\newtheorem{theorem}{Theorem}[section]
\newtheorem{theoremA}{Theorem}
\newtheorem{corA}[theoremA]{Corollary}
\newtheorem*{theorem*}{Theorem}
\newtheorem*{cor*}{Corollary}
\newtheorem{lemma}[theorem]{Lemma}
\newtheorem{prop}[theorem]{Proposition}
\newtheorem{cor}[theorem]{Corollary}
\theoremstyle{definition}
\newtheorem*{remark*}{Remark}
\newtheorem{defn}[theorem]{Definition}
\newtheorem{rem}[theorem]{Remark}
\newtheorem{example}[theorem]{Example}
\DeclareMathOperator{\id}{id}
\DeclareMathOperator{\im}{Im}
\DeclareMathOperator{\tr}{tr}
\DeclareMathOperator{\coker}{coker}
\DeclareMathOperator{\map}{Map}
\DeclareMathOperator*{\colim}{colim}
\DeclareMathOperator{\THH}{THH}
\DeclareMathOperator{\TR}{TR}
\DeclareMathOperator{\Sp}{Sp}
\DeclareMathOperator{\sh}{sh}
\DeclareMathOperator{\Set}{Set}
\DeclareMathOperator{\Z}{\mathbb{Z}}
\DeclareMathOperator{\F}{\mathbb{F}}
\DeclareMathOperator{\proj}{proj}
\DeclareMathOperator{\Ab}{Ab}
\DeclareMathOperator{\bimod}{biMod}
\DeclareMathOperator{\tlog}{tlog}
\DeclareMathOperator{\FF}{\mathcal{F}}
\DeclareMathOperator{\PgcSp}{PgcSp}
\DeclareMathOperator{\Res}{Res}
\DeclareMathOperator{\qfgen}{qfgen}
\DeclareMathOperator{\infl}{infl}
\newcommand{\cyctens}[3]{{#2}\lower.6ex\hbox{${}^{\underset{\scaleto{ #1}{3pt}}{\circledcirc} p^{#3}}$}}
\newcommand{\cycsmash}[3]{{#2}\lower.6ex\hbox{${}^{\underset{\scaleto{ #1}{3pt}}{\varowedge} p^{#3}}$}}
\newcommand{\tens}[3]{{#2}\lower.6ex\hbox{${}^{\underset{\scaleto{ #1}{3pt}}{\otimes} p^{#3}}$}}
\newcommand{\smashp}[3]{{#2}\lower.6ex\hbox{${}^{\underset{\scaleto{ #1}{3pt}}{\wedge} p^{#3}}$}}
\newcommand{\pow}[1]{\left[\kern-0.5ex\left[{#1}\right]\kern-0.5ex\right]}
\begin{document}
\begin{center}\LARGE{Witt vectors with coefficients and TR}
\end{center}

\begin{center}\Large{Emanuele Dotto, Achim Krause, Thomas Nikolaus, Irakli Patchkoria}
\end{center}

\vspace{.05cm}

\abstract{We give a new construction of $p$-typical Witt vectors with coefficients in terms of ghost maps and show that this construction is isomorphic to the one defined in terms of formal power series from the authors' previous paper. We show that our construction recovers  Kaledin's polynomial Witt vectors in the case of vector spaces over a perfect field of characteristic $p$. We then identify the components of the $p$-typical $\TR$ with coefficients, originally defined by Lindenstrauss and McCarthy and later reworked by the second and third authors in joint work with McCandless, with the $p$-typical Witt vectors with coefficients. This extends a celebrated result of Hesselholt and Hesselholt-Madsen relating the components of TR with the Witt vectors. 
As an application, we given an algebraic description of the components of the Hill-Hopkins-Ravenel norm for cyclic $p$-groups in terms of $p$-typical Witt vectors with coefficients. 
}

\vspace{.05cm}


\tableofcontents

\section*{Introduction}
\phantomsection\addcontentsline{toc}{section}{Introduction}

In the paper \cite{DKNP1} we define the Witt vectors $W(R;M)$ of a ring $R$ with coefficients in an $R$-bimodule $M$. This construction extends the usual big Witt vectors of a commutative ring, recovering it in the case where $M=R$. Our approach is analogous to the construction of Witt vectors of a commutative ring 
 in terms of power series (see e.g. \cite{Cartier}), by replacing it with the completed tensor algebra.
In the present paper we give an alternative description of the $p$-typical Witt vectors with coefficients which aligns with the original construction of Witt \cite{Witt}  based on Witt polynomials. In particular we show that for $M=R$ our construction recovers Hesselholt's definition of $p$-typical Witt vectors for non-commutative rings of \cite{HesselholtncW}. We also compare our construction to Kaledin's definition of polynomial Witt vectors from  \cite{KaledinWpoly}, as well as describing the components of topological restriction homology TR with coefficients as defined in \cite{LMcC} and \cite{polygonic}, and in particular of the Hill-Hopkins-Ravenel norm for cyclic $p$-groups, in terms of p-typical Witt vectors.

Let us fix a prime $p$. The Witt polynomials are the $n$-variable polynomials $w_j\in \Z[a_0,\dots,a_{n-1}]$, defined for $0\leq j<n$ as
\[
w_j=a_{0}^{p^j}+pa_{1}^{p^{j-1}}+p^2a_{1}^{p^{j-2}}+\dots +p^{j-1}a_{j-1}^{p}+p^ja_j.
\]
The ring of $n$-truncated $p$-typical Witt vectors $W_n(A)$ of a commutative ring $A$ can be characterised as the unique ring structure on the set $A^{\times n}$ which is functorial in $A$ and with the property that the ``ghost maps'' $w_j\colon A^{\times n}\to A$ defined by the Witt polynomials are ring homomorphisms for every $0\leq j<n$. Given any ring $R$ and $R$-bimodule $M$, let us define a variant of these ghost maps, by formally replacing the $p$-th powers in the ghost map with tensor powers of $M$. For a bimodule $M$ over a ring $R$, we define an $R$-bimodule $\tens{R}{M}{i}$ and an abelian group $\cyctens{R}{M}{j}$ respectively by
\[
\tens{R}{M}{i}=\underbrace{M\otimes_R M\otimes_R\dots\otimes_R M}_{p^i} \ \ \ \ \ \  \ \ \mbox{and} \ \ \ \  \ \ \ \  \cyctens{R}{M}{j}=\tens{R}{M}{j}/[R,\tens{R}{M}{j}]
\vspace{-.3cm}
\]
where $[R,\tens{R}{M}{j}]$ is the abelian subgroup generated by the elements $rm-mr$ for $r\in R$ and $m\in \tens{R}{M}{j}$. We think of $\cyctens{R}{M}{j}$ as $p^j$ copies of $M$ tensored together around a circle, and these define an abelian group with a natural action of the cyclic group $C_{p^j}$. We then define an analogue of the $j$-th ghost map
\[
w_j\colon \prod_{i=0}^{n-1}\tens{R}{M}{i}\longrightarrow (\cyctens{R}{M}{j})^{C_{p^j}},
\]
by sending a sequence $a_0,a_1,\dots,a_{n-1}$ to the invariant represented by
\[
w_j(a_0,a_1,\dots,a_{n-1}):=a^{\otimes p^j}_0+\sum_{\sigma\in C_{p^j}/C_{p^{j-1}}}\sigma(a^{\otimes p^{j-1}}_1)+\sum_{\sigma\in C_{p^j}/C_{p^{j-2}}}\sigma(a^{\otimes p^{j-2}}_2)+\dots+\sum_{\sigma\in C_{p^j}}\sigma(a_j)
\]
where $\sigma\colon \cyctens{R}{M}{j}\to \cyctens{R}{M}{j}$ is the automorphism given by the cyclic action. We then define an equivalence relation on $\prod_{i=0}^{n-1}\tens{R}{M}{i}$ by forcing the ghost map to be injective on free bimodules (see Definition \ref{def:rel}), and define the $p$-typical Witt vectors, as a set, as the quotient
\[
W_{n,p}(R;M):=\left(\prod_{i=0}^{n-1}\tens{R}{M}{j}\right)/\sim
\]
by this relation.
This situation is analogous to the Witt vectors for non-commutative rings $W_n(R)$ of \cite{HesselholtncW}, which as a set is a certain quotient of the product $\prod_{i=0}^{n-1}R$.
The following is the main result of \S\ref{secexist}.

\begin{theoremA}
Let $p$ be a prime, $n\geq 1$ an integer, $R$ a ring and $M$ an $R$-bimodule. There is a unique abelian group structure and lax symmetric monoidal structure on $W_{n,p}(R;M)$ such that $w_j$ is additive and monoidal for all $0\leq j<n$, and a natural monoidal isomorphism of abelian groups 
\[W_{n,p}(R;M)\cong W_{\langle p^{n-1}\rangle}(R;M),\]
where $W_{\langle p^{n-1}\rangle}(R;M)$ is the group of $p$-typical $(n-1)$-truncated Witt vectors of \cite{DKNP1}.
\end{theoremA}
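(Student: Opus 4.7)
My plan is to transport the abelian group and lax symmetric monoidal structure from $W_{\langle p^{n-1}\rangle}(R;M)$ of \cite{DKNP1} to $W_{n,p}(R;M)$ via an explicit natural bijection $\overline{\Phi}\colon W_{n,p}(R;M)\to W_{\langle p^{n-1}\rangle}(R;M)$, and to deduce uniqueness from the defining property of the relation $\sim$. Concretely, $\sim$ is set up so that on a free $R$-bimodule $F$ the total ghost map $w=(w_0,\dots,w_{n-1})$ is injective on $W_{n,p}(R;F)$. Hence any abelian group (resp.\ lax symmetric monoidal) structure making every $w_j$ additive (resp.\ monoidal) is uniquely determined, being pulled back from the corresponding structure on $\prod_{j=0}^{n-1}(\cyctens{R}{M}{j})^{C_{p^j}}$. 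For an arbitrary bimodule $M$, choosing a surjection $F\twoheadrightarrow M$ with $F$ free yields a surjection $W_{n,p}(R;F)\twoheadrightarrow W_{n,p}(R;M)$ (using that the functors $\tens{R}{(-)}{i}$ preserve surjections), and naturality then forces a unique quotient structure; so it remains to prove existence, which will follow from the isomorphism.

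\textbf{Construction of the bijection.} The paper \cite{DKNP1} provides Teichmüller elements $[a]$ and Verschiebung operators $V$ on $p$-typical Witt vectors with coefficients, together with a normal form $\sum_{i=0}^{n-1}V^i[a_i]$ for elements of $W_{\langle p^{n-1}\rangle}(R;M)$ with $a_i\in\tens{R}{M}{i}$. Define
\[
\Phi\colon \prod_{i=0}^{n-1}\tens{R}{M}{i}\longrightarrow W_{\langle p^{n-1}\rangle}(R;M),\qquad (a_0,\dots,a_{n-1})\longmapsto \sum_{i=0}^{n-1}V^i[a_i].
\]
Using the ghost formulas from \cite{DKNP1}, in particular $w_j([a])=a^{\otimes p^j}$ and the identity that expresses $w_j\circ V^i$ on a Teichmüller element as the sum over cosets $C_{p^j}/C_{p^{j-i}}$ of the orbit of $a^{\otimes p^{j-i}}$, the composition of $\Phi$ with the $j$-th ghost map of $W_{\langle p^{n-1}\rangle}(R;M)$ reproduces exactly the polynomial $w_j(a_0,\dots,a_{n-1})$ displayed before the theorem. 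Consequently $\Phi$ carries tuples with equal ghost images to the same element, so it factors through $\sim$, yielding the natural map $\overline{\Phi}$. Transporting the additive and lax symmetric monoidal structure of the right-hand side along $\overline{\Phi}$ then automatically makes each $w_j$ additive and monoidal, by the intertwining just established.

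\textbf{Bijectivity and main obstacle.} Surjectivity of $\overline{\Phi}$ is immediate from the normal form, so the real content is injectivity, which is the main obstacle. On a free bimodule $F$ both $W_{n,p}(R;F)$ and $W_{\langle p^{n-1}\rangle}(R;F)$ embed into $\prod_j(\cyctens{R}{F}{j})^{C_{p^j}}$ via their respective ghost maps, and the key step is to check that the two images coincide, being characterized by the same Witt-type divisibility congruences; this forces $\overline{\Phi}$ to be bijective on free bimodules. A reflexive coequalizer argument in $M$ (both constructions preserve such coequalizers, as can be read off from the explicit formulas for $\sim$ on one side and for Witt vectors in \cite{DKNP1} on the other) then extends bijectivity to arbitrary bimodules. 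The delicate point is the free-bimodule identification of images: it requires a careful comparison of the Witt congruences implicitly cut out by $\sim$ with the explicit Frobenius–Verschiebung congruences characterizing the image of the ghost map on $W_{\langle p^{n-1}\rangle}(R;F)$.
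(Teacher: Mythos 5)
Your route is the paper's own: your $\Phi$ is exactly the paper's map $\gamma$ (in the power-series model of \cite{DKNP1}, $\sum_i V^i[a_i]$ is the class of $\prod_i(1-a_it^{p^i})$), the structures are transported along it, and both the uniqueness argument and the extension from free bimodules to general ones via surjections/reflexive coequalisers match the paper. The one substantive gap is in the descent step. The sentence ``Consequently $\Phi$ carries tuples with equal ghost images to the same element'' does not follow from the intertwining alone: the intertwining only shows that $\Phi(a)$ and $\Phi(b)$ have the same image under the ghost map $\tlog_{\langle p^{n-1}\rangle}$ of the \emph{target}, and to conclude $\Phi(a)=\Phi(b)$ you must invoke the injectivity of $\tlog_{\langle p^{n-1}\rangle}$ on $W_{\langle p^{n-1}\rangle}(R;F)$ for $F$ free (\cite{DKNP1}, Proposition 1.41). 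That injectivity is the key external input and it never appears in your argument. Note also that for non-free $M$ the relation $\sim$ is not ``equal ghost images'' but is defined through a free resolution, so the factorisation for general $M$ can only be obtained by the coequaliser argument you sketch afterwards, not directly.

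Conversely, the step you flag as the ``main obstacle'' is not actually needed. Once $\overline{\Phi}$ is well defined on a free bimodule, its injectivity is immediate from the commuting triangle $w=\tlog_{\langle p^{n-1}\rangle}\circ\overline{\Phi}$, because $w$ is injective on $W_{n,p}(R;F)$ by the very definition of the relation; surjectivity you already have from the normal form. There is no need to identify the images of the two ghost maps inside $\prod_j(\cyctens{R}{F}{j})^{C_{p^j}}$, and doing so by ``Witt-type divisibility congruences'' is a genuinely harder statement (it is the Dwork lemma of the paper's appendix, which requires an external Frobenius); the paper uses that lemma only for the norm construction, not for this theorem. So your proof becomes complete and essentially identical to the paper's once you add the citation of the injectivity of $\tlog_{\langle p^{n-1}\rangle}$ on free bimodules and delete the unexecuted image-comparison step.
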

When $M=R$ with the canonical $R$-bimodule structure, there is a natural isomorphism of abelian groups between
$
W_{n,p}(R;R)
$
and Hesselholt's group of $p$-typical $n$-truncated Witt vectors of non-commutative rings of \cite{HesselholtncW} (see Corollary \ref{CompLars}). This follows from the fact that
\[(\cyctens{R}{R}{j})^{C_{p^j}}\cong R/[R,R]\] and under this isomorphism the ghost maps are given by the usual Witt polynomials.
As an immediate consequence of the symmetric monoidal structure $W_{n,p}$ extends to a functor from the category of algebras over commutative rings to rings, which sends commutative algebras to commutative rings. In particular when $R$ is commutative the isomorphism above identifies $W_{n,p}(R;R)$ with the usual ring of Witt vectors.

The groups $W_{n,p}(R;M)$ also  extend Kaledin's construction of  polynomial Witt vectors from \cite{KaledinWpoly} and \cite{KaledinncW}, as follows.  We let $Q_n(R;M)$ be the abelian group defined as the cofiber of the transfer map
\[
Q_n(R;M):=\coker\big(\tr_{e}^{C_{p^n}}=\sum_{\sigma\in C_{p^n}}\sigma\colon (\cyctens{R}{M}{n})_{C_{p^n}}\longrightarrow  (\cyctens{R}{M}{n})^{C_{p^n}}\big).
\]
Kaledin defines in \cite{KaledinWpoly} and \cite{KaledinncW} a functor $\widetilde{W}_n$ of ``polynomial Witt vectors'' on the category of vector spaces over a perfect field $k$ of characteristic $p$, in terms of the functor $Q_n$. The following theorem, proved in  \S\ref{secKaledin}, provides a similar description for $W_{n,p}$, showing that $W_{n,p}$ restricts to Kaledin's construction on the subcategory of $k$-vectors spaces.
\begin{theoremA}
For every prime $p$ and integer $n\geq 1$, there is a surjective lax symmetric monoidal natural transformation
\[w_{n}\colon W_{n,p}(R/p;M/p)\longrightarrow Q_n(R;M).\]
It is an isomorphism when $R$ is a commutative ring with no $p$-power torsion, $R/p$ is perfect, and $M$ is a free $R$-module. It follows that $W_{n,p}(k;V)$ is isomorphic to the polynomial Witt vectors $\widetilde{W}_n(V)$ of \cite{KaledinWpoly} 
for every $k$-vector space $V$ and perfect field $k$ of characteristic $p$.
\end{theoremA}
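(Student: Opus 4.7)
The plan is to construct $w_n$ in two stages: first as a map on the ambient product $\prod_{i=0}^{n-1}\tens{R/p}{M/p}{i}$, then descend it through the equivalence relation of Definition~\ref{def:rel}. Given a tuple $(a_0,\dots,a_{n-1})$, pick set-theoretic lifts $\tilde a_i\in\tens{R}{M}{i}$ (possible since reduction modulo $p$ is surjective on pure tensors) and set
\[
\widetilde w_n(a_0,\dots,a_{n-1}):=\tilde a_0^{\otimes p^n}+\sum_{\sigma\in C_{p^n}/C_{p^{n-1}}}\sigma(\tilde a_1^{\otimes p^{n-1}})+\dots+\sum_{\sigma\in C_{p^n}/C_p}\sigma(\tilde a_{n-1}^{\otimes p}),
\]
viewed as an element of $Q_n(R;M)$. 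The key technical check is independence of the chosen lifts: if $\tilde a_i$ is replaced by $\tilde a_i+pb_i$, the non-commutative multinomial expansion of $\tilde a_i^{\otimes p^{n-i}}$ produces correction terms indexed by non-empty subsets $S\subseteq\{1,\dots,p^{n-i}\}$, each carrying a factor $p^{|S|}$. Any such subset fixed by a subgroup $C_{p^j}\subseteq C_{p^{n-i}}$ must be a union of $C_{p^j}$-orbits of size $p^j$, so $|S|\ge p^j\ge j+1$, and the factor $p^{|S|}$ suffices to rewrite the partial transfer from $C_{p^j}$ as the full transfer $\tr_e^{C_{p^n}}$ of an appropriate element. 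Thus every correction vanishes in the cokernel $Q_n(R;M)$.

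To descend $\widetilde w_n$ through the equivalence relation, one checks that on free bimodules its value depends only on the ghost components $w_0,\dots,w_{n-1}$ of the input: since those lower ghost maps are injective on free bimodules, one exhibits $\widetilde w_n$ as a universal polynomial-in-ghosts expression composed with the structural partial transfer, the tensor-algebra analogue of the classical Witt polynomial recursion. Naturality then extends the descent to arbitrary bimodules via a free presentation. Surjectivity of the resulting map $w_n$ is immediate: every pure tensor in $(\cyctens{R}{M}{n})^{C_{p^n}}$ is the image of a tuple concentrated in the slot matching the tensor's stabilizer, and every other $C_{p^n}$-invariant differs from a sum of such by an element in the image of $\tr_e^{C_{p^n}}$. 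The lax symmetric monoidal structure is inherited from the monoidal structure on $W_{n,p}$ established in Theorem~A, together with the direct compatibility of tensor products with the partial transfers appearing in the formula.

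For the isomorphism statement, under the hypotheses ($R$ commutative and $p$-torsion-free, $R/p$ perfect, $M$ free over $R$) we combine Theorem~A, which identifies the source with $W_{\langle p^{n-1}\rangle}(R/p;M/p)$, with a basis-level decomposition of both sides into summands indexed by $C_{p^n}$-orbits of tensor words in a chosen $R$-basis of $M$. The $p$-torsion-freeness of $R$ makes the transfers injective on the relevant summands, while the perfectness of $R/p$ supplies the Frobenius twists needed to match the Witt coordinates to the $Q_n$-description orbit by orbit, yielding injectivity. For the final consequence, specialize to $R=W(k)$ with $M$ a free $W(k)$-module lift of the $k$-vector space $V$: then $W_{n,p}(k;V)=W_{n,p}(W(k)/p;M/p)$ and $Q_n(W(k);M)=\widetilde W_n(V)$ by Kaledin's definition, so the required isomorphism is a special case of what has been proved. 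The main obstacle is the combinatorial and transfer-theoretic bookkeeping in the well-definedness and descent of $\widetilde w_n$: the orbit-stabilizer structure of $C_{p^n}$ on tensor words must be tracked carefully against the $p$-adic valuations arising from non-commutative multinomial expansions, and the tensor-algebra analogue of the classical Witt polynomial recursion must be identified in the partial-transfer language.
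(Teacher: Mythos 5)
Your construction of the map and the lift-independence computation are essentially the paper's: the multinomial expansion, the observation that a summand with stabiliser $C_{p^l}$ occurs with multiplicity divisible by $p^l$, and the identity $p^l y=\tr_e^{C_{p^l}}y$ on $C_{p^l}$-invariants converting every correction term into a full transfer. The surjectivity argument and the monoidality claim also match. But the descent step is a genuine gap. The relation defining $W_{n,p}(R/p;M/p)$ is computed from a free resolution $(\overline S;\overline Q)\rightrightarrows(S;Q)\twoheadrightarrow(R/p;M/p)$ in which $S$ is a free (torsion-free) ring; there is no map $(S;Q)\to(R;M)$, so your $\widetilde w_n$ (which takes values in $Q_n(R;M)$ via lifts to $M$) cannot be evaluated on, or compared with ghost components over, the resolving objects. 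Saying "its value depends only on the ghost components on free bimodules" therefore does not engage with the relation you need to quotient by. Likewise, lift-independence only shows the map is constant on mod-$p$ fibres of the ambient product; it does not by itself control the kernel of $W_{n,p}(R;M)\to W_{n,p}(R;M/p)$. The paper resolves exactly this point by presenting $M/p$ as a reflexive coequaliser $M'\rightrightarrows M\twoheadrightarrow M/p$ with $M'\subset M\times M$ the pairs congruent mod $p$, staying over the fixed ring $R$ and using that $W_{n,p}(R;-)$ preserves reflexive coequalisers. Some device of this kind is required; your "naturality via a free presentation" does not supply it.

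The isomorphism part also rests on a false premise. The source $W_{n,p}(R/p;M/p)$ does \emph{not} admit a direct-sum decomposition indexed by $C_{p^n}$-orbits of tensor words: already $W_{2,p}(\F_p;\F_p)\cong\Z/p^2$ is indecomposable, whereas the target $Q_{p^2}(\Z;\Z)$-side decomposition you would match it against is a sum of cyclic pieces. Only the target $Q_{p^n}(R;R(X))$ decomposes; the source is built from nonsplit extensions. The correct argument (the paper's) is an induction on $n$ comparing the exact sequence $W_{n,p}(R;(M/p)^{\otimes p})_{C_p}\xrightarrow{V}W_{n+1,p}(R;M/p)\xrightarrow{R^n}W_{1,p}(R;M/p)\to 0$ with an analogous sequence for $Q$, and applying the five lemma. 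This requires two ingredients absent from your sketch: exactness of the top row on the left, which uses the injectivity of a single Verschiebung over a commutative base (Proposition \ref{singleVinj}), and the identification of the kernel of $R/p^{n+1}\to R/p$ with $R/p^n$, which is where the $p$-power torsion hypothesis enters; perfectness of $R/p$ appears only as the invertibility of the $p^n$-power Frobenius on the quotient term. Your final specialization to $R=W(k)$ to recover Kaledin's functor is correct and is how the paper concludes.
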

When $M=R$ the isomorphism of the theorem recovers the fact that if $R$ has no  $p$-power torsion and is commutative, with perfect $R/p$, then $W_n(R/p)\cong R/p^n$.

In homotopy theory, the ring of Witt vectors arises when one considers the cyclotomic structure on topological Hochschild homology. Hesselholt and Madsen show in \cite{Wittvect} and \cite{HesselholtncW} that the $p$-typical Witt vectors of a ring $R$ are isomorphic to $\pi_0$ of the $p$-typical topological restriction homology spectrum $\TR(R)$. In \cite{LMcC} Lindenstrauss and McCarthy define a version of TR with coefficients in an $R$-bimodule, as the derived cyclic invariants
\[
\TR_{\langle p^n \rangle}(R;M)=(\cycsmash{R}{M}{n})^{C_{p^n}}
\]
of a genuine $C_{p^n}$-spectrum $\cycsmash{R}{M}{n}$, which is the derived analogue of the algebraic cyclic tensor product  used in the definition of the Witt vectors.
The foundations of this theory have been reworked in \cite{polygonic} by McCandless and the second and third authors, in a way analogous to the approach to topological cyclic homology of \cite{NikSchol}. In particular for every prime $p$ and integer $n\geq 0$, the authors give a description of TR as an equaliser
\[
\TR_{\langle p^n\rangle}(R;M)=eq\left(\xymatrix{
\prod_{i=0}^n\THH(R;M^{\wedge_R p^i})^{hC_{p^i}}\ar@<.5ex>[r]\ar@<-.5ex>[r]
&
\prod_{i=0}^{n-1}(\THH(R;M^{\wedge_R p^{i+1}})^{tC_p})^{hC_{p^i}}
}
\right)
\]
where $\THH(R;M)$ is the usual topological Hochschild homology spectrum with coefficients, and $\THH(R;M^{\wedge_R p^{i}})$ carries a certain action of the cyclic group $C_{p^i}$. The maps of the equaliser are defined from the canonical map from homotopy fixed-points to the Tate construction, and from certain Frobenius maps
$\THH(R;M)\to \THH(R;M^{\wedge_R p})^{tC_p}$ (see \S\ref{sec:TR}). From this equaliser formula one can easily deduce, for every spectrum $A$, an equivalence
\[
\TR_{\langle p^n\rangle}(\mathbb{S};A)=(N^{C_{p^n}}_eA)^{C_{p^n}}
\]
with the genuine fixed-points of the norm construction of cyclic $p$-groups of Stolz \cite{Sto} and Hill-Hopkins-Ravenel \cite{HHR}. The components of this norm have been computed by Mazur when $A=H\F_p$ (see \cite[Proposition 5.23]{BGHL}), and for  $p=2$ and $n=1$ in \cite[Proposition 5.5]{THRmodels}. In Theorem \ref{pi0norm} we extend this calculation to all connective bimodules, showing the following:

\begin{theoremA}
Let $R$ be a connective ring spectrum and $M$ a connective $R$-bimodule. There is a canonical isomorphism
\[
W_{\langle p^n\rangle}(\pi_0R;\pi_0M)\cong\pi_0\TR_{\langle p^n\rangle}(R;M),
\]
which is moreover natural in $(R;M)$ and monoidal.
In particular for every connective spectrum $A$, this gives an isomorphism $W_{\langle p^n\rangle}(\Z;\pi_0 A)\cong \pi_0(N_{e}^{C_{p^n}}A)^{C_{p^n}}$ with the components of the Hill-Hopkins-Ravenel norm construction, which is a ring isomorphism when $A$ is a ring spectrum.
\end{theoremA}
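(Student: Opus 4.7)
The plan is to apply $\pi_0$ to the equaliser presentation of $\TR_{\langle p^n\rangle}(R;M)$ recalled above, identify the result with the ghost description of $W_{n+1,p}(\pi_0 R;\pi_0 M)\cong W_{\langle p^n\rangle}(\pi_0 R;\pi_0 M)$ from Theorem~A, and then invoke the uniqueness clause of that theorem to promote the ensuing bijection to a natural isomorphism of abelian groups compatible with the monoidal structure.

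For the source of the equaliser, connectivity of $R$ and $M$ makes each $\THH(R;M^{\wedge_R p^i})$ connective with $\pi_0\cong\cyctens{\pi_0 R}{\pi_0 M}{i}$, so the homotopy fixed-point spectral sequence collapses in total degree zero, giving
\[
\pi_0\THH(R;M^{\wedge_R p^i})^{hC_{p^i}}\;\cong\;\bigl(\cyctens{\pi_0 R}{\pi_0 M}{i}\bigr)^{C_{p^i}},
\]
and the product over $i=0,\dots,n$ is precisely the codomain of the combined ghost map $(w_0,\dots,w_n)$. For the target terms $(\THH(R;M^{\wedge_R p^{i+1}})^{tC_p})^{hC_{p^i}}$, I would use the norm cofibre sequence together with connectivity of the input to compute $\pi_0$ of the Tate construction as $\hat{H}^0(C_p;\cyctens{\pi_0 R}{\pi_0 M}{i+1})$, and track the homotopy fixed-point spectral sequence for $C_{p^i}$ to compute $\pi_0$ and bound $\pi_1$ of the iterated construction. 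Passing to the long exact sequence of the equaliser (viewed as the fibre of the difference map) then presents $\pi_0\TR_{\langle p^n\rangle}(R;M)$ algebraically, with the two parallel maps becoming, respectively, the canonical projection induced by the hfp-to-Tate map and the algebraic Frobenius $m\mapsto[m^{\otimes p}]$.

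The resulting equaliser of abelian groups should match the ghost-map description of $W_{n+1,p}(\pi_0 R;\pi_0 M)$ from Theorem~A term by term, since the defining relations of $W_{n+1,p}$ are exactly the compatibility between the projection and the Frobenius. I would construct the comparison map directly from the projections in the equaliser formula, verify by a reduction to free $R$-bimodules that it is bijective (using injectivity of the ghost map on free bimodules from Theorem~A, and vanishing of the pertinent $\pi_1$ of the target in that case), and then appeal to the uniqueness clause of Theorem~A to transport the abelian group structure across the bijection; the monoidal and ring-theoretic statements follow from the corresponding structures on both sides and from the case $R=\mathbb{S}$ of the equaliser. I expect the main technical obstacle to be the $\pi_1$-control of the target terms: the long exact sequence could a priori introduce extra identifications, and ruling these out will require a careful analysis of the Tate and homotopy fixed-point spectral sequences on free bimodules, together with a verification that the ghost relations $\sim$ of the introduction already account for any potential $\pi_1$-contributions.
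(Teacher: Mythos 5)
Your strategy---apply $\pi_0$ to the equaliser formula termwise and match the result against the ghost description of $W_{n+1,p}$---breaks down at its first computational step. For a connective spectrum $X$ with $C_{p^i}$-action, $\pi_0(X^{hC_{p^i}})$ is \emph{not} $(\pi_0X)^{C_{p^i}}$: the homotopy fixed-point spectral sequence contributes $H^s(C_{p^i};\pi_sX)$ to total degree zero for every $s\geq 0$, and for connective $X$ these groups need not vanish when $s\geq 1$. Far from being a technicality, this failure of collapse is the entire source of the Witt-vector phenomenon. Concretely, take $R=M=H\F_p$ and $n=1$: if your identification of the source of the equaliser were correct, then $\pi_0\TR_{\langle p\rangle}(\F_p)$ would be a subquotient of $\F_p\times\F_p$, hence an $\F_p$-vector space, whereas it is $W_2(\F_p)\cong\Z/p^2$. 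In fact $\pi_0\big(\THH(\F_p)^{hC_p}\big)$ is the $p$-adic integers, assembled from the classes in $H^{2s}(C_p;\pi_{2s}\THH(\F_p))\cong\F_p$ for all $s\geq 0$; the Verschiebung summands of the Witt vectors live exactly in the positive filtration that your computation discards. The same issue, compounded, affects $\pi_0$ and $\pi_1$ of $(\THH(R;M^{\wedge_Rp^{i+1}})^{tC_p})^{hC_{p^i}}$ (the Tate construction is unbounded below, so the outer spectral sequence has infinitely many potential contributions in each total degree and only conditional convergence), and the surjectivity on $\pi_1$ needed to kill the $\coker(\pi_1\to\pi_1)$ correction in the fibre sequence computing the equaliser---which you correctly flag but defer---is not a routine check; in the $\F_p$ example above it is a nontrivial extension problem producing the $\Z/p^2$.

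The paper's proof avoids all of this by never computing the equaliser degreewise. It extracts from the equaliser formula only the fibre sequences $\THH(R;M^{\wedge_Rp^{n}})_{hC_{p^{n}}}\xrightarrow{V}\TR_{\langle p^n\rangle}(R;M)\xrightarrow{R}\TR_{\langle p^{n-1}\rangle}(R;M)$, whose fibre is connective, so that $\pi_0$ yields an honest right-exact sequence with no correction terms. After reducing to discrete bimodules (Corollary \ref{pi0dependence}) and then to free ones via reflexive coequalisers, this sequence is compared by induction and the five lemma with the algebraic sequence $(\cyctens{R}{M}{n})_{C_{p^n}}\xrightarrow{V^n}W_{n+1,p}(R;M)\xrightarrow{R}W_{n,p}(R;M)\to 0$, using the explicit map $I_{n+1}(m_0,\dots,m_n)=\sum_iV^i\tau^{n-i}(m_i)$, the internal ghost maps $\overline{w}_j=F^jR^{n-j}$, and the operator identities of Proposition \ref{Axiomatic}. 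If you wish to salvage an equaliser-style argument, you would at minimum have to replace $(\pi_0X)^{C_{p^i}}$ by the full filtered $\pi_0$ of the homotopy fixed points and control all differentials and extensions, which is not easier than the theorem itself.
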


The proof of this theorem is somewhat similar to Hesselholt and Madsen's proof of the isomorphism between the usual $p$-typical Witt vectors $W_{n+1}(A)$ and the components of the $C_{p^n}$-fixed points of $\THH(A)$, for a commutative ring $A$. The argument is by induction, by comparing certain fibre sequences for TR established in \cite{polygonic}  with the exact sequences for the Witt vectors from \cite[\S 1.5]{DKNP1}. A similar description of the components of the norm for any finite group $G$ has been obtained by Read in \cite{Read}, with a variation of our Witt vectors construction. 

%

By a result of \cite{polygonic}, the spectrum $\TR_{\langle p^{k}\rangle}(R;M^{\wedge_Rp^{n-k}})$ is the $C_{p^k}$-fixed-points of a genuine $C_{p^n}$-spectrum $\underline{\TR}_{\langle p^n\rangle}(R;M)$, for every $0\leq k\leq n$. For example, when $R$ is the sphere spectrum this is the cyclic norm construction
\[
\underline{\TR}_{\langle p^n\rangle}(\mathbb{S};A)=N^{C_{p^n}}_eA
\]
for every spectrum $A$.
In Proposition \ref{structure compatible} we identify the Mackey structure on $\pi_0\underline{\TR}_{\langle p^n\rangle}(R;M)$ in terms of the Witt vectors operators introduced in \cite{DKNP1} and in \S\ref{secop}.
The characterisation of this Mackey structure suggests a relationship between the Witt vectors with coefficients and the free Tambara functor on a commutative ring.
In \cite[Theorem B]{BrunTamb} Brun describes the free $C_{p^n}$-Tambara functor on a commutative ring with \textit{trivial} $C_{p^n}$-action in terms of the usual ring of Witt vectors $W_{n+1}(A)$. In \S\ref{secbrun} we show that the Witt vectors with coefficients in fact compute the free $C_{p^n}$-Tambara functor on every commutative ring.

\begin{corA}
Let $A$ be a commutative ring, $p$ a prime and $n\geq 0$ an integer. The association $C_{p^i}\mapsto W_{i+1}(\Z;A^{\otimes p^{n-i}})$ equipped with the operators  $F$, $V$ and $N$ of \S\ref{secop} form a $C_{p^n}$-Tambara functor, which is the free $C_{p^n}$-Tambara functor on the commutative ring $A$.
\end{corA}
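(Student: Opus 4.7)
The plan is to identify the proposed assignment, at each level and with all of its operators, with $\underline{\pi}_0$ of the genuine $C_{p^n}$-spectrum $N_e^{C_{p^n}}HA$, and then invoke the universal property of the Hill--Hopkins--Ravenel norm to conclude freeness.

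First I would pin down the value at level $C_{p^i}$ for every $0\le i\le n$. The standard compatibility of the norm with restriction yields
\[
\res^{C_{p^n}}_{C_{p^i}}N_e^{C_{p^n}}HA\;\simeq\;N_e^{C_{p^i}}\bigl((HA)^{\wedge p^{n-i}}\bigr),
\]
where $(HA)^{\wedge p^{n-i}}$ is regarded as a $C_{p^i}$-bimodule with $\pi_0=A^{\otimes p^{n-i}}$. Combining this with the recalled formula $\TR_{\langle p^i\rangle}(\mathbb{S};-)=(N_e^{C_{p^i}}-)^{C_{p^i}}$ and with Theorem C applied to $R=\mathbb{S}$ and $M=HA^{\wedge p^{n-i}}$ gives a canonical isomorphism of rings
\[
W_{i+1}(\Z;A^{\otimes p^{n-i}})\;\cong\;\pi_0\bigl((N_e^{C_{p^n}}HA)^{C_{p^i}}\bigr),
\]
natural in the commutative ring $A$.

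Next I would verify that these isomorphisms are compatible with the operators. The Mackey part, namely that $F$ and $V$ from \S\ref{secop} correspond to the restrictions and transfers of the Tambara functor $\underline{\pi}_0 N_e^{C_{p^n}}HA$, follows directly from Proposition \ref{structure compatible} applied to the bimodule $HA$ over $R=\mathbb{S}$, together with the identification $\underline{\TR}_{\langle p^n\rangle}(\mathbb{S};HA)=N_e^{C_{p^n}}HA$. The genuinely new ingredient is the multiplicative operator $N$, which must be matched with the internal Tambara norm coming from the structure of $N_e^{C_{p^n}}HA$ as a commutative $C_{p^n}$-equivariant ring spectrum. Here one traces through the norm on $\pi_0$-groups of fixed points and checks that, under the identification of the previous paragraph, smashing copies of $HA$ along a subgroup transfer reproduces the explicit tensor-power formula defining $N$ on ghost components.

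Finally, having matched all data, the corollary follows from the universal property of $N_e^{C_{p^n}}$ as the left adjoint from connective commutative ring spectra to connective commutative $C_{p^n}$-equivariant ring spectra, which descends on $\underline{\pi}_0$ to the adjunction between commutative rings and $C_{p^n}$-Tambara functors. This identifies $\underline{\pi}_0 N_e^{C_{p^n}}HA$ with the free $C_{p^n}$-Tambara functor on $A$, consistently with Brun's calculation of the top level $W_{n+1}(A)$ in \cite{BrunTamb}. The main obstacle is the identification of the algebraic operator $N$ from \S\ref{secop} with the spectrum-level internal norm: the two are defined by superficially unrelated recipes --- one by an explicit tensor-power construction on Witt representatives, and the other via the commutative $G$-ring structure of the norm --- and matching them requires unwinding both constructions and using the lax symmetric monoidality of $W_{n,p}$ recorded in Theorem A.
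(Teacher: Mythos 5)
Your route is the one the paper explicitly acknowledges as possible in the remark following Proposition \ref{freetamb} — identify $\underline{W}_{\langle p^n\rangle}(\Z;A)$ with $\underline{\pi}_0 N_e^{C_{p^n}}HA$ and invoke the fact (due to Ullman \cite{Ullman}) that the spectrum-level norm adjunction descends to the free-Tambara-functor adjunction on $\underline{\pi}_0$. But the paper deliberately does \emph{not} take this route, because the single step you flag as "the main obstacle" is precisely the step the paper leaves as an open question: Remark \ref{tambnorm} states that the identification of the algebraic norm $N$ of \S\ref{secop} with the internal Tambara norm on $\pi_0\underline{\TR}_{\langle p^n\rangle}(R;M)$ (equivalently, on $\underline{\pi}_0 N_e^{C_{p^n}}HA$ when $R=\mathbb{S}$) is "expected" and is left "as an open question". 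Proposition \ref{structure compatible} only identifies the Mackey-level operators $F$, $V$, $\sigma$, $\tau$; it says nothing about $N$. Your sentence "one traces through the norm on $\pi_0$-groups of fixed points and checks that \dots smashing copies of $HA$ along a subgroup transfer reproduces the explicit tensor-power formula defining $N$ on ghost components" is not an argument but a restatement of the unresolved problem, so as written the proof has a genuine gap exactly where you locate the difficulty.

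The paper's actual proof of Proposition \ref{freetamb} avoids equivariant homotopy theory entirely. It constructs the adjunction bijection
$\hom_{C_{p^n}\mbox{-}\mathrm{Tamb}}(\underline{W}_{\langle p^n\rangle}(\Z;A),T)\cong\hom_{\mathrm{Ring}}(A,T(1))$
by hand: for $A=\Z[X]$ it uses the explicit free additive basis $V^i\tau^{k-i}(m_1\otimes\cdots\otimes m_{p^i})$ of $W_{\langle p^k\rangle}(\Z;\Z[X]^{\otimes p^{n-k}})$ from Proposition \ref{basis} to define the candidate Tambara morphism $\alpha$ forced by a ring map $f\colon A\to T(1)$ (via $\tau=N^{k-i}u$ from Proposition \ref{tauandN}), and then verifies multiplicativity and compatibility with $F$, $V$, $N$ directly using the multiplication formula of Proposition \ref{basis} and Tambara reciprocity (\cite[Corollary 2.9]{HillMazur}); the general case follows by resolving $A$ by polynomial rings and using that reflexive coequalisers of Tambara functors are computed pointwise. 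That the assignment is a Tambara functor in the first place is checked in ghost coordinates (Remark \ref{TambaraStructure}), not deduced from a spectrum. If you want to salvage your approach you must actually carry out the comparison of the two norms — for instance by showing that both are multiplicative maps determined by the same formula $N_w$ on ghost components and that the spectrum-level norm is detected there — which is a nontrivial piece of work the paper does not supply.
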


\subsection*{Acknowledgements}

The authors would like to thank Jonas McCandless and Christian Wimmer for helpful conversations. 

The first and fourth authors were supported by the German Research Foundation Schwerpunktprogramm 1786. 
The second and third author were funded by the Deutsche Forschungsgemeinschaft (DFG, German Research Foundation) -- Project-ID 427320536--SFB 1442, as well as under Germany's Excellence Strategy EXC 2044 390685587, Mathematics M\"unster: Dynamics-Geometry-Structure. 

For the purpose of open access, the authors have applied a Creative Commons Attribution (CC-BY) licence to any Author Accepted Manuscript version arising from this submission.

\section{The $p$-typical Witt vectors with coefficients}

In \cite{DKNP1} we defined the Witt vectors with coefficients in a way analogous to the definition of the (big) Witt vectors of a commutative ring in terms of power series \cite{Cartier}. In this section we give an alternative description of the $p$-typical Witt vectors with coefficients, more in line with the usual construction of $p$-typical Witt vectors for commutative rings of \cite{Witt}.

We start by recalling the definition of Witt vectors with coefficients of \cite{DKNP1}. Let $R$ be a ring and $M$ an $R$-bimodule. We let $\widehat{T}(R;M)=\prod_{n\geq 0}M^{\otimes_R n}$ be the completed tensor algebra, and $\widehat{S}(R;M)=1\times \prod_{n\geq 1}M^{\otimes_R n}\subset\widehat{T}(R;M)$ the multiplicative subgroup of special units. We denote the elements of this group by power series 
\[1+m_1t+m_2t^2+\dots\]
 with $m_i\in M^{\otimes_R i}$, and we let $\tau\colon M\to \widehat{S}(R;M)$ be the map that sends $m$ to the power series $1-mt$. The (big) Witt vectors of $R$ with coefficients in $M$ are defined in \cite{DKNP1} as the group
\[W(R;M)= \frac{ \widehat{S}(R;M)^{\mathrm{ab}}}{ \tau(rm) \sim \tau(mr)}\]
where the relation runs over all $m\in M$ and $r\in R$, and the abelianisation and the quotient are taken in Hausdorff topological groups, that is we quotient by the closure of the normal subgroup generated by the relations.

Given a truncation set $S\subset\mathbb{N}$, one can define the $S$-truncated Witt-vectors as a quotient of $W(R;M)$. In the present papers we will be interested in the truncation sets consisting of the powers of a prime, and in this case the truncated Witt vectors are defined as follows.

\begin{defn}[\cite{DKNP1}]
Let $p$ be a prime and $n\geq 0$ an integer. The $p$-typical $(n+1)$-truncated Witt vectors of $R$ with coefficients in $M$ is the abelian group $W_{\langle p^n\rangle}(R;M)$ defined as the quotient of $W(R;M)$ by the closed subgroup generated by the elements of the form 
\[1-m_1\otimes\dots\otimes m_kt^k,\]
 where $k\notin \{1,p,p^2,\dots,p^n\}$ and $m_1,\dots, m_k\in M$.
\end{defn}

The truncated Witt vectors have operators analogous to those of the usual Witt vectors, which will play a crucial role in the rest of the paper. The functor $W_{\langle p^n\rangle}$ from the category $\bimod$ of pairs $(R;M)$ has a lax symmetric monoidal structure \cite[Proposition 1.27]{DKNP1}.  The operators are defined in \cite[\S 1.3,1.5]{DKNP1}, and in the truncated case above they take the form of natural transformations
\[
\begin{array}{ll}
F=F_p\colon W_{\langle p^n\rangle}(R;M)\longrightarrow W_{\langle p^{n-1}\rangle}(R;\tens{R}{M}{})&\hspace{1cm} V=V_p\colon W_{\langle p^{n-1}\rangle}(R;\tens{R}{M}{})\longrightarrow W_{\langle p^n\rangle}(R;M)
\\
\\
R\colon W_{\langle p^n\rangle}(R;M)\longrightarrow W_{\langle p^{n-1}\rangle}(R;M) &\hspace{1cm} \tau=\tau_1\colon M\longrightarrow W_{\langle p^n\rangle}(R;M)
\\
\\
\sigma\colon W_{\langle p^n\rangle}(R;M^{\otimes_Rk})\longrightarrow W_{\langle p^n\rangle}(R;M^{\otimes_Rk})
\end{array}
\]
The maps $F$ and $R$ are respectively called the Frobenius and restriction map, and they are monoidal. The map $V$ is called the Verschiebung, and it is additive, whereas $\tau$ is called the Teichm\"uller map and it is monoidal (with respect to the tensor product over $\Z$ on the source). The map $\sigma$ is an automorphism of order $k$, which we call the Weyl action of $C_k$. These maps satisfy certain relations which are detailed in \cite[Proposition 1.31]{DKNP1}.


%
%
%

\subsection{The $p$-typical Witt vectors with coefficients in Witt coordinates}\label{secexist}

In this section we will give a description for $W_{\langle p^n\rangle}(R;M)$ in terms of sequences of $p$-powers of $M$, which is more in line with the classical definition of the $p$-typical Witt vectors (see e.g. \cite{Witt}), as well as Hesselholt's construction of $p$-typical Witt vectors for non-commutative rings.
Let us define a map
\[
\gamma\colon \prod_{i=0}^{n}\tens{R}{M}{i}\longrightarrow W_{\langle p^n\rangle}(R;M)
\]
by sending a sequence $(m_0,m_1,\dots,m_n)$ to the equivalence class of $\prod_{i=0}^n(1-m_it^{p^i})$, where the product is taken in the completed tensor algebra (see e.g. \cite[Prop 1.14]{LarsBig}  for the case $M=R$ commutative but notice the different sign convention).

In order to analyse this map we make use of a version with coefficients of the ghost map. The cyclic tensor power of $M$ is defined for every $j\geq 0$ as the abelian group
\[
 \cyctens{R}{M}{j}=\tens{R}{M}{j}/[R,\tens{R}{M}{j}],
\]
where $[R,\tens{R}{M}{j}]$ is the abelian subgroup generated by the elements $rm-mr$ for $r\in R$ and $m\in \tens{R}{M}{j}$. The cyclic group $C_{p^j}$ of order $p^j$ acts on this abelian group by cyclically permuting the tensor factors, and for all $l\leq j$ we write $(\cyctens{R}{M}{j})^{C_{p^l}}$ for the subgroup of invariants of a cyclic subgroup $C_{p^l}\leq C_{p^j}$. The transfer maps for this cyclic action are denoted by
\[
\tr_{C_{p^{l}}}^{C_{p^j}}\colon (\cyctens{R}{M}{j})^{C_{p^l}}\longrightarrow (\cyctens{R}{M}{j})^{C_{p^j}}.
\]
 We also let $(\bullet)^{\otimes p^k}$ be the composite
\[
(\bullet)^{\otimes p^k}\colon \tens{R}{M}{i}\longrightarrow (\tens{R}{(\tens{R}{M}{i})}{k})^{C_{p^k}}\cong (\tens{R}{M}{i+k})^{C_{p^k}}\longrightarrow(\cyctens{R}{M}{i+k})^{C_{p^{k}}}
\]
where the first map sends $x$ to $x^{\otimes p^k}$, the isomorphism is the canonical associativity isomorphism of the monoidal structure on $R$-bimodules, and the last map is the canonical projection onto the quotient.
\begin{defn}\label{defghost}
The $j$-th $p$-typical ghost map is the map $w_j\colon  \prod_{i=0}^{n}\tens{R}{M}{i}\to(\cyctens{R}{M}{j})^{C_{p^j}}$ defined by
\[
w_j(m_0,\dots,m_{n}):=\sum_{i=0}^{j}\tr_{C_{p^{j-i}}}^{C_{p^j}}(m_i^{\otimes p^{j-i}}),
\]
for every $0\leq j<n+1$.
The product of these maps $w\colon  \prod_{i=0}^{n}\tens{R}{M}{i}\to\prod_{j=0}^{n}(\cyctens{R}{M}{j})^{C_{p^j}}$ is called the $p$-typical ghost map.
\end{defn}

We observe that for $M=R$ there are canonical isomorphisms of abelian groups  $\tens{R}{R}{j}\cong R$ and $(\cyctens{R}{R}{j})^{C_{p^j}}\cong R/[R;R]$, and $w_j$ corresponds to the usual Witt polynomial 
\[
w_j(r_0,\dots,r_{n})=\sum_{i=0}^{j}p^ir_i^{p^{j-i}}.
\]

We recall that Hesselholt's definition of the $p$-typical Witt vectors of a non-commutative ring $R$ of \cite{HesselholtncW} is a complicated quotient of the product of copies of $R$. Thus if we want our construction to specialise to his when $M=R$, we need to define an equivalence relation on $\prod_{i=0}^{n}\tens{R}{M}{i}$. Informally, we define the smallest equivalence relation which makes the ghost map injective when $(R;M)$ is free, and such that the resulting functor commutes with reflexive coequalisers of bimodules. 

We let $\bimod$ be the category of bimodules, whose objects are pairs $(R;M)$ of a ring $R$ and an $R$-bimodule $M$, and a morphism $(R;M)\to (R';M')$ is a pair $(\alpha;f)$ of a ring homomorphism $\alpha\colon R\to R'$ and a map of $R$-bimodules $f\colon M\to \alpha^\ast M'$. 
Clearly the tensor power and cyclic tensor powers introduced above are functors on the category $\bimod$ by applying the map $f$  factorwise on elementary tensors, and we will often drop $\alpha$ from the notation and denote a morphism only by $f\colon M\to M'$.
We say that a bimodule $(R;M)$  is free if $R$ is a free ring and $M$ is a free $R$-bimodule. A free resolution of  $(R;M)$ is a reflexive coequaliser diagram
\[\xymatrix@C=12pt{(\overline{S};\overline{Q})\ar@<.5ex>[r]^{f}\ar@<-.5ex>[r]_{g}&(S;Q)\ar[l]\ar@{->>}[r]^-{\epsilon}&(R;M)}\]
in the category $\bimod$, where $(S;Q)$ and $(\overline{S};\overline{Q})$ are free. For more details on free resolutions in $\bimod$ we refer to \cite[\S 1.1]{DKNP1}, and we recall in particular that reflexive coequalisers in $\bimod$ are computed on the underlying pairs of sets.

\begin{defn}\label{def:rel}
Let $M$ be an $R$-bimodule and $\xymatrix@C=12pt{(\overline{S};\overline{Q})\ar@<.5ex>[r]^{f}\ar@<-.5ex>[r]_{g}&(S;Q)\ar[l]\ar@{->>}[r]^-{\epsilon}&(R;M)}$ a free resolution. We let $\mathcal{R}$ be the equivalence relation on $\prod_{i=0}^{n-1}\tens{R}{M}{i}$ generated by 
\[
a\sim b \Leftrightarrow \left\{
\begin{tabular}{c}
there exist $q$ and $u$ in $\prod_{i=0}^{n-1}\tens{S}{Q}{i}$ and $z$ in $\prod_{i=0}^{n-1}\tens{\overline{S}}{\overline{Q}}{i}$ such that:
\\
$\epsilon_* q=a$ and $\epsilon_* u=b$ in $\prod_{i=0}^{n-1}\tens{R}{M}{i}$,
\\
$w(q)= w(f_*(z))$ and $w(u)= w(g_*(z))$ in $\prod_{j=0}^{n-1}(\cyctens{S}{Q}{j})^{C_{p^j}}$
.
\end{tabular}\right.
\]
We denote the orbits of this relation by $W_{n,p}(R;M):=(\prod_{i=0}^{n-1}\tens{R}{M}{i})/\mathcal{R}$.
\end{defn}

\begin{prop}\label{Wwelldef}
The equivalence relation $\mathcal{R}$ is independent of the choice of free resolution. Every bimodule homomorphism $f\colon M\to M'$ induces a map
\[
f_*\colon W_{n,p}(R;M)\longrightarrow  W_{n,p}(R';M')
\]
defined as the quotient of the product map $\prod_{i=0}^{n-1}f^{\otimes_R p^i}$, making $W_{n,p}\colon \bimod\to Set$ into a functor. This functor commutes with  reflexive coequalisers, and in particular a free resolution of $(R;M)$ induces a reflexive coequaliser of sets
\[
\xymatrix{W_{n,p}(\overline{S};\overline{Q})\ar@<.5ex>[r]\ar@<-.5ex>[r]&W_{n,p}(S;Q)\ar[l]\ar@{->>}[r]&W_{n,p}(R;M)}.
\]
The ghost map $w$ descends to a natural transformation  $w\colon W_{n,p}(R;M)\to \prod_{j=0}^{n-1}(\cyctens{R}{M}{j})^{C_{p^j}}$,
which is injective when $(R;M)$ is free.
 \end{prop}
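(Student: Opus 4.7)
The plan is to bootstrap from the free case: first analyse $\mathcal{R}$ when $(R;M)$ is itself free, then transport all conclusions to arbitrary $(R;M)$ via the given free resolution and the naturality of $w$. For the free case I would take the trivial resolution with $(\overline{S};\overline{Q})=(S;Q)=(R;M)$ and every structure map the identity. The generating clause then reduces to: $a\sim b$ iff there exists $z\in\prod_{i=0}^{n-1}\tens{R}{M}{i}$ with $w(z)=w(a)$ and $w(z)=w(b)$. Setting $z=a$ whenever $w(a)=w(b)$ shows this is already an equivalence relation, namely the fibre relation of $w$, so $W_{n,p}(R;M)$ identifies with the image of $w$ in $\prod_{j=0}^{n-1}(\cyctens{R}{M}{j})^{C_{p^j}}$ and the descended ghost map is injective on free bimodules. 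Simultaneously, for any free resolution at all, applying $\epsilon_*$ to the equations $w(q)=w(f_*z)$ and $w(u)=w(g_*z)$ gives $w(a)=w(b)$, so the ghost map always descends to $W_{n,p}(R;M)$ regardless of the chosen resolution.

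To get functoriality and independence of the resolution I would exploit the standard lifting property of free bimodules. Given a morphism $f\colon(R;M)\to(R';M')$ in $\bimod$, pick free resolutions on both sides and lift $f$ level by level to a morphism of resolutions, using freeness of the source objects against the surjections in the target resolution as recalled in \cite[\S 1.1]{DKNP1}. Naturality of $w$, which holds because both $\tens{R}{M}{i}$ and the cyclic-invariants functors are functorial on $\bimod$, implies that a witnessing triple $(q,u,z)$ for $a\sim b$ maps under the lifted $f$ to a witnessing triple for $f_*a\sim f_*b$. Specialising to $f=\id_{(R;M)}$ with two different choices of free resolution produces mutually inverse maps between the corresponding quotients, so $\mathcal{R}$ and hence $W_{n,p}(R;M)$ do not depend on the chosen resolution; this makes $W_{n,p}$ into a functor on $\bimod$ whose morphisms are the quotients of $\prod_i f^{\otimes_R p^i}$.

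For the reflexive coequaliser statement I would use that tensor products of bimodules commute with reflexive coequalisers in each variable and that reflexive coequalisers in $\bimod$ are computed on the underlying pairs of sets, so the product $\prod_{i=0}^{n-1}\tens{R}{M}{i}$ sends reflexive coequalisers in $\bimod$ to coequalisers of sets. For a free resolution of $(R;M)$ the set-level sequence $\prod_{i=0}^{n-1}\tens{\overline{S}}{\overline{Q}}{i}\rightrightarrows\prod_{i=0}^{n-1}\tens{S}{Q}{i}\twoheadrightarrow\prod_{i=0}^{n-1}\tens{R}{M}{i}$ is therefore already a reflexive coequaliser. The definition of $\mathcal{R}$ is engineered so that the additional identifications it imposes on $\prod_{i=0}^{n-1}\tens{R}{M}{i}$ are exactly those coming from pairs $(q,u)\in\prod_{i=0}^{n-1}\tens{S}{Q}{i}$ whose ghost images lie in the image of $(f_*,g_*)$ for some $z$ — and by the free-case ghost injection these are exactly the pairs of the form $(f_*z,g_*z)$ in $W_{n,p}(S;Q)$. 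Combining with the set-level coequaliser above identifies $W_{n,p}(R;M)$ with the reflexive coequaliser of $W_{n,p}(\overline{S};\overline{Q})\rightrightarrows W_{n,p}(S;Q)$ in sets. A general reflexive coequaliser in $\bimod$ is then handled by choosing compatible free resolutions of its three terms and reducing to this case.

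The main obstacle will be the last step, where one must check that $\mathcal{R}$ matches the reflexive coequaliser relation exactly, neither strictly larger nor strictly smaller. The decisive input is the free-case injectivity of $w$ from the first step, which converts the ghost-level equations appearing in the definition of $\mathcal{R}$ into honest set-level identifications in $W_{n,p}$ of the free bimodules $(S;Q)$ and $(\overline{S};\overline{Q})$, thereby aligning $\mathcal{R}$ with a standard reflexive coequaliser of sets.
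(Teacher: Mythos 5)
Your overall strategy coincides with the paper's: compute the free case via the constant resolution to see that $\mathcal{R}$ is the fibre relation of $w$ (so $w$ is injective there), compare two arbitrary resolutions by a lifted morphism to get independence and functoriality, and identify $W_{n,p}$ of a free resolution with a reflexive coequaliser of sets by matching $\mathcal{R}$ against the coequaliser relation through the free-case ghost injectivity. Your direct observation that applying $\epsilon_*$ to the generating clause forces $w(a)=w(b)$, so that $w$ descends for arbitrary $(R;M)$ without first knowing the coequaliser property, is a small simplification over the paper. However, two steps are under-justified as stated.

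First, the lift of a morphism to the top level of the resolutions is not simply ``freeness against the surjections in the target resolution'': to produce $k\colon(\overline{S'};\overline{Q'})\to(\overline{S};\overline{Q})$ with $fk=hf'$ and $gk=hg'$ you must lift against the map $(f,g)\colon(\overline{S};\overline{Q})\to(S;Q)\times_{(R;M)}(S;Q)$, and for a general reflexive coequaliser the image of $(f,g)$ only \emph{generates} the kernel pair. The paper's fix is the observation that, since $f$ and $g$ are additive and admit a common additive section, the relation $\{(f(\overline q),g(\overline q))\}$ is already an equivalence relation, hence equals the kernel pair of $\epsilon$, and only then does freeness of $(\overline{S'};\overline{Q'})$ give the lift. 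Second, the claim that $\prod_{i}\tens{(-)}{(-)}{i}$ turns a free resolution into a coequaliser of sets does not follow directly from ``tensor products commute with reflexive coequalisers in each variable,'' because the base ring varies along the diagram: $\tens{\overline{S}}{\overline{Q}}{i}$ is a tensor power over $\overline{S}$, not over $S$. The paper handles this by choosing a common section $s\colon S\to\overline{S}$, regarding $\overline{Q}$ as an $S$-bimodule, obtaining the coequaliser for $\tens{S}{\overline{Q}}{i}$, and then using surjectivity of $\tens{S}{\overline{Q}}{i}\to\tens{\overline{S}}{\overline{Q}}{i}$ to replace the top term. Both repairs are short, but without them the corresponding steps of your argument do not go through as written.
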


 \begin{proof}
Let $\xymatrix@C=10pt{(\overline{S};\overline{Q})\ar@<.5ex>[r]\ar@<-.5ex>[r]&(S;Q)\ar[l]\ar@{->>}[r]&(R;M)}$ and $\xymatrix@C=10pt{(\overline{S'};\overline{Q'})\ar@<.5ex>[r]\ar@<-.5ex>[r]&(S';Q')\ar[l]\ar@{->>}[r]&(R;M)}$ be two free resolutions of $(R;M)$. We claim that there are vertical arrows
 \[
\xymatrix{(\overline{S'};\overline{Q'})\ar[d]_k\ar@<.5ex>[r]^-{f'}\ar@<-.5ex>[r]_-{g'}&(S';Q')\ar[d]^h\ar@{->>}[r]^-{\epsilon'}&(R;M)\ar@{=}[d]
\\
(\overline{S};\overline{Q})\ar@<.5ex>[r]^-{f}\ar@<-.5ex>[r]_-{g}&(S;Q)\ar@{->>}[r]_-{\epsilon}&(R;M)
}
 \]
such that $hf'=fk$, $hg'=gk$ and $\epsilon h=\epsilon'$. By applying the functor $ \prod_{i=0}^{n-1}\tens{(-)}{(-)}{i}$ to this diagram this immediately implies that the equivalence relation induced by the first resolution is coarser than the second. By reversing the roles of the resolutions the two relations are equal. Let us write $(S';Q')=F(X,Y)$ as the free bimodule generated by a pair of sets $(X,Y)$. That is, $S'=\Z\{X\}$ is the free ring on $X$ and $Q'=(\Z\{X\}\otimes \Z\{X\}^{op})(Y)$ is the free $S'$-bimodule on $Y$.
We define $h$ as the adjoint of the map of pairs of sets
\[
h\colon (X,Y)\stackrel{\epsilon'}{\longrightarrow} (UR,UM)\stackrel{t}{\longrightarrow}(US,UQ)
\]
where $t$ is a section for the map $U\epsilon$ in the category $Set\times Set$. In order to define $k$,  let us write $(\overline{S'};\overline{Q'})=F(Z,W)$ for some sets $(Z,W)$, and define the adjoint map $k\colon (Z,W)\to U(\overline{S};\overline{Q})$ on a pair of elements $(z,w)$ as follows.
Since $f, g$ and their common section are additive (both on bimodules and underlying rings), the relation on $Q$
\[
q\sim q'\mbox{\ if there is $\overline{q}\in \overline{Q}$ such that \ }f(\overline{q})=q\ \mbox{and}\ g(\overline{q})=q'
\]
whose quotient is $M$ is already an equivalence relation, and similarly for the ring components. Since $\epsilon h f'(z,w)=\epsilon h g'(z,w)$ and $(R;M)$ is the coequaliser of the second resolution, there is an element $(\overline{s},\overline{q})\in (\overline{S};\overline{Q})$ such that $f(\overline{s},\overline{q})=h f'(z,w)$ and $g(\overline{s},\overline{q})=h g'(z,w)$, and we set $k(z,w):=(\overline{s},\overline{q})$. By construction the diagram above commutes, and this concludes the proof of the independence on the free resolution.

Every bimodule $(R;M)$ has a canonical free resolution provided by the free-forgetful adjunction to $\Set\times \Set$, given by
\[
\xymatrix@C=10pt{FUFU(R;M)\ar@<.5ex>[r]\ar@<-.5ex>[r]&FU(R;M)\ar[l]\ar@{->>}[r]&(R;M)}
\]
(see \cite[\S1.1]{DKNP1}), which is functorial in $(R;M)$. Using this resolution to compute the quotient it is immediate to verify that the tensor power of a morphism is well-defined on the quotient, and therefore that $W_{n,p}$ is a functor. If $(R;M)$ is free, by computing the quotient using the constant resolution we see that $W_{n,p}(R;M)$ is exactly the quotient of the product which makes the ghost map injective. It follows that the $w_j$ are well-defined maps out of $W_{n,p}(R;M)$ for free $(R;M)$, and that they define a natural transformations on free modules.

Let us show that $W_{n,p}$ of a free resolution is a reflexive coequaliser. This will in particular imply that $w_j$ descends to a well-defined natural transformation on $W_{n,p}(R;M)$ for all $(R;M)$. Given a general free resolution as above,
 we choose a common section $s\colon S\to \overline{S}$, and we regard $\overline{Q}$ as an $S$-bimodule via this map. Since reflexive coequalisers commute with tensor powers and with products of $S$-bimodules, we obtain a reflexive coequaliser of  abelian groups
\[
\xymatrix{\prod_{i=0}^{n-1}\tens{S}{\overline{Q}}{i}\ar@<.5ex>[r]\ar@<-.5ex>[r]&\prod_{i=0}^{n-1}\tens{S}{Q}{i}\ar[l]\ar@{->>}[r]&\prod_{i=0}^{n-1}\tens{S}{M}{i}=\prod_{i=0}^{n-1}\tens{R}{M}{i}},
\]
where $M$ is regarded as an $S$-bimodule via the surjection $S\to R$. Since $\tens{S}{\overline{Q}}{i}\to \tens{\overline{S}}{\overline{Q}}{i}$ is surjective the diagram
 \[
\xymatrix{\prod_{i=0}^{n-1}\tens{\overline{S}}{\overline{Q}}{i}\ar@<.5ex>[r]\ar@<-.5ex>[r]&\prod_{i=0}^{n-1}\tens{S}{Q}{i}\ar[l]\ar@{->>}[r]&\prod_{i=0}^{n-1}\tens{R}{M}{i}}
\]
 is also a coequaliser of abelian groups, and therefore of sets. By modding out the relation which makes the ghost map injective from the first two sets, we obtain a commutative diagram
\[
\xymatrix@R=12pt{\prod_{i=0}^{n-1}\tens{\overline{S}}{\overline{Q}}{i}\ar@<.5ex>[r]\ar@<-.5ex>[r]\ar@{->>}[d]&\prod_{i=0}^{n-1}\tens{S}{Q}{i}\ar[l]\ar@{->>}[r]\ar@{->>}[d]&\prod_{i=0}^{n-1}\tens{R}{M}{i}\ar@{->>}[d]
\\
W_{n,p}(\overline{S};\overline{Q})\ar@<.5ex>[r]\ar@<-.5ex>[r]&W_{n,p}(S;Q)\ar[l]\ar@{->>}[r]&C
}
\]
 where $C$ is the coequaliser of the bottom row. By definition, $W_{n,p}(R;M)$ is the quotient of $\prod_{i=0}^{n-1}\tens{R}{M}{i}$ by the equivalence relation which makes the right vertical map injective, and thus bijective.

Now let $\xymatrix@C=10pt{(T;P)\ar@<.5ex>[r]\ar@<-.5ex>[r]&(S;N)\ar[l]\ar@{->>}[r]&(R;M)}$ be a reflexive coequaliser. By applying the functorial free resolution given by the free forgetful adjunction we obtain a commutative diagram
\[
\xymatrix@R=15pt{
FUFU(T;P)\ar@<.5ex>[r]\ar@<-.5ex>[r]\ar@<.5ex>[d]\ar@<-.5ex>[d]&FUFU(S;N)\ar@<.5ex>[d]\ar@<-.5ex>[d]\ar[l]\ar@{->>}[r]&FUFU(R;M)\ar@<.5ex>[d]\ar@<-.5ex>[d]
\\
FU(T;P)\ar@<.5ex>[r]\ar@<-.5ex>[r]\ar@{->>}[d]&FU(S;N)\ar[l]\ar@{->>}[r]\ar@{->>}[d]&FU(R;M)\ar@{->>}[d]
\\
(T;P)\ar@<.5ex>[r]\ar@<-.5ex>[r]&(S;N)\ar[l]\ar@{->>}[r]&(R;M)
}
\]
Since reflexive coequalisers of bimodules are computed on underlying sets, $FU$ commutes with reflexive coequalisers, hence the rows of this diagrams are reflexive coequalisers. By applying $W_{n,p}$ we obtain a diagram where all the columns and the upper two rows are coequalisers, by the previous argument. Since colimits commute with each other the bottom row must also be a coequaliser, concluding the proof.
 \end{proof}
 
\begin{example}\label{exrel}\
\begin{enumerate}
\item If $(R;M)$ is free, $W_{n,p}(R;M)$ is the quotient of $\prod_{i=0}^{n-1}\tens{R}{M}{i}$ by the smallest equivalence relation that makes $w$ into an injective map.
 
\item If $R$ is a commutative ring and $M=R$, we have that as sets $W_{n,p}(R;R)=R^{\times n}$.
If $R$ has no $p$-torsion, this is because the ghost map 
 \[w\colon R^{\times n}\cong \prod_{i=0}^{n-1}\tens{R}{R}{i}\longrightarrow \prod_{i=0}^{n-1}(\cyctens{R}{R}{j})^{C_{p^j}}\cong R^{\times n},\]
 which is given by the usual Witt polynomials,
    is already injective. In general, one can resolve $(R;R)$ by the bimodules $(\Z[R];\Z[R])$ and  $(\Z[\Z[R]];\Z[\Z[R]])$  given by the free commutative rings. Thus the isomorphism with the product in the free case induces an isomorphism on the coequaliser $W_{n,p}(R;R)=R^{\times n}$.
\item
The $0$-th ghost map induces a natural bijection $W_{1,p}(R;M)\cong M/[R,M]$. This is clear when $(R;M)$ is free, and in general it is the case since both $W_{1,p}(R;M)$ and $M/[R,M]$ commute with reflexive coequalisers. 
\item 
When $R$ is a commutative ring and $M$ is an $R$-module, we will see in Proposition \ref{2truncationcomm} below that there is a natural bijection 
\[W_{2,p}(R;M)\cong M\times (\tens{R}{M}{})_{C_p}.\]
%
\item 
If $R$ is commutative and torsion-free, and $M$ is a free $R$-module, there is a canonical bijection
\[W_{n,p}(R;M)\cong \prod_{i=0}^{n-1}(\tens{R}{M}{i})_{C_{p^i}},\]
but in general the higher truncations $W_{n,p}(R;M)$ are difficult to describe as sets (cf. Proposition \ref{Wasset}).
\end{enumerate}
\end{example}

Before stating the main Theorem of this section, we recall that the category $\bimod$ has a symmetric monoidal structure, defined by the componentwise tensor product over $\Z$. We endow the functors $\cyctens{R}{M}{j}$ from $\bimod$ to $C_{p^j}$-equivariant abelian groups with the canonical monoidal structure
\[
\cyctens{R}{M}{j}\otimes \cyctens{S}{N}{j}\cong \cyctens{R\otimes S}{(M\otimes N)}{j}
\]
defined from the shuffle permutations, and their fixed-points with the canonical induced lax-monoidal structure.

\begin{theorem}\label{existence}
The map $\gamma$ that sends $(m_0,m_1,\dots,m_n)$ to the equivalence class of $\prod_{i=0}^n(1-m_it^{p^i})$ descends to a bijection $\gamma\colon W_{n+1,p}(R;M)\stackrel{\cong}{\longrightarrow} W_{\langle p^n\rangle}(R;M)$ for every integer $n\geq 0$ and any prime $p$. The diagram
\[\xymatrix{
W_{n+1,p}(R;M)\ar[rr]_-{\cong}^-{\gamma} \ar[dr]_-w&&W_{\langle p^n\rangle}(R;M)\ar[dl]^-{\tlog_{\langle p^n\rangle}}
\\
&\prod_{j=0}^{n}(\cyctens{R}{M}{j})^{C_{p^j}}
}
\]
commutes, where $\tlog_{\langle p^n\rangle}$ is the logarithmic derivative of \cite[Propositions 1.18 and 1.41]{DKNP1}.
The abelian group structure and the lax symmetric monoidal structure on $W_{n+1,p}$ which correspond to those of $W_{\langle p^n\rangle}$ are the unique ones such that the ghost maps $w_j\colon W_{n+1,p}(R;M)\to (\cyctens{R}{M}{j})^{C_{p^j}}$ are additive symmetric monoidal transformations for all $0\leq j<n+1$.
\end{theorem}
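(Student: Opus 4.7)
The plan is to reduce the statement to an assertion about free bimodules where the ghost map is injective, and then leverage the preservation of reflexive coequalisers. The backbone of the argument is the identity $\tlog_{\langle p^n\rangle}\circ\gamma=w$ between natural transformations out of $\prod_{i=0}^n\tens{R}{M}{i}$; once this is in place, everything else follows almost formally.

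I would first establish the ghost identity by direct computation. Since $\tlog_{\langle p^n\rangle}$ is a logarithmic derivative it is additive on products, so $\tlog_{\langle p^n\rangle}(\prod_i(1-m_it^{p^i}))=\sum_i\tlog_{\langle p^n\rangle}(1-m_it^{p^i})$. Using the formula from \cite[Propositions 1.18 and 1.41]{DKNP1} for $\tlog_{\langle p^n\rangle}$ on a single special unit, the $j$-th component of $\tlog_{\langle p^n\rangle}(1-m_it^{p^i})$ in $(\cyctens{R}{M}{j})^{C_{p^j}}$ is $\tr_{C_{p^{j-i}}}^{C_{p^j}}(m_i^{\otimes p^{j-i}})$ when $j\geq i$ and zero otherwise, which sums to the formula of Definition \ref{defghost}. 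Having this identity, descent of $\gamma$ to $W_{n+1,p}(R;M)$ follows using injectivity of $\tlog_{\langle p^n\rangle}$ on free bimodules (also from the cited propositions): if $a\sim_\mathcal{R}b$ is witnessed by $q,u$ in $\prod_i\tens{S}{Q}{i}$ and $z$ in $\prod_i\tens{\overline{S}}{\overline{Q}}{i}$ coming from a free resolution, then the identities $w(q)=w(f_\ast z)$ and $w(u)=w(g_\ast z)$ combined with $\tlog_{\langle p^n\rangle}\circ\gamma=w$ and injectivity yield $\gamma_S(q)=f_\ast\gamma_{\overline{S}}(z)$ and $\gamma_S(u)=g_\ast\gamma_{\overline{S}}(z)$ in $W_{\langle p^n\rangle}(S;Q)$; applying $\epsilon_\ast$ and using $\epsilon f=\epsilon g$ gives $\gamma(a)=\gamma(b)$.

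Next I would prove bijectivity on free bimodules. Injectivity on free $(S;Q)$ is immediate from Proposition \ref{Wwelldef} and the ghost identity. For surjectivity I use that, once the abelian group structure is in place, the product of special units corresponds to the sum and therefore $\gamma(m_0,\dots,m_n)=\sum_{i=0}^nV^i(\tau(m_i))$, and I induct on $n$ using the Verschiebung exact sequence of \cite[\S1.5]{DKNP1}. The base case $n=0$ is the identification $W_{1,p}(S;Q)\cong Q/[S,Q]\cong W_{\langle p^0\rangle}(S;Q)$. For the step, given $x\in W_{\langle p^n\rangle}(S;Q)$ the inductive hypothesis yields $R(x)=\gamma(q_0,\dots,q_{n-1})$ in $W_{\langle p^{n-1}\rangle}(S;Q)$; using compatibility of $R$ with $V$ and $\tau$, the element $y=x-\sum_{i=0}^{n-1}V^i\tau(q_i)$ lies in $\ker R$, hence equals $V^n\tau(q_n)$ for some $q_n$ by exactness, giving $x=\gamma(q_0,\dots,q_n)$. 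To extend to general $(R;M)$, both $W_{n+1,p}$ and $W_{\langle p^n\rangle}$ preserve reflexive coequalisers (Proposition \ref{Wwelldef} and \cite{DKNP1} respectively), so the isomorphism on a free resolution of $(R;M)$ extends uniquely and naturally to all bimodules.

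Finally I would transport the abelian group and lax symmetric monoidal structure of $W_{\langle p^n\rangle}$ along $\gamma$. Additivity and lax monoidality of each $w_j$ under the transported structure are inherited from the corresponding properties of $\tlog_{\langle p^n\rangle}$ established in \cite[Proposition 1.41]{DKNP1}. For uniqueness, on free $(S;Q)$ the combined ghost $w$ is injective into a product of abelian groups carrying the canonical lax monoidal structure, forcing any other abelian group and monoidal structure on $W_{n+1,p}(S;Q)$ making all $w_j$ additive and monoidal to coincide with the transported one; the uniqueness then propagates to arbitrary bimodules because such a structure compatible with functoriality and reflexive coequalisers is determined by its restriction to free objects. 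The principal obstacle is the surjectivity induction on free bimodules, which depends both on the interpretation $\gamma(m_0,\dots,m_n)=\sum_iV^i\tau(m_i)$ and on having the Verschiebung short exact sequence cleanly available from \cite{DKNP1}; the explicit computation of $\tlog_{\langle p^n\rangle}$ on the generators $1-mt^{p^i}$ in terms of cyclic transfers is the other technical input carrying the combinatorial content of the argument.
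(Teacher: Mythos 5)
Your argument is correct and lines up with the paper's proof on the central points: the ghost identity $\tlog_{\langle p^n\rangle}\circ\gamma=w$ computed coefficientwise, injectivity of $\tlog_{\langle p^n\rangle}$ on free bimodules to drive descent and injectivity, and the transport/uniqueness argument via reflexive coequalisers for the group and monoidal structure. The one place you genuinely diverge is surjectivity. The paper does it \emph{before} descent, for arbitrary $(R;M)$, by the elementary observation that any special unit $1+\sum_k a_kt^k$ factors uniquely in $\widehat{T}(R;M)$ as $\prod_{i\geq 1}(1-b_it^i)$, so that in the $p$-typical quotient $W_{\langle p^n\rangle}(R;M)$ every class is represented by $\prod_{i=0}^n(1-c_it^{p^i})$. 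You instead rewrite $\gamma(m_0,\dots,m_n)=\sum_i V^i\tau(m_i)$ and run an induction on the truncation length via the exact sequence $(\cyctens{R}{M}{n})_{C_{p^n}}\xrightarrow{V^n}W_{\langle p^n\rangle}\xrightarrow{R}W_{\langle p^{n-1}\rangle}\to 0$ from \cite[Proposition 1.43]{DKNP1}, on free objects, then extend by coequalisers. This works and gives a nice structural picture of $\gamma$ in terms of the Witt operators, but it invokes more machinery from \cite{DKNP1} than necessary and restricts to free bimodules where the paper's argument already applies globally; the paper's unique-factorisation route is both more elementary and shorter. A second, smaller difference: you verify descent directly on the generating witnesses $(q,u,z)$ of the relation $\mathcal{R}$ for general $(R;M)$, where the paper first treats free $(R;M)$ (where $\mathcal{R}$ is just the kernel relation of $w$) and then passes to coequalisers; both are fine, and yours is arguably more explicit about why the generated equivalence relation is respected.
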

\begin{proof}
First, we show that $\gamma\colon \prod_{i=0}^{n}\tens{R}{M}{i}\to W_{\langle p^n\rangle}(R;M)$ is surjective. Any power series $1+\sum_{k\geq 1}a_kt^k$ representing an element of $W(R;M)$ can be written uniquely as 
\[
1+\sum_{k\geq 1}a_kt^k=\prod_{i=1}^{\infty}(1-b_it^i)
\]
in the completed tensor algebra, with $b_i\in M^{\otimes_R i}$. Thus any element $x$ of the quotient $W_{\langle p^n\rangle}(R;M)$ can be represented by an element of the form $\prod_{i=0}^{n}(1-c_it^{p^i})$ with $c_i\in M^{\otimes_R p^i}$, that is $\gamma(c_0,\dots,c_n)=x$, which proves that $\gamma$ is surjective.

Now let us show that $\gamma$ descends to a well-defined isomorphism. Let us first assume that $(R;M)$ is free. In this case $W_{n+1,p}(R;M)$ is the quotient of $\prod_{i=0}^{n}\tens{R}{M}{i}$ by the relation that makes the ghost map injective. The ghost map factors as
\[w\colon \prod_{i=0}^{n}\tens{R}{M}{i}\stackrel{\gamma}{\longrightarrow}W_{\langle p^n\rangle}(R;M)\xrightarrow{\tlog_{\langle p^n\rangle}}\prod_{j=0}^{n}(\cyctens{R}{M}{j})^{C_{p^j}},\]
 since $\tlog_{\langle p^n\rangle}(1-m_{i}t^{p^i}) =  \tr_e^{C_{p^i}} m_{i} t^{p^i} + \tr_{C_p}^{C_{p^{i+1}}} m_{i}^p t^{p^{i+1}} + \ldots$, so that the coefficient of $t^{p^j}$ in $\tlog_{\langle p^n\rangle}\gamma(m_0,\dots,m_n)$ is given by $w_j(m_0,\dots,m_{n})$. Thus $\gamma$ descends to a well-defined injection $W_{n+1,p}(R;M)\to W_{\langle p^n\rangle}(R;M)$. It is therefore an isomorphism, and the diagram of \ref{existence} commutes.

In general, we choose a free resolution of $(R;M)$. Since $\gamma$ is an isomorphism for the free resolution, it induces an isomorphism on the coequalisers $\gamma\colon W_{n+1,p}(R;M)\to W_{\langle p^n\rangle}(R;M)$.

The ghost maps $w_j$ are additive and symmetric monoidal because $\tlog_{\langle p^n\rangle}$ is, by \cite{DKNP1}. To see that the additive structure and the symmetric monoidal structure are unique with this property, it is sufficient to see this on the subcategory of free bimodules since $W_{n+1,p}$ commutes with reflexive coequalisers. In this case the uniqueness follows by the injectivity of the ghost map.
\end{proof}

\begin{cor}\label{CompLars}
For every ring $R$, the abelian group $W_{\langle p^n\rangle}(R;R)$ is isomorphic to the $p$-typical $(n+1)$-truncated Witt vectors $W_{n+1}(R)$ of \cite{HesselholtncW}, naturally in $R$. If $R$ is moreover commutative, $W_{\langle p^n\rangle}(R;R)$ is isomorphic to the usual $p$-typical $(n+1)$-truncated Witt vectors of $R$ as a commutative ring.
\end{cor}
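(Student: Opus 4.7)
The plan is to deduce this from the main theorem of the section (Theorem \ref{existence}), together with the universal characterisations of both the classical $p$-typical Witt vectors and Hesselholt's non-commutative $p$-typical Witt vectors. The idea is that both $W_{\langle p^n\rangle}(R;R)$ and $W_{n+1}(R)$ are quotients of $R^{\times(n+1)}$ carrying a unique abelian group structure making the same ghost map additive, so a formal comparison of ghost maps and defining quotients will finish the argument.

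First I would apply Theorem \ref{existence} to translate $W_{\langle p^n\rangle}(R;R)$ into $W_{n+1,p}(R;R)$. Under the canonical isomorphisms $\tens{R}{R}{i}\cong R$ and $(\cyctens{R}{R}{j})^{C_{p^j}}\cong R/[R,R]$, the underlying set becomes a quotient of $R^{\times(n+1)}$ and the ghost map of Definition \ref{defghost} becomes the reduction mod $[R,R]$ of the classical Witt polynomial
\[
w_j(r_0,\ldots,r_n)=\sum_{i=0}^{j}p^i r_i^{p^{j-i}}.
\]
Hesselholt's $W_{n+1}(R)$ from \cite{HesselholtncW} is, by construction, the unique functorial abelian group structure on a certain quotient of $R^{\times(n+1)}$ for which this very ghost map (landing in $(R/[R,R])^{\times(n+1)}$) is an additive natural transformation; analogously for the monoidal structure. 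So once I match the underlying sets, the uniqueness clause of Theorem \ref{existence} forces the abelian group and monoidal structures to coincide.

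The bulk of the work is matching the quotients. Both sides are defined to make the ghost map injective on free objects and to commute with reflexive coequalisers: on our side this is Proposition \ref{Wwelldef}, and on Hesselholt's side the defining relations can be checked to behave the same way (his quotient is minimal for injectivity of $w$ on free rings, and compatible with surjections of free resolutions). Consequently, for a free ring $R$ the two quotients agree with the image of $R^{\times(n+1)}$ under the ghost map, and in general one resolves $R$ by a reflexive coequaliser of free rings and invokes compatibility with reflexive coequalisers on both sides. The main obstacle is verifying that Hesselholt's equivalence relation really is characterised in this way; this should follow from the universal property he establishes, but a careful reading of \cite{HesselholtncW} to extract the right minimality statement is the step that needs real attention.

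Finally, for the commutative case, $R=R/[R,R]$, and the ghost polynomials $w_j$ are the classical ones. Since the $w_j$ are simultaneously additive and multiplicative and functorial in $R$, and since the family $(w_j)_{j\leq n}$ is known to uniquely characterise the ring structure on $R^{\times(n+1)}$ functorially on commutative rings (classical theorem of Witt, cf.\ Example \ref{exrel}(2)), the abelian group and monoidal structure transported along $\gamma$ must coincide with the usual $p$-typical Witt vector ring structure on $R$.
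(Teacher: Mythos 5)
Your proposal is correct and follows essentially the same route as the paper: identify $W_{\langle p^n\rangle}(R;R)$ with $W_{n+1,p}(R;R)$ via Theorem \ref{existence}, compare the two quotients of $R^{\times(n+1)}$ on free rings using the injectivity of Hesselholt's ghost map (the "minimality" worry you flag is resolved exactly by citing \cite[1.3.7]{HesselholtncWcorr}, since an injective ghost map forces the quotient to be the image of the ghost map), and then pass to general $R$ by reflexive coequalisers, with the fact that Hesselholt's side commutes with them supplied by the identification $W_{n+1}(R)\cong\TR^{n+1}_0(R)$. The commutative case via the classical uniqueness of the ghost-additive ring structure is also as in the paper.
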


\begin{proof}
The claim for commutative rings follows from the characterisation of the ring structure in ghost components of \ref{existence} and the fact that in this case, as a set, $W_{n+1,p}(R;R)$ is $R^{\times n+1}$ by \ref{exrel}. In the non-commutative case we need to make sure that the quotient of $\prod_{i=0}^{n}R^{\otimes_R p^i}=R^{\times n+1}$ defining $W_{n+1,p}(R;R)$ agrees with the quotient defining $W_{n+1}(R)$ from \cite{HesselholtncWcorr}. When $R$ is free the projection $R^{\times n+1}\to W_{n+1}(R)$ descends to an injection
\[
W_{n+1,p}(R;R)\longrightarrow W_{n+1}(R)
\]
since the ghost map of $W_{n+1}(R)$ is injective \cite[1.3.7]{HesselholtncWcorr}. This is therefore an isomorphism for free rings, and it descends to an isomorphism in general since both sides commutes with reflexive coequalisers ($W_{n+1}(R)$ does as a consequence of the identification with $\TR^{n+1}_0(R)$ of \cite{HesselholtncWcorr}).
\end{proof}

\begin{rem} In \cite{Read} Read provides a more general construction of $G$-typical Witt vectors with coefficients for any profinite group $G$. For $G=C_{p^n}$ Read's construction specialises to the $(n+1)$-truncated $p$-typical Witt vectors with coefficients defined above. 
\end{rem}

\begin{prop}\label{2truncationcomm}
Let $R$ be a commutative ring and $M$ an $R$-module. Then the canonical projection $M\times\tens{R}{M}{}\to W_{2,p}(R;M)$ descends to a bijection
\[W_{2,p}(R;M)\cong M\times (\tens{R}{M}{})_{C_p}.\]
\end{prop}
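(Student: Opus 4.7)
The plan is to use a reflexive coequaliser resolution to reduce to the case of a polynomial $\Z$-algebra $R$ with a free $R$-module $M$, where the result is covered by Example~\ref{exrel}(5).

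First, I would check that the canonical projection $M\times M^{\otimes_R p}\to M\times (M^{\otimes_R p})_{C_p}$ descends to a map out of $W_{2,p}(R;M)$. Since $W_{2,p}$ preserves reflexive coequalisers by Proposition~\ref{Wwelldef}, it suffices to verify this on a free bimodule $(R;M)$, where the relation defining $W_{2,p}$ is ``equal ghost components''. The second ghost component $w_1(m_0,m_1)=m_0^{\otimes p}+\mathrm{tr}(m_1)$ is unchanged when $m_1$ is replaced by $\sigma(m_1)$ for $\sigma\in C_p$, because the transfer $\mathrm{tr}$ factors through $C_p$-coinvariants. By naturality this produces a surjective map $\Phi\colon M\times (M^{\otimes_R p})_{C_p}\to W_{2,p}(R;M)$ for every commutative $R$ and $R$-module $M$.

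To show that $\Phi$ is bijective, I would pick a reflexive coequaliser resolution $(R_1;M_1)\rightrightarrows(R_0;M_0)\to(R;M)$ in the category of commutative rings with modules, with each $R_i$ a polynomial $\Z$-algebra and each $M_i$ a free $R_i$-module. Since the $R_i$ are commutative, this is also a reflexive coequaliser in $\bimod$ (both categories compute reflexive coequalisers on underlying pairs of sets). The source functor $W_{2,p}$ preserves reflexive coequalisers by Proposition~\ref{Wwelldef}, and the target functor $(R;M)\mapsto M\times (M^{\otimes_R p})_{C_p}$ does so as well, since it is built from tensor products over the ring, finite products, and $C_p$-coinvariants, all of which commute with colimits. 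It therefore suffices to establish the bijection for the polynomial base case $(R_i;M_i)$, which is the assertion of Example~\ref{exrel}(5).

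The main obstacle is the polynomial free-module base case, which is subtle because $W_{2,p}$ is defined via free resolutions by \emph{non-commutative} free rings, while polynomial rings are commutative. This is handled by Proposition~\ref{Wasset}: for a free module $M$ over a commutative torsion-free ring $R$, the norm map $(M^{\otimes_R p})_{C_p}\to (M^{\otimes_R p})^{C_p}$ is injective (it sends a non-trivial orbit to its orbit sum and acts by multiplication by $p$ on singleton orbits, both injective under torsion-freeness). Combined with the injectivity of the ghost map on a further free bimodule resolution, this identifies the equivalence relation defining $W_{2,p}$ with the relation ``equal first coordinate, equal coinvariants on the second'', yielding the required bijection on the base case.
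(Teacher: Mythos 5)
Your proposal is correct and follows the same overall skeleton as the paper's proof: establish the statement for free modules over a nice commutative base using injectivity of the transfer $\tr^{C_p}_e\colon(\tens{R}{M}{})_{C_p}\to(\tens{R}{M}{})^{C_p}$, then descend to arbitrary $(R;M)$ by a reflexive coequaliser resolution, using that both $W_{2,p}$ and $M\times(\tens{R}{M}{})_{C_p}$ commute with reflexive coequalisers. The difference is in the base case: the paper works with an arbitrary torsion-free commutative $R$ and free $M$, and obtains the inverse map from injectivity of the ghost map on $W_{2,p}(R;M)$ itself, imported via Theorem \ref{existence} and \cite[Proposition 1.41]{DKNP1} (injectivity of $\tlog_{\langle p\rangle}$ when the transfer is injective), whereas you restrict to polynomial rings and unwind Definition \ref{def:rel} directly, which avoids the comparison with $W_{\langle p^n\rangle}$ altogether. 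Your route does work, but the decisive step is justified with the wrong ingredient: what makes the relation of Definition \ref{def:rel} imply ``equal image in $M\times(\tens{R}{M}{})_{C_p}$'' is not injectivity of the ghost map on the free bimodule resolution, but naturality of the ghost map -- Proposition \ref{Wwelldef} already provides $w$ on $W_{2,p}(R;M)$ for every $(R;M)$ -- so that equal classes in $W_{2,p}(R;M)$ have equal ghosts in $M\times(\tens{R}{M}{})^{C_p}$, after which your transfer-injectivity argument (which is exactly the paper's multiplication-by-$p$ argument) turns equality of the second ghost into equality in coinvariants; with that phrasing your map and $\Phi$ are visibly mutually inverse in the base case. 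Two smaller points: your opening sentence has the direction reversed (what you actually verify is that the projection onto $W_{2,p}(R;M)$ factors through $M\times(\tens{R}{M}{})_{C_p}$, yielding $\Phi$); and citing Example \ref{exrel}(5) or Proposition \ref{Wasset} for the base case is a forward reference that your own sketch essentially re-proves, so it is cleaner to argue it directly (also, finite products commute with reflexive coequalisers because these are sifted colimits, not with colimits in general).
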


\begin{proof}
Suppose first that $R$ is torsion-free and that $M$ is a free $R$-module. Let us first prove that the transfer map
\[
\tr_e^{C_p}\colon (\tens{R}{M}{})_{C_p}\longrightarrow (\tens{R}{M}{})^{C_p}
\]
is injective. The composition of the transfer with the projection $(\tens{R}{M}{})^{C_p}\to (\tens{R}{M}{})_{C_p}$ is multiplication by $p$.
By writing $M$ as the free $R$-module $R(X)$ on a set $X$, we see that the orbits
\[
R(X^{\times p})_{C_p}\cong R(X^{\times p}/C_p)
\]
also form a free $R$-module, and since $R$ has no torsion multiplication by $p$ is injective. By \cite[Proposition 1.41]{DKNP1} $\tlog_{\langle p\rangle}$ is injective, and by \ref{existence} so is the ghost map of $W_{2,p}(R;M)$. Now consider the commutative square of sets
\[
\xymatrix{
M\times \tens{R}{M}{}\ar[r]\ar[d]& M\times (\tens{R}{M}{})_{C_p}\ar[d]^-w
\\
W_{2,p}(R;M)\ar@{-->}[ur]\ar[r]_-w&M\times (\tens{R}{M}{})^{C_p}
}
\]
where the unlabelled maps are the projections, the lower map $w$ is injective by the argument above, and the vertical $w$ is injective since the transfer is. The dashed arrow exists by the injectivity of the right vertical map and surjectivity of the left vertical map. It is surjective by the surjectivity of the top horizontal map, and injective by the injectivity of the lower horizontal map.

Without freeness assumptions on $R$ and $M$, the dashed map induces a bijection since both $W_{2,p}(R;M)$ and $M\times (\tens{R}{M}{})_{C_p}$ commute with reflexive coequalisers.
\end{proof}

\begin{rem}
In general there is no product decomposition analogous to Proposition \ref{2truncationcomm} for $W_{n,p}(R;M)$ for $n\geq 3$, since in this case the right vertical map $w$ of the diagram of the proof is generally not injective (see however Proposition \ref{Wasset} for the free case).
\end{rem}

In the rest of the section we try to get a feeling of this construction by describing explicitly the lower components of the addition and the symmetric monoidal structure of $W_{n+1,p}$ of Theorem \ref{existence}. As for the classical Witt vectors, there is no closed formulas for the components of the sum of two sequences $a=(a_0,a_1,\dots, a_n)$ and $b=(b_0,b_1,\dots, b_n)$. The characterisation in terms of ghost components of Theorem \ref{existence} gives however an inductive procedure for calculating them.
The examples below in particular identify $W_{2,2}(\Z;A)$ of a commutative ring $A$ with the ``$2$-truncated non-commutative ring of Witt vectors'' $W^{\otimes}_2(A)$ of \cite{THRmodels}.

\begin{example} \label{ex:solid}
Suppose that $R$ is commutative and $M$ is an $R$-module. Under the canonical bijection $W_{2,p}(R;M)\cong M\times (\tens{R}{M}{})_{C_p}$ of Proposition \ref{2truncationcomm},
we find that
\begin{align*}
a+b&=(a_0+b_0,a_1+b_1-\sum_{\{\emptyset\neq U\subsetneq p\}/C_{p}}t^{U}_1(a_0,b_0)\otimes\dots\otimes t^{U}_p(a_0,b_0))
\end{align*}
where the sum runs through the orbits of the standard $C_p$-action on the set of proper non-empty subsets of the $p$-elements set, and $t^{U}_i(a_0,b_0)=a_0$ if $i\in U$, and $t^{U}_i(a_0,b_0)=b_0$ otherwise. For example for the primes $p=2,3$ these are respectively 
\begin{align*}
a+b&=(a_0+b_0,a_1+b_1-a_0\otimes b_0)
\\
a+b&=(a_0+b_0,a_1+b_1-a_0\otimes b_0\otimes b_0-b_0\otimes a_0\otimes a_0).
\end{align*}
This expression is not well-defined in $M\times \tens{R}{M}{}$, as it requires a choice of orbit representatives of $(\tens{R}{M}{})_{C_p}$.
We observe the resemblance with the universal polynomials for the sum of the usual Witt vectors. In fact these are the usual universal polynomials when $M=R$.
\end{example}

\begin{example}\label{ex:multiplication} Let $R$ be a commutative ring and $M$ an $R$-algebra. The first two components of the product of $a=(a_0,a_1)$ and $b=(b_0,b_1)$ in $W_{2,p}(R;M)\cong M\times (\tens{R}{M}{})_{C_p}$
are
\begin{align*}
a\cdot b&=(a_0\cdot b_0,a^{\otimes p}_0\cdot b_1+a_1\cdot b^{\otimes p}_0+\sum_{\sigma\in C_p}a_1\cdot\sigma(b_1)).
\end{align*}
The term $a^{\otimes p}_0\cdot b_1\in (M^{\otimes_R p})_{C_p}$ is clearly independent on the choice of orbit representative for $b_1$, and similarly for $a_1\cdot b^{\otimes p}_0$. The sum over $C_p$ is clearly independent on the choice of representative for $b_1$, and if we choose a different representative $\tau(a_1)$ for some $\tau\in C_p$ we have
\[
\sum_{\sigma\in C_p}\tau(a_1)\cdot\sigma(b_1)=\tau(\sum_{\sigma\in C_p}a_1\cdot\tau^{-1}\sigma(b_1))=\tau(\sum_{\sigma\in C_p}a_1\cdot\sigma(b_1))=\sum_{\sigma\in C_p}a_1\cdot\sigma(b_1)
\] 
in the set of coinvariants $(M^{\otimes_R p})_{C_p}$. 
Thus this expression is well-defined in the coinvariants.  It is moreover not difficult to see that the last sum is symmetric in $\underline{a}$ and $\underline{b}$.
In particular we see directly that $W_{2,p}(R;M)$ is a commutative ring when $M$ is commutative.
\end{example}

\subsection{$p$-typical operators}\label{secop}

In this section we describe the operators on the truncated Witt vectors under the isomorphism of Theorem \ref{existence}, and investigate some of their properties.
Under the isomorphism of Theorem \ref{existence} the Witt vectors operators of \cite[\S 1.5]{DKNP1} take the form
\[
\begin{array}{ll}
F=F_p\colon W_{n+1,p}(R;M)\longrightarrow W_{n,p}(R;\tens{R}{M}{})&\hspace{1cm} V=V_p\colon W_{n,p}(R;\tens{R}{M}{})\longrightarrow W_{n+1,p}(R;M)
\\
\\
R\colon W_{n+1,p}(R;M)\longrightarrow W_{n,p}(R;M) &\hspace{1cm} \tau\colon M\longrightarrow W_{n,p}(R;M)
\\
\\
\sigma\colon W_{n,p}(R;M^{\otimes_Rk})\longrightarrow W_{n,p}(R;M^{\otimes_Rk})
\end{array}
\]
It is not difficult to see that the maps $V,R$ and $\tau$ can be described on representatives by
\begin{align*}
V(x_0,\dots,x_{n-1})&=(0,x_0,\dots,x_{n-1})
\\
R(m_0,\dots,m_{n})&=(m_0,\dots,m_{n-1})
\\
\tau(m)&=(m,0,\dots,0).
\end{align*} 
The Frobenius and the cyclic action do not however admit closed formulas on representatives. Their lowest components are computed in the following examples over a commutative base ring.

\begin{example}
Suppose that $R$ is commutative and that $M$ is an $R$-module. Under the bijection $W_{2,p}(R;M)\cong M\times (\tens{R}{M}{})_{C_p}$,  $F$ and $\sigma$ are described as follows:
\begin{enumerate}
\item
Let $(a_0,a_1,a_2)$ represent an element $a\in W_{3,p}(R;M)$. The element $F(a)\in M^{\otimes_R p}\times (M^{\otimes_R p^2})_{C_p}$ is represented by a pair of the form $(w_{1}(a_0,a_1),x_1)$. For $p=2$ the element $x_1\in M^{\otimes_R 4}$ is given by
\[
x_1=a_2+\sigma_2a_2-a_1\otimes(\sigma_1a_1)-a^{\otimes 2}_0\otimes (\tr^{C_2}_ea_1)-z(a_1),
\]
where $\sigma_i$ generates $C_{2^i}$, and $z(a_1)$ is a certain element of $M^{\otimes_R 4}$ such that 
\[(\sigma_1 a_1)^{\otimes 2}-\sigma_2(a^{\otimes 2}_1)=\tr^{C_2}_e(z(a_1)).\]
When $M$ is free this difference can be uniquely expressed as a transfer, and $z(a_1)$ is well-defined modulo $C_2$-coinvariants. In general, one needs to calculate $z(q_1)$ where $q_1$ is a lift of $a_1$ to a free resolution $\epsilon \colon Q\twoheadrightarrow M$, and $z(a_1)=\epsilon z(q_1)$.
\item For every $x_0\in M^{\otimes_Rk}$ the difference $(\sigma_k x_0)^{\otimes p}-\sigma_{kp}( x_0^{\otimes p})$ is in the image of the transfer $\tr_{e}^{C_p}\colon M^{\otimes_R kp}\to (M^{\otimes_R kp})^{C_p}$, where $\sigma_n$ generates $M^{\otimes_R n}$. This difference is zero on elementary tensors, and on their sums it is a transfer by the binomial formula. When $M$ is free this transferred term is unique, and 
\[
\sigma(x_0,x_1):=(\sigma_kx_0,\sigma_{kp}x_1-(\tr_{e}^{C_p})^{-1}((\sigma_k x_0)^{\otimes p}-\sigma_{kp}( x_0^{\otimes p}))).
\]
When $M$ is not free, one chooses a preimage of the transferred term in a free resolution of $M$, and then uses its image in $M$ in the same formula.
\end{enumerate}
\end{example}

The Verschiebung and Frobenius maps are equivariant with respect to the cyclic action of $C_p$, and in fact their iterations are invariant by the action of the higher order cyclic groups. In particular they define maps
\[
V^k\colon W_{n,p}(R;\tens{R}{M}{k})_{C_{p^k}}\to W_{n+k,p}(R;M) \ \ \ \  \  , \ \ \  \ F^k\colon W_{n+k,p}(R;M)\to W_{n,p}(R;\tens{R}{M}{k})^{C_{p^k}}.
\]

The cokernel of the Verschiebung is in fact an iteration of $R$, since by  \cite[Proposition 1.43]{DKNP1}  there are exact sequences
\[
W_{n,p}(R;\tens{R}{M}{k})_{C_{p^k}}\stackrel{V^k}{\longrightarrow} W_{n+k,p}(R;M)\stackrel{R^n}{\longrightarrow} W_{k,p}(R;M)\to 0
\]
for every $n,k\geq 1$. Moreover $V^k$ is injective when  $(\cyctens{R}{M}{l})_{C_{p^{l}}}$ has no $p$-torsion for all $k\leq l\leq k+n-1$, by \cite[Proposition 1.44]{DKNP1} (for example if $(R;M)$ is free), but not in general (see \cite{HesselholtncWcorr} for a counterexample where $M=R$ with $R$ non-commutative). We now explore some consequences  of the existence of these exact sequences.

Let us denote by
\[
\tau^k\colon M^{\otimes_R p^k}\longrightarrow W_{n,p}(R;M^{\otimes_R p^k})
\]
the Teichm\"uller map for the $R$-bimodule $M^{\otimes_R p^k}$.
When $R=\Z$ and $M=\Z(X)$ is the free abelian group on a set $X$, the ghost map of the Witt vectors $W_{\langle p^n\rangle}(\Z;\Z(X))$ is injective with target a free abelian group, and thus $W_{\langle p^n\rangle}(\Z;\Z(X))$ is also free abelian. In the following we describe a basis, analogous to the basis for the usual Witt vectors of $\Z$ from \cite[Proposition 1.6]{LarsBig}.

\begin{prop}\label{basis}
Let $\Z(X)$ be the free abelian group on a set $X$. Then there is an isomorphism of abelian groups
\[
W_{\langle p^n\rangle}(\Z;\Z(X))\cong \bigoplus_{i=0}^n \  \bigoplus_{x\in X^{\times p^i}/C_{p^i}}\Z\cdot V^i\tau^{n-i}(x_1\otimes\dots\otimes x_{p^i}).
\]
Under this isomorphism, the monoidal structure map $\star\colon W_{\langle p^n\rangle}(\Z;\Z(X))\otimes W_{\langle p^n\rangle}(\Z;\Z(Y))\to W_{\langle p^n\rangle}(\Z;\Z(X)\otimes \Z(Y))$ multiplies two generators by the formula
\[
V^i\tau^{n-i}(x_1\otimes\dots\otimes x_{p^i})\star V^j\tau^{n-j}(y_1\otimes\dots\otimes y_{p^j})=
  \sum_{\sigma\in C_{p^i}} V^j(\tau^{n-j}((\sigma (\otimes_{l=1}^{p^i}x_l))^{\otimes p^{j-i}}\otimes (\otimes_{h=1}^{p^j}y_h)))
\]
if $i\leq j$, and
\[
V^i\tau^{n-i}(x_1\otimes\dots\otimes x_{p^i})\star V^j\tau^{n-j}(y_1\otimes\dots\otimes y_{p^j})=
  \sum_{\sigma\in C_{p^j}} V^i(\tau^{n-i}((\otimes_{l=1}^{p^i}x_l)\otimes (\sigma (\otimes_{h=1}^{p^j}y_h))^{\otimes p^{i-j}})))
\]
if $j\leq i$.
\end{prop}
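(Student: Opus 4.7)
The key observation is that $(\Z;\Z(X))$ is a free bimodule ($\Z$ is the free ring on the empty set and $\Z(X)$ is the free $\Z$-bimodule on $X$), so by Proposition \ref{Wwelldef} the ghost map $w$ is an injection into a product of free abelian groups. The same applies to $(\Z;\Z(X)\otimes\Z(Y)) \cong (\Z;\Z(X\times Y))$, so I plan to verify both assertions by checking in ghost components.

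For the basis statement, I proceed by induction on $n$ using the exact sequence from \cite[Propositions 1.43 and 1.44]{DKNP1} together with Example \ref{exrel}(3):
\[
0\longrightarrow(\Z(X)^{\otimes p^n})_{C_{p^n}}\stackrel{V^n}{\longrightarrow}W_{\langle p^n\rangle}(\Z;\Z(X))\stackrel{R}{\longrightarrow}W_{\langle p^{n-1}\rangle}(\Z;\Z(X))\longrightarrow 0.
\]
Here $V^n$ is injective because each $(\cyctens{\Z}{\Z(X)}{l})_{C_{p^l}}\cong\Z(X^{\times p^l}/C_{p^l})$ is free abelian, hence torsion-free. Using the Witt-coordinate formulas $V(x_0,\ldots,x_{k-1})=(0,x_0,\ldots,x_{k-1})$ and $\tau(m)=(m,0,\ldots,0)$, the element $V^i\tau^{n-i}(x_1\otimes\cdots\otimes x_{p^i})$ corresponds to the sequence with $x_1\otimes\cdots\otimes x_{p^i}$ in slot $i$ and zeros elsewhere. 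Since $R(m_0,\ldots,m_n)=(m_0,\ldots,m_{n-1})$, one sees $R\circ V^i\tau^{n-i}=V^i\tau^{n-1-i}$ for $i<n$ and $R\circ V^n\tau^0=0$. A splitting of $R$ therefore lifts the inductive basis of $W_{\langle p^{n-1}\rangle}(\Z;\Z(X))$ to the claimed basis elements with $i<n$, while those for $i=n$ span $\ker(R)=\mathrm{Im}(V^n)\cong\Z(X^{\times p^n}/C_{p^n})$.

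For the product formula I compute both sides in ghost components. From Definition \ref{defghost} and the Witt-coordinate description,
\[
w_k\bigl(V^i\tau^{n-i}(a)\bigr)=\begin{cases}\tr_{C_{p^{k-i}}}^{C_{p^k}}(a^{\otimes p^{k-i}})&\text{if }k\geq i,\\ 0&\text{if }k<i,\end{cases}
\]
and lax symmetric monoidality of the ghost map (Theorem \ref{existence}) gives, for $i\leq j$ and $k\geq j$,
\[
w_k(\text{LHS})=\tr_{C_{p^{k-i}}}^{C_{p^k}}(a^{\otimes p^{k-i}})\boxtimes\tr_{C_{p^{k-j}}}^{C_{p^k}}(b^{\otimes p^{k-j}}),
\]
where $\boxtimes\colon\cyctens{\Z}{\Z(X)}{k}\otimes\cyctens{\Z}{\Z(Y)}{k}\to\cyctens{\Z}{\Z(X\times Y)}{k}$ is the shuffle. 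For the right-hand side, $((\sigma a)^{\otimes p^{j-i}}\otimes b)^{\otimes p^{k-j}}=(\sigma a)^{\otimes p^{k-i}}\boxtimes b^{\otimes p^{k-j}}$, and the key reparametrization
\[
\sum_{\sigma\in C_{p^i}}(\sigma a)^{\otimes p^{k-i}}=\tr_{C_{p^{k-i}}}^{C_{p^k}}(a^{\otimes p^{k-i}})\quad\text{in }\cyctens{\Z}{\Z(X)}{k}
\]
---valid because an inner cyclic shift of the $p^i$ factors of $a$ produces an outer cyclic shift of $a^{\otimes p^{k-i}}$ by the same number of positions---reduces $w_k(\text{RHS})$ to $\tr_{C_{p^{k-j}}}^{C_{p^k}}\bigl(\tr_{C_{p^{k-i}}}^{C_{p^k}}(a^{\otimes p^{k-i}})\boxtimes b^{\otimes p^{k-j}}\bigr)$. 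The inner trace is already $C_{p^k}$-invariant, so equivariance of the shuffle gives the Frobenius identity $\tr^G_K(\alpha\boxtimes\beta)=\alpha\boxtimes\tr^G_K(\beta)$ for $\alpha\in A^G$ and $\beta\in B^K$, which equates $w_k(\text{LHS})=w_k(\text{RHS})$; injectivity of $w$ finishes this case, and the case $j\leq i$ is symmetric. The main obstacle is this cyclic bookkeeping: identifying inner with outer shifts and correctly tracking how the shuffle interacts with traces.
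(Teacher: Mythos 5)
Your proof is correct. The basis part is essentially the paper's own argument: the same short exact sequence $0\to(\Z(X)^{\otimes p^n})_{C_{p^n}}\xrightarrow{V^n}W_{\langle p^n\rangle}(\Z;\Z(X))\xrightarrow{R}W_{\langle p^{n-1}\rangle}(\Z;\Z(X))\to 0$, the same injectivity input for $V^n$ (freeness of $\Z(X^{\times p^n}/C_{p^n})$), and the same commutation relations $R\tau^k=\tau^{k-1}$, $RV^i=V^iR$, $RV=0$ to run the induction; the paper phrases it as a map of short exact sequences rather than a splitting of $R$, but that is cosmetic.

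The multiplication formula is where you genuinely diverge. The paper derives it by operator calculus imported from \cite{DKNP1}: Frobenius reciprocity $V^j(x)\star y=V^j(x\star F^jy)$, the double-coset formula for $F^{j}V^{i}$, equivariance and monoidality of $\tau$, and the identity $F^k\tau^l(m)=\tau^{l-k}(m^{\otimes p^k})$ (only this last identity is checked in ghost components). You instead compute both sides of the asserted formula entirely in ghost components and conclude by injectivity of $w$ on the free bimodule $(\Z;\Z(X)\otimes\Z(Y))$. Your route is more self-contained and elementary, at the price of the combinatorial bookkeeping you identify: the identification of inner cyclic shifts of $a$ with outer shifts of $a^{\otimes p^{k-i}}$ (which gives $\sum_{\sigma\in C_{p^i}}(\sigma a)^{\otimes p^{k-i}}=\tr_{C_{p^{k-i}}}^{C_{p^k}}(a^{\otimes p^{k-i}})$ — valid here precisely because $a=x_1\otimes\dots\otimes x_{p^i}$ is an elementary tensor of basis elements; the power map is not additive, so this would need more care for general $a$), and Frobenius reciprocity for the shuffle $\boxtimes$. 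Both of these check out. The paper's route has the advantage of reusing identities that hold for arbitrary bimodules, so the manipulation is not tied to the free case; yours makes the answer visibly forced by the ghost components. One small omission: you only treat $k\geq j$; for $k<j$ both sides have vanishing $k$-th ghost (the left because $w_k V^j=0$ and $w_k$ is monoidal, the right termwise), so nothing is lost, but it is worth saying.
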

\begin{proof}
There is a clear map from the free abelian group on the right to the Witt-vectors which takes the sum. For $n=0$, the map is an isomorphism since it is the canonical isomorphism $W_{\langle 1\rangle}(\Z;\Z(X))\cong \Z(X)$. Suppose by induction that the map is an isomorphism for $n-1$, and consider the map of exact sequences
\[
\xymatrix@C=8pt{
0\ar[r]&\Z(X^{\times p^n}/C_{p^n})\ar[r]\ar[d]_-{\cong}&\displaystyle \bigoplus_{i=0}^n \bigoplus_{x\in X^{\times p^i}/C_{p^i}}\Z\cdot V^i\tau^{n-i}(\otimes_{l=1}^{p^i}x_l)\ar[r]\ar[d]& \displaystyle \bigoplus_{i=0}^{n-1} \bigoplus_{x\in X^{\times p^i}/C_{p^i}}\Z\cdot V^i\tau^{n-1-i}(\otimes_{l=1}^{p^i}x_l)\ar[r]\ar[d]^{\cong}& 0
\\
0\ar[r]&(\Z(X)^{\otimes p^n})_{C_{p^n}}\ar[r]^-{V^n}&W_{\langle p^n\rangle}(\Z;\Z(X))\ar[r]^R& W_{\langle p^{n-1}\rangle}(\Z;\Z(X))\ar[r]& 0
}
\]
where the bottom row is exact since $(\Z;\Z(X))$ is free. The maps of the top row are respectively the projection and the inclusion of the summand $i=n$, and the left square commutes by definition since the left vertical map sends the orbit of $(x_1,\dots, x_{p^n})$ to $x_1\otimes\dots\otimes x_{p^n}$. The right square commutes since $R$ is additive, $R\tau^{k}=\tau^{k-1}$ (with the convention $\tau^0=\id$), $RV^i=V^iR$ for $i>0$, and $RV=0$. Thus the middle map is also an isomorphism.

Let us now determine the multiplication of the generators. We show that case $i\leq j$, the other is similar. We have that
\begin{align*}
V^i\tau^{n-i}(\otimes_{l=1}^{p^i}x_l)\star V^j\tau^{n-j}(\otimes_{h=1}^{p^j}y_h)&=V^j((F^jV^i\tau^{n-i}(\otimes_{l=1}^{p^i}x_l))\star(\tau^{n-j}(\otimes_{h=1}^{p^j}y_h)))
\\&=V^j((F^{j-i}\sum_{\sigma\in C_{p^n}/C_{p^{n-i}}}\sigma \tau^{n-i}(\otimes_{l=1}^{p^i}x_l))\star(\tau^{n-j}(\otimes_{h=1}^{p^j}y_h)))
  \\&=\sum_{\sigma\in C_{p^i}} V^j((F^{j-i} \tau^{n-i}(\sigma (\otimes_{l=1}^{p^i}x_l)))\star(\tau^{n-j}(\otimes_{h=1}^{p^j}y_h)))
  \\&=\sum_{\sigma\in C_{p^i}} V^j((\tau^{n-j}((\sigma (\otimes_{l=1}^{p^i}x_l))^{\otimes p^{j-i}}))\star(\tau^{n-j}(\otimes_{h=1}^{p^j}y_h)))
  \\&=\sum_{\sigma\in C_{p^i}} V^j(\tau^{n-j}((\sigma (\otimes_{l=1}^{p^i}x_l))^{\otimes p^{j-i}}\otimes (\otimes_{h=1}^{p^j}y_h)))
\end{align*}
where the first equality holds by Frobenius reciprocity and the last by the monoidality of $\tau$, from \cite[Proposition 1.27]{DKNP1}. The second equality is the double-coset formula from \cite[Proposition 1.32, 5)]{DKNP1}, and the third follows from the equivariance of $\tau$ of \cite[Proposition 1.25]{DKNP1}. Finally, the fourth equality is the fact that for every $k\leq l$, bimodule $(R;M)$, and $m\in M$
\[
F^k\tau^l(m)=\tau^{l-k}(m^{\otimes p^k})
\]
in $W_{\langle p^{l-k}\rangle}(R;M^{\otimes_Rp^k})$, which by the standard resolution argument can be verified in ghost components, where
\[
w_jF^k\tau^l(m)=w_{j+k}(m,0,\dots,0)=m^{\otimes p^{j+k}}=(m^{\otimes p^k})^{\otimes p^j}=w_j(\tau^{l-k}(m^{\otimes p^k})).\qedhere
\] 
\end{proof}

\begin{prop}\label{singleVinj}
A single Verschiebung  $V\colon W_{n,p}(R;\tens{R}{M}{})_{C_{p^{}}}\to W_{n+1,p}(R;M)$ is injective when $R$ is a commutative ring and $M$ is an $R$-module (considered as a bimodule with the same left and right action). 
\end{prop}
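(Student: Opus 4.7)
The plan is to build a natural left inverse of $V$ on the subcategory of free modules over torsion-free commutative rings, and then descend it through a reflexive coequaliser to the general case, following the same template as the proof of Proposition \ref{2truncationcomm}.

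First I would treat the special case where $S$ is a polynomial ring and $N=S(X)$ is a free $S$-module on a set $X$. Here $(\cyctens{S}{N}{l})_{C_{p^l}}\cong S(X^{\times p^l}/C_{p^l})$ is itself a free $S$-module, hence $p$-torsion free, so \cite[Proposition 1.44]{DKNP1} gives that the single Verschiebung $V$ is injective. Moreover, \cite[Proposition 1.43]{DKNP1} supplies the exact sequence
\[
W_{n,p}(S;\tens{S}{N}{})_{C_p}\xrightarrow{V} W_{n+1,p}(S;N)\xrightarrow{R^n} W_{1,p}(S;N)=N\to 0,
\]
and the Teichm\"uller $\tau\colon N\to W_{n+1,p}(S;N)$ satisfies $R^n\tau=\id$ by the explicit formulas on representatives. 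I would therefore define a set-theoretic retraction
\[
r(y):=V^{-1}\bigl(y-\tau(R^n y)\bigr),
\]
which is well-defined because $R^n(y-\tau R^n y)=0$ places $y-\tau R^n y$ in $\ker R^n=\im V$. The identity $rV=\id$ follows from $R^n V=0$, and the construction is natural for morphisms of free torsion-free commutative pairs since each of $V,\tau,R$ is natural and $V^{-1}$ is natural on its domain of definition.

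To pass to a general commutative ring $R$ with $R$-module $M$, I would use the comonad resolution induced by the free-forgetful adjunction between commutative ring/module pairs and pairs of sets, obtaining a reflexive coequaliser $(\bar S;\bar N)\rightrightarrows(S;N)\twoheadrightarrow(R;M)$ with $S,\bar S$ polynomial rings and $N,\bar N$ free modules. By Proposition \ref{Wwelldef}, $W_{n+1,p}$ commutes with reflexive coequalisers, and the same holds for $W_{n,p}(-;\tens{-}{-}{})_{C_p}$ since tensor powers and coinvariants do. Naturality of $r$ on the top two terms of the resolution lets it descend via the universal property to a map $\bar r\colon W_{n+1,p}(R;M)\to W_{n,p}(R;\tens{R}{M}{})_{C_p}$, and a routine diagram chase (using surjectivity of the coequaliser map $W_{n,p}(S;\tens{S}{N}{})_{C_p}\twoheadrightarrow W_{n,p}(R;\tens{R}{M}{})_{C_p}$) gives $\bar rV=\id$. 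The main obstacle is really the first step: the formula $r=V^{-1}\circ(\id-\tau R^n)$ makes sense only because $V$ is \emph{injective} (not just split) on the free pairs and because, in the commutative module situation, $W_{1,p}(S;N)$ simplifies to $N$ so that $\tau$ furnishes a set-theoretic section of $R^n$; once these ingredients are in place, everything else is a formal manipulation of reflexive coequalisers.
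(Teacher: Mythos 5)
Your proof is correct, but it takes a genuinely different route from the paper's. The paper also resolves $(R;M)$ by free torsion-free commutative pairs $(S;Q)$ and uses \cite[Proposition 1.44]{DKNP1} to get injectivity of $V$ on each level of the resolution, but then proceeds by a direct element chase in the coequaliser: given $x$ with $V(x)=0$ in the coequaliser, it produces a witness $q=(q_0,\dots,q_n)$ over $(\overline{S};\overline{Q})$ with $wf_*q=wV(x)$ and $wg_*q=0$; using the zeroth ghost component and the commutativity hypothesis (so that $Q/[S,Q]=Q$) it concludes $f(q_0)=g(q_0)=0$, hence one may pass to $(0,q_1,\dots,q_n)=V(q_1,\dots,q_n)$; injectivity of $V$ and of $w$ on the free level then gives $f_*(q_1,\dots,q_n)=x$ and $g_*(q_1,\dots,q_n)=0$, so $x=0$ in the coequaliser. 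You instead package the free-case input into a natural set-theoretic retraction $r=V^{-1}\circ(\id-\tau R^n)$, exploiting that $\tau$ is an honest section of $R^n$ precisely because $W_{1,p}(S;N)=N/[S,N]=N$ in the commutative module case, and then descend $r$ along the comonad resolution. This is a bit more conceptual: it promotes the set-theoretic splitting implicit in Proposition \ref{Wasset} to an actual retraction (possible here because $V$ is injective on the free level), and it replaces the element chase with a formal manipulation of natural transformations and reflexive coequalisers. Both proofs rely on commutativity in the same two places (torsion-freeness of $(\cyctens{S}{N}{l})_{C_{p^l}}$ so that \cite[Proposition 1.44]{DKNP1} applies, and the identification $W_{1,p}(S;N)=N$), so neither seems to generalize beyond the module case without new ideas. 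Your naturality check for $V^{-1}$ on its image and the descent of $\bar r$ via the universal property are both sound.
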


\begin{proof} 
By resolving in the subcategory of pairs of commutative rings and modules, we can construct a reflexive coequaliser  $\xymatrix@C=10pt{(\overline{S};\overline{Q})\ar@<.5ex>[r]\ar@<-.5ex>[r]&(S;Q)\ar[l]\ar@{->>}[r]&(R;M)}$ were $S$ and $\overline{S}$ are free commutative rings, $Q$ is a free $S$-module and $\overline{Q}$ is a free $\overline{S}$-module. Since $W_{n,p}$ commutes with reflexive coequalisers we obtain a commutative diagram
\[
\xymatrix{
W_{n,p}(\overline{S};\tens{\overline{S}}{\overline{Q}}{})_{C_p}\ar@<.5ex>[r]^-{f_*}\ar@{>->}[d]^V\ar@<-.5ex>[r]_-{g_*}&W_{n,p}(S;\tens{S}{Q}{})_{C_p}\ar@{>->}[d]^V\ar[l]\ar@{->>}[r]&W_{n,p}(R;\tens{R}{M}{})_{C_p}\ar[d]^V
\\
W_{n+1,p}(\overline{S};\overline{Q})\ar@<.5ex>[r]^{f_*}\ar@<-.5ex>[r]_{g_*}&W_{n+1,p}(S;Q)\ar[l]\ar@{->>}[r]&W_{n+1,p}(R;M)
}
\]
where the rows are reflexive coequalisers. The middle vertical map is injective by \cite[Proposition 1.44]{DKNP1}, since for $Q=S(X)=\oplus_XS$ the free module on a set $X$ we have that 
\[(\cyctens{S}{(S(X))}{i})_{C_{p^i}}\cong (\oplus_{X^{\times p^i}}S)_{C_{p^i}}=S(X^{\times p^i}/C_{p^i})\]
is a free $S$-module, which is torsion-free since $S$ is torsion-free. The same argument applies to the left vertical map.
Let $x=(x_0,\dots,x_{n-1})$ represent an element of $W_{n,p}(S;\tens{S}{Q}{})_{C_p}$ such that $V(x)$ is zero in the coequaliser $W_{n+1,p}(R;M)$. Then there is an element $q=(q_0,\dots,q_{n})$ in $\prod_{i=0}^{n}\tens{\overline{S}}{\overline{Q}}{i}$ such that $wf_* q=w V(x)$ and $wg_* q=0$ in $\prod_{j=0}^{n}(\tens{S}{Q}{j})^{C_{p^j}}$. In particular since $Q$ is an $S$-module 
\[g(q_0)=w_0g_* q=0=w_0f_* q=f(q_0)\]
as elements of $Q/[S,Q]=Q$, and therefore $g(q_0)^{\otimes p^{i}}=f(q_0)^{\otimes p^{i}}=0$ for all $i\geq 0$. It follows that
\[
wf_* (0,q_1,\dots,q_n)=w V(x) \ \ \ \ \ \ \mbox{and} \  \  \ \ \ \ \ \  wg_* (0,q_1,\dots,q_n)=0,
\]
or in other words that
\[
wV f_* (q_1,\dots,q_n)=w V(x) \ \ \ \ \ \ \mbox{and} \  \  \ \ \ \ \ \  wVg_* (q_1,\dots,q_n)=0.
\]
Since $V$ and $w$ on the Witt vectors of $Q$ are injective we have that $ f_* (q_1,\dots,q_n)=x$ and $g_* (q_1,\dots,q_n)=0$ in $W_{n,p}(S;\tens{S}{Q}{})_{C_p}$, that is that $x$ and zero define the same class in the coequaliser $W_{n,p}(R;\tens{R}{M}{})_{C_p}$.
\end{proof}

\begin{prop}\label{Wasset}
A choice of sections of the quotients $\tens{R}{M}{i}\to(\cyctens{R}{M}{i})_{C_{p^i}}/\ker V^i$ determines a bijection
\[
W_{n+1,p}(R;M)\cong \prod_{i=0}^{n}\big(((\cyctens{R}{M}{i})_{C_{p^i}})/\ker V^i\big),
\]
where $V^i$ is the iterated Verschiebung $V^i\colon (\cyctens{R}{M}{i})_{C_{p^i}}\to W_{i+1,p}(R;M)$. In particular when $M=R$, a choice of section for $R\to R/[R,R]$ determines a bijection $W_{n+1,p}(R)\cong \prod_{i=0}^nR/[R,R]$ when $R/[R,R]$ has no $p$-power torsion, as in \cite{HesselholtncWcorr}. If $R$ is commutative with no $p$-power torsion, and $M$ is a free $R$-module, there is a canonical bijection
\[
W_{n+1,p}(R;M)\cong \prod_{i=0}^{n}(\tens{R}{M}{i})_{C_{p^i}}.
\]
\end{prop}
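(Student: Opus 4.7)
My plan is to induct on the truncation level, peeling off the top coordinate at each step via the exact sequences of \cite[Proposition 1.43]{DKNP1}. Specialising the exact sequence
\[
W_{a,p}(R;\tens{R}{M}{k})_{C_{p^k}} \xrightarrow{V^k} W_{a+k,p}(R;M) \xrightarrow{R^a} W_{k,p}(R;M) \to 0
\]
with outer index $a=1$ and $k=i$, and using the identification $W_{1,p}(R;\tens{R}{M}{i}) \cong \cyctens{R}{M}{i}$ of Example \ref{exrel}(3), I would obtain
\[
(\cyctens{R}{M}{i})_{C_{p^i}} \xrightarrow{V^i} W_{i+1,p}(R;M) \xrightarrow{R} W_{i,p}(R;M) \to 0.
\]
This is a short exact sequence of abelian groups once the left term is replaced by its quotient $(\cyctens{R}{M}{i})_{C_{p^i}}/\ker V^i$. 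Every such sequence splits set-theoretically, so each choice of section $s_i\colon (\cyctens{R}{M}{i})_{C_{p^i}}/\ker V^i \to \tens{R}{M}{i}$ of the canonical surjection determines a bijection of sets $W_{i+1,p}(R;M) \cong \bigl((\cyctens{R}{M}{i})_{C_{p^i}}/\ker V^i\bigr) \times W_{i,p}(R;M)$. Iterating from $i=n$ down to the base case $W_{1,p}(R;M) = \cyctens{R}{M}{0}$ (where $C_1$ is trivial and $V^0=\id$ forces $\ker V^0 = 0$) assembles the product decomposition. Inspecting the formula $V^i(y_0,\dots,y_{k-1})=(0,\dots,0,y_0,\dots,y_{k-1})$ from Section \ref{secop} together with the ghost-component characterisation of addition shows the resulting bijection is realised by $(x_0,\dots,x_n) \mapsto [(s_0(x_0),\dots,s_n(x_n))]$.

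For the case $M=R$, the canonical isomorphism $\cyctens{R}{R}{i} \cong R/[R,R]$ carries the trivial $C_{p^i}$-action, so $(\cyctens{R}{R}{i})_{C_{p^i}} \cong R/[R,R]$. Assuming $R/[R,R]$ has no $p$-power torsion, \cite[Proposition 1.44]{DKNP1} applied with outer index $1$ (so the $p$-torsion-freeness hypothesis is required only at the single index $l=i$) forces $\ker V^i = 0$ for every $i$, giving $W_{n+1,p}(R;R) \cong \prod_{i=0}^n R/[R,R]$. For the last assertion, writing $M = R(X)$ and computing $(\cyctens{R}{M}{i})_{C_{p^i}} \cong R(X^{\times p^i}/C_{p^i})$ shows it is a free $R$-module, and therefore $p$-power torsion-free since $R$ is; the same proposition gives $\ker V^i = 0$, and a set of orbit representatives for $X^{\times p^i}/C_{p^i}$ provides canonical sections $s_i$, yielding the canonical bijection.

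The main obstacle I anticipate is not conceptual but notational: verifying that the section-based formula $[(s_0(x_0),\dots,s_n(x_n))]$ really agrees with the output of the iterated set-theoretic splittings. This reduces via Theorem \ref{existence} to a ghost-component calculation confirming that the sums $(m_0,\dots,m_{n-1},0) + (0,\dots,0,s_n(x))$ and $(m_0,\dots,m_{n-1},s_n(x))$ represent the same Witt vector, which in turn follows because both sequences have the same image under every $w_j$ for $0 \leq j \leq n$. Beyond this bookkeeping, the argument is a direct application of the exact sequence and the injectivity criterion already established in \cite{DKNP1}.
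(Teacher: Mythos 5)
Your proof is correct, and it takes a genuinely different route from the paper's. You specialise the exact sequence of \cite[Proposition 1.43]{DKNP1} with outer truncation index $1$, stripping off the \emph{top} Witt coordinate at each stage: the quotient is then the honest Witt group $W_{i,p}(R;M)$ of the original bimodule, and the fibre is immediately $(\cyctens{R}{M}{i})_{C_{p^i}}/\ker V^i$ for exactly the iterated Verschiebung named in the statement. The paper instead takes $k=1$ and strips off the \emph{bottom} coordinate, so its intermediate objects are the quotients $(W_{n+1-i,p}(R;\tens{R}{M}{i})_{C_{p^i}})/\ker V^i$ of Witt groups of tensor-power bimodules; this is what forces its extra diagram chase showing that the exact sequences remain exact after quotienting by $\ker V^i$ and that the kernel of the induced Verschiebung on that quotient equals $\ker V^{i+1}$. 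Your version dispenses with that bookkeeping, and you correctly observe that the torsion hypothesis of \cite[Proposition 1.44]{DKNP1} is then needed only at the single index $l=i$. The one imprecision is the sentence ``every such sequence splits set-theoretically, so each choice of section $s_i$ determines a bijection'': a set-theoretic splitting of your sequence requires a section of $R\colon W_{i+1,p}(R;M)\to W_{i,p}(R;M)$, which the $s_i$ do not directly supply. The repair is exactly what your closing paragraph sketches: define $f(x_0,\dots,x_n)=[(s_0(x_0),\dots,s_n(x_n))]=\sum_i V^i\tau^{n-i}(s_i(x_i))$ outright, verify in ghost components (first on free bimodules, then by resolving) that appending a top coordinate is additive, and run the downward induction on $f$ itself: $R\circ f$ recovers the length-$n$ map, and surjectivity and injectivity follow from exactness together with $V^i(\ker V^i)=0$. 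This is the same Teichm\"uller-based splitting the paper builds (``a section followed by $\tau$''), just organised coordinate by coordinate; the two special cases are handled as in the paper.
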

\begin{proof}
A choice of section of $M\to\cyctens{R}{M}{0}=W_{1,p}(R;M)$, followed by the map $\tau$, determines a splitting (as a map of sets) of the right-hang map of the exact sequence
\[
W_{n,p}(R;\tens{R}{M}{})_{C_{p}}\stackrel{V}{\longrightarrow} W_{n+1,p}(R;M)\stackrel{R^n}{\longrightarrow} \cyctens{R}{M}{0}\to 0
\]
from  \cite[Proposition 1.43]{DKNP1}, and thus a bijection
\[
W_{n+1,p}(R;M)\cong \cyctens{R}{M}{0}\times (W_{n,p}(R;\tens{R}{M}{})_{C_{p}})/\ker V.
\]
A straightforward diagram chase on the diagram with exact rows
\[
\xymatrix{
W_{n-i,p}(R;\tens{R}{M}{i+1})_{C_{p^{i+1}}}\ar@{=}[d]\ar[r]^-V& W_{n+1-i,p}(R;\tens{R}{M}{i})_{C_{p^i}}\ar[r]^-{R^{n-i}}\ar[d]^{V^i}& (\cyctens{R}{M}{i})_{C_{p^i}}
\ar[d]^{V^i}\ar[r]&0
\\
W_{n-i,p}(R;\tens{R}{M}{i+1})_{C_{p^{i+1}}}\ar[r]^-{V^{i+1}}& W_{n+1,p}(R;M)\ar[r]^-{R^{n-i}}&W_{i+1,p}(R;M)\ar[r]&0
}
\]
shows that the top sequence stays exact after quotienting by the kernel of $V^i$, for every $i=0,\dots,n$. 
A section of $\tens{R}{M}{i}\to (\cyctens{R}{M}{i})_{C_{p^i}}/\ker V^i$, followed by the map $\tau$, determines a splitting (as a map of sets) of the right-hang map of the top sequence quotiented by the kernel of $V^i$. 
Moreover the kernel of 
\[
V\colon W_{n-i,p}(R;\tens{R}{M}{i+1})_{C_{p^{i+1}}}\longrightarrow W_{n+1-i,p}(R;\tens{R}{M}{i})_{C_{p^i}}/\ker V^i
\]
is equal to the kernel of $V^{i+1}$.
Thus by downward induction on $i$, we obtain a sequence of bijections
\begin{align*}
W_{n+1,p}(R;M)&\cong \cyctens{R}{M}{0}\times (W_{n,p}(R;\tens{R}{M}{}_{C_{p}})/\ker V)
\\&\cong \cyctens{R}{M}{0}\times ((\cyctens{R}{M}{})_{C_p}/\ker V)\times (W_{n-1,p}(R;\tens{R}{M}{2})_{C_{p^2}}/\ker V^2)
\\&\cong\dots
\\&\cong \cyctens{R}{M}{0}\times ((\cyctens{R}{M}{})_{C_p}/\ker V)\times ((\cyctens{R}{M}{2})_{C_{p^2}}/\ker V^2)\times\dots \times  ((\cyctens{R}{M}{n})_{C_{p^n}}/\ker V^n).
\end{align*}

If $R$ is commutative without $p$-power torsion and $M=R(X)$ is a free $R$-module, $V$ is injective by \cite[Proposition 1.44]{DKNP1} and the  projection maps
\[
\tens{R}{M}{k}\cong R(X^{\times p^k})\to R((X^{\times p^k})_{C_{p^k}})\cong(\cyctens{R}{M}{k})_{C_{p^k}}
\]
have canonical sections, induced by the maps $(X^{\times p^k})_{C_{p^k}}\cong \hom(p^k/C_{p^k},X)\to \hom(p^k,X)=X^{\times p^k}$ which precompose with the quotient map $p^k\to p^k/C_{p^k}$ (here we denoted $p^k$ the set with $p^k$-elements).
\end{proof}

Using the maps $R$ we can also define
\[W_{\infty,p}(R;M):=\lim(W_{1,p}(R;M)\stackrel{R}{\longleftarrow} W_{2,p}(R;M)\stackrel{R}{\longleftarrow}\dots).\]
It follows from Theorem \ref{existence} and \cite[Lemma 1.37]{DKNP1} that $W_{\infty,p}(R;M) \cong W_{\langle p^{\infty} \rangle}(R;M)$, where $\langle p^{\infty} \rangle$ is the truncation set of all powers of $p$.

\begin{prop}
Let $R$ be a commutative $\F_p$-algebra and $M$ an $R$-algebra. Then $V^n(1)=p^n$ in $W_{n+m+1,p}(R;M)$ for all $n,m\geq 0$, and 
\[W_{\infty,p}(R;-)=\lim(W_{1,p}(R;-)\stackrel{R}{\longleftarrow} W_{2,p}(R;-)\stackrel{R}{\longleftarrow}\dots)\]
 takes commutative $R$-algebras to commutative rings of characteristic zero.
\end{prop}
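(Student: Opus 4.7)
The plan is to reduce both claims to the universal identity $V(1)=p$ in the case $M=R$ and then bootstrap. For the first claim, induction on $n$ reduces the task to showing $V(1)=p$ in $W_{m+2,p}(R;R)$. Under the isomorphism of Corollary~\ref{CompLars} this is the classical identity $p=V(1)$ in the $p$-typical Witt vectors of a commutative $\F_p$-algebra; one clean way to see it uses the power series model of Theorem~\ref{existence}, where $V(\tau(1))$ corresponds to the series $1-t^p$ while $p\cdot\tau(1)$ corresponds to $(1-t)^p$, and these agree in characteristic $p$ by $(1-t)^p=1-t^p$. For a general $R$-algebra $M$ the algebra unit $R\to M$ is a bimodule map (since $R$ is commutative and hence central in $M$), so it defines a morphism $(R;R)\to(R;M)$ in $\bimod$; naturality of $V$ and functoriality of $W_{k,p}$ then push the identity $V(1)=p$ forward to $W_{m+2,p}(R;M)$. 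The iterated statement $V^n(1)=p^n$ follows by a further induction using that $V$ is additive:
\[
V^{n+1}(1)=V(V^n(1))=V(p^n\cdot 1)=p^n\,V(1)=p^{n+1}.
\]

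For the second claim we show that for every nonzero integer $n$ the element $n\cdot 1$ is nonzero in $W_{\infty,p}(R;M)$. Writing $n=p^a m$ with $\gcd(m,p)=1$, Part~1 gives $n\cdot 1=V^a(m\cdot 1)$ in $W_{k,p}(R;M)$ for all $k\geq a+1$, so it suffices to prove $V^a(m\cdot 1)\neq 0$ in $W_{a+1,p}(R;M)$. The main obstacle is that in characteristic $p$ every ghost component of $V^a(m\cdot 1)$ vanishes, so the ghost map cannot detect non-triviality. The plan is to exploit the injectivity of the single Verschiebung from Proposition~\ref{singleVinj} together with the commutativity of $M$.

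The argument is by induction on $a$. The base case $a=0$ is clear: $m\cdot 1_M$ is nonzero in $M=W_{1,p}(R;M)$ because $M$ is a nontrivial $\F_p$-algebra and $\gcd(m,p)=1$. For the inductive step, factor
\[
V^a(m\cdot 1)=V\bigl([V^{a-1}(m\cdot 1)]_{C_p}\bigr),
\]
where the outer $V\colon W_{a,p}(R;M^{\otimes_R p})_{C_p}\hookrightarrow W_{a+1,p}(R;M)$ is injective by Proposition~\ref{singleVinj}. Since $M$ is commutative, the multiplication $\mu\colon M^{\otimes_R p}\to M$ is $C_p$-equivariant and descends to a homomorphism $\mu_\ast\colon W_{a,p}(R;M^{\otimes_R p})_{C_p}\to W_{a,p}(R;M)$. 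By naturality of $V^{a-1}$ and $\mu(1)=1$, this map sends $[V^{a-1}(m\cdot 1)]_{C_p}$ to $V^{a-1}(m\cdot 1)\in W_{a,p}(R;M)$, which is nonzero by the induction hypothesis. Hence $[V^{a-1}(m\cdot 1)]_{C_p}\neq 0$ and therefore $V^a(m\cdot 1)\neq 0$, completing the induction. The commutativity of $W_{\infty,p}(R;M)$ for commutative $M$ is immediate from the lax symmetric monoidal structure of Theorem~\ref{existence}, which is preserved under the inverse limit.
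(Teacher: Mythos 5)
Your treatment of the first claim is correct and essentially the paper's argument: both proofs reduce $V(1)=p$ to a universal case by naturality of $V$ (the paper maps in from $(\F_p;\F_p)$, you map in from $(R;R)$ via the algebra unit and invoke Corollary \ref{CompLars}), and your power-series verification $(1-t)^p=1-t^p$ is fine.

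The second claim, however, has a genuine gap in the inductive step for $a\geq 2$. You factor $V^a(m\cdot 1)$ through the coinvariants $W_{a,p}(R;\tens{R}{M}{})_{C_p}$ of Proposition \ref{singleVinj} and then claim that the multiplication $\mu\colon \tens{R}{M}{}\to M$ induces a homomorphism out of these coinvariants because $\mu$ coequalises the cyclic permutation of tensor factors. This conflates two different $C_p$-actions on $W_{a,p}(R;\tens{R}{M}{})$: the functorial action induced by the bimodule automorphism permuting the factors of $\tens{R}{M}{}$, and the Weyl action $\sigma_1$, which is the one appearing in Proposition \ref{singleVinj} and in the exact sequence $W_{a,p}(R;\tens{R}{M}{})_{C_p}\xrightarrow{V}W_{a+1,p}(R;M)\to\cdots$. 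These agree on $W_{1,p}$ but not on higher truncations: in ghost coordinates $\sigma_1$ acts on the $j$-th component $(\cyctens{R}{M}{j+1})^{C_{p^j}}$ by the shift-by-one $\sigma_{j+1}$, whereas the functorial action acts blockwise (see the explicit formula for $\sigma$ in \S\ref{secop}, which records the transfer correction between the two). Consequently $\mu_*$ need not coequalise the Weyl action: for $p=2$ in ghost component $j=1$ one has $\mu^{\otimes 2}(\sigma_2(m_1\otimes m_2\otimes m_3\otimes m_4))=m_4m_1\otimes m_2m_3$, which differs from $\mu^{\otimes 2}(m_1\otimes m_2\otimes m_3\otimes m_4)=m_1m_2\otimes m_3m_4$ in $\tens{R}{M}{}$. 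So nonvanishing of $\mu_*(V^{a-1}(m\cdot 1))$ does not detect the class in the relevant coinvariants; your base case and the step $a=1$ are unaffected, since there the two actions coincide. The paper sidesteps this by pushing forward along the morphism of bimodules $(R;M)\to(M;M)$ given by the unit $\eta\colon R\to M$: being a morphism in $\bimod$ it is compatible with $V$, and in the classical Witt vectors $W_{\infty,p}(M)$ of the commutative ring $M$ the Verschiebung is the injective coordinate shift, whence $p^n=V^n(1)\neq 0$ there and a fortiori in $W_{\infty,p}(R;M)$. If you wish to keep your structure, replace $\mu_*$ by the map $W_{a,p}(R;\tens{R}{M}{})\to W_{a,p}(M;M)$ induced by $(R;\tens{R}{M}{})\to(M;M)$; this intertwines the Weyl actions (which is trivial on the target), and the required nonvanishing in $W_{a,p}(M;M)=W_a(M)$ is again the classical injectivity of the shift.
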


\begin{proof}
Since $R$ is an $\F_p$-algebra there is a map of bimodules $\iota\colon (\F_p;\F_p)\to (R;M)$. By naturality of $V$ we have that
\[
V^n(1)=V^n(\iota(1))=\iota V^n(1)
\]
where the last Verschiebung $V^n$ is for the ring $\F_p$, and it sends $1$ to $p^n$.


The ring homomorphism $\eta\colon R\to M$ defines a map of bimodules $(R;M)\to (M;M)$, and the induced map sends $p^n$ in  $W_{\infty,p}(R;M)$ to $p^n$ in $W_{\infty,p}(M)$. When $M$ is commutative the Verschiebung of $W_{\infty,p}(M)$ is injective and therefore $V^n(1)=p^n\neq 0\in W_{\infty,p}(M)$. Thus it must be non-zero already in $W_{\infty,p}(R;M)$.
\end{proof}

\begin{rem} 
Classically $W_{\infty,p}(A)$ is $p$-complete when $A$ is a commutative semi-perfect $\F_p$-algebra. With coefficients, we do always have a natural isomorphism
\[
W_{\infty,p}(R;M)= \lim_{n\geq 1}W_{n,p}(R;M)\cong \lim_{n\geq 1} W_{\infty,p}(R;M)/V^{n}.
\]
In other words the $\Z$-module $W_{\infty,p}(R;M)$ is complete with respect to the sequence of submodules 
\[\im V\supset \im(V^2)\supset\dots\supset  \im(V^n)\supset\dots.\]

When $A$ is a semi-perfect commutative $\F_p$-algebra the image of $V^n$ on the classical Witt vectors is $p^nW_{\infty,p}(A)$, and the $V$-completeness statement  above shows that $W_{\infty,p}(A)$ is $p$-complete. This characterisation of $V$ follows from Frobenius reciprocity, the identity $V(1)=p$, and the fact that $F$ is surjective since for $\F_p$-algebras it is of the form 
\[
F(a_0,a_1,\dots)=(a_{0}^p,a_{1}^p,\dots).
\]
In pursuing a similar argument for the Frobenius of $W_{\infty,p}(\Z;A)$ for a ring $A$, one would need the $p$-th power map $(-)^{\otimes p}\colon A\to A^{\otimes p}$ to be additive. This is the case modulo the image of the transfer map $\tr^{C_p}_e\colon A^{\otimes p}\to A^{\otimes p}$, and therefore one would require that $\tr^{C_p}_e=0$. This is however the case only when $A=\F_p$ and this just recovers the standard completeness of $W(\F_p)$. 
Indeed since $\mu_p \tr^{C_p}_e 1=p\in A$ the condition that $\tr^{C_p}_e=0$ forces $A$ to be an $\F_p$-algebra. By choosing a basis of $A$ as an $\F_p$-vector space, we further see that  $A$ is of rank $1$. Thus $W_{\infty,p}(\Z;A)=W_{\infty,p}(\F_p;A)$ does not seem to be $p$-complete for any ring except for $A=\F_p$.
\end{rem}

When $M$ is a commutative $R$-algebra, we show below that the Teichm\"{u}ller map $\tau\colon M\to W_{n+1,p}(R;M)$ factors through multiplicative norm maps
\[
N\colon W_{n,p}(R;\tens{R}{M}{})\longrightarrow W_{n+1,p}(R;M).
\]
This map extends the norm map of the Witt vectors introduced by Angeltveit in \cite{Angeltveit} in the case where $R=M$.
Let us consider the multiplicative map $N_w\colon \prod_{j=0}^{n-1}(\tens{R}{M}{j+1})^{C_{p^j}}\to \prod_{j=0}^{n}(\tens{R}{M}{j})^{C_{p^j}}$
defined by
\[
N_w(y_0,y_1,\dots,y_{n-1}):=(\mu_py_0,\prod_{\sigma\in C_p}\sigma y_0,\prod_{\sigma\in C_{p^2}/C_p}\sigma y_1,\dots,\prod_{\sigma\in C_{p^n}/C_{p^{n-1}}}\sigma y_{n-1})
\]
where $\mu_p\colon \tens{R}{M}{}\to M$ is the multiplication map. Here we are using that, since $R$ is commutative and $M$ is an $R$-module, $\tens{R}{M}{j}=\cyctens{R}{M}{j}$ so that the product of cyclic tensors is well-defined, as well as the commutativity of $M$ so that this product is independent of the cyclic ordering.

\begin{prop}\label{N}
Let $M$ be a commutative $R$-algebra. There is a unique natural map of sets $N\colon W_{n,p}(R;\tens{R}{M}{})\to W_{n+1,p}(R;M)$, which we call the norm, such that $wN=N_ww$. It is multiplicative, unital, and it satisfies the identities
\[\begin{array}{ll}
RN&=NR
\\F N(x)&=\prod_{\sigma\in C_p}\sigma x
\end{array}\]
for every $x\in W_{n,p}(R;\tens{R}{M}{})$, where $\sigma$ is the Weyl action.
\end{prop}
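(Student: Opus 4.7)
The plan is as follows. Uniqueness is immediate: on a free bimodule $(R;M)$ the ghost map $w$ on $W_{n+1,p}(R;M)$ is injective by Proposition \ref{Wwelldef}, so at most one map $N$ can satisfy $wN = N_w w$; the general case then follows from uniqueness on free bimodules together with the fact that $W_{n+1,p}$ commutes with reflexive coequalisers. Naturality will come automatically from the construction at the level of ghost components.

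For existence I would first build $N$ on free commutative bimodules $(R;M)$. Given $x \in W_{n,p}(R;\tens{R}{M}{})$ with ghost coordinates $(y_0,\ldots,y_{n-1})$, a representative $(z_0,\ldots,z_n) \in \prod_{j=0}^n \tens{R}{M}{j}$ for $N(x)$ must satisfy $w_j(z_0,\ldots,z_j) = N_w(y_0,\ldots,y_{n-1})_j$ for every $j$. I construct the $z_j$ inductively: set $z_0 = \mu_p y_0 \in M$; assuming $z_0,\ldots,z_{j-1}$ are chosen, the equation at level $j$ requires that the element
\begin{equation*}
\prod_{\sigma \in C_{p^j}/C_{p^{j-1}}} \sigma\,y_{j-1}\ -\ \sum_{i=0}^{j-1} \tr_{C_{p^{j-i}}}^{C_{p^j}}(z_i^{\otimes p^{j-i}})
\end{equation*}
of $(\cyctens{R}{M}{j})^{C_{p^j}}$ lie in the image of $\tr_e^{C_{p^j}}\colon \cyctens{R}{M}{j}\to (\cyctens{R}{M}{j})^{C_{p^j}}$. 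This is the essential step and the multiplicative counterpart of the classical ghost-map congruences of Witt: it reduces, via the inductive hypothesis on $z_0,\ldots,z_{j-1}$, to the identity that $\prod_{\sigma \in C_p} \sigma(a) - (\mu_p a)^{\otimes p}$ lies in the image of $\tr_e^{C_p}$ for any $a \in \tens{R}{M}{}$, which is verified directly on elementary tensors using the commutativity of $M$. A choice of such $z_j$ determines the next Witt coordinate; two choices differ by an element of $\ker(\tr_e^{C_{p^j}})$, which has trivial contribution in the quotient $W_{n+1,p}(R;M)$, so the class $N(x)$ is well-defined even though the representative is not.

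To extend to arbitrary bimodules I choose a free resolution of $(R;M)$ by commutative-algebra bimodules (possible since $M$ is a commutative $R$-algebra) and apply the reflexive-coequaliser description of Proposition \ref{Wwelldef}. Because $N$ on the free pieces commutes with bimodule morphisms (by its characterisation at the ghost level, which is manifestly natural), it descends to the coequaliser, yielding the desired natural $N\colon W_{n,p}(R;\tens{R}{M}{}) \to W_{n+1,p}(R;M)$.

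Finally, all of the asserted identities can be checked on ghost components and then extended to general bimodules by naturality. $N_w$ is multiplicative and unital since each of its coordinates is a (possibly empty) product; $R_w$ is the projection onto the first $n$ ghost coordinates and thus commutes with $N_w$; and $F_w \circ N_w(y_0,\ldots,y_{n-1})$ shifts the tuple to $(\prod_{\sigma\in C_p}\sigma\, y_0,\prod_{\sigma\in C_{p^2}/C_p}\sigma\, y_1,\ldots)$, which matches the ghost of $\prod_{\sigma \in C_p}\sigma x$ after identifying the Weyl action on $W_{n,p}(R;\tens{R}{M}{})$ with the induced cyclic permutation on its ghost components. On free bimodules these identities lift to $W_{n+1,p}(R;M)$ by injectivity of $w$, and the general case follows by the reflexive-coequaliser argument. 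The main obstacle in the whole proof is the Witt-Dwork style congruence used in the inductive construction of the $z_j$'s.
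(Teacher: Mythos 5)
Your overall architecture is the same as the paper's: reduce to free commutative pairs where the ghost map $w$ of $W_{n+1,p}(R;M)$ is injective, show that $N_w\circ\hat w$ lands in the image of $w$, and then extend to general $(R;M)$ by resolving in commutative algebras and using that $W_{n,p}$ commutes with reflexive coequalisers; the uniqueness argument and the verification of $RN=NR$ and $FN(x)=\prod_\sigma\sigma x$ in ghost components are also exactly as in the paper. The problem is the step you yourself identify as the main obstacle: your claim that the level-$j$ membership
\[
\prod_{\sigma\in C_{p^j}/C_{p^{j-1}}}\sigma\,y_{j-1}\ -\ \sum_{i=0}^{j-1}\tr_{C_{p^{j-i}}}^{C_{p^j}}\bigl(z_i^{\otimes p^{j-i}}\bigr)\ \in\ \im\bigl(\tr_e^{C_{p^j}}\bigr)
\]
``reduces, via the inductive hypothesis, to the identity $\prod_{\sigma\in C_p}\sigma(a)\equiv(\mu_p a)^{\otimes p}$ mod $\tr_e^{C_p}$'' is not substantiated, and I do not believe the reduction holds as stated. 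That base identity is precisely the $j=1$ case. For $j\geq 2$ the inductive hypothesis gives you information about $\sum_i\tr(z_i^{\otimes p^{j-1-i}})$ at level $j-1$, but to compare it with the level-$j$ sum $\sum_i\tr(z_i^{\otimes p^{j-i}})$ you must relate $u^{\otimes p^{k}}$ to $u^{\otimes p^{k+1}}$ across transfer levels, and you must also relate $y_j$ to $y_{j-1}$ (you never use that $(y_0,\dots,y_{n-1})$ is itself a ghost sequence, beyond its mere existence). Both of these are Dwork-type congruences that have no elementary proof: in the paper they rest on the existence of an external Frobenius $\phi_j\colon\cyctens{R}{M}{j}\to\cyctens{R}{M}{j+1}$ on free pairs, its multiplicativity and $C$-equivariance (Lemma \ref{higherFrob}, Lemma \ref{iterFrob}), the delicate congruence $\omega_{k+1}(a^{\otimes p^k})\equiv\omega_{k+1}(b^{\otimes p^k})$ mod $\tr_e^{C_{p^{k+1}}}$ for $a\equiv b$ mod $\tr_e^{C_p}$ (Proposition \ref{congruences}, whose proof is a nontrivial combinatorial argument with the reordering permutations $\tau_n$), and finally Tambara reciprocity to expand $\prod_\sigma\sigma(\hat w_j+\tr)$ as $\prod_\sigma\sigma(\hat w_j)$ plus a $\tr_e^{C_{p^{j+1}}}$ term.

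Concretely, the paper's proof of the key inclusion is: apply the Dwork Lemma (Theorem \ref{Dwork}) to characterise $\im(w)$ by the congruences $\phi_j(b_j)\equiv b_{j+1}$ mod $\tr_e^{C_{p^{j+1}}}$, and then verify these for $b_\bullet=N_w\hat w(x)$ via $\phi_j\prod_\sigma\sigma\hat w_{j-1}=\prod_\sigma\sigma\Phi_{j-1}\hat w_{j-1}\equiv\prod_\sigma\sigma\hat w_j$, using that $\Phi_{j-1}=\phi_j$ is an external Frobenius for the bimodule $M^{\otimes_R p}$ so that the Dwork congruence for the \emph{input} sequence $\hat w(x)$ can be invoked. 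Your direct construction of the coordinates $z_j$ is a legitimate alternative packaging (it is essentially the ``if'' direction of the Dwork Lemma unwound), but it does not avoid this machinery; without supplying the analogue of Proposition \ref{congruences} and the Frobenius/Tambara bookkeeping, the inductive step for $j\geq 2$ is a genuine gap rather than a routine verification.
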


\begin{proof}
It is sufficient to show that such a unique natural transformation exists on pairs $(R;M)$ where $R$ is a free commutative ring and $M$ is a free commutative $R$-algebra. The map $N$ for a general pair will then be defined as the reflexive coequaliser of the map induced on the Witt vectors of a free resolution (the fact that $N$ is not a ring homomorphism is not an issue since reflexive coequalisers of commutative rings are computed in sets). The map will be independent on the choice of resolution since different free resolutions can be compared by a map as in the proof of \ref{Wwelldef}.

Let us then suppose that $(R;M)$ is such a free pair and denote by $\hat{w}$ the ghost map of $W_{n,p}(R;\tens{R}{M}{})$. We remark that since the ghost map $w$ of $W_{n+1,p}(R;M)$ is injective the existence and uniqueness of $N$ follow if we can prove that  the image of $N_w\hat{w}$ is included in the image of $w$. In order to verify this we use a version of the Dwork lemma with coefficients which characterises the image of $w$. The proof of the lemma is technical and is deferred to the appendix \ref{Dwork}. It states that when $(R;M)$ is free (or more generally if it has an ``external Frobenius''), there are additive maps
\[
\phi_{j}\colon (\cyctens{R}{M}{j})^{C_{p^{j}}}\longrightarrow(\cyctens{R}{M}{j+1})^{C_{p^{j+1}}}
\]
such that a sequence $(b_0,b_1,\dots,b_{n})$ of $ \prod_{j=0}^{n}(\cyctens{R}{M}{j})^{C_{p^j}}$ lies in the image of the ghost map $w$ if and only if
\[
\phi_j(b_j)\equiv b_{j+1}\ \mbox{mod}\ \tr_{e}^{C_{p^{j+1}}}
\]
for every $0\leq j<n$,
where the congruence is modulo the image of the additive transfer map $\tr_{e}^{C_{p^{j+1}}}\colon \cyctens{R}{M}{j+1}\to (\cyctens{R}{M}{j+1})^{C_{p^{j+1}}}$.
Thus we need to verify that the sequence
\[(\mu_p\hat{w}_0,\prod_{\sigma\in C_p}\sigma \hat{w}_0,\prod_{\sigma\in C_{p^2}/C_p}\sigma \hat{w}_1,\dots,\prod_{\sigma\in C_{p^n}/C_{p^{n-1}}}\sigma \hat{w}_{n-1})\]
satisfies these congruences.  We start by separately analyzing the first congruence
\begin{align*}
\phi \mu_p \hat{w}_0x_0=\phi \mu_p x_0=\phi w_0\mu_p x_0\equiv w_1(\mu_p x_0)\equiv (\mu_p x_0)^{\otimes p}= \prod_{\sigma\in C_p}\sigma x_0\ \ \ \ \ \mbox{mod}\ \tr^{C_p}_e,
\end{align*}
where the third equality is from the Dwork Lemma, and the last one can be easily verified on elementary tensors. Now let us take $j\geq 1$. When $M$ is a free commutative $R$-algebra the maps $\phi_j$ are moreover multiplicative (see \ref{Frobpoly}). They are defined before taking invariants, and they satisfy $\phi_{j-1}\sigma_{j-1}=\sigma_j\phi_{j-1}$ where $\sigma_k$ denotes the action of a generator of $C_{p^k}$ (Lemma \ref{higherFrob} ). Moreover the maps
\[
\Phi_{j}:=\phi_{j+1}\colon \cyctens{R}{(M^{\otimes_R p})}{j}= \cyctens{R}{M}{j+1}\longrightarrow \cyctens{R}{M}{j+2}= \cyctens{R}{(M^{\otimes_R p})}{j+1}
\]
satisfy the Dwork Lemma for the free $R$-module $M^{\otimes_R p}$ (Lemma \ref{iterFrob}). We can now verify that
\begin{align*}
\phi_j \prod_{\sigma\in C_{p^j}/C_{p^{j-1}}}\sigma \hat{w}_{j-1}&= \prod_{l=1}^p\phi_j\sigma^{l}_j \hat{w}_{j-1}=  \prod_{l=1}^p\sigma^{l}_{j+1} \phi_j\hat{w}_{j-1}
=\prod_{\sigma\in C_{p^{j+1}}/C_{p^{j}}}\sigma \phi_j \hat{w}_{j-1}
\\&=\prod_{\sigma\in C_{p^{j+1}}/C_{p^{j}}}\sigma \Phi_{j-1} \hat{w}_{j-1}=\prod_{\sigma\in C_{p^{j+1}}/C_{p^{j}}}\sigma (\hat{w}_{j}+\tr^{C_{p^j}}_e)
\\&=(\prod_{\sigma\in C_{p^{j+1}}/C_{p^{j}}}\sigma \hat{w}_{j})+\tr^{C_{p^{j+1}}}_e
,
\end{align*}
and therefore that $N_w\hat{w}$ lands in the image of $w$. The  last equality follows from the Tambara reciprocity relations of the $C_{p^{j+1}}$-Tambara functor $\tens{R}{M}{j+1}$ (see e.g. \cite[Corollaries 2.6 and 2.9]{HillMazur}). It can also be verified directly as follows. For every subset $V\subset\{1,\dots,p\}$ and $1\leq l\leq p$ let us define $t^{V}_l:=\sigma^{l}_{j+1} \hat{w}_{j}$ if $l\notin V$, and  $t^{V}_l:=\sigma^{l}_{j+1}\tr^{C_{p^j}}_e$ if $l\in V$. Then
\begin{align*}
\prod_{\sigma\in C_{p^{j+1}}/C_{p^{j}}}\sigma (\hat{w}_{j}+\tr^{C_{p^j}}_e)&=\sum_{V\subset \{1,\dots,p\}}t^{V}_1t^{V}_2\dots t^{V}_p=(\prod_{\sigma\in C_{p^{j+1}}/C_{p^{j}}}\sigma \hat{w}_{j})+\sum_{\emptyset\neq V\subset \{1,\dots,p\}}t^{V}_1t^{V}_2\dots t^{V}_p
\\
&=(\prod_{\sigma\in C_{p^{j+1}}/C_{p^{j}}}\sigma \hat{w}_{j})+\sum_{l=1}^p\sum_{l\in V\subset \{1,\dots,p\}}t^{V}_1\dots t^{V}_{l-1}(\sigma^{l}_{j+1}\tr^{C_{p^j}}_e) t^{V}_{l+1}\dots t^{V}_p
\\&=(\prod_{\sigma\in C_{p^{j+1}}/C_{p^{j}}}\sigma\hat{w}_{j})+\sum_{l=1}^p\sum_{l\in V\subset \{1,\dots,p\}}\sigma^{l}_{j+1}\tr^{C_{p^j}}_e
=\prod_{\sigma\in C_{p^{j+1}}/C_{p^{j}}}\sigma (\hat{w}_{j})+\tr^{C_{p^{j+1}}}_e
\end{align*}
where the third equality holds by Frobenius reciprocity.

The relations between $N$, $R$, $F$ are easily verified in ghost components and are natural, and therefore they hold in Witt coordinates by the usual resolution argument. 
\end{proof}

\begin{rem}\label{TambaraStructure}
There are relations between $N$ and $V$ and $N$ and the sum which are difficult to express without the help of exponential diagrams. These are the Tambara reciprocity conditions for norms and transfers and norms and sums of \cite[(2.1)(v)]{Tambara} (See also \cite[Corollaries 2.6 and 2.9]{HillMazur}). They can be directly verified in ghost coordinates using Tambara reciprocity for the $C_{p^j}$-Tambara functors $(\tens{R}{M}{j})^{(-)}$. 
\end{rem}

\begin{prop}\label{tauandN}
When $M$ is a commutative $R$-algebra, the map $\tau\colon M\to W_{n+1,p}(R;M)$ agrees with the composite
\[
M\stackrel{u}{\longrightarrow} \tens{R}{M}{n}=W_{1,p}(R;\tens{R}{M}{n})\stackrel{N}{\longrightarrow}W_{2,p}(R;\tens{R}{M}{n-1})\stackrel{N}{\longrightarrow}\dots \stackrel{N}{\longrightarrow}W_{n+1,p}(R;M),
\]
where $u(m)=m\otimes 1\otimes\dots\otimes 1$.
\end{prop}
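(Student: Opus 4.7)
The plan is to reduce to a free case via the reflexive-coequaliser formalism and then verify the identity by computing ghost components. By Proposition~\ref{Wwelldef} the functor $W_{k,p}$ preserves reflexive coequalisers, and by construction in Proposition~\ref{N} so does the norm $N$; the maps $u$ and $\tau$ are natural and therefore also compatible with coequalisers. Any pair $(R,M)$ with $M$ a commutative $R$-algebra admits a reflexive resolution by pairs $(\tilde{R},\tilde{M})$ in which $\tilde{R}$ is a free commutative ring and $\tilde{M}$ is a free commutative $\tilde{R}$-algebra, and, as in the proof of Proposition~\ref{N}, for such $(\tilde{R},\tilde{M})$ the ghost map $w\colon W_{n+1,p}(\tilde{R};\tilde{M})\hookrightarrow\prod_{j=0}^n(\cyctens{\tilde{R}}{\tilde{M}}{j})^{C_{p^j}}$ is injective. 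Hence it suffices to prove that $\tau(m)$ and the iterated composite $N^n u(m)$ agree in ghost components in this free case.

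On the $\tau$-side, Definition~\ref{defghost} applied to $\tau(m)=(m,0,\dots,0)$ gives $w_j(\tau(m))=m^{\otimes p^j}$ for all $0\leq j\leq n$. For the other side, I would introduce the abbreviation $u_l(m):=m\otimes 1\otimes\cdots\otimes 1\in M^{\otimes_R p^l}$ (so $u(m)=u_n(m)$) and denote by $\alpha_k\in W_{k+1,p}(R;M^{\otimes_R p^{n-k}})$ the element obtained from $u(m)$ after $k$ iterations of $N$. I then would prove by induction on $k$ that
\[
w_j(\alpha_k)=u_{n-k}(m)^{\otimes p^j}\qquad\text{for all }0\leq j\leq k.
\]
The base case $k=0$ is immediate from $W_{1,p}(R;-)=(-)/[R,-]$ and $u(m)=u_n(m)$.

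For the inductive step I would apply the identity $wN=N_ww$ from Proposition~\ref{N}. The $j=0$ component equals $\mu_p(u_{n-k}(m))$, and after regrouping $M^{\otimes_R p^{n-k}}=(M^{\otimes_R p^{n-k-1}})^{\otimes_R p}$ this reduces to $u_{n-k-1}(m)\cdot 1\cdots 1=u_{n-k-1}(m)$. For $j\geq 1$ the component is
\[
\prod_{\sigma\in C_{p^j}/C_{p^{j-1}}}\sigma\bigl(u_{n-k}(m)^{\otimes p^{j-1}}\bigr),
\]
and here lies the main combinatorial point: after regrouping $M^{\otimes_R p^{n-k+j-1}}=(M^{\otimes_R p^{n-k-1}})^{\otimes_R p^j}$, the positions in which $m$ (rather than $1$) appears in $u_{n-k}(m)^{\otimes p^{j-1}}$ form the unique subgroup $p\Z/p^j\Z$ of order $p^{j-1}$ inside $\Z/p^j\Z$; translating by coset representatives of $C_{p^j}/C_{p^{j-1}}$ partitions $\Z/p^j\Z$. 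Multiplying in the commutative ring $(M^{\otimes_R p^{n-k-1}})^{\otimes_R p^j}$ therefore places $m$ at each of the $p^j$ positions exactly once, yielding $u_{n-k-1}(m)^{\otimes p^j}$.

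Specialising the completed induction at $k=n$ gives $w_j(N^n u(m))=u_0(m)^{\otimes p^j}=m^{\otimes p^j}=w_j(\tau(m))$ for every $j$, and the result follows by injectivity of the ghost map on free pairs together with the reduction above. The main technical obstacle is the combinatorial identity for the coset-indexed product in the inductive step; the remaining input is the formula $wN=N_ww$ together with routine bookkeeping of tensor positions.
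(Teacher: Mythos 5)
Your proof is correct and follows essentially the same strategy as the paper: reduce to free commutative pairs via reflexive coequalisers, then check the identity on ghost components, which are injective there. The only cosmetic difference is that the paper writes down a closed formula $w_jN^n(x)=\prod_{\sigma\in C_{p^j}}\sigma\mu_p^{n-j}(x)$ and evaluates at $x=u(m)$, whereas you establish the same ghost values by an induction on the number of norms applied.
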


\begin{proof}
In ghost components the iterated norm $N$ sends $x\in \tens{R}{M}{n}$ to
\[
wN^n(x)=(\mu_{p}^n(x),\prod_{\sigma\in C_{p}}\sigma\mu_{p}^{n-1}(x),\prod_{\sigma\in C_{p^2}}\sigma\mu_{p}^{n-2}(x),\dots, \prod_{\sigma \in C_{p^n}}\sigma(x)),
\]
where $\mu^{n-i}_p$ is the composite of the multiplication maps
\[
M^{\otimes_Rp^n}\stackrel{\mu_p}{\longrightarrow} M^{\otimes_Rp^{n-1}}\stackrel{\mu_p}{\longrightarrow} M^{\otimes_Rp^{n-2}}\stackrel{\mu_p}{\longrightarrow} \dots \stackrel{\mu_p}{\longrightarrow} M^{\otimes_Rp^{i}}.
\]
After precomposing with $u\colon M\to \tens{R}{M}{n}$ this sends $a\in M$ to
\[
wN^n(a\otimes 1\otimes\dots\otimes 1)=(a,a^{\otimes p},a^{\otimes p^2},\dots, a^{\otimes p^n})=w(a,0,\dots,0),
\]
showing that the maps agree in ghost components. By the usual resolution argument they agree in Witt coordinates.
\end{proof}

 \subsection{The comparison with Kaledin's polynomial Witt vectors}\label{secKaledin}

Kaledin defines in \cite{KaledinWpoly} and \cite{KaledinncW} a functor $\overline{W}_n$ (denoted by $\widetilde{W}_n$ in \cite{KaledinWpoly}) of ``polynomial Witt vectors'' from the category of vector spaces over a perfect field $k$ of characteristic $p$ to the category of abelian groups. 
We show that on $k$-vector spaces our functor $W_{n,p}$ can be described as the cokernel of a transfer map, and use this to identify $W_{n,p}$ with $\overline{W}_n$.

For any $R$-bimodule $M$, we let $Q_{p^n}(R;M)$ denote the cokernel of the transfer map
\[
(\cyctens{R}{M}{n})_{C_{p^n}}\xrightarrow{\tr^{C_{p^n}}_e} (\cyctens{R}{M}{n})^{C_{p^n}} \twoheadrightarrow Q_{p^n}(R;M).
\]
The canonical lax symmetric monoidal structure of $(\cyctens{R}{M}{n})^{C_{p^n}}$ (as defined in \cite{DKNP1}) descends to a lax symmetric monoidal structure on the functor $Q_{p^n}$. We let $R[m]$ denote the $m$-torsion subgroup of $R$ for every integer $m$.

\begin{theorem}\label{Kaledin} Let $M$ be an $R$-bimodule. For every integer $n\geq 1$ there is a surjective natural lax symmetric monoidal transformation
\[
w_n\colon W_{n,p}(R;M/p)=W_{n,p}(R/p;M/p)\twoheadrightarrow Q_{p^n}(R;M).
\]
It is an isomorphism when $R$ is commutative, $R/p$ is perfect, $M$ is a free $R$-module, and the multiplication by $p^{l}$-map $p^{l}\colon R[p^{l+1}]\to R[p]$ is surjective for every $1\leq l\leq n-1$.
\end{theorem}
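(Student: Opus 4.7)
My plan is to construct $w_n$ as a ``next-order'' ghost map reduced modulo $p$, show it descends to the Witt-vector quotient and is lax symmetric monoidal, and then prove the isomorphism statement by induction on $n$ using a comparison of exact sequences.

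\textbf{Step 1 (Construction of $w_n$).} On representatives $(m_0,\dots,m_{n-1})$ with $m_i \in (M/p)^{\otimes_R p^i}$, pick lifts $\tilde m_i \in M^{\otimes_R p^i}$ and set
\[
w_n(m_0,\dots,m_{n-1}) := \sum_{i=0}^{n-1}\tr_{C_{p^{n-i}}}^{C_{p^n}}\bigl(\tilde m_i^{\otimes p^{n-i}}\bigr) \pmod{\im \tr_e^{C_{p^n}}}.
\]
The key independence-of-lift lemma is: if $a,a' \in M^{\otimes_R k}$ satisfy $a-a' \in pM^{\otimes_R k}$, then $a^{\otimes p^{j}}-(a')^{\otimes p^{j}}$ lies in $\im(\tr_e^{C_{p^j}})$. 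For $j=1$, expand $(a'+pc)^{\otimes p}$ multilinearly: terms indexed by proper non-empty subsets $S \subsetneq \{1,\dots,p\}$ form free $C_p$-orbits with coefficients divisible by $p$, realizing them as transfers; the top term $p^p c^{\otimes p}$ is absorbed using $\tr_e^{C_p}(c^{\otimes p})=p\cdot c^{\otimes p}$. For higher $j$, iterate, using $\tr_e^{C_{p^j}} = \tr_{C_{p^{j-1}}}^{C_{p^j}}\circ\tr_e^{C_{p^{j-1}}}$. Finally, $\tr_{C_{p^{n-i}}}^{C_{p^n}}\circ\tr_e^{C_{p^{n-i}}} = \tr_e^{C_{p^n}}$ shows the sum is well-defined modulo $\im\tr_e^{C_{p^n}}$.

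\textbf{Step 2 (Descent, naturality, and surjectivity).} Both functors commute with reflexive coequalisers (for the source by Proposition \ref{Wwelldef}, for the target by inspection), so to check descent through the equivalence relation of Definition \ref{def:rel} it suffices to do so when $(R;M)$ is free. There $W_{n,p}(R/p;M/p)$ is the quotient making $w_0,\dots,w_{n-1}$ jointly injective, and the Dwork-type congruences from Appendix \ref{Dwork}, applied to $(R;M)$ and reduced mod $p$, determine $w_n$ modulo $\im\tr_e^{C_{p^n}}$ in terms of $w_0,\dots,w_{n-1}$. The formula is visibly natural and compatible with the shuffle isomorphism, so $w_n$ is lax symmetric monoidal. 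For surjectivity, in the free case an orbit-analysis shows $(\cyctens{R}{M}{n})^{C_{p^n}}$ is generated by transfers $\tr_{C_{p^{n-i}}}^{C_{p^n}}(x^{\otimes p^{n-i}})$ ($x \in M^{\otimes_R p^i}$, $0\le i\le n$); modulo $\im\tr_e^{C_{p^n}}$ the $i=n$ generators vanish, and the remaining ones are precisely the images of the Teichm\"uller--Verschiebung generators $V^i\tau^{n-i}$ of Proposition \ref{basis}, establishing surjectivity. The general case follows from the free case by reflexive coequalisers.

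\textbf{Step 3 (Isomorphism by induction on $n$).} For $n=1$, both sides are canonically $M/p$: the source since $R$ is commutative, and the target because for $M=R(X)$ the fixed points of $C_p$ on $M^{\otimes_R p} = R(X^{\times p})$ split into free $C_p$-orbits (killed by $\tr_e^{C_p}$) and fixed orbits $x^{\otimes p}$, each contributing an $R/p$-summand; under these identifications $w_1$ becomes the Frobenius, which is bijective since $R/p$ is perfect. For the inductive step, compare the exact sequence
\[
W_{n-1,p}\!\bigl(R/p;(M/p)^{\otimes_{R/p} p}\bigr)_{C_p} \xrightarrow{V} W_{n,p}(R/p;M/p) \xrightarrow{R^{n-1}} W_{1,p}(R/p;M/p) \to 0
\]
from \cite[Proposition 1.43]{DKNP1} with a parallel filtration sequence on $Q_{p^n}(R;M)$ arising from the chain $C_p \le C_{p^n}$ together with the associativity isomorphism $\cyctens{R}{M}{n} \cong \cyctens{R}{(\tens{R}{M}{})}{n-1}$. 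The inductive hypothesis identifies the left terms via $w_{n-1}$ applied to $\tens{R}{M}{}$, and the right terms via $w_1$. The main obstacle is that the filtration on $Q_{p^n}$ need not split cleanly: showing the analogue of $V$ on the $Q$-side is injective and that the connecting maps respect the filtration is exactly where the surjectivity of $p^l\colon R[p^{l+1}]\to R[p]$ is used, to lift $p$-torsion obstructions that appear when computing the failure of freeness of $(M/p)^{\otimes_{R/p} p^i}$ over $R/p$. Together with the perfectness of $R/p$ (controlling the Frobenius identifications between successive filtration quotients), the five lemma then concludes that $w_n$ is an isomorphism.
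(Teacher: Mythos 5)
Your overall architecture matches the paper's: define $w_n$ as the top ghost component modulo $\im\tr_e^{C_{p^n}}$, use a binomial/orbit congruence to handle the mod-$p$ reduction, get surjectivity from the transfer generators of $(\cyctens{R}{M}{n})^{C_{p^n}}$, and prove the isomorphism by induction by comparing the Verschiebung--restriction exact sequence with a parallel one for $Q_{p^{n+1}}$. Your Step 1 congruence lemma is correct and is indeed the key computational input. The genuine gap is in Step 2: you assert that for free $(R;M)$ the group $W_{n,p}(R/p;M/p)$ is the quotient of $\prod_{i=0}^{n-1}\tens{R/p}{(M/p)}{i}$ making the ghost maps jointly injective. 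This is false, because freeness of $(R;M)$ does not make $(R/p;M/p)$ free, and the relation of Definition \ref{def:rel} is computed from a free resolution of $(R/p;M/p)$, not of $(R;M)$; for instance $W_{n,p}(\F_p;\F_p)\cong\Z/p^n$, which is not detected by ghosts valued in $\prod_j\F_p$. Consequently your descent check collapses: two representatives of the same class in $W_{n,p}(R/p;M/p)$ need not admit lifts with equal ghost components in $\prod_j(\cyctens{R}{M}{j})^{C_{p^j}}$, so the Dwork congruences cannot be invoked as you do. The correct assembly (the paper's) first descends the top ghost mod transfer along $R\colon W_{n+1,p}(R;M)\to W_{n,p}(R;M)$, where the free case genuinely is a ghost quotient, and then presents $M/p$ as the reflexive coequaliser of $M'\rightrightarrows M$ with $M'=\{(x,y):x-y\in pM\}$, reducing the remaining descent to the identity $w_n a_*=w_n b_*$ on $W_{n,p}(R;M')$ — which is exactly what your Step 1 congruence gives, pushed down from representatives via surjectivity of the projection. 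Your ingredients suffice, but the route must pass through $W_{n,p}(R;M)$ and this coequaliser; the congruence alone is explicitly not enough.

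A secondary point on Step 3: the strategy (five lemma against the sequence $0\to Q_{p^n}(R;\tens{R}{M}{})_{C_p}\to Q_{p^{n+1}}(R;M)\to Q_p(R;M)\to 0$, with perfectness of $R/p$ making the right-hand comparison map a Frobenius isomorphism) is right, but your account of where the torsion hypothesis enters is off. It has nothing to do with ``failure of freeness of $(M/p)^{\otimes_{R/p}p^i}$''; it is used to identify $\ker(R/p^{l+1}\to R/p)$ with $R/p^{l}$ (equivalently to ensure $R[p]\subset p^{l}R$), which is what makes the kernel of the iterated restriction on $Q_{p^{n+1}}(R;R(X))$ match $Q_{p^n}(R;R(X^{\times p}))_{C_p}$ in the explicit orbit decomposition. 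Left-exactness of the Witt-vector row also requires the injectivity of a single Verschiebung (Proposition \ref{singleVinj}), which your sequence, quoted only as right-exact, does not yet supply.
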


\begin{rem}
When $M=R$, this in particular states that for commutative  rings without $p$-power torsion and with perfect $R/p$, there is a ring isomorphism $W_{n,p}(R/p)\cong R/p^n$. For example  $W_{n,p}(\F_p)=\Z/p^n$.
\end{rem}

\begin{proof}[Proof of \ref{Kaledin}]
We start by observing that the top ghost component $w_n$ of $W_{n+1,p}(R;M)$ modulo transfer descends along the restriction map
\[
\xymatrix@C=60pt{
W_{n+1,p}(R;M)\ar[r]^-{w_n}\ar@{->>}[d]_R&(\cyctens{R}{M}{n})^{C_{p^n}}\ar@{->>}[d]
\\
W_{n,p}(R;M)\ar@{-->}[r]_-{w_n}&Q_{p^n}(R;M)\rlap{\ .}
}
\]
This is because the summand of $w_n(a_0,\dots, a_n)$ which depends on $a_n$ is $\tr^{C_{p^n}}_e(a_n)$ and it therefore vanishes in $Q_{p^n}(R;M)$. We claim that this map further descends along the map $W_{n,p}(R;M)\to W_{n,p}(R;M/p)$ induced by the modulo $p$ reduction $M\to M/p$. We start by computing that for every $0\leq i\leq n-1$, the $i$-th summand of $w_n(a_0+px_0,\dots, a_n+px_n)$ is
\begin{align*}
&\tr^{C_{p^n}}_{C_{p^{n-i}}}(a_i+px_i)^{\otimes p^{n-i}}=\tr^{C_{p^n}}_{C_{p^{n-i}}}\sum_{f\colon p^{n-i}\to 2}s_{f(1)}\otimes \dots \otimes s_{f(p^{n-i})}
  \\&=\tr^{C_{p^n}}_{C_{p^{n-i}}}({a_i}^{\otimes p^{n-i}}+(px_i)^{\otimes p^{n-i}}+\sum_{l=0}^{p^{n-i-1}}\sum_{\{\substack{f\colon p^{n-i}\to 2\\\mathrm{stab}(f)=C_{p^{l}}}\}/C_{p^{n-i}}}\tr^{C_{p^{n-i}}}_{C_{p^l}}(s_{f(1)}\otimes \dots \otimes s_{f(p^{n-i})}))
,
\end{align*}
where $s_1=a_i$ and $s_2=px_i$, and the sum is the decomposed according to the orbits of the $C_{p^{n-i}}$-action on the set of maps $p^{n-i}\to 2$ by precomposition. Every non-constant $f\colon p^{n-i}\to 2$ with stabiliser $C_{p^l}$ needs to have value $2$ on at least $p^l$ points. Thus each tensor product above is divisible by $p^l$, and 
\begin{align*}
&\tr^{C_{p^n}}_{C_{p^{n-i}}}(a_i+px_i)^{\otimes p^{n-i}}=\tr^{C_{p^n}}_{C_{p^{n-i}}}(a_i^{\otimes p^{n-i}})+\sum_{l=0}^{p^{n-i}}\tr^{C_{p^{n}}}_{C_{p^l}}p^ly_l
,
\end{align*}
for some $C_{p^l}$-invariant $y_l\in (\cyctens{R}{M}{n})^{C_{p^l}}$. But then $p^ly_l=\tr_{e}^{C_{p^l}}y_l$, and the right hand side is congruent to $\tr^{C_{p^n}}_{C_{p^{n-i}}}(a_i^{\otimes p^{n-i}})$ modulo the image of $\tr^{C_{p^n}}_{e}$. Note that this is not quite enough to conclude that the map $w_n$ factors over $W_{n,p}(R;M/p)$ as one needs to handle the kernel of $W_{n,p}(R;M) \to W_{n,p}(R;M/p)$. 
Let $M'$ denote the sub-$R$-bimodule of $M \times M$ of those pairs $(x,y) \in M$ with $x-y \in pM$. Then we get a reflexive coequaliser
\[
\xymatrix{M'\ar@<.5ex>[r]^a \ar@<-.5ex>[r]_b & M \ar[l] \ar@{->>}[r] &  M/p, }
\]
where the section is the diagonal and $a$ and $b$ are the projections. Consider the diagram
\[\xymatrix{ \prod_{i=0}^{n-1}\tens{R}{M'}{i} \ar@<.5ex>[r]^{a_\ast} \ar@<-.5ex>[r]_{b_\ast} \ar@{->>}[d]^{\pi} & \prod_{i=0}^{n-1}\tens{R}{M}{i}  \ar@{->>}[d]^{\pi}  &   \\ W_{n,p}(R;M')  \ar@<.5ex>[r]^{a_\ast} \ar@<-.5ex>[r]_{b_\ast} & W_{n,p}(R;M) \ar[r]^{w_n} & Q_{p^n}(R;M). }\]
From the above congruences we know that $w_n \pi a_\ast=w_n \pi b_\ast$ and hence, since the left hand diagram commutes, we get that $w_n a_\ast\pi = w_n  b_\ast \pi$. But $\pi$ is surjective and therefore $w_n a_\ast=w_n b_\ast$. Since 
\[
\xymatrix{W_{n,p}(R;M')\ar@<.5ex>[r]^{a_\ast} \ar@<-.5ex>[r]_{b_\ast} & W_{n,p}(R;M) \ar@{->>}[r] &  W_{n,p}(R;M/p) }
\]
is a coequaliser, this gives a well-defined additive natural transformation
\[
w_n\colon W_{n,p}(R;M/p)\longrightarrow Q_{p^{n}}(R;M)
\]
for every $R$-bimodule $M$, factoring $w_n\colon W_{n,p}(R;M)\longrightarrow Q_{p^{n}}(R;M)$. It is lax symmetric monoidal because $w_n$ is. It is surjective because the fixed-points $(\cyctens{R}{M}{n})^{C_{p^n}}$ are generated additively by elements of the form 
\[\tr^{C_{p^n}}_{C_{p^{n-i}}}a_i^{\otimes p^{n-i}}=w_n(0,\dots,a_i,\dots,0)\]
for $0\leq i\leq n$, where $a_i\in \cyctens{R}{M}{i}$ (this can be verified first for free bimodules, and then by resolving $(R;M)$ by a free bimodule).

Let us now suppose that $R$ is commutative with perfect $R/p$, and that $M=R(X):=\oplus_XR$ is a free $R$-module. Under the isomorphism $\tens{R}{R(X)}{n}\cong R(X^{\times p^n})$, we write the transfer as the map
\[
\tr^{C_{p^n}}_e\colon R(X^{\times p^n})\longrightarrow R(X^{\times p^n})^{C_{p^n}}
\]
which sends a basis element $(x_1,\dots ,x_{p^n})$ to $\sum_{\sigma\in C_{p^n}}\sigma(x_1,\dots ,x_{p^n})$.
For $n=1$ the invariants $R(X^{\times p})^{C_{p}}$ decompose as
\[
R(X^{\times p})^{C_{p}}\cong R(X^{\times p}/{C_{p}})\cong R(X)\oplus R((X^{\times p}\backslash \Delta)/C_p)
\]
where the first summand is generated by the diagonal elements $(x,\dots,x)$, and the second summand by the transferred elements. Thus the second summand is quotiented off in $Q_{p}(R;R(X))$, and the first summand is hit by the multiplication by $p$ map. This provides the identification
\[
w_1\colon W_{1,p}(R;R(X)/p)=R/p(X) \stackrel{\cong}{\longrightarrow} Q_p(R;R(X))
\]
that sends $rx$ to $r^p(x,\dots,x)$ for every $r\in R$ and $x\in X$. 

Now suppose inductively that $w_{n}$ is an isomorphism and that $p^{n}\colon R[p^{n+1}]\to R[p]$ is surjective, and let us show that $w_{n+1}$ is also an isomorphism.
The abelian group of invariants $R(X^{\times p^n})^{C_{p^n}}$ decomposes as
\[
R(X^{\times p^n})^{C_{p^n}}\cong R(X^{\times p^n}/C_{p^n})\cong R(X)\oplus R((X^{\times p}\backslash X)/C_p)\oplus 
\dots\oplus R((X^{\times p^n}\backslash X^{\times p^{n-1}})/C_{p^n})
\]
where $X^{\times p^{i-1}}\subset X^{p^{i}}$ via the $p^{i-1}$-power of the diagonal map $\Delta\colon X\to X^{\times p}$, and the isomorphism sends a basis element $(x_1,\dots,x_{p^{i}})$ in $X^{\times p^{i}}\backslash X^{\times p^{i-1}}$ to
\[
\tr^{C_{p^{n}}}_{C_{p^{n-i}}}(x_1,\dots,x_{p^{i}},x_1,\dots,x_{p^{i}},\dots,x_1,\dots,x_{p^{i}}).
\]
The transfer map hits the $X^{\times p^{i}}\backslash X^{\times p^{i-1}}$ summand with the multiplication by $p^{n-i}$ map, thus inducing an isomorphism
\[
Q_{p^n}(R;R(X))\cong R/p^n(X)\oplus R/p^{n-1}((X^{\times p}\backslash X)/C_p)\oplus 
 \dots\oplus R/p((X^{\times p^{n-1}}\backslash X^{\times p^{n-2}})/C_{p^{n-1}}).
\]
We can therefore define a map $R\colon Q_{p^{n+1}}(R;R(X))\to Q_{p^n}(R;R(X))$ which under this decomposition collapses the last summand, and which on the summand $X^{\times p^{i}}\backslash X^{\times p^{i-1}}$ is the sum of the modulo $p^{n-i}$ reductions $R/p^{n+1-i}\to R/p^{n-i}$. 
We claim that there is a short exact sequence
\[
0\to Q_{p^n}(R;\tens{R}{R(X)}{})_{C_p}\longrightarrow Q_{p^{n+1}}(R;R(X))\stackrel{R^n}{\longrightarrow}Q_{p}(R;R(X))\to 0
\]
where $\tens{R}{R(X)}{}\cong R(X^{\times p})$ is also a free $R$-module, and the $C_p$-action on $Q_{p^n}(R;\tens{R}{R(X)}{})$ is induced by the Weyl action on $(\tens{R}{(\tens{R}{R(X)}{})}{n})^{C_{p^n}}\cong (\tens{R}{R(X)}{n+1})^{C_{p^n}}$. Indeed under the decomposition above the kernel $K_n$ of $R^n$ is
\begin{align*}
K_n&\cong R/p^{n}(X)\oplus R/p^{n}((X^{\times p}\backslash X)/C_p)\oplus  R/p^{n-1}((X^{\times p^2}\backslash X^{\times p})/C_{p^2})\oplus
 \dots\oplus R/p((X^{\times p^{n}}\backslash X^{\times p^{n-1}})/C_{p^{n}})
 \\&\cong R/p^{n}(X^{\times p}/C_p)\oplus  R/p^{n-1}((X^{\times p^2}\backslash X^{\times p})/C_{p^2})\oplus
 \dots\oplus R/p((X^{\times p^{n}}\backslash X^{\times p^{n-1}})/C_{p^{n}})
\\&\cong (R/p^{n}(X^{\times p})\oplus  R/p^{n-1}((X^{\times p^2}\backslash X^{\times p})/C_{p})\oplus
 \dots\oplus R/p((X^{\times p^{n}}\backslash X^{\times p^{n-1}})/C_{p^{n-1}}))_{C_p}
 \\&\cong Q_{p^n}(R;R(X^{\times p}))_{C_p},
\end{align*}
where the first isomorphism is due to the fact that since $p^{n}\colon R[p^{n+1}]\to R[p]$ is surjective, the kernel of the $p$-reduction map $R/p^{n+1}\to R/p$ is $R/p^n$. The second isomorphism collects the first two summands. The third isomorphism commutes the quotient $S(Y)_{C_p}\cong S(Y/C_p)$ for the free $S$-module on a $C_{p}$-set $Y$. The last one is again by the decomposition of $Q_{p^n}$ above using that $R(X^{\times p})$ is free.
Thus the bottom row of the commutative diagram
\[
\xymatrix{
0\ar[r]&W_{n,p}(R;R(X^{\times p})/p)_{C_p}\ar[r]^-V\ar[d]^{w_{n}}_{\cong}&W_{n+1,p}(R;R(X)/p)\ar[r]^{R^n}\ar[d]^{w_{n+1}}&W_{1,p}(R;R(X)/p)\ar[d]^{(-)^{p^{n}}}\ar[r]&0
\\
0\ar[r]&Q_{p^n}(R;R(X^{\times p}))_{C_p}\ar[r]^-V&Q_{p^{n+1}}(R;R(X))\ar[r]^{R^n}&Q_{p}(R;R(X))\ar[r]&0
}
\]
is exact, where the bottom map $V$ is induced by $\tr_{C_{p^n}}^{C_{p^{n+1}}}\colon R(X^{\times p^{n+1}})^{C_{p^n}}_{C_p}\to  R(X^{\times p^{n+1}})^{C_{p^{n+1}}}$.
The top row is exact by Proposition \ref{singleVinj} since $R$ is commutative.
Moreover $w_{n}$ is an isomorphism by the inductive assumption.  The right vertical map is the map $R/p(X)\to R/p(X)$ which sends $r\cdot x$ to  $r^{p^n}\cdot x$, which is an isomorphism since $R/p$ is assumed to be perfect.
 It follows that $w_{n+1}$ is an isomorphism.
\end{proof}

%

In  \cite[Cor. 2.5]{KaledinncW}  Kaledin shows that there is  a unique functor $\overline{W}_n$ from $\F_p$-vector spaces to abelian groups such that $\overline{W}_n(A/p)=Q_{p^n}(\Z;A)$ for every free abelian group $A$. Thus Theorem \ref{Kaledin} immediately gives the following.
\begin{cor} There is a natural isomorphism of abelian groups $\overline{W}_n\cong W_{n,p}(\F_p;-)$.\qed
\end{cor}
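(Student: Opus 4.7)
The plan is to apply Theorem \ref{Kaledin} with $R=\Z$ to obtain, for every free abelian group $A$, a natural isomorphism
\[
W_{n,p}(\F_p;\, A/p) \;\cong\; Q_{p^n}(\Z;\, A),
\]
and then to invoke the uniqueness part of Kaledin's characterisation from \cite[Cor.~2.5]{KaledinncW}.

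First, I would verify that the hypotheses of Theorem \ref{Kaledin} are automatically satisfied in this case. The ring $\Z$ is commutative, the quotient $\Z/p = \F_p$ is a perfect field, the module $A$ is free as a $\Z$-module by assumption, and the condition that the multiplication map $p^l\colon \Z[p^{l+1}] \to \Z[p]$ be surjective is vacuous because both torsion groups vanish. Moreover the identification $W_{n,p}(\Z; A/p) = W_{n,p}(\Z/p; A/p) = W_{n,p}(\F_p; A/p)$ already appears in the statement of Theorem \ref{Kaledin}. Thus the theorem provides a natural isomorphism $w_n\colon W_{n,p}(\F_p;\, A/p) \xrightarrow{\cong} Q_{p^n}(\Z; A)$, natural in the free abelian group $A$ (and in particular in maps between free resolutions of $\F_p$-vector spaces).

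Second, I would appeal to Kaledin's result: the functor $\overline{W}_n$ is uniquely determined, up to natural isomorphism, by the requirement that $\overline{W}_n(A/p) = Q_{p^n}(\Z; A)$ for every free abelian group $A$. Since every $\F_p$-vector space $V$ is of the form $A/p$ for some free abelian lift $A$, and since the functor $W_{n,p}(\F_p; -)$ on $\F_p$-vector spaces satisfies exactly this condition by the isomorphism just established, the uniqueness forces a natural isomorphism $W_{n,p}(\F_p; -) \cong \overline{W}_n$.

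There is really no obstacle beyond matching naturality: the nontrivial content is entirely packaged into Theorem \ref{Kaledin}, and once that is in hand the corollary is a direct appeal to Kaledin's universal property. The only point requiring a moment of care is to check that the isomorphism $w_n$ is natural in morphisms of free abelian groups lifting a given map of $\F_p$-vector spaces, so that it genuinely descends to a natural transformation of functors on $\F_p$-vector spaces; this is automatic from the naturality of $w_n$ in the pair $(R;M)$ recorded in Theorem \ref{Kaledin}.
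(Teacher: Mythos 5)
Your proposal is correct and matches the paper's argument exactly: the paper also deduces the corollary by applying Theorem \ref{Kaledin} to $(\Z;A)$ for $A$ free abelian (where all hypotheses hold trivially) and then invoking the uniqueness statement of \cite[Cor.~2.5]{KaledinncW}. Nothing further is needed.
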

This construction is lifted in \cite[Prop 2.3]{KaledinWpoly}  to a functor $\overline{W}_n$ from $k$-modules to $W_{n,p}(k)$-modules for every perfect field $k$ of characteristic $p$ (and in fact further to a category of Mackey functors). It is determined by a similar formula 
\[\overline{W}_n(E/p)=Q_{p^n}(W_{m,p}(k);E)\]
for every free $W_{m,p}(k)$-module $E$ and every $m\geq n$.

\begin{cor} Let $k$ be a perfect field of characteristic $p$, $m\geq n\geq 1$ integers and $V$ a $k$-vector space. There is a natural isomorphism of $W_{n,p}(k)$-modules
\[W_{n,p}(k;V)\cong \overline{W}_n(V)\] with the polynomial Witt vectors $\overline{W}_n(V)$ of \cite{KaledinWpoly} and \cite{KaledinncW}.
\end{cor}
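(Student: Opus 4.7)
The plan is to deduce the corollary directly from Theorem \ref{Kaledin} applied to the ring $R = W_{m,p}(k)$ (which under Corollary \ref{CompLars} is the classical ring of Witt vectors $W_m(k)$). Since $k$ is perfect of characteristic $p$, the ring $W_m(k)$ is the quotient $W(k)/p^m$ of the complete DVR $W(k)$; in particular it is commutative, its reduction mod $p$ is $k$ (which is perfect), and its $p^{l+1}$-torsion is $p^{m-l-1}W_m(k) \cong W_{l+1}(k)$ on which multiplication by $p^l$ is the projection $W_{l+1}(k)\twoheadrightarrow W_1(k)=k$, hence surjective for every $1\leq l\leq n-1$ as soon as $m\geq n$. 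So all hypotheses of Theorem \ref{Kaledin} are met.

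Now given a $k$-vector space $V$, choose a basis to write $V \cong k(X)$ and lift to the free $W_m(k)$-module $E := W_m(k)(X)$, so that $E/p \cong V$. Theorem \ref{Kaledin} gives a natural isomorphism
\[
w_n \colon W_{n,p}(W_m(k);E/p) \xrightarrow{\cong} Q_{p^n}(W_m(k);E),
\]
and the identity $W_{n,p}(W_m(k);E/p) = W_{n,p}(W_m(k)/p;E/p) = W_{n,p}(k;V)$ from Theorem \ref{Kaledin} rewrites the left-hand side as $W_{n,p}(k;V)$. By Kaledin's defining formula $\overline{W}_n(E/p) = Q_{p^n}(W_m(k);E)$, the right-hand side is $\overline{W}_n(V)$. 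This produces the desired bijection.

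The remaining issue is to promote this to a natural isomorphism of $W_{n,p}(k)$-modules. Naturality follows from the fact that both functors satisfy the same universal property on the full subcategory of free $W_m(k)$-modules reduced mod $p$, which by Kaledin \cite[Prop 2.3]{KaledinWpoly} already pins down a functor on all of $k$-vector spaces: given $f\colon V\to V'$ with chosen lift $\tilde f\colon E\to E'$ between free $W_m(k)$-modules, the square built from $w_n$ commutes by naturality of $w_n$ in the pair $(R;M)$, and independence of the lift is already part of Kaledin's construction. Compatibility with the $W_{n,p}(k) = W_m(k)/p$-module structure follows from the lax symmetric monoidality asserted in Theorem \ref{Kaledin}: both the multiplication on $W_{n,p}(k)$ and its action on $W_{n,p}(k;V)$ are obtained from the same monoidal structure that induces the module structure on $Q_{p^n}$, so $w_n$ is a map of modules by construction.

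The only real check is therefore the torsion hypothesis on $W_m(k)$, which as explained above is immediate from the classical structure theory of Witt vectors of a perfect field; everything else is a formal consequence of Theorem \ref{Kaledin} together with the characterization of Kaledin's $\overline{W}_n$ on free modules.
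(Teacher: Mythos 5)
Your proposal is correct and follows essentially the same route as the paper: reduce via Kaledin's characterisation of $\overline{W}_n$ on reductions of free $W_{m,p}(k)$-modules, apply Theorem \ref{Kaledin} to $R=W_{m,p}(k)$ (whose hypotheses, including the torsion condition you verify explicitly, hold since $k$ is perfect and $m\geq n$), and obtain the $W_{n,p}(k)$-module structure from the symmetric monoidality of $w_n$. The only difference is that you spell out the verification of the $p$-torsion hypothesis and the naturality bookkeeping, which the paper leaves implicit.
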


\begin{proof} By Kaledin's characterisation of the functor $\overline{W}_n$ it is sufficient to show that $W_{n,p}(k;E/p)\cong Q_{p^n}(W_{m,p}(k);E)$ for every free $W_{m,p}(k)$-module $E$. Since $k$ is perfect of characteristic $p$ and $m\geq n$, the commutative ring $W_{m,p}(k)$ satisfies the conditions of Theorem \ref{Kaledin} and
\[
W_{n,p}(k;E/p)\cong W_{n,p}(W_{m,p}(k)/p;E/p)\stackrel{w_n}{\longrightarrow} Q_{p^n}(W_{m,p}(k);E)
\]
is an isomorphism. Since $w_n$ is symmetric monoidal this is an isomorphism of  $W_{n,p}(k;k)=W_{n,p}(k)$-modules. 
\end{proof}
 
 \begin{rem}
 The first Corollary can be deduced by the second one as follows. Let $A$ be a free abelian group. Since $A/p\cong (A\otimes\Z^{\wedge}_p)/p$, the corollary for the perfect field $k=\F_p$ gives an isomorphism
 \[
 W_{n,p}(\F_p;A/p)\cong  W_{n,p}(\F_p;(A\otimes\Z^{\wedge}_p)/p)\cong Q_{p^n}(\Z^{\wedge}_p;A\otimes\Z^{\wedge}_p)\cong Q_{p^n}(\Z;A)\otimes\Z^{\wedge}_p\cong Q_{p^n}(\Z;A)
 \]
 where the last isomorphism holds because $p^n=\tr^{C_{p^n}}_e(1)$ acts as zero on $Q_{p^n}(\Z;\Z)$.
 \end{rem}

\section{Witt vectors with coefficients in homotopy theory}\label{sec:TR}

The topological restriction homology (TR) of a ring spectrum $R$ with coefficients in an $R$-bimodule $M$ was introduced by Lindenstrauss and McCarthy in \cite{LMcC}, as a version with coefficients of the cyclic bar construction. It is defined for every integer $n\geq 0$ as the fixed-points of a genuine $C_n$-spectrum $M^{\varowedge_Rn}$, a homotopical analogue of the algebraic cyclic tensor powers appearing earlier in the present paper, constructed as the geometric realisation of a simplicial object with $k$-simplicies
\[
(M^{\varowedge_Rn})_k:=(M\wedge R^{\wedge k})^{\wedge n},
\]
and with a simplicial structure analogous to that of the $n$-fold subdivision of the cyclic bar construction of $R$. The underlying spectrum is in fact equivalent to $\THH(R;M^{\wedge_Rn})$. In order to derive this construction appropriately and obtain a genuine equivariant spectrum the authors employed B\"okstedt's model for the smash product, and in turn defined $\TR_{\langle n\rangle}$ as the fixed-points
\[
\TR_{\langle n\rangle}(R;M)=(M^{\varowedge_Rn})^{C_n}.
\]
The foundations of this theory have been reworked in \cite{polygonic} by McCandless and the second and third authors, and we now review the bases of this construction.

\begin{defn}\cite{polygonic} 
A \emph{polygonic spectrum} $X$ consists of a spectrum $X_d$ with an action of the cyclic group $C_d$ for every integer $d \geq 1$, together with $C_d$-equivariant Frobenius maps
\[
\phi_{p,d}\colon X_d\longrightarrow (X_{dp})^{tC_p}
\]
for every prime $p$ and $d \geq 1$, where $(-)^{tC_p}$ denotes the Tate construction of the $C_p$-action.

Given a truncation set $T$ (a subset of $\mathbb{N}_{>0}$ such that if $xy \in T$, then $x \in T$ and $y \in T$), a $T$-\emph{typical polygonic spectrum} is a polygonic spectrum with $X_d=0$, whenever $d \notin T$. We will mainly focus on the case where $T=\langle n\rangle$ is the truncation set of divisors of an integer $n\geq 1$, especially when $n$ is a power of a prime $p$ .
\end{defn}

In \cite{polygonic} the authors introduce a stable and presentable  $\infty$-category $\PgcSp$ of polygonic spectra, and the full $\infty$-subcategory  $\PgcSp_{T}$ of $T$-typical polygonic spectra. Given an inclusion of truncation sets  $T' \subset T$, the corresponding inclusion $\PgcSp_{T'} \hookrightarrow \PgcSp_T$ has a left adjoint $\Res^{T}_{T'} \colon \PgcSp_T \to  \PgcSp_{T'}$ which is a localisation, by  \cite[Section 2 and Construction 2.11]{polygonic}. On objects $\Res^{T}_{T'}$ sends a $T$-typical polygonic spectrum $X=\{X_d\}_{d \geq 1}$ to the $T'$-typical polygonic spectrum $\Res^{T}_{T'}(X)=\{Y_d\}_{d \geq 1}$ with $Y_d=X_d$ whenever $d \in T'$ and $Y_d=0$ otherwise.
 In particular, we have the localisation
\[\xymatrix{\PgcSp_{\langle m\rangle} \ar@<0.5ex>[r]^{\Res^{\langle m\rangle}_{\langle n\rangle}} & \PgcSp_{\langle n\rangle}, \ar@<0.5ex>[l] }\]
whenever $n$ divides $m$. 

The $\infty$-category $\PgcSp$ is symmetric monoidal by \cite[Construction 2.14]{polygonic}. The symmetric monoidal structure comes from levelwise tensoring the entries and using the lax symmetric monoidality of the Tate construction. 

\begin{example} \label{example: polygonic}
\begin{enumerate}
\item Any cyclotomic spectrum $X$ as in \cite{NikSchol} defines a polygonic spectrum, with $X_{d}=X$ for all $d \geq 1$, and $\phi_{p,d}\colon X\to X^{tC_p}$ the cyclotomic Frobenius of $X$ for every $d$ and any prime $p$. In particular any spectrum $X$ with trivial action defines a polygonic spectrum $X^{\mathrm{triv}}$, for example the sphere spectrum $\mathbb{S}^{\mathrm{triv}}$. By applying the truncation we obtain a $T$-typical polygonic spectrum $X^{\mathrm{triv}}_T=\Res_{T} X^{\mathrm{triv}}$, for any truncation set $T$.

Further, any $p$-typical cyclotomic spectrum $X$ as in \cite{NikSchol} defines a $\langle p^{\infty}\rangle$-typical polygonic spectrum, where $\langle p^{\infty}\rangle$ is the truncation set consisting of all the powers of $p$. It consists of the spectra $X_{p^k}=X$ for all $k \geq 0$ with maps $\phi_{p,p^k}\colon X\to X^{tC_p}$ the cyclotomic Frobenius of $X$ for every $k \geq 0$, and $X_d=0$ if $d$ is not a power of $p$. After applying $\Res^{\langle p^{\infty}\rangle}_{\langle p^n\rangle}$, we get a $\langle p^n\rangle$-typical polygonic spectrum with $X_{p^k}=X$ for all $0 \leq k \leq n$.

\item Every space $M$ defines a polygonic spectrum $\Sigma^{\infty}_+M^{\times}$, with $(\Sigma^{\infty}_+ M^{\times })_{n}=\Sigma^{\infty}_+ M^{\times n}$ and with the Frobenius maps defined by the composite of the canonical maps
\[
\Sigma^{\infty}_+M^{\times n} \to \Sigma^{\infty}_+(M^{\times pn})^{hC_p}\to (\Sigma^{\infty}_+ M^{\times pn})^{hC_p}\to (\Sigma^{\infty}_+ M^{\times pn})^{tC_p},
\]
where the first map is the diagonal, the second is the unique map into the limit and the last map is the canonical map from the homotopy fixed points to the Tate construction. By applying the appropriate truncation functors we also obtain its $T$-typical versions. In particular, we get a $\langle p^n\rangle$-typical polygonic spectrum $\Res_{\langle p^n\rangle}\Sigma^{\infty}_+M^{\times}$ with $(\Res_{\langle p^n\rangle}\Sigma^{\infty}_+M^{\times})_{p^k}=\Sigma^{\infty}_+ M^{\times p^k}$ for $0 \leq k \leq n$.

\item For every ring spectrum $R$ and $R$-bimodule $M$, there is a polygonic spectrum $\underline{\THH}(R;M)$ with $\underline{\THH}(R;M)_{n}=\THH(R;M^{\wedge_R n})$ and Frobenius maps
\[
\phi_{p,n}\THH(R;M^{\wedge_R n})\longrightarrow \THH(R;M^{\wedge_R pn})^{tC_p}
\]
defined on the cyclic bar construction from the Tate diagonals, see \cite[Construction 6.31]{polygonic}.
\end{enumerate}
\end{example}

\begin{defn}\cite{polygonic} 
For any $T$-typical polygonic spectrum $X$, one defines $\TR_T(X)$ to be the mapping spectrum out of the sphere spectrum 
\[\TR_T(X)=\map_{\PgcSp_T}(\mathbb{S}^{\mathrm{triv}}_T, X).\]
For a ring spectrum $R$ and $R$-bimodule $M$, we let $\TR_T(R;M):=\TR_T(\Res_{T}\underline{\THH}(R;M))$.
\end{defn}

Since $\mathbb{S}^{\mathrm{triv}}_T$ is the unit of the monoidal structure on $T$-typical polygonic spectra, we have that $\TR_T$ is a lax monoidal functor.
Let $\bimod_{\mathbb{S}}$ denote the $\infty$-category of spectral bimodules. Then the functor 
\[\underline{\THH} \colon \bimod_{\mathbb{S}} \to \PgcSp \]
sending $(R;M)$ to $\underline{\THH}(R;M)$ is a lax symmetric monoidal functor, by \cite[Section 6]{polygonic}. Hence all in all we conclude that $\TR_T(R;M)$ is a lax symmetric monoidal functor from $\bimod_{\mathbb{S}}$ to spectra. 

In \cite[Proposition 2.10]{polygonic}, the authors provide an equaliser formula for $\TR_T(X)$ analogous to the description of topological cyclic homology of \cite{NikSchol} and \cite{BluMan}. In the case of $T={\langle p^n\rangle}$, this is the equaliser
\[
\TR_{\langle p^n\rangle}(R;M)=eq\left(\xymatrix{
\prod_{i=0}^n\THH(R;M^{\wedge_R p^i})^{hC_{p^i}}\ar@<.5ex>[r]\ar@<-.5ex>[r]
&
\prod_{i=0}^{n-1}(\THH(R;M^{\wedge_R p^{i+1}})^{tC_p})^{hC_{p^i}}
}
\right).
\]
One of the maps of the equaliser is the composite
\[
\prod_{i=0}^n\THH(R;M^{\wedge_R p^i})^{hC_{p^i}}\to \prod_{i=0}^{n-1}\THH(R;M^{\wedge_R p^i})^{hC_{p^i}}\longrightarrow \prod_{i=0}^{n-1}(\THH(R;M^{\wedge_R p^{i+1}})^{tC_p})^{hC_{p^i}}
\]
of the projection and the product of the homotopy fixed-points of the Frobenius maps. The other map in the equaliser is the composite
\[
\prod_{i=0}^n\THH(R;M^{\wedge_R p^i})^{hC_{p^i}}\to \prod_{i=1}^{n}\THH(R;M^{\wedge_R p^i})^{hC_{p^i}}\longrightarrow \prod_{i=1}^{n}(\THH(R;M^{\wedge_R p^{i}})^{tC_p})^{hC_{p^{i-1}}}
\]
of the other projection followed by the product of the homotopy $C_{p^{i-1}}$-fixed points of canonical maps $\THH(R;M^{\wedge_R p^i})^{hC_{p}}\to \THH(R;M^{\wedge_R p^i})^{tC_{p}}$.

\begin{example}\label{knowncalculations}
Let $(R;M)$ be a bimodule spectrum, where $R$ and $M$ are connective. 
\begin{enumerate}
\item By definition $\TR_{\langle 1\rangle}(R;M)=\THH(R;M)$ is the topological Hochschild homology of $R$ with coefficients in $M$. Thus $\pi_0\TR_{\langle 1\rangle}(R;M)\cong \pi_0M/[\pi_0R,\pi_0M]\cong W_{\langle 1\rangle}(\pi_0R;\pi_0M)$.
\item When $R=M$, we have that $\TR_{\langle p^n\rangle}(R;R)=\TR^{n+1}(R)$ is the classical $p$-typical TR of \cite{BHM}. Thus by the calculations of \cite{Wittvect} and \cite{HesselholtncW} there is a natural isomorphism
\[
W_{\langle p^n\rangle}(\pi_0R;\pi_0R)\cong \pi_0\TR_{\langle p^n\rangle}(R;R)
\]
which is multiplicative when $R$ is commutative.
\item When $R=\mathbb{S}$ is the sphere spectrum and $M=A$ is a connective spectrum, there is a natural equivalence
\[
\TR_{\langle p^n\rangle}(\mathbb{S};A)=(N_{e}^{C_{p^n}}A)^{C_{p^n}},
\]
where the right-hand side is the genuine fixed-points of the Hill-Hopkins-Ravenel norm of $A$ of \cite{HHR}. Indeed $\THH(\mathbb{S};A)=A$, and the Frobenius maps above
\[
\phi\colon A^{\wedge p^i}\to (A^{\wedge p^{i+1}})^{tC_{p}}
\]
are by construction the Tate diagonals of the spectra $A^{\wedge p^i}$. By identifying the geometric fixed-points spectrum $(N_{e}^{C_{p^n}}A)^{\Phi C_{p^i}}$ with $A^{\wedge p^{n-i}}$ we find that the equaliser formula for $\TR_{\langle p^n\rangle}(\mathbb{S};A)$ above is equivalent to the iterated pullback of \cite[Corollary II.4.7]{NikSchol} which describes the genuine fixed points spectrum $(N_{e}^{C_{p^n}}A)^{C_{p^n}}$. When $A=H\F_p$, the components of this norm is the ring of $(n+1)$-truncated $p$-typical Witt vectors
\[\pi_0(N_{e}^{C_{p^n}}H\F_p)^{C_{p^n}}\cong W_{\langle p^n\rangle}(\F_p),\]
by work of Mazur (see \cite[Proposition 5.23]{BGHL}). For the prime $p=2$ on the other hand, there is a natural ring isomorphism
\[\pi_0(N_{e}^{C_{2}}HA)^{C_{2}}\cong W_{\langle 2\rangle}(\Z;A),\]
for every ring $A$ by \cite[Proposition 5.5]{THRmodels} (compare with \ref{ex:solid}).
\end{enumerate}
\end{example}

\subsection{Witt vectors with coefficients and TR with coefficients}\label{secnorm}

We now state the main result of this section, which extends the calculations of Example \ref{knowncalculations} to all bimodules. 

\begin{theorem}\label{pi0norm}
Let $R$ be a connective ring spectrum and $M$ a connective $R$-bimodule. There is an isomorphism
\[
W_{\langle p^n\rangle}(\pi_0R;\pi_0M)\cong\pi_0\TR_{\langle p^n\rangle}(R;M),
\]
which is moreover natural in $(R;M)$ and monoidal.
In particular for every connective spectrum $A$, this gives an isomorphism $W_{\langle p^n\rangle}(\Z;\pi_0 A)\cong \pi_0(N_{e}^{C_{p^n}}A)^{C_{p^n}}$ with the Hill-Hopkins-Ravenel norm construction, which is a ring isomorphism when $A$ is a ring spectrum.
\end{theorem}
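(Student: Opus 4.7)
The plan is induction on $n$, comparing the fibre sequence for $\TR$ from \cite{polygonic} with the algebraic exact sequence recalled in \S\ref{secop} from \cite[Proposition 1.43]{DKNP1}. The base case $n=0$ is immediate: $\TR_{\langle 1 \rangle}(R;M)=\THH(R;M)$, whose $\pi_0$ for connective $R$ and $M$ is $\pi_0 M/[\pi_0 R,\pi_0 M]\cong W_{\langle 1\rangle}(\pi_0 R;\pi_0 M)$ by Example \ref{exrel}(3).

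For the inductive step I would invoke a fibre sequence of the form
\[
\THH(R;M^{\wedge_R p^n})_{hC_{p^n}}\xrightarrow{V^n}\TR_{\langle p^n\rangle}(R;M)\xrightarrow{R}\TR_{\langle p^{n-1}\rangle}(R;M),
\]
generalising Hesselholt--Madsen's classical sequence for $\TR^{n+1}$. In the polygonic framework this should arise from a cofibre sequence relating the truncations $\Res_{\langle p^{n-1}\rangle}$ and $\Res_{\langle p^n\rangle}$ after mapping out of the unit $\mathbb{S}^{\mathrm{triv}}_{\langle p^n\rangle}$. Connectivity of $R$ and $M$ gives $\pi_0\THH(R;M^{\wedge_R p^n})\cong\cyctens{\pi_0 R}{\pi_0 M}{n}$ by a K\"unneth/Tor argument, and since $\pi_0$ of a connective homotopy-orbit spectrum is the orbits on $\pi_0$, applying $\pi_0$ to the fibre sequence produces a right-exact sequence whose outer terms are canonically identified with those of the algebraic exact sequence
\[
(\cyctens{\pi_0 R}{\pi_0 M}{n})_{C_{p^n}}\xrightarrow{V^n}W_{\langle p^n\rangle}(\pi_0 R;\pi_0 M)\xrightarrow{R}W_{\langle p^{n-1}\rangle}(\pi_0 R;\pi_0 M)\to 0.
\]

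Next I would construct the comparison map $W_{\langle p^n\rangle}(\pi_0 R;\pi_0 M)\to\pi_0\TR_{\langle p^n\rangle}(R;M)$ using the universal property of Theorem \ref{existence}: both sides carry compatible Teichm\"uller and Verschiebung operations (the topological ones coming from the unit of $\THH$ and from the fibre sequence above), and the equaliser projections supply natural topological ghost components $\pi_0\TR_{\langle p^n\rangle}(R;M)\to(\cyctens{\pi_0 R}{\pi_0 M}{j})^{C_{p^j}}$ that are additive and monoidal. With this map in hand, naturality yields a commutative ladder with exact rows whose outer vertical arrows are isomorphisms---the right one by the inductive hypothesis and the left one by the identification above---and a five-lemma-style diagram chase delivers surjectivity of the middle arrow. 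For injectivity one must check that the connecting morphism $\pi_1\TR_{\langle p^{n-1}\rangle}(R;M)\to\pi_0\THH(R;M^{\wedge_R p^n})_{hC_{p^n}}$ hits exactly the kernel of the algebraic $V^n$; I would verify this by reducing to free bimodules, where $V^n$ is injective by \cite[Proposition 1.44]{DKNP1} and both sides can be computed explicitly via the ghost map, and then propagating to general $(R;M)$ using that $W_{n+1,p}$ commutes with reflexive coequalisers (Proposition \ref{Wwelldef}).

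Monoidality is then automatic from the lax symmetric monoidal structures on both functors combined with the preservation of Teichm\"uller elements under the comparison map; the statement about the Hill--Hopkins--Ravenel norm follows by specialising $R=\mathbb{S}$ via Example \ref{knowncalculations}(3). The hard part will be the compatibility of the topological Verschiebung coming from the polygonic fibre sequence with the algebraic one---essentially the same subtlety that appears in Hesselholt--Madsen's classical computation---together with the connecting-morphism analysis needed for injectivity; both should be dispatchable by the free-resolution reduction just sketched.
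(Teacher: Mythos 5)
Your strategy---induction on $n$ comparing the polygonic fibre sequence with the algebraic exact sequence, ghost components, and reduction to free bimodules---is exactly the route the paper takes, but the two steps you treat as routine are where essentially all of the work lies, and as written they are gaps. First, Theorem \ref{existence} is a \emph{uniqueness} statement about the group structure on $W_{n+1,p}$, not a universal mapping property, so it does not produce the comparison map. One has to define it on representatives by $I_{n+1}(m_0,\dots,m_n)=\sum_i V^i\tau^{n-i}(m_i)$, prove the identity $\overline{w}_j\circ I_{n+1}=w_j$ with $\overline{w}_j=F^jR^{n-j}$ (this uses the double-coset computation $F^jV^j=\sum_{\sigma\in C_{p^j}}\sigma$ and the relations among $R,F,V,\tau$ on $\pi_0\TR$), and then prove that $\overline{w}$ is injective for free bimodules---itself an induction using the fibre sequence and the injectivity of the transfer $\tr_e^{C_{p^n}}$ on free objects. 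Only then does the candidate map descend from $\prod_i\tens{R}{M}{i}$ to the quotient $W_{n+1,p}(R;M)$ and become injective. This also makes your proposed analysis of the connecting morphism $\pi_1\TR_{\langle p^{n-1}\rangle}\to\pi_0\THH(R;M^{\wedge_Rp^n})_{hC_{p^n}}$ unnecessary: injectivity of the topological ghost on free bimodules kills it without ever computing its image.

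Second, your propagation from the free case to general $(R;M)$ is incomplete. Extending a natural isomorphism from free bimodules along reflexive coequalisers requires that \emph{both} sides commute with reflexive coequalisers: for $W_{n+1,p}$ this is Proposition \ref{Wwelldef}, but for $\pi_0\TR_{\langle p^n\rangle}(H(-);H(-))$ it is a nontrivial statement (Proposition \ref{Axiomatic}(vii)) whose proof needs that $\THH$ commutes with sifted colimits, the fibre sequences, and the $1$-connectivity lemma. That same connectivity lemma (Corollary \ref{pi0dependence}) is also what allows you to replace $(R;M)$ by $(H\pi_0R;H\pi_0M)$ in the first place, so that free resolutions of discrete bimodules are even available; you use this reduction implicitly but never justify it. Neither ingredient is automatic, and without them the free-case isomorphism does not extend.
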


\begin{rem}
When $A=\Sigma_{+}^{\infty}X$ is the suspension spectrum of a space $X$, the norm $N^{C_{p^n}}_e(\Sigma^{\infty}_+X)\cong \Sigma^{\infty}_+X^{\times p^n}$ is again a suspension spectrum, and the tom Dieck splitting provides a canonical isomorphism of abelian groups
\[
\pi^{C_{p^n}}_0N^{C_{p^n}}_e(\Sigma^{\infty}_+X)\cong \bigoplus_{i=0}^n\pi_0(\Sigma^{\infty}_+((X^{\times p^n})^{C_{p^{n-i}}}))_{hC_{p^i}} \cong \bigoplus_{i=0}^n(\mathbb{Z}(\pi_0X)^{\otimes p^i})_{C_{p^i}}
\]
where $\mathbb{Z}(\pi_0X)$ is the free abelian group generated by $\pi_0 X$.
 Thus by Theorem \ref{pi0norm} the group of Witt vectors $W_{\langle p^n\rangle}(\Z;\Z(\pi_0 X))$ is  isomorphic to the direct sum of the $(\mathbb{Z}(\pi_0X)^{\otimes p^i})_{C_{p^i}}\cong \Z((\pi_0X^{\times p^i})_{C_{p^i}})$, and it is in particular free abelian.
 This product decomposition matches with the algebraic one of Proposition \ref{basis}.
\end{rem}

The proof of Theorem \ref{pi0norm} will occupy the rest of the section. To set up our proof, and in particular to construct the map giving the isomorphism of the statement,  we need to discuss the spectrum level analogue of the Witt vectors operators of the previous section. For describing these operators we will use the stable $\infty$-category $\Sp^{\Z}_{\qfgen}$ of \emph{quasifinitely genuine $\Z$-spectra} which was constructed in \cite{polygonic} following ideas of Kaledin. We briefly recall the setup, and refer to \cite[Sections 4-6]{polygonic} for further details. 

The $\infty$-category $\Sp^{\Z}_{\qfgen}$ is defined as the  $\infty$-category of spectral Mackey-functors on spans of quasifinite $\Z$-sets. In particular, any quasifinitely genuine $\Z$-spectrum $Y$ has genuine and geometric fixed-points spectra $Y^{d\Z}$ and $Y^{\Phi d\Z}$ for any integer $d\geq 1$.
By \cite[Proposition 5.2, Theorem 5.4]{polygonic} there is an adjunction
\[\xymatrix{\Sp^{\Z}_{\qfgen}  \ar@<0.5ex>[r]^L &  \PgcSp \ar@<0.5ex>[l]^{\underline{TR}}}\]
which restricts to an equivalence of $\infty$-categories on uniformly bounded below objects. The left adjoint $L$ is defined by sending a quasifinitely genuine $\Z$-spectrum $Y$ to the polygonic spectrum $LY=\{X_d\}_{d \geq 1}$ consisting of the  the geometric fixed points spectra $X_d=Y^{\Phi d\Z}$ with their residual $C_d \cong \Z/d\Z$-action. The Frobenius maps come from the canonical map from the $C_p$-geometric fixed points to the $C_p$-Tate construction. 
The right adjoint $\underline{\TR}$ sends a polygonic spectrum $X$ to a quasifnitely genuine $\Z$-spectrum with genuine fixed points $\underline{\TR}(X)^{n\Z}=\TR(\sh_nX)$, where by definition $(\sh_nX)_d=X_{nd}$  for $n \geq 1$, with the obvious actions and Frobenius maps \cite[Example 2.28]{polygonic}. This equivalence is symmetric monoidal since the geometric fixed points are symmetric monoidal. In particular, for uniformly bounded below polygonic spectra $\underline{\TR}$ is symmetric monoidal. However, since genuine fixed points are only lax symmetric monoidal this shows that the functor $\TR(X)$ is only lax symmetric monoidal which agrees with our observation above. 

For a connective pair $(R;M)$, we have formulas for the genuine and geometric fixed-points of $\underline{\TR}(R;M)$, given respectively by
\[\underline{\TR}(R;M)^{n\Z}\simeq \TR(R;M^{\wedge_R n}) \ \ \ \ \  \mbox{and}\ \ \ \ \ \ \underline{\TR}(R;M)^{\Phi n\Z} \simeq \THH(R;M^{\wedge_R n}).\]
By construction, any 
quasifinitely genuine $\Z$-spectrum comes equipped with transfer and restriction maps, and Weyl actions. Thus $\underline{\TR}(R;M)$ admits a Weyl $C_n$-action, whose generator we denote by
\[ \sigma_{\langle n\rangle} \colon \TR(R;M^{\wedge_R n}) \to \TR(R;M^{\wedge_R n}),\]
a restriction map
\[F \colon \TR(R;M) \to \TR(R;M^{\wedge_R n})^{hC_n}\]
which we call Frobenius, and a transfer
\[V \colon \TR(R;M^{\wedge_R n})_{hC_n} \to \TR(R;M)\]
which we call Verschiebung, for every integer $n\geq 1$.
 Let us now produce truncated versions of these structure maps.

As mentioned above, the $\infty$-category of ${\langle p^n\rangle}$-typical polygonic spectra $\PgcSp_{\langle p^n\rangle}$ is a localisation of $\PgcSp$, where the localisation functor annihilates the values on integers which are not of the form $p^k$, for $0 \leq k \leq n$. Under the equivalence above for uniformly bounded below spectra, this corresponds to the full subcategory of $\Sp^{\Z}_{\qfgen}$ of those $\Z$-spectra $X$ with $X^{\Phi d\Z}=0$ unless $d=1,p,\dots p^n$. The geometric fixed points functor $(-)^{\Phi p^n\Z} \colon \Sp^{\Z}_{\qfgen} \to \Sp^{C_{p^n}}$ to genuine $C_{p^n}$-spectra is a localisation whose right adjoint inflation $\infl_{p^n\Z} \colon  \Sp^{C_{p^n}} \to \Sp^{\Z}_{\qfgen}$ \cite[Construction 4.20]{polygonic} is fully faithful with the essential image given by the latter full subcategory of $\Sp^{\Z}_{\qfgen}$. This shows (see also \cite[Example 2.9]{polygonic}) that the functor
\[
\underline{\TR}(-)^{\Phi p^n \Z}\colon \PgcSp_{\langle p^n\rangle}\longrightarrow \Sp^{C_{p^n}}
\]
is an equivalence on the full subcategories of bounded below objects.

\begin{prop} \label{prop: truncated TR} For every $n\geq 0$, there is a symmetric monoidal functor $\underline{\TR}_{\langle p^n\rangle} \colon \bimod_{\mathbb{S}}^{\geq 0} \to \Sp^{C_{p^n}}$ from connective spectral bimodules to genuine $C_{p^n}$-spectra, and a natural equivalence
\[
\underline{\TR}_{\langle p^n\rangle}(R;M)^{C_{p^k}}\simeq \TR_{\langle p^{k}\rangle}(R;M^{\wedge_Rp^{n-k}})
\]
 for every $0\leq k\leq n$. \end{prop}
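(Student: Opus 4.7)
The plan is to define $\underline{\TR}_{\langle p^n\rangle}(R;M)$ as the genuine $C_{p^n}$-spectrum corresponding, under the equivalence (on uniformly bounded below objects) between $\langle p^n\rangle$-typical polygonic spectra and genuine $C_{p^n}$-spectra, to the $\langle p^n\rangle$-truncation of the polygonic spectrum $\underline{\THH}(R;M)$. The fixed-points formula will then follow from the identity $\underline{\TR}(Y)^{m\Z}\simeq \TR(\sh_m Y)$ recalled from \cite[Example 2.28]{polygonic} together with the standard identification of subgroups of $C_{p^n}$ with cofinite subgroups of $\Z$ containing $p^n\Z$.

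Concretely, since $R$ and $M$ are connective, the polygonic spectrum $\underline{\THH}(R;M)$ is uniformly bounded below, and so is its localisation $P:=\Res^{\langle p^\infty\rangle}_{\langle p^n\rangle}\underline{\THH}(R;M)\in \PgcSp_{\langle p^n\rangle}$. The composite
\[
\PgcSp_{\langle p^n\rangle}\xrightarrow{\underline{\TR}}\Sp^{\Z}_{\qfgen}\xrightarrow{(-)^{\Phi p^n\Z}}\Sp^{C_{p^n}}
\]
recalled in the excerpt restricts to an equivalence on uniformly bounded below objects, so I would set $\underline{\TR}_{\langle p^n\rangle}(R;M):=\underline{\TR}(P)^{\Phi p^n\Z}$. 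Symmetric monoidality of the resulting functor $\bimod_{\mathbb{S}}^{\geq 0}\to \Sp^{C_{p^n}}$ follows by composing the lax symmetric monoidal $\underline{\THH}$ (stated in the excerpt), the symmetric monoidal localisation $\Res^{\langle p^\infty\rangle}_{\langle p^n\rangle}$, and the symmetric monoidal equivalence above (symmetric monoidal because on uniformly bounded below polygonic spectra $\underline{\TR}$ is and because geometric fixed points are).

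For the fixed-points formula, write $X:=\underline{\TR}(P)\in \Sp^{\Z}_{\qfgen}$. Since $P$ is $\langle p^n\rangle$-typical, $X$ lies in the essential image of the fully faithful inflation $\infl_{p^n\Z}$, hence $X\simeq \infl_{p^n\Z}\underline{\TR}_{\langle p^n\rangle}(R;M)$. Under this inflation, the subgroup $C_{p^k}\leq C_{p^n}$ corresponds to $p^{n-k}\Z\leq \Z$, so for $0\leq k\leq n$,
\[
\underline{\TR}_{\langle p^n\rangle}(R;M)^{C_{p^k}}\simeq X^{p^{n-k}\Z}\simeq \TR(\sh_{p^{n-k}}P).
\]
It remains to identify $\sh_{p^{n-k}}P$ with $\Res_{\langle p^k\rangle}\underline{\THH}(R;M^{\wedge_R p^{n-k}})$ as $\langle p^k\rangle$-typical polygonic spectra. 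On underlying objects this is clear from $(\sh_{p^{n-k}}P)_{p^j}=P_{p^{n-k+j}}=\THH(R;M^{\wedge_R p^{n-k+j}})=\underline{\THH}(R;M^{\wedge_R p^{n-k}})_{p^j}$ for $0\leq j\leq k$, together with vanishing at other levels. The conclusion is then $\underline{\TR}_{\langle p^n\rangle}(R;M)^{C_{p^k}}\simeq \TR_{\langle p^k\rangle}(R;M^{\wedge_R p^{n-k}})$.

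The main obstacle I expect is the final identification at the level of Frobenius maps and Weyl actions: one must verify that the Frobenius $\phi_{p,p^j}$ and the $C_{p^j}$-action on $(\sh_{p^{n-k}}P)_{p^j}$ coincide with those of $\underline{\THH}(R;M^{\wedge_R p^{n-k}})$ at level $p^j$. This amounts to unwinding the construction of the Frobenius of $\underline{\THH}$ in \cite[Construction 6.31]{polygonic} and recognising it, under the associativity identification $M^{\wedge_R p^{n-k+j}}\simeq (M^{\wedge_R p^{n-k}})^{\wedge_R p^j}$, as the Frobenius at level $p^j$ of $\underline{\THH}(R;M^{\wedge_R p^{n-k}})$; naturality in $(R;M)$ will then be automatic from the naturality of each step in the construction.
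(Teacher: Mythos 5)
Your proposal is correct and follows essentially the same route as the paper: define $\underline{\TR}_{\langle p^n\rangle}(R;M)$ as the $p^n\Z$-geometric fixed points of $\underline{\TR}$ of the $\langle p^n\rangle$-truncation of $\underline{\THH}(R;M)$, use fully faithfulness of the inflation to convert genuine $C_{p^k}$-fixed points into $p^{n-k}\Z$-fixed points, and deduce symmetric monoidality from that of $\underline{\THH}$, $\underline{\TR}$ and geometric fixed points. The only cosmetic difference is that where you pass through $\sh_{p^{n-k}}$ and identify the shifted polygonic spectrum with $\underline{\THH}(R;M^{\wedge_R p^{n-k}})$ by hand, the paper simply invokes the equaliser formula of \cite[Proposition 2.10]{polygonic} (equivalently the formula $\underline{\TR}(R;M)^{m\Z}\simeq \TR(R;M^{\wedge_R m})$), which packages exactly the identification of Frobenii and Weyl actions you flag as the remaining work.
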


\begin{proof} Consider the ${\langle p^n\rangle}$-typical polygonic spectrum $\Res_{\langle p^n\rangle} (\underline{\THH}(R;M))$ with corresponding quasifinitely genuine $\Z$-spectrum  $Y=\underline{\TR}(\Res_{\langle p^n\rangle} (\underline{\THH}(R,M)))$ (with vanishing $d\Z$-genuine and geometric fixed points unless $d=1,p,\dots p^n$). By the equaliser formula of \cite[Proposition 2.10]{polygonic}, the genuine fixed points spectrum $Y^{p^{n-k}\Z}$ is equivalent to $\TR_{\langle p^{k}\rangle}(R;M^{\wedge_Rp^{n-k}})$ for all $0\leq k \leq n$. Define
\[\underline{\TR}_{\langle p^n\rangle}(R;M)= Y^{\Phi p^n\Z} \in \Sp^{C_{p^n}}.\]
Since $Y$ is in the essential image of the inflation $\infl_{p^n\Z}$, we have a good control on its genuine fixed-points. Indeed, by definition of the inflation
\begin{align*}(\underline{\TR}_{\langle p^n\rangle}(R;M))^{C_{p^k}}&=(Y^{\Phi p^n \Z})^{C_{p^k}} ={(Y^{\Phi p^n \Z})}^{p^{n-k}\Z/p^n\Z}\\&=Y^{p^{n-k}\Z}\simeq \TR_{\langle p^{k}\rangle}(R;M^{\wedge_Rp^{n-k}}). \end{align*}
The symmetric monoidality follows from the definition since the functors $(-)^{\Phi p^n\Z}$ and $\underline{\TR}$ and $\underline{\THH}$ are all symmetric monoidal. 

\end{proof}

As a consequence of the latter theorem, we obtain a $C_{p^k}$-action on $\TR_{\langle p^{n-k}\rangle}(R;M^{\wedge_R p^k})$, with generator
\[\sigma_{k} \colon \TR_{\langle p^{n-k}\rangle}(R;M^{\wedge_R p^k}) \to\TR_{\langle p^{n-k}\rangle}(R;M^{\wedge_R p^k}),\] 
given by the action of the Weyl group $C_{p^n}/C_{p^{n-k}}\cong C_{p^k}$, and maps
\[
\begin{array}{ll}
F\colon \TR_{\langle p^n\rangle}(R;M)\longrightarrow \TR_{\langle p^{n-1}\rangle}(R;M^{\wedge_Rp})^{hC_p}&\hspace{.5cm} V\colon \TR_{\langle p^{n-1}\rangle}(R;M^{\wedge_Rp})_{hC_p}\longrightarrow \TR_{\langle p^n\rangle}(R;M)
\end{array}
\]
corresponding respectively to the restriction and transfer of a spectral Mackey functor, thus satisfying the homotopy coherent analogue of the double-coset formula of usual Mackey functors. Analogous maps were also defined in \cite[Lemma 4.10 and Corollary 5.7]{LMcC}. We also observe that $\TR_{\langle p^n\rangle}(R;M)$ is naturally lax symmetric monoidal since $\underline{\TR}$ is symmetric monoidal and the genuine fixed points functor is lax symmetric monoidal.

There are also maps $R\colon \TR_{\langle p^n\rangle}(R;M)\to \TR_{\langle p^{n-1}\rangle}(R;M)$, which under the equaliser formula for $\TR_{\langle p^n\rangle}$ above correspond to projections of product factors, and therefore fit into fibre sequences
\[
\THH(R;M^{\wedge_Rp^{n}})_{hC_{p^{n}}}\stackrel{V}{\longrightarrow} \TR_{\langle p^n\rangle}(R;M)\stackrel{R}{\longrightarrow} \TR_{\langle p^{n-1}\rangle}(R;M)
\]
for every $n\geq 1$ (see also \cite[Corollary 5.7]{LMcC}).

The final structure that we need on $\TR_{\langle p^n\rangle}$ is a topological analogue of the Teichm\"uller character map. We recall its construction, from \cite[Construction 6.33]{polygonic}. Given a connective bimodule $(R,M)$, there is a $C_n$-equivariant map
\[M^{\wedge n} \to \THH(R;M^{\wedge_R n}) \]
for any $n \geq 1$. If we identify $M^{\wedge n}$ with $\THH(\mathbb{S};M^{\wedge_\mathbb{S} n})$, this is the map induced by the map of bimodules $(\mathbb{S},M) \to (R,M)$. In particular, these maps for all $n \geq 1$ assemble into a natural morphism of polygonic spectra. We then consider the composite
\[\Sigma^{\infty}_+ (\Omega^\infty M)^{\times n} \to \Sigma^{\infty}_+ (\Omega^\infty (M^{\wedge n})) \to M^{\wedge n} \to \THH(R;M^{\wedge_R n})\]
where the first map is induced by the canonical map from the product to the smash product of spaces and the  lax monoidal structure of $\Omega^{\infty}$, and the second map is the counit of the $(\Sigma^{\infty}_+,\Omega^\infty)$-adjunction.
The source of this map admits a polygonic structure by Example \ref{example: polygonic}, where the Frobenius maps are induced by the diagonal. Similarly, the second term in this sequence can also be assembled into a polygonic spectrum via the polygonic structure on $M^{\wedge (-)}$ and the diagonals, and these maps form a sequence of morphisms of polygonic spectra when $n$ runs through natural numbers. 
The spectrum $\Sigma^{\infty}_+ (\Omega^\infty M)^{\times n}$ is the geometric fixed points ${(\Sigma^{\infty}_+\underline{M})}^{\Phi n\Z}$ of the suspension spectrum $\Sigma^{\infty}_+\underline{M}$, where $\underline{M}$ is the quasifinitely genuine $\Z$-space assigning to any finite orbit $S$ the space $(\Omega^\infty M)^{\times S}$, in particular $\underline{M}^{n\Z}={(\Omega^\infty M)}^{\times n}$.  
Hence we can interpret the latter composite as a morphism of polygonic spectra
\[L(\Sigma^{\infty}_+\underline{M}) \to \underline{\THH}(R;M) \]
which by adjunction gives the map of quasifinitely genuine $\Z$-spectra
\[\tau \colon \Sigma^{\infty}_+\underline{M} \to \underline{\TR}(R;M).\]
This is the topological analogue of the Teichm\"uller map. Let us now produce a truncated version of this map. By composing with the unit of the localisation we get a map of quasifinitely genuine $\Z$-spectra 
\[\Sigma^{\infty}_+\underline{M} \to \underline{\TR}(R;M) \to\infl_{p^n\Z} (\underline{\TR}_{\langle p^n\rangle}(R;M)),\]
and by adjoining a map of genuine $C_{p^n}$-spectra
\[\tau \colon \Sigma^{\infty}_+(\Omega^\infty M)^{\times p^n}\simeq (\Sigma^{\infty}_+\underline{M})^{\Phi p^n\Z}  \to \underline{\TR}_{\langle p^n\rangle}(R;M) \]
also denoted by $\tau$. By adjoining and taking $C_{p^k}$-fixed-points spaces this gives, using the equivalence of Proposition \ref{prop: truncated TR}, a map of spaces
\[\tau \colon (\Omega^{\infty}M)^{\times p^{n-k}} \to \Omega^{\infty} \TR_{\langle p^{k}\rangle}(R;M^{\wedge_Rp^{n-k}})\]
 for any $0 \leq k \leq n$.

Next, we give a conceptual description of the maps $R\colon \TR_{\langle p^n\rangle}(R;M)\to \TR_{\langle p^{n-1}\rangle}(R;M)$ which will explain why they are compatible with Frobenius, Verschiebung and Weyl actions. 

\begin{prop} \label{R maps} For any $n \geq 0$, there is a natural morphism of genuine $C_{p^{n+1}}$-spectra
\[R \colon \underline{\TR}_{\langle p^{n+1} \rangle}(R;M) \to \infl_{C_{p^n}}(\underline{\TR}_{\langle p^{n} \rangle}((R;M))),\]
where $\infl_{C_{p^n}} \colon \Sp^{C_{p^n}} \to \Sp^{C_{p^{n+1}}}$ is the right adjoint of the geometric fixed points $(-)^{\Phi C_p}$. For $0 \leq k \leq n+1$, the genuine $C_{p^k}$-fixed points of this map is equivalent to
\[R \colon \TR_{\langle p^{k}\rangle}(R;M^{\wedge_Rp^{n+1-k}}) \to \TR_{\langle p^{k-1}\rangle}(R;M^{\wedge_Rp^{n+1-k}}),\]
where the target is interpreted as zero when $k=0$. 
\end{prop}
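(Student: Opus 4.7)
I will construct $R$ as the image of the unit of the localisation adjunction $\Res^{\langle p^{n+1}\rangle}_{\langle p^n\rangle}\dashv i$ on polygonic spectra, evaluated at $\Res_{\langle p^{n+1}\rangle}\underline{\THH}(R;M)$. This produces a natural morphism
\[
\eta\colon \Res_{\langle p^{n+1}\rangle}\underline{\THH}(R;M)\longrightarrow i\Res_{\langle p^n\rangle}\underline{\THH}(R;M)
\]
in $\PgcSp_{\langle p^{n+1}\rangle}$, and applying the symmetric monoidal equivalence $(-)^{\Phi p^{n+1}\Z}\circ\underline{\TR}$ from $\PgcSp_{\langle p^{n+1}\rangle}$ to $\Sp^{C_{p^{n+1}}}$ (which is an equivalence on bounded below objects by \cite[Sections 4-5]{polygonic}) delivers the required morphism of genuine $C_{p^{n+1}}$-spectra, with source $\underline{\TR}_{\langle p^{n+1}\rangle}(R;M)$ by the definition from Proposition \ref{prop: truncated TR}.

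To identify the target of $R$ with $\infl_{C_{p^n}}(\underline{\TR}_{\langle p^n\rangle}(R;M))$, I would show that the inclusion $i\colon \PgcSp_{\langle p^n\rangle}\hookrightarrow \PgcSp_{\langle p^{n+1}\rangle}$ corresponds, under the equivalences $(-)^{\Phi p^{k}\Z}\circ \underline{\TR}$ for $k=n$ and $k=n+1$, to the inflation $\infl_{C_{p^n}}$. This is a comparison of essential images: the essential image of $i$ consists of those $X\in \PgcSp_{\langle p^{n+1}\rangle}$ with $X_{p^{n+1}}=0$, and this vanishing translates to vanishing underlying spectrum $\underline{\TR}(X)^{p^{n+1}\Z}=\TR(\sh_{p^{n+1}}X)=0$, which characterises the essential image of $\infl_{C_{p^n}}$. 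Uniqueness of adjoints then identifies the left adjoints $\Res^{\langle p^{n+1}\rangle}_{\langle p^n\rangle}$ and $(-)^{\Phi C_p}$ and transports the unit of the first adjunction to the unit of the second, giving the sought identification.

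For $0\leq k\leq n+1$, the induced map on $C_{p^k}$-fixed points is computed via the formula $Y^{C_{p^k}}\simeq \underline{\TR}(Y)^{p^{n+1-k}\Z}=\TR(\sh_{p^{n+1-k}}Y)$ for $Y$ in the essential image of $\infl_{p^{n+1}\Z}$. Applied to the two sides of $\eta$, the source of the $C_{p^k}$-fixed points is $\TR_{\langle p^k\rangle}(R;M^{\wedge_R p^{n+1-k}})$, the target is $\TR_{\langle p^{k-1}\rangle}(R;M^{\wedge_R p^{n+1-k}})$ for $k\geq 1$ and zero for $k=0$, and the induced map is obtained by applying $\TR$ to the projection of the shifted polygonic spectrum to its first $k$ levels. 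Under the equaliser formula of \cite[Proposition 2.10]{polygonic}, this is exactly the standard restriction appearing in the fibre sequence $\THH(R;M^{\wedge_R p^k})_{hC_{p^k}}\to \TR_{\langle p^k\rangle}(R;M)\to \TR_{\langle p^{k-1}\rangle}(R;M)$.

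The main obstacle is the comparison of adjunctions in the second paragraph: verifying that the localisation on polygonic spectra transports precisely to the geometric-fixed-points/inflation adjunction under the equivalences of \cite{polygonic}, which in turn rests on the compatibility of the $\Z$-equivariant inflation $\infl_{p^n\Z}$ with $\infl_{C_{p^n}}$ along the subgroup chain $C_p\subset C_{p^{n+1}}\subset \Z$. Once this is in hand, the construction of $R$ and the computation of its fixed points are essentially formal consequences of naturality.
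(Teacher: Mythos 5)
Your proposal is correct and follows essentially the same route as the paper's proof: the map is the unit of the polygonic localisation $\Res^{\langle p^{n+1}\rangle}_{\langle p^n\rangle}$ applied to $\Res_{\langle p^{n+1}\rangle}\underline{\THH}(R;M)$, transported through $(-)^{\Phi p^{n+1}\Z}\circ\underline{\TR}$, with the target identified as the inflation because the two localisations correspond under the equivalence of \cite{polygonic}. Your essential-image comparison and the fixed-point computation via $\TR(\sh_{p^{n+1-k}}(-))$ just spell out details the paper leaves implicit.
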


\begin{proof}
By adjunction, we have the truncation map of polygonic spectra
\[R \colon \Res_{\langle p^{n+1} \rangle} (\underline{\THH}(R;M)) \to \Res_{\langle p^{n} \rangle} (\underline{\THH}(R;M)).\]
After passing to $\underline{\TR}$, we get the a map of quasifinitely genuine $\Z$-spectra
\[R \colon \underline{\TR}(\Res_{\langle p^{n+1} \rangle} (\underline{\THH}(R;M))) \to\underline{\TR}(\Res_{\langle p^{n} \rangle} (\underline{\THH}(R;M))).\]
We claim that by taking $p^{n+1}\Z$-geometric fixed points we obtain the map of the statement. By definition the $p^{n+1}\Z$-geometric fixed points of the source are  $\underline{\TR}_{\langle p^{n+1} \rangle}(R;M)$. For the target, we use that $\underline{\TR}$ is an equivalence of categories on bounded below objects, and that under this equivalence the localisation onto the subcategory of $\langle p^n\rangle$-typical polygonic spectra corresponds to that onto the genuine $C_{p^n}$-spectra. Thus
\begin{align*}
\Phi^{p^{n+1}\Z}\underline{\TR}(\Res_{\langle p^{n} \rangle} (\underline{\THH}(R;M)))&\simeq\underline{\TR}_{\langle p^{n+1}\rangle}(\Res_{\langle p^{n+1} \rangle}\Res_{\langle p^{n} \rangle} (\underline{\THH}(R;M)))
\\&\simeq \underline{\TR}_{\langle p^{n+1}\rangle}(\Res_{\langle p^{n} \rangle} (\underline{\THH}(R;M)))
\\&
\simeq  \infl_{C_{p^n}}(\underline{\TR}_{\langle p^{n} \rangle}((R;M))).
\end{align*}
\end{proof}

\begin{lemma} \label{one-connectivity} Let $f \colon (R;M) \to (S;N)$ be a morphism of connective spectral bimodules. Suppose that $f$ is $1$-connected, i.e., it induces isomorphisms on $\pi_0$ and surjections on $\pi_1$ between the underlying rings and the underlying bimodules. Then the induced map
\[\TR_{\langle p^n\rangle}(R;M) \longrightarrow \TR_{\langle p^n\rangle}(S;N)\]
is $1$-connected for any $n \geq 0$ and any prime $p$.
\end{lemma}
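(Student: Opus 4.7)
The plan is to proceed by induction on $n$ using the fibre sequence
\[
\THH(R;M^{\wedge_R p^n})_{hC_{p^n}} \xrightarrow{V} \TR_{\langle p^n\rangle}(R;M) \xrightarrow{R} \TR_{\langle p^{n-1}\rangle}(R;M)
\]
recalled just above the lemma. Recall that a map $f$ of connective spectra is $1$-connected iff $\fib(f)$ satisfies $\pi_i\fib(f)=0$ for $i\leq 0$; in particular, the conclusion follows from the LES of the map of fibre sequences obtained by applying the above to both $(R;M)$ and $(S;N)$, provided that both the left-hand term and the right-hand term induce $1$-connected maps.

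For the base case $n=0$, I would show that $\THH$ sends $1$-connected maps of connective bimodules to $1$-connected maps. Realize $\THH(R;M)$ as the geometric realization of the cyclic bar construction whose $k$-simplices are $M\wedge R^{\wedge k}$. Smashing a $1$-connected map between connective spectra with a connective spectrum yields a $1$-connected map (since the fibre of the former is $0$-connected and the smash of a $0$-connected and a $(-1)$-connected spectrum is $0$-connected); iterating factor by factor, the map of $k$-simplices is $1$-connected. Geometric realization of simplicial spectra preserves $1$-connectedness levelwise (via the skeletal filtration), so $\THH(R;M)\to\THH(S;N)$ is $1$-connected.

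For the inductive step, the right-hand term of the fibre sequence is handled by the inductive hypothesis. For the left-hand term, the key sublemma is that $M^{\wedge_R p^n}\to N^{\wedge_S p^n}$ is $1$-connected; I would establish this by induction, writing the relative smash as the realization of the two-sided bar construction $B_\bullet(M,R,M')$ with $k$-simplices $M\wedge R^{\wedge k}\wedge M'$, applying the smash-product connectivity argument levelwise, and concluding as before. Feeding the resulting $1$-connected map $(R;M^{\wedge_R p^n})\to (S;N^{\wedge_S p^n})$ into the base case yields that $\THH(R;M^{\wedge_R p^n})\to\THH(S;N^{\wedge_S p^n})$ is $1$-connected. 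Homotopy orbits $(-)_{hC_{p^n}}$ are a colimit and thus preserve connectivity of fibres, so the left-hand term of the fibre sequence induces a $1$-connected map, completing the induction.

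I do not anticipate a serious obstacle: the whole argument reduces to the fact that $\THH$, homotopy orbits of finite groups, relative smash products over connective ring spectra, and fibre sequences of connective spectra all interact well with $1$-connectedness. The only mildly technical point is the smash-power sublemma, but it is already implicit in the standard bar-resolution arguments for connectivity of $(-)\wedge_R(-)$ on connective bimodules.
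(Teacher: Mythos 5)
Your proof is correct and follows essentially the same route as the paper: induction on $n$ via the fibre sequence $\THH(R;M^{\wedge_R p^n})_{hC_{p^n}}\to \TR_{\langle p^n\rangle}(R;M)\to \TR_{\langle p^{n-1}\rangle}(R;M)$, with the base case and the left-hand term handled by the fact that $1$-connectedness is preserved under smash products and geometric realisations (and, as you rightly add explicitly, under homotopy orbits). Your treatment is if anything slightly more careful than the paper's, which leaves the relative smash-power sublemma and the homotopy-orbits step implicit.
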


\begin{proof} For any $n \geq 0$, the induced map 
\[\THH(R; M^{\wedge_R p^{n}})  \longrightarrow \THH(S; N^{\wedge_S p^{n}}) \]
is $1$-connected. This follows by looking at the homotopy fibers and noticing that the property of being $1$-connected is preserved under smash products and geometric realisations. In particular the claim holds for $n=0$. The general case is proved inductively, by considering the commutative diagram 
\[\xymatrix{ 
 \THH(R; M^{\wedge_R p^{n}})_{h C_{p^{n}}}
 \ar[d]^{f_\ast}  \ar[r] & \TR_{\langle p^n\rangle}(R;M) \ar[d]^{f_\ast}   \ar[r]^R & \TR_{\langle p^{n-1}\rangle}(R;M) \ar[d]^{f_\ast}  
 \\
  \THH(S; N^{\wedge_R p^{n}})_{h C_{p^{n}}} \ar[r] & \TR_{\langle p^n\rangle}(S;N)  \ar[r]^R & \TR_{\langle p^{n-1}\rangle}(S;N),}\]
where the left hand map is $1$-connected by the previous paragraph, and the right hand map by the inductive assumption. Hence so is the middle map by the five lemma. 
\end{proof}

\begin{cor} \label{pi0dependence} Let $(R;M)$ be a connective bimodule spectrum. For any $n\geq 0$ and prime $p$, the canonical map induces a natural isomorphism
\[ \pi_0 \TR_{\langle p^n\rangle}(R;M) \cong \pi_0 \TR_{\langle p^n\rangle}(H\pi_0R; H\pi_0 M). \] \qed
\end{cor}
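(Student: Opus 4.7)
The plan is to deduce this corollary directly from Lemma \ref{one-connectivity}. The canonical map in question is the one induced by the $0$-th Postnikov section $(R;M) \to (H\pi_0 R; H\pi_0 M)$, which is a morphism of connective bimodule spectra (the truncation of a ring spectrum is a ring spectrum, and the truncation of a bimodule is a bimodule over the truncated ring, with the structure maps obtained functorially). This morphism exists and is natural by the universal property of Postnikov truncation applied in the category $\bimod_{\mathbb{S}}^{\geq 0}$.

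Next I would verify that this map is $1$-connected in the sense of Lemma \ref{one-connectivity}. On $\pi_0$ it is the identity on $\pi_0 R$ and $\pi_0 M$ by construction of the Postnikov truncation, hence an isomorphism. On $\pi_1$ it lands in $\pi_1 H\pi_0 R = 0$ and $\pi_1 H\pi_0 M = 0$, so the map on $\pi_1$ is trivially surjective.

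Applying Lemma \ref{one-connectivity}, the induced map
\[\TR_{\langle p^n\rangle}(R;M)\longrightarrow \TR_{\langle p^n\rangle}(H\pi_0 R; H\pi_0 M)\]
is $1$-connected and in particular induces an isomorphism on $\pi_0$. Naturality in $(R;M)$ follows from the naturality of the Postnikov truncation and of $\TR_{\langle p^n \rangle}$. There is essentially no obstacle here, since all the real work has been done in Lemma \ref{one-connectivity}; the only thing to check is that the Postnikov truncation of a bimodule spectrum provides a $1$-connected morphism in $\bimod_{\mathbb{S}}^{\geq 0}$, which is immediate.
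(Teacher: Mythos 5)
Your argument is exactly the one the paper intends: the corollary is stated with an immediate \qed precisely because it follows by applying Lemma \ref{one-connectivity} to the Postnikov truncation map $(R;M)\to(H\pi_0R;H\pi_0M)$, which is $1$-connected for the reasons you give. The proposal is correct and matches the paper's (implicit) proof.
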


With this lemma at hand and by using Theorem \ref{existence}, in order to prove Theorem \ref{pi0norm} we can equivalently show that for every (discrete) bimodule $(R;M)$ there is an isomorphism
\[
W_{n+1,p}(R;M)\cong\pi_0\TR_{\langle p^n\rangle}(HR;HM),
\]
natural in $(R;M)$ and lax symmetric monoidal. In what follows unless necessary, we will often suppress $H$ and just write $(R;M)$ instead of $(HR;HM)$, keeping in mind that $(R;M)$ is discrete. 

We want to single out the formal properties  and structure of $\TR$ which makes it possible to construct the desired isomorphism. Let us define a functor ${\mathcal F}_{n+1} \colon \bimod \to \Ab$ for every $n\geq 0$, by 
\[
{\mathcal F}_{n+1}(R;M):=\pi_0\TR_{\langle p^n\rangle}(R;M),
\]
where we make the prime implicit to lighten up the notation. This functor inherits a lax symmetric monoidal structure from the one of $\TR_{\langle p^n\rangle}$, as well as operators
\[
\begin{array}{ll}
F \colon \FF_{n+1}(R;M)\longrightarrow \FF_{n}(R;\tens{R}{M}{})&\hspace{1cm} V \colon \FF_{n}(R;\tens{R}{M}{})\longrightarrow \FF_{n+1}(R;M)
\\
\\
R\colon \FF_{n+1}(R;M)\longrightarrow \FF_{n}(R;M) &\hspace{1cm} \tau\colon M\longrightarrow \FF_{n}(R;M)
\\
\\
\sigma_i \colon \FF_{n}(R;\tens{R}{M}{i})\longrightarrow \FF_{n}(R;\tens{R}{M}{i})
\end{array}
\]
defined by taking $\pi_0$ of the corresponding maps of $\TR_{\langle p^n\rangle}$.
These operators enjoy the following properties, and these are all we need for proving Theorem \ref{pi0norm}.

\begin{prop}\label{Axiomatic}\
\begin{enumerate}[label={(\roman*)}]
\item $R$, $F$ and $V$ and $\sigma_i$ are natural group homomorphisms, with $R$ and $F$ monoidal transformations, and $\tau$ is a natural set valued map.

\item For all $n \geq 1$, the diagrams
\[\xymatrix{ \FF_{n+1}(R;M) \ar[d]^R \ar[r]^F & \FF_{n}(R;\tens{R}{M}{}) \ar[d]^R & & \FF_{n}(R;\tens{R}{M}{}) \ar[d]^R \ar[r]^V & \FF_{n+1}(R;M) \ar[d]^R  \\ \FF_{n}(R;M) \ar[r]^F & \FF_{n-1}(R;\tens{R}{M}{}) & & \FF_{n-1}(R;\tens{R}{M}{}) \ar[r]^V & \FF_{n}(R;M) }\]
commute. Here we use the convention that $\FF_{0}=0$. 
\item For any $i, k \geq 0$, we have $\sigma_i^{p^i}=\id$, $R \sigma_i =\sigma_i R$, and $\sigma_k= \sigma_{i+k}^{p^i}$ as maps
\[ \FF_{n}(R;\tens{R}{(M^{\otimes_R p^i})}{k}) = \FF_{n}(R;\tens{R}{M}{i+k}) \to \FF_{n}(R;\tens{R}{(M^{\otimes_R p^i})}{k}) = \FF_{n}(R;\tens{R}{M}{i+k}). \]
Moreover the maps
\[V \colon \FF_{n}(R;\tens{R}{M}{i+1}) = \FF_{n}(R;\tens{R}{(M^{\otimes_R p^i})}{}) \to \FF_{n+1}(R;\tens{R}{M}{i}) \]
and 
\[F \colon \FF_{n+1}(R;\tens{R}{M}{i}) \to \FF_{n}(R;\tens{R}{(M^{\otimes_R p^i})}{}) =\FF_{n}(R;\tens{R}{M}{i+1}) \]
are equivariant with respect to the projection $C_{p^{i+1}} \to C_{p^{i+1}}/C_p \cong C_{p^i}$ which sends  $\sigma_{i+1}$ to $\sigma_i$. 
\item The following identity holds: $FV=\sum_{k=0}^{p-1} \sigma_1^k \colon \FF_{n}(R;\tens{R}{M}{}) \to \FF_{n}(R;\tens{R}{M}{}).$
\item For any $n \geq 1$, the diagrams 
\[\xymatrix{ M \ar[dr]_-{\tau} \ar[r]^-{\tau} & \FF_{n+1}(R; M) \ar[d]^{R} \\ &  \FF_{n}(R; M)  }
\ \ \ \ \ \ \ \ \ \ \ \ \ \ \ \ \ \xymatrix{ M \ar[d]_{(-)^{\otimes p}} \ar[r]^-{\tau} \ar[r]^-{\tau} & \FF_{n+1}(R; M) \ar[d]^{F} \\ \tens{R}{M}{} \ar[r]^-{\tau} &  \FF_{n}(R; \tens{R}{M}{})}
\]
commute. The map $\tau \colon M \to \FF_{1}(R, M)$ is additive, it sends $[R,M]$ to zero and induces an isomorphism
\[ M/ [R,M] \cong  \FF_{1}(R; M). \]
In particular we have $\cyctens{R}{M}{n} \cong \FF_{1}(R, \tens{R}{M}{n})$, and under this isomorphism the action of $\sigma_n$ is given by permuting the cyclic tensor factors. 
\item For any $n \geq 1$, the sequence
\[\xymatrix{\FF_{1}(R;\tens{R}{M}{n}) \ar[r]^{V^n} &  \FF_{n+1}(R; M) \ar[r]^R &  \FF_{n}(R; M) \ar[r] & 0}\]
is exact. 
\item For any $n \geq 1$, the functor $\FF_n$ commutes with reflexive coequalisers. 
\end{enumerate}
\end{prop}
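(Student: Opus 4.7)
The plan is to derive each item from a corresponding spectrum-level statement about the $C_{p^n}$-spectrum $\underline{\TR}_{\langle p^n\rangle}(R;M)$ of Proposition \ref{prop: truncated TR}, then pass to $\pi_0$. Since $\underline{\TR}_{\langle p^n\rangle}(R;M)$ is a genuine $C_{p^n}$-spectrum, it automatically carries a spectral Mackey-functor structure in which the restriction and transfer along $C_{p^{n-k}}\subset C_{p^n}$ are exactly the Frobenius and Verschiebung maps, and whose Weyl action is $\sigma_k$. The claims in (i) (naturality, additivity of $R$, $F$, $V$, $\sigma_i$, and lax symmetric monoidality of $F$ and $R$) are built into the construction of $\underline{\TR}_{\langle p^n\rangle}$ as a symmetric monoidal functor to genuine $C_{p^n}$-spectra, combined with the lax monoidality of genuine fixed-points. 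Property (ii) is exactly the $C_{p^{n+1}}$-equivariance of the map $R\colon \underline{\TR}_{\langle p^{n+1}\rangle}(R;M)\to \infl_{C_{p^n}}\underline{\TR}_{\langle p^{n}\rangle}(R;M)$ from Proposition \ref{R maps}, combined with naturality of restriction and transfer of Mackey functors. The identities in (iii) reduce to the group-theoretic facts that the Weyl quotient of $C_{p^{n-k}}\subset C_{p^n}$ is $C_{p^k}$ and that its generator is the $p^{n-k}$-th power of the generator of the ambient group. The double-coset relation (iv) is the standard Mackey double-coset formula for $C_p\triangleleft C_{p^{n+1}}$, where all $p$ cosets are $C_p$-stabilised and contribute a Weyl twist.

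For (v), I would use that $\tau$ is obtained as the $C_{p^n}$-fixed-points of a morphism of polygonic spectra $L(\Sigma^\infty_+\underline{M})\to \underline{\THH}(R;M)$ whose source carries the polygonic structure built from diagonals. Compatibility with the truncation functors yields $R\tau=\tau$, and the fact that the source's Frobenius is the diagonal yields $F\tau=\tau\circ(-)^{\otimes p}$ at $\pi_0$. The identification $\FF_1(R;M)=\pi_0\THH(R;M)\cong M/[R,M]$ is classical (Example \ref{knowncalculations}(1)), and that $\tau$ realises this isomorphism is direct from its construction via the counit $\Sigma^\infty_+\Omega^\infty M\to M$; additivity of $\tau$ on $\FF_1$ follows. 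Substituting $M\mapsto \tens{R}{M}{n}$ gives $\FF_1(R;\tens{R}{M}{n})\cong \cyctens{R}{M}{n}$, and the Weyl $\sigma_n$ acts by cyclic permutation because the Weyl action on $\underline{\THH}(R;M^{\wedge_R p^n})$ permutes smash factors by construction. For (vi), I invoke the fibre sequence $\THH(R;M^{\wedge_R p^n})_{hC_{p^n}}\to \TR_{\langle p^n\rangle}(R;M)\xrightarrow{R}\TR_{\langle p^{n-1}\rangle}(R;M)$ obtained by projecting off the top factor in the equaliser formula. Since $R$ and $M$ are connective, all three spectra are connective and the sequence is right-exact on $\pi_0$; the left-hand $\pi_0$ is $(\cyctens{R}{M}{n})_{C_{p^n}}$, and the iterated Verschiebung $V^n$ agrees with the transfer $\tr^{C_{p^n}}_e$ in the Mackey structure, which factors through $C_{p^n}$-coinvariants by the Mackey identity $\tr^{C_{p^n}}_e\circ \sigma_n=\tr^{C_{p^n}}_e$. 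Hence the image of $V^n$ coincides with the image of the fibre-sequence map, proving exactness.

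Finally, (vii) follows by induction on $n$. The base $n=1$ is immediate: $\FF_1(R;M)=M/[R,M]$ manifestly commutes with reflexive coequalisers, using from Proposition \ref{Wwelldef} that reflexive coequalisers in $\bimod$ are computed on the underlying sets. For the inductive step I apply the right-exact sequence of (vi): the functor $(R;M)\mapsto (\cyctens{R}{M}{n})_{C_{p^n}}$ commutes with reflexive coequalisers since tensor powers and coinvariants are colimit constructions, and for a reflexive coequaliser $(S;N)\rightrightarrows(R;M)\to(R';M')$ the tensor powers can be computed over the common base $S$ as in the proof of Proposition \ref{Wwelldef}; by induction $\FF_n$ commutes with them, and hence so does the cokernel $\FF_{n+1}$ by a three-term diagram chase in the category of abelian groups. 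The principal obstacle in this programme is pinning down the connecting map in the fibre sequence of (vi) precisely as the iterated Verschiebung $V^n$ (and not merely as some transfer): this requires unwinding the Mackey-functor structure on $\pi_0\underline{\TR}_{\langle p^n\rangle}(R;M)$ and using the identification of the Weyl action on $\underline{\THH}$ with cyclic permutation of smash factors to identify the composite of single Verschiebungs with the single transfer coming out of the fibre sequence.
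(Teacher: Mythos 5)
Your treatment of parts (i)--(vi) follows essentially the same route as the paper: everything is read off from the spectral Mackey functor structure of $\underline{\TR}_{\langle p^n\rangle}(R;M)$ supplied by Proposition \ref{prop: truncated TR}, the equivariant truncation map of Proposition \ref{R maps}, the polygonic construction of $\tau$ (including the identification of the source's Frobenius with the diagonal, which gives the second square in (v)), and the fibre sequence exhibiting $R$ as a projection off the equaliser. Those parts are fine, and your worry about identifying the fibre-sequence map with the iterated Verschiebung is legitimate but resolvable exactly as you indicate.

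Part (vii) has a genuine gap. The inductive step you propose --- deducing that $\FF_{n+1}$ commutes with reflexive coequalisers from the right-exact sequences of (vi) by a three-term diagram chase --- does not go through, because those sequences are only right exact and the kernel of $V^n$ is not under control. Concretely, for a reflexive coequaliser $(R_1;M_1)\rightrightarrows (R_0;M_0)\to(R;M)$ write $A_\bullet\to B_\bullet\to C_\bullet\to 0$ for the three right-exact sequences of (vi) and $X$ for the coequaliser of $B_1\rightrightarrows B_0$. Right-exactness of cokernels does give an exact sequence $A\to X\to C\to 0$ mapping to $A\to B\to C\to 0$, and the comparison $X\to B$ is surjective by a chase; but to get injectivity the chase requires that every element of $\ker(V^n\colon \FF_1(R;\tens{R}{M}{n})\to\FF_{n+1}(R;M))$ lift to an element of $\ker(V^n)$ at level $0$ of the resolution. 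For a free resolution the level-$0$ kernel vanishes ($V^n$ is injective for free bimodules), whereas $V^n$ for $(R;M)$ can have nontrivial kernel (the paper recalls Hesselholt's counterexample for non-commutative $R$), so this lifting is unavailable and the argument breaks. Equivalently, $\FF_{n+1}$ is an extension of $\FF_n$ by $\mathrm{im}(V^n)$, and the coequaliser of the middle term differs from the middle term of the coequalisers by the image of a connecting map that nothing in your argument kills. The paper circumvents this by working at the level of spectra: it extends the reflexive pair to a simplicial bimodule spectrum by right Kan extension, uses that $\THH(R;M^{\wedge_R p^k})$ commutes with sifted colimits and that the relevant sequences are genuine cofibre sequences of spectra --- where the two-out-of-three argument for commuting with colimits is valid --- and only then passes to $\pi_0$ via the $1$-connectivity statement of Corollary \ref{pi0dependence}. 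You need some version of this spectrum-level argument, or an independent control on $\ker(V^n)$ under resolution, to close the gap.
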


\begin{proof}
Part (i) follows from the naturality of the operators on $\TR$ and the fact that $F$, $V$, and the action of the cyclic generator are maps of spectra, and the monoidality of $R$ and $F$ follows from the lax monoidality of $\TR$ and $\TR_{\langle p^n \rangle}$. 
Part (ii) follows from Proposition \ref{R maps}. Parts (iii) and (iv) follow from Propositions \ref{prop: truncated TR} and \ref{R maps} and the fact that $F$ and $V$ are respectively restriction and transfer of the $C_{p^{n}}$-Mackey functor $\underline{\pi}_0$ of $\underline{\TR}_{\langle p^{n} \rangle}(R;M)$, and $\sigma_i$ its Weyl action.

The first diagram in Part (v) commutes by definition of $\tau$ for  $\underline{\TR}_{\langle p^{n} \rangle}$ via the truncation 
of $\tau \colon \Sigma^{\infty}_+\underline{M} \to \underline{\TR}(R;M)$ and by Proposition \ref{R maps}. Let us show that the second diagram commutes. Since $\tau$ is defined from a map of $C_{p^{n+1}}$-spaces, it is compatible with the Frobenius. Thus the diagram
\[
\xymatrix{M \ar[d]_{\Delta} \ar[r]^-{\tau} \ar[r]^-{\tau} & \FF_{n+1}(R; M) \ar[d]^{F} \\ M^{\times p} \ar[r]^-{\tau_p} &  \FF_{n}(R; \tens{R}{M}{})}
\]
commutes, where $\tau_p$ is the value at $C_{p^{n+1}}/C_{p^n}$ of the map on $\pi_0$-coefficient systems induced by the morphism of $C_{p^{n+1}}$-spaces $\tau$. 
It is then sufficient to show that $\tau_p$ factors as the composite $M^{\times p}\to \tens{R}{M}{}\to  \FF_{n}(R; \tens{R}{M}{})$ of the canonical map and the map $\tau$ for the bimodule $ \tens{R}{M}{}$ (recall that we are assuming that $R$ and $M$ are discrete). By adjoining the infinite loop space and $\underline{\TR}$, this is the case if the diagram of polygonic spectra
\[
\xymatrix{
\sh_pL(\Sigma^{\infty}_+\underline{M})\ar[d] \ar[r] & \sh_p\underline{\THH}(R;M)\ar@{=}[d]
\\
L(\Sigma^{\infty}_+\underline{ \tens{R}{M}{}})\ar[r] & \underline{\THH}(R; \tens{R}{M}{})
}
\]
commutes, where the horizontal maps are the maps defining $\tau$ under the adjunction, and the vertical map is induced by the canonical map $M^{\times p}\to  \tens{R}{M}{}$. This holds by definition of the horizontal maps.

The properties of $\FF_{1}(R; M)$ follow from the fact that $\TR_{\langle 1\rangle}(R;M)=\THH(R;M)$ (Example \ref{knowncalculations}) and the definition of the cyclic action on $\THH(R;M^{\wedge_R p})$.
Part (vi) follows from the fibre sequence involving the map $R$ for $\TR_{\langle p^n\rangle}(R;M)$.
Let us show Part (vii).
Given a reflexive coequaliser
\[
\xymatrix{(R_1;M_1)\ar@<.5ex>[r]\ar@<-.5ex>[r]&(R_0;M_0)\ar[l]\ar@{->>}[r]&(R;M)}
\]
in the category $\bimod$ of (discrete) bimodules, we need to show that 
\[
\xymatrix{\FF_{n}(R_1;M_1)\ar@<.5ex>[r]\ar@<-.5ex>[r]&\FF_{n}(R_0;M_0)\ar@{->>}[r]&\FF_{n}(R;M)}
\]
is a coequaliser of abelian groups. Using the right Kan extension, the diagram 
\[\xymatrix{(R_1;M_1)\ar@<.5ex>[r]\ar@<-.5ex>[r]&(R_0;M_0)\ar[l]}\]
can be extended to a simplicial object $(R_{\bullet}; M_{\bullet})$ in $\bimod$ (explicitly this can be done by taking iterated pullbacks which are computed underlying, as limits in $\bimod$ are). Upon taking Eilenberg-MacLane spectra we obtain a simplicial object  $(HR_{\bullet}; HM_{\bullet})$ in the category of bimodule spectra.
Since smash powers and smash products of spectra commute with sifted colimits, $\THH(R;M^{\wedge_Rp^{k}})$ commutes with sifted colimits of bimodule spectra for every $k\geq 0$. Thus by induction on the fibre sequences
\[
\THH(R;M^{\wedge_Rp^{k}})_{hC_{p^{k}}}\stackrel{V}{\longrightarrow} \TR_{\langle p^k\rangle}(R;M)\stackrel{R}{\longrightarrow} \TR_{\langle p^{k-1}\rangle}(R;M)
\]
so does $ \TR_{\langle p^{k}\rangle}$ for all $k\geq 0$. It follows that
\[ \vert \TR_{\langle p^{n-1}\rangle}(HR_{\bullet}; HM_{\bullet}) \vert \simeq \TR_{\langle p^{n-1}\rangle}( \vert HR_{\bullet} \vert; \vert HM_{\bullet} \vert).\]
After applying $\pi_0$ on both sides and using Corollary \ref{pi0dependence} we get an isomorphism
\[ \pi_0 \vert \TR_{\langle p^{n-1}\rangle}(HR_{\bullet}; HM_{\bullet}) \vert \cong \pi_0 \TR_{\langle p^{n-1}\rangle}( H\pi_0\vert HR_{\bullet} \vert; H\pi_0 \vert HM_{\bullet} \vert)  \cong \pi_0 \TR_{\langle p^{n-1}\rangle}(R;M)=\FF_{n}(R;M).\]
On the other hand $ \pi_0 \vert \TR_{\langle p^{n-1}\rangle}(HR_{\bullet}; HM_{\bullet})\vert$ fits into the coequaliser diagram
\[ \xymatrix{ \pi_0   \TR_{\langle p^{n-1}\rangle}(HR_{1}; HM_{1})\ar@<.5ex>[r]\ar@<-.5ex>[r]&\pi_0  \TR_{\langle p^{n-1}\rangle}(HR_{0}; HM_{0}) \ar@{->>}[r]& \pi_0 \vert \TR_{\langle p^{n-1}\rangle}(HR_{\bullet}; HM_{\bullet})\vert
}\]  
which proves the desired result.
\end{proof}

\begin{rem}\label{rem:axiomatic}
The proof of Theorem \ref{pi0norm} that we give just below works for any collection of functors $\mathcal{F}_n$ with operators which satisfy the conditions of Proposition \ref{Axiomatic}.
\end{rem}

\begin{proof}[Proof of Theorem \ref{pi0norm}]
Let $(R;M)$ be a bimodule. We start by defining a map
\[I_{n+1} \colon W_{n+1,p}(R;M)\stackrel{}{\longrightarrow} \mathcal{F}_{n+1}(R;M)
\]
by taking a representative $(m_0,m_1,\dots,m_n)$ of a class in $W_{n+1,p}(R;M)$ to
\[
I_{n+1}(m_0,m_1,\dots,m_n):=\sum_{i=0}^n V^i\tau^{n-i}(m_i), 
\]
where $\tau^{n-i}\colon M^{\otimes_Rp^i}\to \mathcal{F}_{n+1-i}(R;M^{\otimes_R p^i})$ is the map $\tau$ for the bimodule $M^{\otimes_R p^i}$, and we wrote 
$V^i\colon  \mathcal{F}_{n+1-i}(R;M^{\otimes_R p^i})\to \mathcal{F}_{n+1}(R;M)$ for the iteration of the map $V$. In order to show that $I_{n+1}$ is well-defined, and ultimately an isomorphism, we need to define an analogue of the ghost maps for $\FF_{n+1}(R; M)$. Define for $0 \leq j < n+1$, 
\[\overline{w}_j := F^j R^{n-j} \colon \FF_{n+1}(R; M) \to \FF_{1}(R; \tens{R}{M}{j})^{C_{p^j}} \cong (\tens{R}{M}{j}/[R,\tens{R}{M}{j}])^{C_{p^j}} =  (\cyctens{R}{M}{j})^{C_{p^j}}\]
where for the latter identification we used (v).
We want to verify that under the map $I_{n+1}$ this map corresponds to the usual ghost map, i.e. that the diagram
\[\xymatrix{\prod_{i=0}^n  \tens{R}{M}{i} \ar[r]^{I_{n+1}} \ar[dr]_{w_j} & \FF_{n+1}(R; M) \ar[d]^{\overline{w}_j } \\ &  (\cyctens{R}{M}{j})^{C_{p^j}} } \]
commutes. To see this we first observe that using (iii)-(iv) the following identities hold:
\begin{align*} F^j V^j&=F^{j-1} FV V^{j-1}= F^{j-1} (\sum_{k=0}^{p-1} \sigma_1^k) V^{j-1}=(\sum_{k=0}^{p-1} \sigma_{j}^{k}) F^{j-1} V^{j-1}
\\
&=(\sum_{k=0}^{p-1} \sigma_{j}^{k}) (\sum_{k=0}^{p-1} \sigma_{j}^{k p}) F^{j-2} V^{j-2}= \dots =(\sum_{k=0}^{p-1} \sigma_{j}^{k}) (\sum_{k=0}^{p-1} \sigma_{j}^{k p}) \cdots (\sum_{k=0}^{p-1} \sigma_{j}^{k p^{j-1}})=\sum_{k=0}^{p^j-1} \sigma_{j}^k=\sum_{\sigma \in C_{p^j}} \sigma, \end{align*}
as endomorphisms of $\FF_{n+1}(R; \tens{R}{M}{j})$. 
It then follows that 
\begin{align*}
\overline{w}_jI_{{n+1}}(m_0,\dots,m_n)&=F^j R^{n-j}\sum_{i=0}^nV^i\tau^{n-i}(m_i)
\\
&=F^j\sum_{i=0}^jV^iR^{n-j}\tau^{n-i}(m_i),
\end{align*}
where we used (ii) and in particular that $R^{n-j}V^i=0$ on $\FF_{n-i+1}(R;\tens{R}{M}{i})$ if $i >j$. Further using (v), (iii) and the previous paragraph, we get
\begin{align*} 
F^j\sum_{i=0}^jV^iR^{n-j}\tau^{n-i}(m_i) = \sum_{i=0}^j\sum_{\sigma\in C_{p^{j}}/C_{p^{j-i}}}\sigma F^{j-i} \tau^{j-i}(m_i)
\\
=\sum_{i=0}^j\sum_{\sigma \in C_{p^{j}}/C_{p^{j-i}}}\sigma m^{\otimes p^{j-i}}_i=w_j(m_0,\dots,m_n),\end{align*}
which shows that the triangle above commutes. 

Before showing that the map $I_{n+1}$ descends to Witt vectors we check that the morphism
\[\overline{w}=(\overline{w}_0,\dots,\overline{w}_n) \colon \FF_{n+1}(R, M) \to \prod_{j=0}^n (\cyctens{R}{M}{j})^{C_{p^j}}  \]
is injective when $(R;M)$ is free (or more generally when $\tr \colon  (\cyctens{R}{M}{j})_{C_{p^j}} \to (\cyctens{R}{M}{j})^{C_{p^j}}$ is injective, see \cite[Lemma 1.4 and Proposition 1.18]{DKNP1}). For $n=0$, the map $\overline{w}$ is just the isomorphism
\[\FF_{1}(R; M) \cong M/[R, M]. \]
Now inductively assume that 
\[\overline{w}=(\overline{w}_0,\dots,\overline{w}_{n-1}) \colon \FF_{n}(R; M) \longrightarrow \prod_{j=0}^{n-1} (\cyctens{R}{M}{j})^{C_{p^j}}  \]
is injective. Consider the diagram with exact rows
\[\xymatrix{& (\cyctens{R}{M}{n})_{C_{p^n}} \ar[d]^{\tr} \ar[r]^{V^n} &  \FF_{n+1}(R; M) \ar[d]^{\overline{w}} \ar[r]^R &  \FF_{n}(R; M) \ar[d]^{\overline{w}}  \ar[r] & 0 \\ 0 \ar[r] & (\cyctens{R}{M}{n})^{C_{p^n}} \ar[r] &  \prod_{j=0}^{n} (\cyctens{R}{M}{j})^{C_{p^j}}  \ar[r]^{\proj} & \prod_{j=0}^{n-1} (\cyctens{R}{M}{j})^{C_{p^j}} \ar[r] & 0 }\]
where the left hand diagram commutes because of (ii), (iv) and (v). The commutativity of the right hand diagram is clear. The left hand vertical map is injective since $(R;M)$ is free. Assuming inductively that the right-hand map is also injective,  the standard diagram chase shows that the middle map is injective as well. 

For free $(R;M)$, since $\overline{w}$ is injective, the definition of the relation defining $W_{n+1,p}(R;M)$ shows that $I_{n+1}$ descends to a well-defined group homomorphism 
\[I_{n+1} \colon W_{n+1,p}(R;M) \longrightarrow \FF_{n+1}(R;M)\]
which is moreover injective.
To see that it is also surjective, we consider the diagram
\[\xymatrix{ (\cyctens{R}{M}{n})_{C_{p^n}} \ar@{=}[d] \ar[r]^{V^n} &  W_{n+1,p}(R; M) \ar[d]^{I_{n+1}} \ar[r]^R &  W_{n,p}(R; M) \ar[d]^{I_{n}}  \ar[r] & 0 \\(\cyctens{R}{M}{n})_{C_{p^n}}  \ar[r]^{V^n} &  \FF_{n+1}(R; M) \ar[r]^R &  \FF_{n}(R; M) \ar[r] & 0. }\]
The right hand square commutes by the construction of $I_n$ and from the fact that $R$ commutes with $V$ and $\tau$. The left hand square commutes by the description of $V^n \colon (\cyctens{R}{M}{n})_{C_{p^n}} \to  W_{n+1,p}(R; M)$ of Section \ref{secop}. The exactness of the top row follows from the results of \cite{DKNP1} (see Section \ref{secop}). Now $I_{0}$ is surjective by (v), and by induction so is the middle map by the standard diagram chase. 

We have thus constructed a natural isomorphism 
\[I_{n+1} \colon W_{n+1,p}(R;M) \cong \FF_{n+1}(R;M)\]
for any free bimodule $(R;M)$. Using the fact that both the source and target commute with reflexive coequalisers, this uniquely extends to a natural isomorphism on the whole category $\bimod$. Also using the naturality one can see that the formula for the general $I_n$ is given as claimed above. 

Finally, let us show that $I_{n+1}$ is monoidal. Again since $\FF_{n+1}$ and $W_{{n+1},p}$ commute with reflexive coequalisers we may show this on the full subcategory of free bimodules. It is then sufficient to see that
\[ \overline{w}_j (I_{n+1}(a) \ast I_{n+1}(b)) =  \overline{w}_j  I_{n+1} ( a \ast b)\]
holds for all $0 \leq j \leq n$. By definition $\overline{w}_j=F^j R^{n-1-j}$ and therefore it is monoidal by $i)$, and
\begin{align*}\overline{w}_j (I_{n+1}(a) \ast I_{n+1}(b))&=\overline{w}_j (I_{n+1}(a)) \ast \overline{w}_j (I_{n+1}(a)) = w_j(a) \ast w_j(b)
\\&
=w_j(a \ast b)=  \overline{w}_j  I_{n-1} ( a \ast b).
\end{align*}
%
%
\end{proof}

\subsection{The Mackey structure on the components of TR} 


We now identify the operators of $\TR$ and the Witt vectors under the isomorphism of Theorem \ref{pi0norm}. By Proposition \ref{prop: truncated TR} there is a genuine $C_{p^n}$-spectrum $\underline{\TR}_{\langle p^n\rangle}(R;M)$ whose genuine $C_{p^k}$-fixed-points are the spectra $\TR_{\langle p^k\rangle}(R;M^{\wedge_R p^{n-k}})$, and in particular the next result identifies the Mackey structure of its components.

\begin{prop} \label{structure compatible} Let $R$ be a connective ring spectrum and $M$ a connective $R$-bimodule. For any $n \geq 1$, the following diagrams commute:
\[
\xymatrix@R=20pt@C=22pt{
W_{\langle p^{n-1}\rangle}(\pi_0R;\tens{\pi_0R}{\pi_0M}{})\ar[d]^-{V}\ar[r]^-{I_{n}}_-{\cong}&\pi_0\TR_{\langle p^{n-1}\rangle}(R; M^{\wedge_R p}) \ar[d]^-{V}
\\
W_{\langle p^n\rangle}(\pi_0R;\pi_0M)\ar[r]^-{I_{n+1}}_-{\cong}& \pi_0\TR_{\langle p^{n}\rangle}(R;M) ,
}\ \ \ \ \ \ \ \ 
\xymatrix@R=20pt@C=30pt{
W_{\langle p^n\rangle}(\pi_0R;\pi_0M) \ar[d]^-{R}\ar[r]^-{I_{n+1}}_-{\cong}&\pi_0\TR_{\langle p^{n}\rangle}(R;M) \ar[d]^{R}
\\
W_{\langle p^{n-1}\rangle}(\pi_0R;\pi_0M) \ar[r]^-{I_{n}}_-{\cong}&
\pi_0\TR_{\langle p^{n-1}\rangle}(R;M) }
\]

\[
\xymatrix@R=20pt@C=22pt{
W_{\langle p^{n}\rangle}(\pi_0R;\pi_0M) \ar[d]^-{F}\ar[r]^-{I_{n+1}}_-{\cong}&\pi_0\TR_{\langle p^{n}\rangle}(R;M)  \ar[d]^{F}
\\
W_{\langle p^{n-1}\rangle}(\pi_0R;\tens{\pi_0R}{\pi_0M}{})\ar[r]^-{I_{n}}_-{\cong}&\pi_0\TR_{\langle p^{n-1}\rangle}(R;M^{\wedge_R p}),
}
\ \ \ \ \ \ \ 
\xymatrix@R=20pt@C=22pt{
W_{\langle p^{n-1}\rangle}(\pi_0R;\tens{\pi_0R}{\pi_0M}{i})\ar[d]^-{\sigma_i}\ar[r]^-{I_{n}}_-{\cong}&\pi_0\TR_{\langle p^{n-1}\rangle}(R;M^{\wedge_R p^i})\ar[d]^{\sigma_i}
\\
W_{\langle p^{n-1}\rangle}(\pi_0R;\tens{\pi_0R}{\pi_0M}{i})\ar[r]^-{I_{n}}_-{\cong}& \pi_0\TR_{\langle p^{n-1}\rangle}(R;M^{\wedge_R p^i})
}
\]
\[
\xymatrix@R=20pt@C=22pt{
  M \ar[d]^-{\tau}\ar[r]^-{\mathrm{id}}& M  \ar[d]^{\tau}
\\
W_{\langle p^{n}\rangle}(\pi_0R;\pi_0M)\ar[r]^-{I_{n+1}}_-{\cong}&\pi_0\TR_{\langle p^{n}\rangle}(R;M),
}
\]
In particular the maps $I_{k}$, for $1\leq k\leq n+1$, determine a monoidal isomorphism of $C_{p^n}$-Mackey functors between $\pi_0\underline{\TR}_{\langle p^{n}\rangle}(R;M)$ and the Mackey functor $C_{p^k}\mapsto W_{\langle p^{k}\rangle}(\pi_0R;\tens{\pi_0R}{\pi_0M}{n-k})$ equipped with the restriction maps $F$ and the transfers $V$ of \S\ref{secop}.
\end{prop}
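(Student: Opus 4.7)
The overall strategy is to reduce each of the five diagrams to the case where $(R;M)$ is a free bimodule via the reflexive coequaliser property (Proposition \ref{Wwelldef} and Proposition \ref{Axiomatic}(vii)) together with the naturality of $I_{n+1}$, and then verify the identity in ghost components. On free bimodules the composite ghost maps
\[\overline{w}=(\overline{w}_j)_{j=0}^n=(F^jR^{n-j})_{j=0}^n\colon \FF_{n+1}(R;M)\longrightarrow \prod_{j=0}^n(\cyctens{R}{M}{j})^{C_{p^j}}\]
are collectively injective (shown in the proof of Theorem \ref{pi0norm}), and $I_{n+1}$ identifies $\overline{w}$ with the algebraic ghost map $w$ of $W_{n+1,p}$ (again from that proof). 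So checking that two maps out of $W_{n+1,p}(R;M)$ agree reduces to checking equality of their compositions with each $\overline{w}_j$.

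For the operators $V$, $R$ and $\tau$, the compatibility is essentially tautological from the formula $I_{n+1}(m_0,\dots,m_n)=\sum_{i=0}^n V^i\tau^{n-i}(m_i)$, and no ghost computation is needed. Indeed $I_{n+1}\circ V=V\circ I_n$ because the sum shifts by one application of $V$; $I_{n+1}\circ R=R\circ I_n$ follows from $RV=VR$ and $R\tau=\tau$ (Proposition \ref{Axiomatic}(ii), (v)) together with $RV^n(m_n)=V^nR(m_n)=0$ (as $\FF_0=0$); and $I_{n+1}\circ\tau(m)=\tau^n(m)$ is literally the topological Teichm\"uller map for $M$.

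The main content lies in the Frobenius $F$ and the Weyl action $\sigma_i$, which admit no closed form on representatives. For $F$, Proposition \ref{Axiomatic}(ii) gives $FR=RF$, hence
\[\overline{w}_j\bigl(F(x)\bigr)=F^jR^{n-1-j}F(x)=F^{j+1}R^{n-1-j}(x)=\overline{w}_{j+1}(x),\]
which matches the algebraic characterisation $w_j\circ F_{\mathrm{alg}}=w_{j+1}$ of the Witt-Frobenius from \cite[Proposition 1.31]{DKNP1}. For $\sigma_i$, Proposition \ref{Axiomatic}(iii) gives that the Weyl generator commutes with $F$ and $R$ (suitably equivariantly along the projection $C_{p^{i+j}}\to C_{p^i}$), so $\overline{w}_j\sigma_i=\sigma_i\overline{w}_j$; and by (v) the induced action on $\FF_1(R;\tens{R}{M}{i+j})^{C_{p^j}}\cong(\cyctens{R}{M}{i+j})^{C_{p^j}}$ is cyclic permutation of tensor factors, agreeing with the algebraic Weyl action on ghost components as computed in \cite[Proposition 1.32]{DKNP1}. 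Injectivity of $\overline{w}$ on free bimodules closes the square.

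The main technical obstacle will be keeping straight the identifications of iterated smash powers $M^{\wedge_R p^{i+j}}\cong(M^{\wedge_R p^i})^{\wedge_R p^j}$ and the corresponding inclusions $C_{p^j}\hookrightarrow C_{p^{i+j}}$, so that ``the Weyl action on the inner $p^i$-fold smash power'' seen inside $\pi_0\TR_{\langle p^{n-1}\rangle}(R;M^{\wedge_R p^i})$ matches the cyclic permutation of tensor factors appearing in the algebraic ghost target — this bookkeeping is exactly what Proposition \ref{Axiomatic}(iii) and (v) are designed to record. Once the four square diagrams are established, the final Mackey-functor assertion is formal: Proposition \ref{prop: truncated TR} provides the $C_{p^n}$-Mackey structure on $\pi_0\underline{\TR}_{\langle p^n\rangle}(R;M)$ whose restriction, transfer and Weyl data are $F$, $V$ and $\sigma$, and the four squares say that $(I_k)_{k=0}^{n+1}$ intertwines these with the Witt-vector $F$, $V$ and $\sigma$ of \S\ref{secop}; monoidality has already been established in Theorem \ref{pi0norm}.
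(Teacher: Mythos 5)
Your plan is correct and follows essentially the same route as the paper's proof: direct computation on representatives for $V$, $R$ and $\tau$, verification in ghost components using injectivity of $\overline{w}$ on free bimodules plus naturality for $F$ and $\sigma_i$, and the formal deduction of the Mackey statement from the restriction equivalences underlying Proposition \ref{prop: truncated TR}. The only loose point is that on the ghost target the action corresponding to $\sigma_i$ is $\sigma_{i+j}$ rather than $\sigma_i$ (precisely the content of Proposition \ref{Axiomatic}(iii)), which is the bookkeeping you already flag.
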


\begin{proof} By Corollary \ref{pi0dependence} we can assume that $(R;M)$ is discrete, and we prove this theorem for $W_{n,p}(R;M)$. Our argument will moreover work for any collection of functors $\FF_{n}$ satisfying Proposition \ref{Axiomatic}.

The commutativity of the first diagram follows from the construction of the map $I_n$. Indeed, it suffices to show that the diagram commutes after precomposing with the projection 
\[ \prod_{i=0}^{n-1}  \tens{R}{(\tens{R}{M}{})}{i} \to  W_{n,p}(R;\tens{R}{M}{}).\]
We then compute:
\begin{align*} &V  I_{n}(m_0, \dots m_{n-1})= V \sum_{i=0}^{n-1} V^i\tau^{n-1-i}(m_i) = \sum_{i=0}^{n-1} V^{i+1}\tau^{n-1-i}(m_i) 
\\ 
&= \sum_{i=1}^{n} V^{i}\tau^{n-1-{(i-1)}}(m_{i-1})= \sum_{i=1}^{n} V^{i}\tau^{n-i}(m_{i-1})= I_{n+1}(0, m_0, \dots, m_{n-1})=I_{n+1} V(m_0, \dots, m_{n-1}). \end{align*} 
Here we use the description of $V$ on representatives as in Section \ref{secop} (see also  \cite[Proposition 1.23]{DKNP1}), the identification 
\[\tens{R}{(\tens{R}{M}{})}{i}  \cong \tens{R}{M}{i+1}\]
and that under these identifications
\[\tau^{n-1-i}=\tau^{n-(i+1)} \colon \tens{R}{(\tens{R}{M}{})}{i}=\tens{R}{M}{i+1} \to \FF_{n-1-i+1}(R;\tens{R}{(\tens{R}{M}{})}{i}) =  \FF_{n-i}(R;\tens{R}{M}{i+1}),\]
which follows from the naturality of $\tau$. 

Next we check $RI_{n+1}=I_{n}R$. Again it suffices to check that the identity holds after precomposing with the projection
 \[ \prod_{i=0}^n  \tens{R}{M}{i} \to  W_{n+1,p}(R;M),\]
 and we get that
 \begin{align*} 
 RI_{n+1}(m_0, \dots, m_n) = R (\sum_{i=0}^n V^i\tau^{n-i} (m_i) )= \sum_{i=0}^n R V^i\tau^{n-i}(m_i) \\= \sum_{i=0}^{n-1} V^i \tau^{n-1-i}(m_i) = I_{n} R(m_0, \dots m_n). 
 \end{align*}
 Here we have used the description of $R \colon W_{n+1,p}(R;M) \to W_{n,p}(R;M)$ as in Section \ref{secop}, as well as (ii) and (v). In particular, we have used $RV^n(m_n)=0$.
 
 For checking the commutativity of the next two diagrams we employ the ghost components. We first check that
 \[\overline{w} F I_{n+1}=  \overline{w} I_{n} F \]
 in $\prod_{j=0}^{n-1} \cyctens{R}{M}{j}$. This implies the result when $(R;M)$ is free since in this case  $\overline{w}$ is injective as observed above. For general $(R;M)$ the result then follows by naturality. We verify the latter identity for every component $0 \leq j \leq n-1$:
 \[ \overline{w}_j I_{n} F = w_j F =  w_{j+1}  = \overline{w}_{j+1}  I_{n+1} = F^{j+1} R^{n-j-1}  I_{n+1} =F^j R^{n-j-1} F I_{n+1} = \overline{w}_j F I_{n+1},\]
 where we have used (ii) and \cite[Proposition 1.25]{DKNP1}. 
 
For the fourth diagram, we check that the identity $\sigma_i I_{n}=I_{n} \sigma_i$ holds. We can again assume that $(R;M)$ is free and compute componentwise that for $0 \leq j \leq n-1$: 
 \begin{align*} \overline{w}_j I_{n} \sigma_i = w_j \sigma_i = \sigma_{i+j} w_j = \sigma_{i+j} \overline{w}_j I_{n}=\overline{w}_j \sigma_i I_{n}, \end{align*}
 where we used (iii). 
 
 The compatibility with $\tau$ is immediate from the definition of $I_n$.

 In order to identify the full Mackey structure simply notice that for every $0\leq k\leq n$ there is an equivalence of $C_{p^k}$-equivariant spectra
 \[
 \Res^{C_{p^n}}_{C_{p^k}}(\underline{\TR}_{\langle p^n\rangle}(R;M)) \simeq \underline{\TR}_{\langle p^k\rangle}(R;M^{\wedge_R p^{n-k}}).
 \]
 It follows that the transfer of $\underline{\TR}_{\langle p^n\rangle}(R;M)$ from $C_{p^{k-1}}$ to $C_{p^{k}}$ agrees with the same transfer for $\underline{\TR}_{\langle p^k\rangle}(R;M^{\wedge_R p^{n-k}})$, which by the first part of Proposition \ref{structure compatible} applied to the bimodule $M^{\wedge_R p^{n-k}}$ agrees with the Witt vectors Verschiebung. A similar argument identifies the lower restrictions and the Weyl actions.
\end{proof}

\begin{rem}
As in Remark\ref{rem:axiomatic}, the proof of Proposition \ref{structure compatible} works for any collection of functors $\mathcal{F}_n$ with operators which satisfy the conditions of Proposition \ref{Axiomatic}. In fact, the isomorphisms $I_n$ are the unique isomorphisms which satisfy \ref{structure compatible}.
\end{rem}

\begin{rem} \label{tambnorm} Suppose that $R$ is a connective $E_\infty$-ring spectrum and $M$ a connective $E_\infty$-$R$-algebra. In this case we expect that analogously to the map $\tau$ (Section \ref{secnorm}), one can construct multiplicative maps
\[
N\colon \pi_0\TR_{\langle p^{n-1}\rangle}(R;M^{\wedge_Rp})\longrightarrow \pi_0\TR_{\langle p^{n}\rangle}(R;M)
\]
for every $n\geq 1$ which endow $\pi_0\underline{\TR}_{\langle p^{n}\rangle}(R;M)$ with the structure of a $C_{p^n}$-Tambara functor.
%
%
%
%
%
%
These maps should be characterised algebraically by the commutative diagram
\[
\xymatrix@R=20pt@C=60pt{
W_{\langle p^{n-1}\rangle}(\pi_0R;\tens{\pi_0R}{\pi_0M}{})\ar[d]_-{N}\ar[r]^-{I_{n}}_-{\cong}&\pi_0\TR_{\langle p^{n-1}\rangle}(R;M^{\wedge_Rp})\ar[d]^{N}
\\
W_{\langle p^{n}\rangle}(\pi_0R;\pi_0M)\ar[r]^-{I_{n+1}}_-{\cong}&\pi_0\TR_{\langle p^{n}\rangle}(R;M)
}
\]
where the map $N$ on the left is the norm operator of \ref{N}. In particular the maps $I_{k}$, for $1\leq k\leq n+1$, should determine an isomorphism of $C_{p^n}$-Tambara functors between $\pi_0\underline{\TR}_{\langle p^{n}\rangle}(R;M)$ and the Tambara functor $C_{p^k}\mapsto W_{\langle p^{k}\rangle}(\pi_0R;\tens{\pi_0R}{\pi_0M}{n-k})$ equipped with the restriction maps $F$ and the transfers $V$ and the norm $N$ of \S\ref{secop}. 

We leave these observations about the norms on $\TR$ as open questions and encourage interested readers to work out the details. 
 \end{rem}

\subsection{Free Tambara functors and Witt vectors}\label{secbrun}

In this section we describe the free $C_{p^n}$-Tambara functor in terms of Witt vectors with coefficients. This is a result analogous to Brun's \cite[Theorem B]{BrunTamb} which establishes a relationship between these Tambara functors  and the usual Witt vectors of a commutative ring.

We recall that that a $C_{p^n}$-Tambara functor $T$ consists of a commutative ring $T(C_{p^k})$ with a $C_{p^{n-k}}$-action for every $0\leq k\leq n$, together with equivariant maps
\[
\xymatrix{
T(C_{p^{k+1}})\ar[r]|-F & T(C_{p^k})\ar@<1ex>[l]^-{V}\ar@<-1ex>[l]_-N
}
\]
for all $0\leq k\leq n-1$, where $F$ is a ring homomorphism, $V$ is additive, and $R$ is multiplicative. These satisfy certain relations, which can be encoded by declaring $T$ to be a finite products preserving functor on a certain category of double-spans (see \cite{Tambara}). 
We already saw in Remark \ref{TambaraStructure} that for every commutative ring $A$, the functor $\underline{W}_{\langle p^n\rangle}(\Z;A)$ that sends $C_{p^k}$ to the commutative ring
\[
\underline{W}_{\langle p^n\rangle}(\Z;A)(C_{p^k}):=W_{\langle p^k\rangle}(\Z;A^{\otimes p^{n-k}})
\]
and equipped with the operators of \S\ref{secop} is a $C_{p^n}$-Tambara functor.
Let $U$ be the forgetful functor that takes a $C_{p^n}$-Tambara functor $T$ to the underlying commutative ring $T(1)$.

\begin{prop}\label{freetamb} The functor that takes a commutative ring $A$ to $\underline{W}_{\langle p^n\rangle}(\Z;A)$ is left adjoint to $U$.
\end{prop}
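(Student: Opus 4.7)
The plan is to exhibit a natural bijection
\[
\Hom_{\mathrm{Tamb}_{C_{p^n}}}\bigl(\underline{W}_{\langle p^n\rangle}(\Z; A),\, T\bigr) \cong \Hom_{\mathrm{CRing}}(A,\, T(1))
\]
via an explicit unit together with a construction of the adjoint map. Using Example \ref{exrel}(3) to identify $\underline{W}_{\langle p^n\rangle}(\Z; A)(1) = W_{\langle 1\rangle}(\Z; A^{\otimes p^n}) \cong A^{\otimes p^n}$, I take the unit $\eta_A\colon A \to A^{\otimes p^n}$ to be $a \mapsto a \otimes 1 \otimes \cdots \otimes 1$, which is clearly a natural ring homomorphism. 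An immediate check shows that $(A^{\otimes p^n}, \eta_A)$ is the free commutative ring with $C_{p^n}$-action on the plain commutative ring $A$: any ring map $f\colon A \to B$ into a $C_{p^n}$-ring extends uniquely to a $C_{p^n}$-equivariant ring map $f^{(n)}\colon A^{\otimes p^n} \to B$ via $a_0 \otimes \cdots \otimes a_{p^n-1} \mapsto \prod_{i=0}^{p^n-1} \sigma^i f(a_i)$, using the commutativity of $B$.

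Given a ring map $f\colon A \to T(1)$, I define $\tilde f\colon \underline{W}_{\langle p^n\rangle}(\Z; A) \to T$ level by level. At the bottom I take $\tilde f(1) := f^{(n)}$, using the Weyl $C_{p^n}$-action on $T(1)$. At level $C_{p^k}$, Proposition \ref{basis} shows that $W_{\langle p^k\rangle}(\Z; A^{\otimes p^{n-k}})$ is generated as an abelian group by elements of the form $V^i \tau^{k-i}(m)$ with $m \in A^{\otimes p^{n-k+i}}$, while Proposition \ref{tauandN} rewrites $\tau^{k-i}(m) = N^{k-i}(u(m))$ with $u(m) \in A^{\otimes p^n}$. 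Any Tambara functor map $\tilde f$ commuting with the operators $V$ and $N$ is thus forced to satisfy
\[
\tilde f(C_{p^k})\bigl(V^i \tau^{k-i}(m)\bigr) = V^i N^{k-i}\bigl(\tilde f(1)(u(m))\bigr),
\]
so uniqueness is immediate and this formula defines $\tilde f$ on a generating set of $W_{\langle p^k\rangle}(\Z; A^{\otimes p^{n-k}})$.

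The main obstacle is existence, i.e., verifying that this formula descends through the abelian-group and Tambara-functor relations defining $\underline{W}_{\langle p^n\rangle}(\Z; A)$. I reduce to the case where $A = \Z[X]$ is a polynomial ring using that $\underline{W}_{\langle p^n\rangle}(\Z; -)$ preserves reflexive coequalisers of commutative rings, levelwise by Proposition \ref{Wwelldef} and then in the category of Tambara functors since colimits of Tambara functors along reflexive diagrams are computed levelwise. Consequently both $\Hom_{\mathrm{Tamb}}(\underline{W}_{\langle p^n\rangle}(\Z; -), T)$ and $\Hom_{\mathrm{CRing}}(-, T(1))$ turn reflexive coequalisers in $A$ into equalisers of $\Hom$-sets. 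In the polynomial case, the ghost map of Theorem \ref{existence} is injective at every level $W_{\langle p^k\rangle}(\Z; \Z[X]^{\otimes p^{n-k}})$, so compatibility of $\tilde f$ with $F$, $V$, $N$, the Weyl action, and the ring structure reduces to a ghost-coordinate identity that follows from Tambara reciprocity for the permutation Tambara functors already exploited in the proof of Proposition \ref{N}. The triangle identities then hold by construction.
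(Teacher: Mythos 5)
Your overall architecture coincides with the paper's: the unit $a\mapsto a\otimes 1\otimes\cdots\otimes 1$ together with the observation that $A^{\otimes p^n}$ is the free commutative $C_{p^n}$-ring on $A$, the forced formula $V^i\tau^{k-i}(m)\mapsto V^iN^{k-i}(\tilde f(1)(u(m)))$ on the generators of Proposition \ref{basis}, the reduction to $A=\Z[X]$, and the passage to general $A$ via the reflexive coequaliser $\Z[\Z[A]]\rightrightarrows\Z[A]\twoheadrightarrow A$, using that reflexive coequalisers of Tambara functors are computed levelwise. Up to that skeleton your proposal and the paper's proof agree.

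The justification of the crucial existence step, however, does not work as stated. You claim that because the ghost map is injective on each $W_{\langle p^k\rangle}(\Z;\Z[X]^{\otimes p^{n-k}})$, the compatibility of $\tilde f$ with $F$, $V$, $N$, the Weyl action and the multiplication ``reduces to a ghost-coordinate identity''. Injectivity of the ghost map lets you compare two maps \emph{into} the Witt vectors by composing with $w$; it gives nothing for comparing two maps \emph{out of} the Witt vectors into an arbitrary Tambara functor $T$, which carries no ghost map. What actually makes the verification go through is that for $A=\Z[X]$ Proposition \ref{basis} exhibits a \emph{free} $\Z$-basis (so the additive map $\alpha_{C_{p^k}}$ is well defined at all, not merely determined on a generating set), together with explicit closed formulas --- these are the statements proved in ghost coordinates, namely the product formula of Proposition \ref{basis}, the identity $F^k\tau^l(m)=\tau^{l-k}(m^{\otimes p^k})$, and $\tau=N^ku$ from Proposition \ref{tauandN} --- and that one must then match these formulas against the corresponding identities \emph{in the target $T$}: the double coset formula for $FV$, Frobenius reciprocity, and Tambara reciprocity for norms of sums and norms of transfers (e.g.\ \cite[Lemma 5.2]{Angeltveit} and \cite[Corollary 2.9]{HillMazur}). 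That second half of the argument takes place entirely in $T$ and is not a ghost computation; your appeal to ``Tambara reciprocity for the permutation Tambara functors already exploited in the proof of Proposition \ref{N}'' points at the Witt-vector side rather than at $T$, and so the multiplicativity of $\alpha_{C_{p^k}}$ and its compatibility with the norm are left unproved.
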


\begin{proof}
Let us start by defining a natural transformation
\[
  \hom_{C_{p^n}\mbox{-}\mathrm{Tamb}}(\underline{W}_{\langle p^n\rangle}(\Z;A),T)\longrightarrow  \hom_{\mathrm{Ring}}(A,T(1))
\]
from the morphism set in the category of Tambara functors to that of the category of commutative rings.
We send a morphism of Tambara functors $\alpha\colon \underline{W}_{\langle p^n\rangle}(\Z;A)\to T$ to the ring homomorphism
\[
f\colon A\xrightarrow{\id\otimes 1\otimes\dots\otimes 1}A^{\otimes p^{n}}\xrightarrow{\alpha_{0}} T(1)
\]
where $\alpha_0$ is the value at the trivial group of the natural transformation $\alpha$. We notice that by assumption $\alpha_0$ is $C_{p^n}$-equivariant, and therefore $f$ does not depend on the choice of ordering made in the first map. Moreover $\alpha_0$ can be recovered by $f$, since by equivariancy
\[\alpha_0(a_1\otimes\dots\otimes a_{p^n})=\alpha_0(a_1\otimes 1\otimes \dots\otimes 1)\cdot\dots\cdot \alpha_0(1\otimes\dots\otimes 1\otimes a_{p^n})=\prod_{l=1}^{p^n}\sigma_{p^n}^lf(a_l)\]
where $\sigma_{p^n}\in C_{p^n}$ is a generator
(in other words, $A^{\otimes p^{n}}$ is the free commutative $C_{p^n}$-ring on the commutative ring $A$).

In order to see that the map above defined on hom sets is a bijection, we first assume that $A=\Z[X]$ is the polynomial ring on a set $X$. We can then define an inverse as follows. Given a ring homomorphism $f\colon \Z[X]\to T(1)$, we define $\alpha_0\colon \Z[X]^{\otimes p^n}\to T(1)$ by the formula above. In order to define $\alpha_{C_{p^k}}$ for $0<k\leq n$ we recall from Proposition \ref{basis} that the group $W_{\langle p^k\rangle}(\Z;\Z[X]^{\otimes p^{n-k}})$ is free abelian, generated by the elements $V^i\tau^{k-i}(m_1\otimes \dots\otimes m_{p^{i}})$, where $0\leq i\leq k$, and $(m_1, \dots, m_{p^{i}})$ ranges through the orbits of the $C_{p^i}$-action on the $p^i$-fold product of additive generators of the free abelian group $\Z[X]^{\otimes p^{n-k}}$, that is on the monomials $m_l$ in the set $X^{\amalg p^{n-k}}$. Since $\tau$ relates to $N$ by Proposition \ref{tauandN} and $\alpha$ needs to be compatible with the Tambara structure, we must define $\alpha_{C_{p^k}}$ by
\[
\alpha_{C_{p^k}}(V^i\tau^{k-i}(m_1\otimes\dots\otimes m_{p^i}))=V^iN^{k-i}\alpha_0((m_1\otimes\dots\otimes m_{p^i})\otimes 1^{\otimes (p^{k-i}-1)})\] 
where $V$ and $N$ on the right hand side are the transfer and norm maps of $T$, and  $1$ is the unit of $\Z[X]^{\otimes p^{n-k+i}}$. Since these are free generators, this gives a well-defined additive map
\[
\alpha_{C_{p^k}}\colon W_{\langle p^k\rangle}(\Z;\Z[X]^{\otimes p^{n-k}})\longrightarrow T(C_{p^k}),
\]
for every $0\leq k\leq n$. This map is moreover multiplicative, since by the formula of Proposition \ref{basis}, $\alpha_{C_{p^k}}$ sends the product of two generators, with $i\leq j$, to (abusing notation below, we will denote the unit of some tensor powers of $A$ just by $1$)
\begin{align*}
&\alpha_{C_{p^k}}(V^i\tau^{k-i}(\otimes_{l=1}^{p^i}u_l)\cdot V^j\tau^{k-j}(\otimes_{h=1}^{p^j}v_h))=
  \alpha_{C_{p^k}}(\sum_{\sigma\in C_{p^i}} V^j(\tau^{k-j}((\sigma (\otimes_{l=1}^{p^i}u_l))^{\otimes p^{j-i}}\cdot (\otimes_{h=1}^{p^j}v_h))))
  \\&=\sum_{\sigma\in C_{p^i}}V^jN^{k-j}\alpha_0\big(((\sigma (\otimes_{l=1}^{p^i}u_l))^{\otimes p^{j-i}}\cdot (\otimes_{h=1}^{p^j}v_h))\otimes 1^{\otimes (p^{k-j}-1)}\big)
\\&=\sum_{\sigma\in C_{p^k}/C_{p^{k-i}}}V^jN^{k-j}\alpha_0\big(\prod_{\omega\in C_{p^{k-i}}/C_{p^{k-j}}}\omega\sigma( (u_1\otimes\dots\otimes u_{p^i})\otimes 1^{\otimes (p^{k-i}-1)})\cdot ( (v_1\otimes\dots\otimes v_{p^j})\otimes 1^{\otimes (p^{k-j}-1)})\big)
\\&=V^iN^{k-i}\alpha_0((u_1\otimes\dots\otimes u_{p^i})\otimes 1^{\otimes (p^{k-i}-1)})\cdot V^jN^{k-j}\alpha_0((v_1\otimes\dots\otimes v_{p^j})\otimes 1^{\otimes (p^{k-j}-1)})
\\&=\alpha_{C_{p^k}}(V^i\tau^{k-i}(\otimes_{l=1}^{p^i}u_l))\cdot \alpha_{C_{p^k}}(V^i\tau^{k-j}(\otimes_{h=1}^{p^j}v_h)).
\end{align*}
The fourth equality holds by an argument  analogous to the calculation of the multiplicative structure of Proposition \ref{basis}, by using the Mackey and Tambara identities of $T$.

Let us now show that the collection of maps $\alpha$ is compatible with the Tambara structures. The map $\alpha$ commutes with the transfer maps, since these are additive and  on generators
\begin{align*}
V\alpha_{C_{p^k}}(V^i\tau^{k-i}(m_1\otimes\dots\otimes m_{p^i}))&=VV^iN^{k-i}\alpha_0((m_1\otimes\dots\otimes m_{p^i})\otimes 1^{\otimes (p^{k-i}-1)})
\\&=V^{i+1}N^{k+1-(i+1)}\alpha_0((m_1\otimes\dots\otimes m_{p^i}\otimes 1^{\otimes (p^{k+1-(i+1)}-1)})
\\&=\alpha_{C_{p^{k+1}}}(V^{i+1}\tau^{k+1-(i+1)}(m_1\otimes\dots\otimes m_{p^i}))
\\&=\alpha_{C_{p^{k+1}}}(VV^i\tau^{k-i}(m_1\otimes\dots\otimes m_{p^i}))
\end{align*}
A similar argument shows that $\alpha$ commutes with the restriction maps $F$ on generators, and therefore on all elements since $F$ is additive. Finally let us show that $\alpha$ is compatible with the norms. The norm of a sum of elements is the sum of the norms of those elements plus a sum of transfer terms, see e.g.  \cite[Lemma 5.2]{Angeltveit} for a precise formula. Since We already showed that $\alpha$ is compatible with transfers and multiplication, it is sufficient to show that it commutes with the norms on additive generators. By \cite[Corollary 2.9]{HillMazur}, for abelian $p$-groups norm of a transfer can be described as a transfer applied to a specific polynomial only depending on the group under the consideration. Since we know that $\alpha$ is multiplicative and commutes with transfers and Weyl actions, this shows that $\alpha$ also commutes with norms. 

This shows that $\alpha$ is a well-defined map of Tambara functors. Using again that the Witt vectors above are free as abelian groups, one can easily see that the map that sends $f$ to $\alpha$ is an inverse for the map above, showing that the Witt vectors are a left adjoint on the subcategory of free commutative rings.

Now assume that $A$ is any commutative ring, and consider the functorial reflexive coequaliser diagram of bimodules
\[
  \xymatrix{(\Z;\Z[\Z[A]])\ar@<.5ex>[r]\ar@<-.5ex>[r]&(\Z;\Z[A])\ar@{->>}[r]&(\Z;A)}.
\]
Since the category of Tambara functors is a category of product-preserving functors, and reflexive coequalisers of product-preserving functors are computed pointwise, the diagram
\[
  \xymatrix{\underline{W}_{\langle p^n\rangle}(\Z;\Z[\Z[A]])\ar@<.5ex>[r]\ar@<-.5ex>[r]&\underline{W}_{\langle p^n\rangle}(\Z;\Z[A])\ar@{->>}[r]&\underline{W}_{\langle p^n\rangle}(\Z;A)}
\]
is a coequaliser in the category of $C_{p^n}$-Tambara functors.  Thus for every $C_{p^n}$-Tambara functor $T$, there is a bijection
\[
  \hom_{C_{p^n}\mbox{-}\mathrm{Tamb}}(\underline{W}_{\langle p^n\rangle}(\Z;A),T)\cong \hom_{C_{p^n}\mbox{-}\mathrm{Tamb}}\big(\colim(
\xymatrix@C=10pt{\underline{W}_{\langle p^n\rangle}(\Z;\Z[\Z[A]])\ar@<.5ex>[r]\ar@<-.5ex>[r]&\underline{W}_{\langle p^n\rangle}(\Z;\Z[A]))},T\big)
\]
\[
\cong
\lim\big(\xymatrix@C=10pt{ \hom_{C_{p^n}\mbox{-}\mathrm{Tamb}}(\underline{W}_{\langle p^n\rangle}(\Z;\Z[\Z[A]]),T)\ar@<.5ex>[r]\ar@<-.5ex>[r]& \hom_{C_{p^n}\mbox{-}\mathrm{Tamb}}(\underline{W}_{\langle p^n\rangle}(\Z;\Z[A]),T)}\big)
\]
\[
\cong
\lim\big(\xymatrix@C=10pt{ \hom_{\mathrm{Ring}}(\Z[\Z[A]],T(0))\ar@<.5ex>[r]\ar@<-.5ex>[r]& \hom_{\mathrm{Ring}}(\Z[A],T(0))}\big)
\]
\[
\cong
\hom_{\mathrm{Ring}}\big(\colim(\xymatrix@C=10pt{\Z[\Z[A]]\ar@<.5ex>[r]\ar@<-.5ex>[r]& \Z[A]}),T(0)\big)\cong \hom_{\mathrm{Ring}}(A,T(0))\]
where the third isomorphism follows from the free case above. It is moreover not difficult to see that this bijection coincides with the natural transformation defined above.
\end{proof}

\begin{rem}
It is also possible to prove Proposition \ref{freetamb} from the results of \cite{Ullman}, by identifying $\underline{W}_{\langle p^n\rangle}(\Z;A)$ with the components of the norm via \ref{tambnorm}, and use that $N^{C_{p^n}}_e$ is the left adjoint of the forgetful functor from genuine $C_{p^n}$-commutative equivariant ring spectra to $E_\infty$-ring spectra.
\end{rem}

\begin{example}
Let us describe explicitly the free $C_p$-Tambara functor on a commutative ring $A$. This is the diagram
\[
L(A)=\big(\xymatrix@C=70pt{\ar@(ul,dl)[]_{C_p} \ \ A^{\otimes p}
&W_{2,p}(\Z;A)\cong A\times (A^{\otimes p})_{C_p} \ar[l]|-{F} \ar@{<-}@<-1ex>[l]_-{V}\ar@{<-}@<1ex>[l]^-{N}}
\  \big),
\]
where the $C_p$-action on $A^{\otimes p}$ is the standard one, and
\begin{align*}
F(a,[x])&=a^{\otimes p}+\sum_{\sigma\in C_p}\sigma x
\\
V(x)&=(0,[x])
\\
N(x)&=(\mu_p(x),0)
\end{align*}
where $\mu_p\colon A^{\otimes p}\to A$ is the multiplication map. The ring structure on the right-hand term is that described explicitly in Examples \ref{ex:solid} and \ref{ex:multiplication}.
\end{example}

We now explain the relationship between this result and \cite[Theorem B]{BrunTamb}. Let $U'$ be the forgetful functor from $C_{p^n}$-Tambara functors to commutative rings with $C_{p^n}$-action, which sends $T$ to $T(0)$ with its $C_{p^n}$-action. Let $L'$ denote its left adjoint.

\begin{theorem}[\cite{BrunTamb}]
Let $A$ be a commutative ring, and let us regard $A$ as a commutative ring with the trivial  $C_{p^n}$-action.
There is a natural ring isomorphism
\[
L'(A)(C_{p^n}/C_{p^n})\cong W_{\langle p^n\rangle}(A).
\]
\end{theorem}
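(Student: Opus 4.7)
The plan is to deduce this statement from Proposition \ref{freetamb} by a universal property argument. First I would construct a candidate $C_{p^n}$-Tambara functor $\underline{W}_n(A)$ with $\underline{W}_n(A)(C_{p^k}) = W_{\langle p^k\rangle}(A)$ for $0\leq k\leq n$, equipped with the classical $p$-typical Frobenius, Verschiebung, Weyl action, and the norm operation of Angeltveit \cite{Angeltveit}. Since $\underline{W}_n(A)(1) = W_{\langle 1\rangle}(A) = A$ carries the trivial $C_{p^n}$-action, the identity $\mathrm{id}_A$ is a morphism of commutative $C_{p^n}$-rings $A_{\mathrm{triv}} \to \underline{W}_n(A)(1)$, so by the universal property of $L'$ it extends uniquely to a morphism of Tambara functors $L'(A_{\mathrm{triv}}) \to \underline{W}_n(A)$. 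Evaluating at $C_{p^n}$ gives a candidate ring homomorphism
\[
\psi\colon L'(A_{\mathrm{triv}})(C_{p^n}) \longrightarrow W_{\langle p^n\rangle}(A).
\]

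To construct the inverse I would prescribe
\[
\phi\colon W_{\langle p^n\rangle}(A) \longrightarrow L'(A_{\mathrm{triv}})(C_{p^n}), \qquad V^i\tau^{n-i}(a) \longmapsto V^i N^{n-i} u(a),
\]
where $u\colon A \to L'(A_{\mathrm{triv}})(1)$ is the unit of the adjunction and $V, N$ are the transfer and norm in the Tambara functor $L'(A_{\mathrm{triv}})$. The identity $\tau^{n-i} = N^{n-i}\, u$ of Proposition \ref{tauandN}, specialised to the commutative case $R=M=A$, then shows that $\psi \circ \phi$ is the identity on generators. For surjectivity of $\phi$, the elements $V^i N^{n-i} u(a)$ generate $L'(A_{\mathrm{triv}})(C_{p^n})$ as an abelian group: any element of the free Tambara functor generated by $A_{\mathrm{triv}}$ can be rewritten, using the Tambara reciprocity relations for norms of sums and norms of transfers (see \cite[Corollaries 2.6 and 2.9]{HillMazur}) together with the multiplicativity of $N$, as such a combination. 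Surjectivity of $\phi$ combined with $\psi \circ \phi = \mathrm{id}$ implies that both $\phi$ and $\psi$ are isomorphisms.

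The main obstacle will be the verification that $\phi$ is a well-defined ring homomorphism, i.e.\ that the universal polynomials governing the additive and multiplicative structure of $W_{\langle p^n\rangle}(A)$ translate into identities satisfied in any $C_{p^n}$-Tambara functor via Tambara reciprocity. This matching of the Witt polynomials with the exponential-diagram identities of \cite{Tambara} is essentially Brun's original argument \cite[Theorem B]{BrunTamb}; the DKNP perspective adds only the clean derivation of $\psi$ via Proposition \ref{freetamb} and the universal property of $L'$. A cleaner alternative is to first verify the case $A = \Z[X]$ a polynomial ring, where $W_{\langle p^n\rangle}(\Z[X])$ admits an explicit abelian group basis of the form $V^i\tau^{n-i}(m)$ for monomials $m$ and $0\leq i\leq n$, and then to extend to arbitrary commutative $A$ by a reflexive coequaliser argument, using that $L'(-_{\mathrm{triv}})(C_{p^n})$ commutes with colimits (since $L'$ is a left adjoint and colimits of Tambara functors are computed at each orbit) and that $W_{\langle p^n\rangle}(-)$ commutes with reflexive coequalisers by classical Witt vector theory.
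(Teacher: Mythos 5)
First, a point of logic: the paper offers no proof of this statement at all --- it is imported verbatim from Brun \cite{BrunTamb} and then only \emph{used} (in Proposition \ref{compbrun}), so your attempt has to stand on its own; structurally it parallels the paper's proof of the adjacent Proposition \ref{freetamb}, which concerns the other adjunction ($U$, forgetting to plain rings) rather than $U'$.

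Your outline has a genuine gap at the surjectivity step, and it sits exactly where the substance of Brun's theorem lies. Granting that $C_{p^k}\mapsto W_{\langle p^k\rangle}(A)$ with Angeltveit's norm really is a $C_{p^n}$-Tambara functor and that $\tau=N^{k}$ there (both must be established independently of Brun to avoid circularity; they can be, by specialising Proposition \ref{N}, Remark \ref{TambaraStructure} and Proposition \ref{tauandN} to $R=M=A$, but you should say this), the universal property does give $\psi$, and the triangle identity plus $\tau=N^{n-i}$ gives $\psi\phi=\mathrm{id}$ on the elements $V^iN^{n-i}u(a)$. But the assertion that these elements additively generate $L'(A_{\mathrm{triv}})(C_{p^n}/C_{p^n})$ is precisely what you cannot get for free: the reciprocity formulas of \cite{Tambara} and \cite[Corollaries 2.6 and 2.9]{HillMazur} tell you how to rewrite norms of sums and norms of transfers \emph{once you already have} an explicit presentation of the free Tambara functor by generators and the operations $+,\cdot,F,V,N$, and producing such a presentation of $L'$ and running the rewriting induction is the heart of Brun's original argument (or of Tambara's/Ullman's constructions of the left adjoint). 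Your first route concedes the same point in different words: well-definedness of $\phi$ on the non-free generators $V^i\tau^{n-i}(a)$ ``is essentially Brun's original argument,'' i.e.\ is not supplied. The clean way to close the gap in the spirit of this paper is to avoid inverting $\psi$ altogether and instead verify the universal property directly, exactly as in the proof of Proposition \ref{freetamb}: for $A=\Z[X]$ use the $\Z$-basis $V^i[m]$ ($m$ a monomial, $0\leq i\leq k$) of $W_{\langle p^k\rangle}(\Z[X])$ to define, from an equivariant ring map $f\colon A_{\mathrm{triv}}\to T(1)$, the map $\alpha_{C_{p^k}}(V^i[m])=V^iN^{k-i}f(m)$, check compatibility with products, $F$, $V$, $N$ and the (trivial) Weyl actions by the same Frobenius-reciprocity and double-coset computations as in Propositions \ref{basis} and \ref{freetamb}, and then pass to arbitrary $A$ by reflexive coequalisers. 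On that last step, note that only \emph{sifted} colimits of Tambara functors are computed orbitwise; your parenthetical claim that colimits of Tambara functors are computed at each orbit is false in general (coproducts are not), though reflexive coequalisers, which is all you need, are.
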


We remark that since the restriction  of the $C_{p^n}$-Tambara functor $\pi_0\underline{\TR}_{\langle p^n\rangle}(A)$ to the subgroup $C_{p^{n-1}}$ is the $C_{p^{n-1}}$-Tambara functor $\pi_0\underline{\TR}_{\langle p^n\rangle}(A)$, Brun's Theorem in fact provides a natural isomorphism of Tambara functors
\[
L'(A)(C_{p^n}/C_{p^i})\cong W_{\langle p^i\rangle}(A),
\]
where the Tambara structure on $W_{\langle p^{(-)}\rangle}(A)$ is defined by the Frobenius maps $F$, the Verschiebung $V$, and the norm maps $N$ of \cite{Angeltveit}. 
The unit and the multiplication map of $A$ define a morphism of commutative monoids $(\Z;A^{\otimes p^{n-i}})\to (A;A)$ in the category of bimodules, and thus a ring homomorphism
\[
\mathfrak{m}\colon W_{\langle p^i\rangle}(\Z;A^{\otimes p^{n-i}})\longrightarrow W_{\langle p^i\rangle}(A;A)=W_{\langle p^i\rangle}(A),
\]
which by naturality of the operators is a natural morphism of Tambara functors.

\begin{prop}\label{compbrun}
For every commutative ring $A$, there is a commutative diagram of $C_{p^n}$-Tambara functors
\[
\xymatrix@C=50pt{
W_{\langle p^{i}\rangle}(\Z;A^{\otimes p^{n-i}})\ar[d]_{\mathfrak{m}}\ar[r]^-{\cong}&L(A)(C_{p^n}/C_{p^i})=L'(A^{\otimes p^n})(C_{p^n}/C_{p^i})\ar@<8ex>[d]^{L'(\mu_{p^n})}
\\
W_{\langle p^i\rangle}(A)\ar[r]^{\cong}&\rlap{$L'(A)(C_{p^n}/C_{p^i})$}
}
\]
where the upper isomorphism is from \ref{freetamb} and the lower isomorphism is from Brun's Theorem.
\end{prop}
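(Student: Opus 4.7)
The plan is to invoke the universal property of $L(A)$ from Proposition \ref{freetamb}: a morphism of $C_{p^n}$-Tambara functors out of $L(A) \cong \underline{W}_{\langle p^n\rangle}(\Z;A)$ is uniquely determined by the induced ring homomorphism $A \to T(1)$ on the underlying ring. I will show that both $\mathfrak{m}$ and $L'(\mu_{p^n})$, viewed as morphisms $L(A) \to L'(A)$ through the two isomorphisms in the diagram, correspond to the identity ring homomorphism $A \to L'(A)(1) = A$. This equality at the level of underlying rings will force the two Tambara maps to coincide, yielding the commutativity of the diagram at every Mackey level $C_{p^n}/C_{p^i}$.

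For $\mathfrak{m}$, I evaluate at the trivial subgroup $C_{p^0}$. Example \ref{exrel}(3) identifies $W_{\langle 1\rangle}(\Z;A^{\otimes p^n}) = A^{\otimes p^n}$ and $W_{\langle 1\rangle}(A;A) = A/[A,A] = A$ (since $A$ is commutative), and under these identifications $\mathfrak{m}_{C_{p^0}}$ is the multiplication $\mu_{p^n}\colon A^{\otimes p^n} \to A$. By construction in the proof of Proposition \ref{freetamb}, the unit $A \to L(A)(1) = A^{\otimes p^n}$ of the $L \dashv U$ adjunction is $a \mapsto a \otimes 1 \otimes \cdots \otimes 1$. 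Composing with $\mu_{p^n}$ yields $a \mapsto a \cdot 1 \cdots 1 = a$, the identity of $A$.

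For $L'(\mu_{p^n})$, note that this map is by definition the unique Tambara extension of the $C_{p^n}$-equivariant ring map $\mu_{p^n}$, so its value at $C_{p^0}$ is $\mu_{p^n}$ itself. The adjunction $L \dashv U$ factors as the composite of $L' \dashv U'$ with the free-forgetful adjunction between commutative rings and $C_{p^n}$-rings (whose left adjoint is $A \mapsto A^{\otimes p^n}$), and the composite unit is once again $a \mapsto a \otimes 1 \otimes \cdots \otimes 1$. The underlying ring homomorphism of $L'(\mu_{p^n})$, regarded as a map $L(A) \to L'(A)$, is therefore also $\mathrm{id}_A$. By the uniqueness in the universal property, $\mathfrak{m}$ and $L'(\mu_{p^n})$ agree as morphisms of Tambara functors, which is the desired commutativity.

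The main technical obstacle is to verify that the isomorphisms from Proposition \ref{freetamb} and Brun's theorem are genuine isomorphisms of $C_{p^n}$-Tambara functors, i.e.\ that they intertwine the multiplicative norms and not merely the sums, transfers and restrictions. For the former this is part of the statement of Proposition \ref{freetamb}, underwritten by the norm construction of Proposition \ref{N} and Remark \ref{TambaraStructure}; for Brun's isomorphism it is the content of \cite[Theorem B]{BrunTamb}, extended across all Mackey levels as indicated just above the statement of the present proposition. Once these compatibilities are in hand, the adjunction argument above is purely formal.
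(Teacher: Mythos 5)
Your argument is correct and is essentially the paper's own proof: both reduce the equality of the two Tambara morphisms out of the free object to an equality of ring homomorphisms after precomposing with the unit of an adjunction, the only (inessential) difference being that you use the adjunction $L\dashv U$ of Proposition \ref{freetamb} while the paper uses $L'\dashv U'$ and checks directly that both composites with $\eta\colon A^{\otimes p^n}\to U'L'(A^{\otimes p^n})$ equal $\mu_{p^n}$. Your extra remark that the horizontal maps must be isomorphisms of Tambara functors (not just Mackey functors) is a fair point that the paper treats as already established by Proposition \ref{freetamb} and the discussion of Brun's theorem preceding the statement.
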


\begin{proof}
For convenience we denote by $\mathfrak{m}$ also the map $L(A)\to L'(A)$ obtained by transporting  $\mathfrak{m}$ through the horizontal isomorphisms.
Since $L'$ is a left adjoint, the maps $\mathfrak{m}$ and $L'(\mu_{p^n})$ agree if and only if 
\[(U'L'(\mu_{p^n}))\circ \eta=U'(\mathfrak{m})\circ \eta,\]
where $\eta\colon \id\to U'L'$ is the unit of the adjunction. These agree since
\[
U'L'(\mu_{p^n})=\mu_{p^n}=U'(\mathfrak{m})
\]
are both the $p^n$-fold multiplication map.
\end{proof}

\appendix

\section{The Dwork Lemma}

\subsection{Congruences of tensor $p$-powers}

Once and for all, we fix a prime number $p$ and we regard the cyclic groups $C_{p^j}$ as subgroups of the complex circle, so that we have chosen generators $\sigma_j=e^{2i\pi/p^j}$ of $C_{p^j}$ with the property that 
\[\sigma^{p^k}_{j+k}=\sigma_j\]
 for every $j,k\geq 0$. For every abelian group $A$ we equip $A^{\otimes p^j}$ with the $C_{p^j}$-action on which the generator $\sigma_j$ acts by
\[
\sigma_j(a_1\otimes\dots\otimes a_{p^j})=a_{p^j}\otimes a_1\otimes \dots\otimes a_{p^{j}-1}.
\]
We let $(-)^{\otimes p^j}\colon A\to (A^{\otimes p^j})^{C_{p^j}}$
be the (non-additive) map that sends $a$ to the  $p^j$-fold tensor product $a\otimes\dots\otimes a$. 

The following is analogous to the fact that if two elements $a,b\in A$ of a commutative ring $A$ are congruent modulo $p$, then $a^{p^k}$ and $b^{p^k}$ are congruent modulo $p^{k+1}$. It plays a fundamental role in the proof of the Dwork Lemma \ref{Dwork}.

\begin{prop}\label{congruences} Let $A$ be an abelian group, and let $a$ and $b$ be elements of $(A^{\otimes p})^{C_p}$ which are congruent modulo $\tr^{C_p}_e$. Then for every $k\geq 0$
\[
\omega_{k+1}(a^{\otimes p^k})\equiv \omega_{k+1}(b^{\otimes p^k}) \ \mbox{mod}\ \tr_{e}^{C_{p^{k+1}}},
\]
where $\omega_{k+1}$ is the automorphism of $\{1,\dots,p\}^{\times k+1}$ that reverses the order of the product factors. 
\end{prop}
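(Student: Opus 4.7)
The plan is induction on $k$, with the base case $k=0$ being immediate since $\omega_1 = \id$ and the statement is precisely the hypothesis $a\equiv b \pmod{\tr^{C_p}_e}$.

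For the inductive step, I would first establish a key identity relating $\omega_{k+1}$ applied to a $p$-th tensor power to $\omega_k$ applied to each factor. Viewing $A^{\otimes p^{k+1}}$ as $(A^{\otimes p^k})^{\otimes p}$ via the obvious ordering of the $p^{k+1}$ factors as a $p\times p^k$ array, one checks directly on elementary tensors, using the explicit reversal formula $\omega_{k+1}(i_1, \dots, i_{k+1}) = (i_{k+1}, \dots, i_1)$, that
\[
\omega_{k+1}(x^{\otimes p}) = \rho_*\bigl(\omega_k(x)^{\otimes p}\bigr) \qquad \text{for all } x \in A^{\otimes p^k},
\]
where $\rho$ is the permutation of $\{1,\dots,p\}^{k+1}$ given by the cyclic shift of coordinates $(\ell_1,\dots,\ell_{k+1})\mapsto (\ell_2,\dots,\ell_{k+1},\ell_1)$.

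Setting $X = \omega_k(a^{\otimes p^{k-1}})$ and $Y = \omega_k(b^{\otimes p^{k-1}})$, the inductive hypothesis gives $X - Y = \tr^{C_{p^k}}_e(u)$ for some $u \in A^{\otimes p^k}$. Substituting into the identity and expanding by multilinearity yields
\[
\omega_{k+1}(a^{\otimes p^k}) - \omega_{k+1}(b^{\otimes p^k}) \;=\; \rho_*\bigl(X^{\otimes p} - Y^{\otimes p}\bigr) \;=\; \rho_*\!\!\sum_{\varnothing\neq S\subseteq \{1,\dots,p\}}\! z_S,
\]
where $z_S = \otimes_{i=1}^p w_{i,S}$ has $w_{i,S} = Y$ for $i\notin S$ and $w_{i,S} = \tr^{C_{p^k}}_e(u)$ for $i\in S$.

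The final and main step is to show that each $\rho_*(z_S)$ lies in $\im \tr^{C_{p^{k+1}}}_e$. For proper non-empty $S \subsetneq \{1,\dots,p\}$, the outer cyclic $C_p$ permuting the $p$ copies acts freely on such subsets by primality of $p$, so the $z_S$'s pair up into $C_p$-orbits of size $p$; each orbit sum is an outer $\tr^{C_p}_e$ applied to a representative, and composing this with the inner $\tr^{C_{p^k}}_e$ factors under the $\rho_*$-twist produces an element of $\im \tr^{C_{p^{k+1}}}_e$. The remaining term $z_{\{1,\dots,p\}} = \tr^{C_{p^k}}_e(u)^{\otimes p}$ is handled separately: after $\rho_*$, it is visibly an iterated transfer of $u^{\otimes p}$, and the transitivity of transfers yields a factorisation through $\tr^{C_{p^{k+1}}}_e$. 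The principal obstacle of the proof is this last analysis: tracking how $\rho_*$ interlaces the outer cyclic $C_p$ (from the $p$-th tensor power) with the inner $C_{p^k}$-action (from the $\tr^{C_{p^k}}_e$-factors) so that their composition is precisely $\tr^{C_{p^{k+1}}}_e$ rather than a partial transfer from a nontrivial subgroup. This is the tensor analogue of the combinatorial input $v_p\!\bigl(\binom{p}{j} p^{jk}\bigr) \geq k+1$ for $j \geq 1$ used in the classical proof that $a\equiv b\pmod p$ implies $a^{p^k}\equiv b^{p^k} \pmod{p^{k+1}}$.
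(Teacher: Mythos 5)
Your overall strategy coincides with the paper's: induction on $k$, the identity $\omega_{k+1}(x^{\otimes p})=\tau_{k+1}^{-1}\bigl((\omega_k x)^{\otimes p}\bigr)$ (your $\rho$ is exactly $\tau_{k+1}^{-1}$), binomial expansion of $(Y+\tr_e^{C_{p^k}}(u))^{\otimes p}$, and the conversion of (outer $C_p$-orbit sum)$\,\circ\,$(inner $C_{p^k}$-transfer in one factor) into a full $\tr_e^{C_{p^{k+1}}}$ via the conjugation relations $\tau_{k+1}\sigma_{k+1}^{pl+j}\tau_{k+1}^{-1}=(\sigma_1^j\times\id_{p^k})(\sigma_k^{l}\amalg\cdots\amalg\sigma_k^{l+1}\amalg\cdots)$. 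Your treatment of the terms $z_S$ with $\emptyset\neq S\subsetneq\{1,\dots,p\}$ is correct in outline; you should just make explicit that the untransferred factors equal $Y=\omega_k(b^{\otimes p^{k-1}})$ and are $C_{p^k}$-invariant (this is the paper's opening observation that $\omega_{k+1}\sigma_{k+1}\omega_{k+1}$ is a block permutation composed with blockwise elements of $C_p$, so $\omega_{k+1}$ of a $p^k$-th power of a $C_p$-invariant is $C_{p^{k+1}}$-invariant), since without this invariance the inner and outer sums do not assemble into a single $C_{p^{k+1}}$-orbit.

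The genuine gap is the full-set term $z_{\{1,\dots,p\}}=\bigl(\tr_e^{C_{p^k}}(u)\bigr)^{\otimes p}$. This is \emph{not} ``visibly an iterated transfer of $u^{\otimes p}$,'' and transitivity of transfers is the wrong tool: expanding it by the multinomial formula gives $\sum_{f\colon\{1,\dots,p\}\to C_{p^k}}\sigma_k^{f(1)}u\otimes\cdots\otimes\sigma_k^{f(p)}u$, a sum of $p^{kp}$ terms indexed by the base group $H=(C_{p^k})^{\times p}$ of the wreath product, whereas $\tau_{k+1}C_{p^{k+1}}\tau_{k+1}^{-1}$ meets $H$ only in the diagonal $C_{p^k}$. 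Grouping the terms by cosets of that intersection therefore yields only partial transfers $\tr_{C_{p^k}}^{C_{p^{k+1}}}$ — exactly the failure mode you flag as the ``principal obstacle,'' but your proposed resolution does not avoid it for this term. What is actually needed (and what the paper proves) is that the twisted $C_{p^{k+1}}$-action on the index set $\{f\colon\{1,\dots,p\}\to C_{p^k}\}$, given by $(\sigma_{k+1}f)(i)=f(\sigma_1^{-1}i)$ for $i\neq1$ and $\sigma_k f(\sigma_1^{-1}i)$ for $i=1$, is \emph{free}; freeness forces every orbit to have size $p^{k+1}$, so each orbit sum is a genuine $\tr_e^{C_{p^{k+1}}}$ of a representative. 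The freeness is a nontrivial verification (a stabiliser $\langle\sigma_{k+1}^{p^n}\rangle$ would force $f(i)\equiv f(i)+p^{n-1}\bmod p^k$), and it is the tensor-level substitute for the valuation estimate you cite; your sketch as written does not supply it.
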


Let $\tau_n$ be the element of the symmetric group $\Sigma_{p^{n}}$ of automorphisms of the set $\{1,\dots,p\}^{\times n}$ that cyclically permutes the $n$-coordinates
\[
\tau_{n}(i_1,\dots,i_n)=(i_n,i_1,i_2,\dots,i_{n-1})
\]
for all $1\leq i_1,\dots,i_n\leq p$.
The key combinatorial ingredient for Proposition \ref{congruences} is the interaction between $\tau_n$ and the cyclic permutations, which is summarised in the following lemma. We write $\{1,\dots,p\}^{\times n}$ in lexicographical order, and think of it as the disjoint union of $p^{n-1}$ blocks of size $p$, or of $p$-blocks of size $p^{n-1}$.

\begin{lemma}\label{tst}
The permutation $\tau_n\in \Sigma_{p^n}$ satisfies
\begin{align*}
\tau_{n}^{-1} \sigma_n \tau_n&=(\sigma_{1}\amalg \id_{ p^{n}-p})\circ (\sigma_{n-1}\times\id_p)
\\
\tau_{n}\sigma_{n}\tau^{-1}_{n}&=(\sigma_{n-1}\amalg\id_{p^n-p^{n-1}}) (\sigma_1\times \id_{p^{n-1}})
\end{align*}
for every $n\geq 1$, where $(\sigma_{n-1}\times\id_p)$ permutes the $p^{n-1}$ blocks of size $p$ by the generator of  $C_{p^{n-1}}$, and $(\sigma_{1}\amalg \id_{ p^{n}-p})$ applies the generator $\sigma_{1}$ of $C_p$ to the first block, and similarly for the second equation.
\end{lemma}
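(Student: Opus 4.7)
I would compute both sides as concrete permutations of $\{1,\dots,p\}^n$ written in lex order, identified with $\{0,1,\dots,p^n-1\}$ via the base-$p$ map
\[
N(i_1,\dots,i_n)=\sum_{k=1}^{n}(i_k-1)p^{n-k}.
\]
Under this identification $\sigma_n$ is the permutation $N\mapsto N+1\pmod{p^n}$, and $\tau_n$ (which moves the last coordinate to the first) is the permutation $pM+r\mapsto rp^{n-1}+M$, where every $N$ is written uniquely as $N=pM+r$ with $0\le M<p^{n-1}$ and $0\le r<p$. In particular $\tau_n^{-1}(rp^{n-1}+M)=pM+r$. This translates the lemma into two purely numerical identities on $\{0,\dots,p^n-1\}$ which I will then verify case by case.

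For the first identity, the natural decomposition is the $p^{n-1}$ blocks $B_M=\{pM,\dots,pM+p-1\}$ of size $p$. I compute
\[
\tau_n^{-1}\sigma_n\tau_n(pM+r)=\tau_n^{-1}\bigl(rp^{n-1}+M+1\bmod p^n\bigr)
\]
and split into three cases depending on where the carry lands: (i) if $M<p^{n-1}-1$, the result is $p(M+1)+r$, which matches $(\sigma_{n-1}\times\id_p)$ sending $B_M$ to $B_{M+1}$ preserving the offset $r$, with $(\sigma_1\amalg\id_{p^n-p})$ doing nothing on the image since $M+1\ne 0$; (ii) if $M=p^{n-1}-1$ and $r<p-1$, the result is $r+1$, matching the composition of $(\sigma_{n-1}\times\id_p)\colon B_{p^{n-1}-1}\to B_0$ with $\sigma_1\colon r\mapsto r+1$; (iii) if $M=p^{n-1}-1$ and $r=p-1$, the result is $0$, matching $\sigma_1(p-1)=0$ after the same block rotation. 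These three cases cover all of $\{0,\dots,p^n-1\}$.

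The second identity is then proved by the same pattern with the dual decomposition into $p$ blocks $B'_r=\{rp^{n-1},\dots,(r+1)p^{n-1}-1\}$ of size $p^{n-1}$. Writing $N=rp^{n-1}+M'$ and computing $\tau_n\sigma_n\tau_n^{-1}(N)=\tau_n\bigl(pM'+r+1\bmod p^n\bigr)$, the cases are: (a) $r<p-1$, giving $(r+1)p^{n-1}+M'$ and matching $(\sigma_1\times\id_{p^{n-1}})$ while $(\sigma_{n-1}\amalg\id)$ acts trivially on the target block $B'_{r+1}$; (b) $r=p-1$ and $M'<p^{n-1}-1$, giving $M'+1$, matching $(\sigma_1\times\id_{p^{n-1}})\colon B'_{p-1}\to B'_0$ followed by $\sigma_{n-1}$ on $B'_0$; (c) $r=p-1$ and $M'=p^{n-1}-1$, giving $0$, matching the wraparound of $\sigma_{n-1}$ on the top offset.

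There is no real obstacle beyond careful bookkeeping of the boundary behaviours where ``add $1$ mod $p^n$'' causes a carry at one of the two levels $p$ or $p^{n-1}$; the whole argument is a finite case check and uses nothing beyond the base-$p$ description of $\sigma_n$ and $\tau_n$.
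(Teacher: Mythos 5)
Your proof is correct and takes essentially the same route as the paper: the paper also verifies both identities by direct computation from the explicit carry-formulas for $\sigma_n$, $(\sigma_{n-1}\times\id_p)$ and $(\sigma_{1}\amalg \id_{p^{n}-p})$ on multi-indices $(i_1,\dots,i_n)$. Your base-$p$ encoding $N=pM+r$ with $\sigma_n=+1\bmod p^n$ and $\tau_n(pM+r)=rp^{n-1}+M$ is just a tidier bookkeeping of the same finite case check.
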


\begin{proof}
The equations can be directly verified using the description of the cyclic permutations
\[
 \sigma_n(i_1,\dots,i_n)=\left\{
 \begin{array}{ll}
 (i_1,\dots,i_n+1)&, i_{n}<p
 \\
  (i_1,\dots,i_{n-1}+1,1)&, i_n=p, i_{n-1}<p
  \\
 \ \ \ \ \ \ \ \ \ \  \vdots
  \\
  (i_1+1,1,\dots,,1)&, i_n=\dots,i_2=p, i_{1}<p
  \\
(1,\dots,1)&, i_n=\dots,i_1=p
 \end{array}
 \right.
\]
\[
(\sigma_{n-1}\times\id_p)(i_1,\dots,i_n)=\left\{
 \begin{array}{ll}
 (i_1,\dots,i_{n-1}+1,i_n)&, i_{n-1}<p
 \\
  (i_1,\dots,i_{n-2}+1,1,i_n)&, i_{n-1}=p, i_{n-2}<p
  \\
 \ \ \ \ \ \ \ \ \ \  \vdots
  \\
  (i_1+1,1,\dots,1,i_n)&, i_{n-1}=\dots,i_2=p, i_{1}<p
  \\
(1,\dots,1,i_n)&, i_{n-1}=\dots,i_1=p
 \end{array}
 \right.
\]
\[
(\sigma_{1}\amalg \id_{ p^{n}-p})(i_1,\dots,i_n)=\left\{
 \begin{array}{ll}
 (i_1,\dots,i_n)&, (i_1,\dots,i_{n-1})\neq (1,\dots,1)
 \\
  (1,\dots,1,i_n+1)&, i_n<p, (i_1,\dots,i_{n-1})=(1,\dots,1)
  \\
  (1,\dots,1,1)&, i_n=p, (i_1,\dots,i_{n-1})=(1,\dots,1) .
 \end{array}
 \right.
\]
and the analogous formulas for $(\sigma_{n-1}\amalg\id_{p^n-p^{n-1}})$ and $(\sigma_1\times \id_{p^{n-1}})$.
\end{proof}

\begin{proof}[Proof of \ref{congruences}]
We start by observing that $\omega_{k+1}\sigma_{k+1}\omega_{k+1}$ is the composition of a block sum of cyclic permutations of $C_p$, and  of a permutation of blocks of size $p$. Thus  since $a$ is in $(A^{\otimes p})^{C_p}$
\[
\sigma_{k+1}\omega_{k+1} (a^{\otimes p^k})=\omega_{k+1} (a^{\otimes p^k}),
\]
and similarly for $\omega_{k+1}(b^{\otimes p^k})$. Thus both $\omega_{k+1} (a^{\otimes p^k})$ and $\omega_{k+1} (b^{\otimes p^k})$ belong to $(A^{\otimes p^{k+1}})^{C_{p^{k+1}}}$, as well as the elements of the image of $ \tr_{e}^{C_{p^{k+1}}}$.

We prove the congruence by induction on $k$. For $k=0$ the claim holds by assumption. For the induction step, we recall the relative binomial formula
\[
(r+s)^{\otimes p}=r^{\otimes p}+s^{\otimes p}+\sum_{\{\emptyset\neq V\subsetneq p\}/C_p}\tr^{C_p}_e(t^{V}_1\otimes\dots\otimes t^{V}_p),
\]
where the sum runs through the orbits of the action of $C_p$ on the subsets of the $p$-elements set,
and $t^{V}_j=r$ if $j\in V$, and $t^{V}_j=s$ otherwise. By supposing that the lemma holds for $k-1$ we see that
\begin{align*}
\omega_{k+1}(a^{\otimes p^{k}})&=\omega_{k+1}((a^{\otimes p^{k-1}})^{\otimes p})=\omega_{k+1}((b^{\otimes p^{k-1}} +\omega_k\tr_{e}^{C_{p^{k}}}(c))^{\otimes p})
\\&=\omega_{k+1}(\id_p\times \omega_k)((\omega_k(b^{\otimes p^{k-1}}) +\tr_{e}^{C_{p^{k}}}(c))^{\otimes p})=\tau_{k+1}^{-1}((\omega_k(b^{\otimes p^{k-1}})  +\tr_{e}^{C_{p^{k}}}(c))^{\otimes p})
\\&=\omega_{k+1}(b^{\otimes p^{k}})+\tau^{-1}_{k+1}((\tr_{e}^{C_{p^{k}}}(c))^{\otimes p})+\sum_{\{\emptyset\neq U\subsetneq p\}/C_p}\tau_{k+1}^{-1}\tr^{C_p}_es^{U}_1\otimes\dots\otimes s^{U}_p,
\end{align*}
where $s^{U}_j=\omega_kb^{\otimes p^{k-1}}$ if $j\in U$, and  otherwise it lies in the image of $\tr_{e}^{C_{p^{k}}}$. Let us analyze the terms of the last sum.
Let us suppose without loss of generality that  a transferred term lies in the first tensor factor . Then the term of the sum corresponding to a subset $U$ is
\begin{align*}
\tau^{-1}_{k+1}\tr^{C_p}_e(\tr_{e}^{C_{p^{k}}}(x)\otimes s^{U}_2\otimes\dots\otimes  s^{U}_p)&=\sum_{j=1}^{p}\sum_{l=1}^{p^k}\tau^{-1}_{k+1}(\sigma^{j}_1\times\id_{p^k})(\sigma_{k}^l\amalg \id_{p^{k+1}-p^k})(x\otimes s^{U}_2\otimes\dots\otimes  s^{U}_p)
\end{align*}
Let us write any $n\in \{1,\dots,p^{k+1}\}$ as $pl+j$ for unique $l\in \{0,1,\dots,p^{k}-1\}$ and $j\in \{1,\dots, p\}$. It is not hard to verify that 
\begin{align*}
\tau_{k+1}\sigma_{k+1}^{pl+j}\tau^{-1}_{k+1}&=(\underbrace{\sigma^{l+1}_k\amalg\dots\amalg \sigma^{l+1}_k}_j\amalg \underbrace{\sigma^{l}_k\amalg\dots\amalg \sigma^{l}_k}_{p-j}) (\sigma^{j}_1\times \id_{p^k})
\\
&=(\sigma^{j}_1\times \id_{p^k}) (\underbrace{\sigma^{l}_k\amalg\dots\amalg \sigma^{l}_k}_{p-j}\amalg \underbrace{\sigma^{l+1}_k\amalg\dots\amalg \sigma^{l+1}_k}_{j}),
\end{align*}
by induction on $n$ from the identity of Lemma \ref{tst}. Since all the terms $s^{U}_j$ for $j=2,\dots,p$ are invariant under the $C_{p^k}$ action, we can rewrite the expression above as
\begin{align*}
&\tau^{-1}_{k+1}\tr^{C_p}_e(\tr_{e}^{C_{p^{k}}}(x)\otimes s^{U}_2\otimes\dots\otimes  s^{U}_p)
\\
&=\sum_{j=1}^{p}\sum_{l=0}^{p^k-1}\tau^{-1}_{k+1}(\sigma^{j}_1\times \id_{p^k}) (\underbrace{\sigma^{l}_k\amalg\dots\amalg \sigma^{l}_k}_{p-j}\amalg \underbrace{\sigma^{l+1}_k\amalg\dots\amalg \sigma^{l+1}_k}_{j})(x\otimes s^{U}_2\otimes\dots\otimes  s^{U}_p)
\\
&=\sum_{j=1}^{p}\sum_{l=0}^{p^k-1}\sigma_{k+1}^{pl+j}\tau^{-1}_{k+1}(x\otimes s^{U}_2\otimes\dots\otimes  s^{U}_p)
\\
&=\sum_{n=1}^{p^{k+1}}\sigma_{k+1}^{n}\tau^{-1}_{k+1}(x\otimes s^{U}_2\otimes\dots\otimes  s^{U}_p)
\\
&=\tr^{C_{p^{k+1}}}_e\tau^{-1}_{k+1}(x\otimes s^{U}_2\otimes\dots\otimes  s^{U}_p).
\end{align*}
This shows that the last term of the expression above lies in the image of $\tr^{C_{p^{k+1}}}_e$, and we are left with verifying that the same holds for $\tau^{-1}_{k+1}((\tr_{e}^{C_{p^{k}}}(c))^{\otimes p})$ for every $k\geq 1$. We recall that the relative multinomial formula for a sequence of $n$-elements $b_1,\dots,b_n$ of an abelian group $B$ is
\[
(b_1+\dots+b_{n})^{\otimes p}=\sum_{f\colon p\to n}b_{f(1)}\otimes\dots\otimes b_{f(p)},
\]
where the sum runs through the set of maps  $f\colon p\to n$. Now let us consider the case where $n=p^k$ for some $k\geq 1$. The group $C_{p^{k+1}}$ acts freely on the set of maps $\{f\colon p\to p^k\}$ by
\[
(\sigma_{k+1}f)(i)=\left\{
\begin{array}{ll}
f(\sigma^{-1}_1i)&\mbox{if }\ i\neq 1
\\
\sigma_kf(\sigma^{-1}_1i)&\mbox{if }\ i= 1.
\end{array}
\right.
\]
The powers of the generator act as
\[
(\sigma_{k+1}^{pl+j}f)(i)=\left\{
\begin{array}{ll}
\sigma_{k}^{l}f(\sigma^{-j}_1i)&\mbox{if }\ j+1\leq i\leq p
\\
\sigma_{k}^{l+1}f(\sigma^{-j}_1i)&\mbox{if }\ 1\leq i\leq j,
\end{array}\right.
\]
where $l\in \{0,1,\dots,p^{k}-1\}$ and $j\in \{1,\dots, p\}$. Indeed, the generator acts freely since if $\sigma_{k+1}f=f$, then mod $p^k$ we must have
\[
f(p)+1=f(1)=f(2)=\dots=f(p),
\]
which is a contradiction. Similarly, if $f$ has non-trivial proper stabilisers we must have that $f=\sigma_{k+1}^{p^n}f$ for some $n\in\{1,\dots,k\}$. By writing $p^n=p(p^{n-1}-1)+p$ we see that we must have
\[
f(i)=(\sigma_{k+1}^{p^n}f)(i)=\sigma_{k}^{p^{n-1}}f(\sigma^{-p}_1i)\equiv f(i)+p^{n-1}\ \mbox{mod}\ p^{k},
\]
which is a contradiction. 
We can therefore decompose the multinomial formula as
\begin{align*}
(b_1+\dots+b_{p^k})^{\otimes p}&=
\sum_{\{f\colon p\to p^k\}/C_{p^{k+1}}}\sum_{n=1}^{p^{k+1}}b_{(\sigma_{k+1}^{n} f)(1)}\otimes\dots\otimes b_{(\sigma_{k+1}^{n} f)(p)}
\\
&\hspace{-1.1cm}=\sum_{\{f\colon p\to p^k\}/C_{p^{k+1}}}\sum_{j=1}^{p}\sum_{l=0}^{p^{k}-1}(b_{\sigma_{k}^{l+1}f(\sigma^{-j}_11)}\otimes\dots\otimes b_{\sigma_{k}^{l+1}f(\sigma^{-j}_1j)}\otimes b_{\sigma_{k}^{l}f(\sigma^{-j}_1(j+1))}\otimes\dots \otimes b_{\sigma_{k}^{l}f(\sigma^{-j}_1p)}).
\end{align*}
We apply this formula to the sequence $\sigma_{k} c,\sigma_{k}^2 c,\dots, \sigma_{k}^{p^k} c$ of the abelian group $B=A^{\otimes p^k}$, and find the expression
\begin{align*}
&\tau^{-1}_{k+1}((\tr_{e}^{C_{p^{k}}}(c))^{\otimes p})\\
&=
\sum_{\{f\colon p\to p^k\}/C_{p^{k+1}}}\sum_{j=1}^{p}\sum_{l=0}^{p^{k}-1}\tau^{-1}_{k+1}((\sigma_{k}^{\sigma_{k}^{l+1}f(\sigma^{-j}_11)}c)\otimes\dots\otimes (\sigma_{k}^{\sigma_{k}^{l+1}f(\sigma^{-j}_1j)}c)
\\&
\otimes (\sigma_{k}^{\sigma_{k}^{l}f(\sigma^{-j}_1(j+1))}c)\otimes\dots \otimes (\sigma_{k}^{\sigma_{k}^{l}f(\sigma^{-j}_1p)}c))
\\&=
\sum_{\{f\colon p\to p^k\}/C_{p^{k+1}}}\sum_{j=1}^{p}\sum_{l=0}^{p^{k}-1}\tau^{-1}_{k+1}(\underbrace{\sigma^{l+1}_k\amalg\dots\amalg \sigma^{l+1}_k}_j\amalg \underbrace{\sigma^{l}_k\amalg\dots\amalg \sigma^{l}_k}_{p-j})(\sigma^{j}_1\times \id_{p^k})(\sigma_{k}^{f(1)}c\otimes\dots\otimes\sigma^{f(p)}_kc)
\\&=
\sum_{\{f\colon p\to p^k\}/C_{p^{k+1}}}\sum_{j=1}^{p}\sum_{l=0}^{p^{k}-1}\sigma_{k+1}^{pl+j}\tau^{-1}_{k+1}(\sigma_{k}^{f(1)}c\otimes\dots\otimes\sigma^{f(p)}_kc)
\\&=\sum_{\{f\colon p\to p^k\}/C_{p^{k+1}}}\tr^{C_{p^{k+1}}}_e\tau^{-1}_{k+1}(\sigma_{k}^{f(1)}c\otimes\dots\otimes\sigma^{f(p)}_kc),
\end{align*}
which concludes the proof.
 \end{proof}

\subsection{External Frobenius and the Dwork Lemma}

We give a characterisation of the image of the ghost map when the bimodule $(R;M)$ is equipped with an ``external Frobenius lift''. 
In the following we will always denote by $\otimes$ the tensor product over the integers.
\begin{defn}
An external Frobenius on a ring $R$ is a ring homomorphism
\[
\varphi\colon R\longrightarrow (R^{\otimes p})^{C_p}
\]
which is congruent to the $p$-th power map $(-)^{\otimes p}$ modulo the image of the additive transfer map $\tr_{e}^{C_p}\colon (R^{\otimes p})_{C_p}\to (R^{\otimes p})^{C_p}$, which sends $x$ to $\sum_{\sigma\in C_p}\sigma(x)$.
\end{defn}

Now let $M$ be an $R$-bimodule. The abelian group $(M^{\otimes p})^{C_p}$ is a $(R^{\otimes p})^{C_p}$-bimodule, where the bimodule actions are factorwise. If $R$ has an external Frobenius $\varphi$ we may then consider $(M^{\otimes p})^{C_p}$ as an $R$-bimodule by restricting scalars along $\varphi\colon R\to (R^{\otimes p})^{C_p}$.

\begin{defn}\label{FrobM}
Let $(R,\varphi)$ be a ring with an external Frobenius. A Frobenius on an $R$-bimodule $M$ is a morphism of $R$-bimodules
\[
\phi\colon M\longrightarrow (M^{\otimes p})^{C_p}
\]
which is congruent to $(-)^{\otimes p}\colon M\to M^{\otimes p}$ modulo the image of $\tr_{e}^{C_p}\colon ( M^{\otimes p})_{C_p}\to  (M^{\otimes p})^{C_p}$. By saying that an $R$-bimodule $M$ has an external Frobenius we will implicitly imply that $R$ also has an external Frobenius.
\end{defn}

\begin{example}\label{Frobpoly}
\
\begin{enumerate}
\item If a ring $R$ has an external Frobenius $\varphi$, the composite
\[
R \stackrel{\varphi}{\longrightarrow} (R^{\otimes p})^{C_p}\stackrel{\mu_p}{\longrightarrow} R
\]
with the $p$-fold multiplication map is a Frobenius lift on $R$ in the usual sense, since $\mu_p((-)^{\otimes p})=(-)^p$ and $\mu_p \tr_{e}^{C_p} \equiv p\mu_p$ modulo $[R,R]$ . 
When $R$ is commutative this is a ring endomorphism of $R$ which is congruent to the $p$-th power  map modulo $p$. For a non-commutative ring this is in the sense of \cite[\S 1.3]{HesselholtncW}, an additive endomorphism of $R$ which preserves the commutators subgroup $[R,R]$, and which is congruent to the $p$-th power map modulo $pR+[R,R]$.
\item The ring of integers has an external Frobenius, defined by the canonical isomorphism $\Z\cong  (\Z^{\otimes p})^{C_p}$ which sends $n$ to $n(1\otimes 1\otimes\dots\otimes 1)$. 
\item Let us denote by $\Z(X):=\oplus_X\Z$ the free abelian group on a set of generators $X$, which we regard as a $\Z$-bimodule. Then $\Z(X)$ has an external Frobenius 
\[\phi\colon \Z(X)\longrightarrow \Z(X)^{\otimes p}\cong \Z(X^{\times p}),\] 
which is defined by sending a basis element $x$ to the diagonal basis element $x^{\otimes p}=(x,\dots,x)$.
We show that that this is congruent to the map $(-)^{\otimes p}$ modulo additive transfer. Let $f$ be a linear combination in $\Z(X)$. By induction, we can assume that $f$ is the linear combination of two basis elements $f=n x+my$. By the relative binomial formula
\[
f^{\otimes p}=(n x+my)^{\otimes p}=(n x)^{\otimes p}+(m y)^{\otimes p}+\sum_{\{\emptyset\neq U\subsetneq p\}/C_p}\tr^{C_p}_e(t^{U}_1\otimes \dots\otimes t^{U}_p),
\]
where $C_p$ acts on the subsets of the $p$-elements set by the image map,
and $t^{U}_k=n x$ if $k\in U$ and $t^{U}_k=my$ otherwise. Thus
\[
f^{\otimes p}\equiv (n x)^{\otimes p}+(m y)^{\otimes p}\equiv n^p \phi(x)+m^p\phi(y)^{\otimes p}\ \mbox{mod}\ \tr^{C_p}_e.
\]
Finally $n^p$ is congruent to $n$ modulo $p$, and similarly for $m^p$, and therefore there are integers $k$ and $l$ such that
\[
f^{\otimes p}\equiv n\phi(x)+m\phi(y)+p\phi(lx+ky)\equiv \phi(nx+my)+\tr^{C_p}_e(\phi(lx+ky))\equiv \phi(f)\ \mbox{mod}\ \tr^{C_p}_e,
\]
where the second congruence holds since the transfer acts by multiplication by $p$ on the fixed-points of $\Z(X)^{\otimes p}$.
\item A completely analogous argument shows that polynomial rings $\Z[X]$ and non-commutative polynomial rings $\Z\{X\}$ have external Frobenius maps which sends $x$ to $x^{\otimes p}$. These refine the standard Frobenius lift in the sense that $\mu_p\phi$ recover the usual Frobenius lift endomorphisms, and they are moreover multiplicative.
\item Let us denote by $R^e(X):=\oplus_X(R\otimes R)$ the free $R$-bimodule on a set of generators $X$, and suppose that $R$ has an external Frobenius $\varphi$. Then $R^e(X)$ has an external Frobenius 
\[\phi\colon R^e(X)\longrightarrow (R^{e}(X)^{\otimes p})^{C_p}\cong (\bigoplus_{X^{\times p}}(R\otimes R)^{\otimes p})^{C_p}\cong \bigoplus_{(X^{\times p})^{C_p}}((R\otimes R)^{\otimes p})^{C_p}\oplus \bigoplus_{(X^{\times p}\backslash \Delta)/C_{p}}(R\otimes R)^{\otimes p},\] 
which is the unique morphism of $R$-bimodules that sends a basis element $x$ to $(x,\dots,x)$ in the first summand. It sends
an element $r\otimes s$ in the $x$-summand to the element $\chi(\varphi(r)\otimes\varphi(s))$ in the $(x,\dots,x)$-summand, where $\chi$ is the shuffle permutation which acts as
\[\chi(r_1\otimes r_2\otimes \dots\otimes r_p\otimes s_1\otimes s_2\otimes \dots\otimes s_p)=(r_1\otimes s_1\otimes r_2\otimes s_2\otimes\dots\otimes r_{p}\otimes s_p).\]
We show that $\phi$ is congruent to the power map $(-)^{\otimes p}$ modulo transfer. As in the example above it is sufficient to show this on the sum of two elements $(r\otimes s)x+(t\otimes u)y$, and by the binomial formula
\[
((r\otimes s)x+(t\otimes u)y)^{\otimes p}\equiv ((r\otimes s)x)^{\otimes p}+((t\otimes u)y)^{\otimes p}\ \mbox{mod}\ \tr^{C_p}_e.
\]
Therefore it is sufficient to show that $\phi((r\otimes s)x)=\chi(\varphi(r)\otimes\varphi(s))(x,\dots,x)$ is congruent to $(r\otimes s)^{\otimes p}(x,\dots,x)$ modulo transfers for every $r,s\in R$ and $x\in X$. Since $\varphi$ is an external Frobenius on $R$ there are $v,w\in R$ such that
\begin{align*}
\phi((r\otimes s)x)&=\chi((r^{\otimes p}+\tr^{C_p}_e(v))\otimes(s^{\otimes p}+\tr^{C_p}_e(w)))(x,\dots,x)
=\big(\chi(r^{\otimes p}\otimes s^{\otimes p})+\chi(T)\big)(x,\dots,x)
\\&=(r\otimes s)^{\otimes p}(x,\dots,x)+\chi(T)(x,\dots,x)
\end{align*}
where $T\in R^{\otimes p}\otimes  R^{\otimes p}$ is a sum of transfers of $R^{\otimes p}$ tensored with a fixed-point of  $(R^{\otimes p})^{C_p}$ (in either order). Thus we need to show that $\chi(T)\in (R\otimes R)^{\otimes p}$ is a transfer. We show this when $T=a\otimes\tr^{C_p}_e(w)$ for a fixed element $a\in R^{\otimes p}$, and
\[\chi(a\otimes \tr^{C_p}_e(w))=\sum_{\sigma\in C_p}\chi(a\otimes \sigma(w))
=\sum_{\sigma\in C_p}\chi(\sigma(a)\otimes \sigma(w))=\sum_{\sigma\in C_p}\sigma \chi(a\otimes w)=\tr^{C_p}_e \chi(a\otimes w)
\]
where the second equality holds because $a$ is a cyclic invariant.
\item A construction analogous to the previous example shows that if $R$ is commutative,  $M$ is a free or free commutative $R$-algebra, and if $R$ has an external Frobenius, then $M$ has an external Frobenius which is multiplicative.
\end{enumerate}
\end{example}

If $M$ is an $R$-bimodule with an external Frobenius and $n\geq 1$ is an integer, we let
\[
\phi^{\otimes p^{n-1}}\colon M^{\otimes p^{n-1}}\longrightarrow M^{\otimes p^{n}}
\]
be  the composite of the map that sends  $m_1\otimes\dots\otimes m_{p^{n-1}}$ to $\phi(m_1)\otimes\dots\otimes \phi(m_{p^{n-1}})\in (M^{\otimes p})^{\otimes p^{n-1}}$, and the canonical isomorphism $(M^{\otimes p})^{\otimes p^{n-1}}\cong  M^{\otimes p^n}$ that sends a generator $m_1\otimes\dots\otimes m_{p^{n}}$ to 
\[
(m_1\otimes\dots\otimes m_{p})\otimes (m_{p+1}\otimes\dots\otimes m_{2p})\otimes\dots\otimes  (m_{(p^{n-1}-1)p+1}\otimes\dots\otimes m_{p^{n}}).
\]
This map is not well-behaved with respect to the cyclic action. In particular, we want to modify this map in such a way that it restricts to a group homomorphism on cyclic invariants. We recall that $\tau_n\in \Sigma_{p^{n}}$ is defined by
\[
\tau_{n}(i_1,\dots,i_n)=(i_n,i_1,i_2,\dots,i_{n-1})
\]
for all $1\leq i_1,\dots,i_n\leq p$.
\begin{lemma}\label{higherFrob} Let $M$ be an $R$-bimodule with an external Frobenius $\phi$, and let $n\geq 1$ be an integer. The map
\[\phi_{n-1}\colon M^{\otimes p^{n-1}}\xrightarrow{\phi^{\otimes p^{n-1}}}M^{\otimes p^{n}}\xrightarrow{\tau_n}M^{\otimes p^{n}}
 \]
satisfies the following properties:
\begin{enumerate}
\item 
It is congruent to $(-)^{\otimes p}$ modulo the image of the  transfer $\tr^{C_p}_e\colon  M^{\otimes p^{n}}\to (M^{\otimes p^{n}})^{C_{p}}$.
\item It descends to a group homomorphism $\phi_{n-1}\colon \cyctens{R}{M}{n-1}\to\cyctens{R}{M}{n}$which satisfies $
\phi_{n-1}\sigma_{n-1}=\sigma_n\phi_{n-1}$,
where $\sigma_k$ is the chosen generator of $C_{p^k}$. In particular it restricts to a group homomorphism 
\[
\phi_{n-1}\colon (\cyctens{R}{M}{n-1})^{C_{p^{n-1}}}\longrightarrow (\cyctens{R}{M}{n})^{C_{p^{n}}}.
\]
\item
For every $i,k\geq 0$ and element $m_i\in M^{\otimes p^i}$ we have 
$\phi_{k+i}(m_{i}^{\otimes p^k})\equiv m_{i}^{\otimes p^{k+1}}\ \mbox{mod}\ \ \tr_{e}^{C_{p^{k+1}}}$.
\end{enumerate}
\end{lemma}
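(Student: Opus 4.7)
The plan is to prove the three parts in order, using part~(1) as the key ingredient for part~(3).

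For part~(1), I would expand $\phi^{\otimes p^{n-1}}(x) = \bigotimes_i \phi(m_i)$ on a simple tensor $x = m_1\otimes\cdots\otimes m_{p^{n-1}}$ by writing $\phi(m_i) = m_i^{\otimes p} + \tr_e^{C_p}(c_i)$. The resulting expansion is a sum over subsets $S \subseteq \{1,\dots,p^{n-1}\}$: the $S=\emptyset$ term is $\bigotimes_i m_i^{\otimes p}$, and a direct index calculation shows that $\tau_n$ maps it to $x^{\otimes p}$, since $\tau_n$ converts the $p^{n-1}$ diagonal $p$-blocks into $p$ identical mega-blocks each equal to $x$. For $S \neq \emptyset$, the term is a sum over maps $\alpha\colon S \to C_p$; grouping by the diagonal shift $\alpha\mapsto\alpha+c$ produces orbits of exact size $p$. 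Using Lemma~\ref{tst} to compute $\tau_n \pi_S \tau_n^{-1}$, where $\pi_S$ is the simultaneous inner-block cycling on the blocks in $S$, one sees that this conjugate cycles only the first index $j_1$ for positions $(j_2,\dots,j_n)\in S$; but the image is constant in $j_1$ on positions outside $S$ (the non-$S$ factors being diagonal), so this partial cycling agrees with the full outer $\sigma_1$-action. Each orbit sum is thus an outer transfer, proving the congruence. The general case follows by linearity and the relative binomial formula, whose cross terms also lie in $\im \tr_e^{C_p}$.

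For part~(2), the equivariance $\phi_{n-1}\sigma_{n-1} = \sigma_n \phi_{n-1}$ follows from Lemma~\ref{tst}: applying $\phi$ factorwise commutes with cyclic permutation of input factors up to the block-wise action $\sigma_{n-1}\times \id_p$, and the identity $\tau_n^{-1}\sigma_n\tau_n = (\sigma_1\sqcup \id_{p^n-p})(\sigma_{n-1}\times \id_p)$ collapses on the image of $\phi^{\otimes p^{n-1}}$ because each block $\phi(m_i)$ is $C_p$-invariant, killing the extra factor $(\sigma_1\sqcup\id)$. Descent to the cyclic tensor products uses that $\phi\colon M \to (M^{\otimes p})^{C_p}$ is an $R$-bimodule map with target carrying the $R$-action restricted along $\varphi$, so $\phi^{\otimes_R p^{n-1}}$ is defined over $R$. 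Passing further to $\cyctens{R}{M}{n}$ requires verifying that the boundary relation $\phi(m_j)\varphi(r)\otimes \phi(m_{j+1}) = \phi(m_j)\otimes \varphi(r)\phi(m_{j+1})$ holds modulo the cyclic commutator. This is the main obstacle, since single-step sliding in $M^{\otimes_R p^n}$ does not obviously move the $p$-tuple $\varphi(r)$ across a block boundary; the argument uses the $C_p$-invariance of both $\varphi(r)$ and $\phi(m_{j+1})$ together with the cyclic commutator relation in $\cyctens{R}{M}{n}$ to reduce the displacement to admissible single-step slides after averaging over $C_p$.

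For part~(3), I would apply Proposition~\ref{congruences} to the abelian group $A := M^{\otimes p^i}$ with $a := \phi_i(m_i)$ and $b := m_i^{\otimes p}$, both lying in $(A^{\otimes p})^{C_p}$: $b$ trivially, and $a$ because by part~(1) the difference $a-b$ lies in $\im \tr_e^{C_p}$, which is $C_p$-invariant. Part~(1) gives $a \equiv b \pmod{\tr_e^{C_p}}$, so the proposition yields $\omega_{k+1}(a^{\otimes p^k}) \equiv \omega_{k+1}(b^{\otimes p^k}) \pmod{\tr_e^{C_{p^{k+1}}}}$ in $A^{\otimes p^{k+1}} = M^{\otimes p^{k+i+1}}$; the right side equals $m_i^{\otimes p^{k+1}}$ by symmetry of tensor powers. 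The final identification $\omega_{k+1}(\phi_i(m_i)^{\otimes p^k}) = \phi_{k+i}(m_i^{\otimes p^k})$ is a position-by-position check: for a simple tensor $m_i = \bigotimes_{(j)} m_{(j)}$, both elements have factor $\phi(m_{(\xi_{k+2},\dots,\xi_{k+i+1})})_{\xi_1}$ at position $(\xi_1,\dots,\xi_{k+i+1})$, reconciling the $\tau_{i+1}$ hidden in $\phi_i$, the $\tau_{k+i+1}$ in $\phi_{k+i}$, and the outer reversal $\omega_{k+1}$ through the symmetry of the outer tensor power.
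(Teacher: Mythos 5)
Your proposal is correct and follows essentially the same route as the paper: expand $\phi^{\otimes p^{n-1}}$ termwise, use the conjugation identities of Lemma~\ref{tst} to recognise the cross terms as transfers and to deduce the equivariance from the $C_p$-invariance of the image of $\phi$, and obtain part~(3) by feeding $a=\phi_i(m_i)$ and $b=m_i^{\otimes p}$ into Proposition~\ref{congruences} together with the bookkeeping identity $\phi_{k+i}(m_i^{\otimes p^k})=(\omega_{k+1}\times\id_{p^i})\bigl((\phi_i m_i)^{\otimes p^k}\bigr)$. The one place the paper is tidier is the descent in part~(2): instead of ``averaging over $C_p$'' it factors through the cyclic tensor product over $(R^{\otimes p})^{C_p}$ and checks that $\tau_n$ descends by converting right into left multiplication via the full cyclic relation in $\cyctens{R}{M}{n}$ combined with $\sigma_n\tau_n=\tau_n(\sigma_1\amalg\id)(\sigma_{n-1}\times\id_p)$ --- your ingredients (the cyclic commutator relation and the $C_p$-invariance of $\varphi(r)$, which is genuinely needed at the wrap-around boundary) are the right ones, though the invariance of $\phi(m_{j+1})$ plays no role.
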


\begin{example}
Let us consider a free abelian group $\Z(X)$ with the external Frobenius that sends $x$ to $(x,\dots,x)$ in $\Z(X^{\times p})\cong\Z(X)^{\otimes p}$.
Under the isomorphism $\Z(X^{\times p^n})\cong\Z(X)^{\otimes p^n}$, the higher Frobenius $\phi_{n-1}$ sends a generator $(x_1,\dots,x_{p^{n-1}})$ to
\[
\phi_{n-1}(x_1,\dots,x_{p^{n-1}})=(x_1,\dots,x_{p^{n-1}},x_1,\dots,x_{p^{n-1}},\dots,x_1,\dots,x_{p^{n-1}}),
\]
whereas $\phi^{\otimes p^{n-1}}$ sends it to $(x_1,\dots,x_1,x_2,\dots,x_2,\dots,x_{p^{n-1}},\dots,x_{p^{n-1}})$.
\end{example}

%


\begin{proof}[Proof of \ref{higherFrob}] 
We start by showing that $\phi_{n-1}$ descends to a map on the cyclic tensor powers over $R$. Let us consider the commutative diagram 
\[
\xymatrix@C=50pt{
M^{\otimes p^{n-1}}\ar@{->>}[d]\ar[r]^-{\phi^{\otimes p^{n-1}}}&M^{\otimes p^{n}}\ar[rr]^{\tau_n}\ar@{->>}[d]&&M^{\otimes p^{n}}\ar@{->>}[d]
\\
\cyctens{R}{M}{n-1}\ar[r]_-{\cyctens{R}{\phi}{n-1}}&\cyctens{R}{(M^{\otimes p})}{n-1}\ar@{->>}[r]&(M^{\otimes p})^{\circledcirc_{((R^{\otimes p})^{C_p})}p^{n-1}} \ar@{-->}[r]_-{\tau_n}&\cyctens{R}{M}{n}\rlap{\ .}
}
\]
The bottom-left horizontal map is well-defined because $\phi$ is a map of $R$-bimodules, where the bimodule structure on the target is via the external Frobenius $\varphi\colon R\to (R^{\otimes p})^{C_p}$ of $R$. The middle bottom horizontal map is the projection map that regards $M^{\otimes p}$ as an $(R^{\otimes p})^{C_p}$-bimodule. Thus we need to show that the permutation $\tau_n$ gives a well-defined map on the bottom-right. We show that it is well-defined with respect to tensoring over $(R^{\otimes p})^{C_p}$ for the first tensor factor, the others are similar. For every $m\in M^{\otimes p^{n}}$ and $r\in R^{\otimes p^{}}$ we need to show that
\[
\tau_n(m\cdot (r\otimes 1^{\otimes (p^n-p)}))=\tau_n((1^{\otimes p}\otimes r\otimes 1^{\otimes (p^n-2p)})\cdot m).
\]
Since in the target we are tensoring over $R$, we can turn the right action of an element of $R^{\otimes p^{n}}$ into the left action by a cyclic permutation of this element, and by Lemma \ref{tst}
\begin{align*}
\tau_n(m\cdot (r\otimes 1^{\otimes (p^n-p)}))&=\tau_n(m)\cdot \tau_n(r\otimes 1^{\otimes (p^n-p)})=(\sigma_n\tau_n(r\otimes 1^{\otimes (p^n-p)}))\cdot \tau_n(m)
\\&=(\tau_n(\sigma_{1}\amalg \id_{ p^{n}-p})\circ (\sigma_{n-1}\times\id_p)(r\otimes 1^{\otimes (p^n-p)}))\cdot \tau_n(m)
\\&=(\tau_n(1^{\otimes p}\otimes r\otimes 1^{\otimes (p^n-2p)}))\cdot \tau_n(m)
\\&=\tau_n((1^{\otimes p}\otimes r\otimes 1^{\otimes (p^n-2p)})\cdot m).
\end{align*}

Let us now show that $\phi_{n-1}$ and $\cyctens{R}{(-)}{}$ are congruent modulo the transfer map.
Let $x=\sum a_1\otimes\dots\otimes a_{p^{n-1}}$ be an element of $M^{\otimes p^{n-1}}$, and $b_j\in  \cyctens{R}{M}{}$ such that $\phi(a_j)= \cyctens{R}{a_j}{}+\tr^{C_p}_eb_j$.
Then in $\cyctens{R}{M}{n}$ we have that
\begin{align*}
\phi_{n-1}(x)&=\sum \tau_n(\phi(a_1)\otimes\dots\otimes \phi(a_{p^{n-1}}))=\sum \tau_n((a^{\otimes p}_1+\tr^{C_p}_eb_1)\otimes\dots\otimes (a^{\otimes p}_{p^{n-1}}+\tr^{C_p}_eb_{p^{n-1}}))
\\&=\sum \tau_n (a^{\otimes p}_1\otimes\dots\otimes a^{\otimes p}_{p^{n-1}}+\sum_{\emptyset\neq V\subset p^{n-1}}t^{V}_1\otimes\dots\otimes t^{V}_{p^{n-1}})
\\&=\sum ((a_1\otimes\dots\otimes a_{p^{n-1}})^{\otimes p}+\sum_{\emptyset\neq V\subset p^{n-1}}\tau_n(t^{V}_1\otimes\dots\otimes t^{V}_{p^{n-1}})).
\end{align*}
The inner sum runs through the non-empty subsets of the set with $p^{n-1}$ elements, where  $t^{V}_{j}= \tr^{C_p}_eb_{j}$ if $j\in V$, and  $t^{V}_{j}= a^{\otimes p}_j$ otherwise. Each of the terms of this sum contains at least one transferred tensor factor, let us say for simplicity the first one. Then
\begin{align*}
\tau_n (t^{V}_1\otimes\dots\otimes t^{V}_{p^{n-1}})&=\tau_n(\tr^{C_p}_eb_{1}\otimes t^{V}_2\otimes\dots\otimes t^{V}_{p^{n-1}})=\tau_n((\sum_{\sigma\in C_p}\sigma(b_1))\otimes t^{V}_2\otimes\dots\otimes t^{V}_{p^{n-1}})
\\&=\sum_{\sigma\in C_p}\tau_n(\sigma(b_1)\otimes t^{V}_2\otimes\dots\otimes t^{V}_{p^{n-1}})=\sum_{\sigma\in C_p}\tau_n(\sigma(b_1)\otimes \sigma(t^{V}_2)\otimes\dots\otimes \sigma(t^{V}_{p^{n-1}}))
\\
&=\sum_{\sigma\in C_p}(\sigma\times\id_{p^{n-1}} )\tau_n(b_1\otimes t^{V}_2\otimes\dots\otimes t^{V}_{p^{n-1}})
\\
&=\tr_{e}^{C_p} \tau_n(b_1\otimes t^{V}_2\otimes\dots\otimes t^{V}_{p^{n-1}}),
\end{align*}
where the fourth equality holds because $t^{V}_{j}$ is $C_p$-invariant for every $j$. It follows that
\begin{align*}
\phi_{n-1}(x)&\equiv  \sum (a_1\otimes\dots\otimes a_{p^{n-1}})^{\otimes p}\ \mbox{mod}\ \tr^{C_p}_e.
\end{align*}
Let us show that this is congruent to the $p$-th tensor power of $x=\sum a_1\otimes\dots\otimes a_{p^{n-1}}$. By induction we can assume that $x$ is the sum of two elementary tensors $x= a_1\otimes\dots\otimes a_{p^{n-1}}+a'_1\otimes\dots\otimes a'_{p^{n-1}}$, and
\begin{align*}
x^{\otimes p}&=(a_1\otimes\dots\otimes a_{p^{n-1}})^{\otimes p}+(a'_1\otimes\dots\otimes a'_{p^{n-1}})^{\otimes p}+\sum_{\emptyset\neq U\subsetneq p}s^{U}_1\otimes\dots\otimes s^{U}_{p}
\\&=(a_1\otimes\dots\otimes a_{p^{n-1}})^{\otimes p}+(a'_1\otimes\dots\otimes a'_{p^{n-1}})^{\otimes p}+\sum_{\{\emptyset\neq U\subsetneq p\}/C_p}\tr_{e}^{C_p}(s^{U}_1\otimes\dots\otimes s^{U}_{p}),
\end{align*}
where $s^{U}_j=a_1\otimes\dots\otimes a_{p^{n-1}}$ if $j\in U$, and  $s^{U}_j=a'_1\otimes\dots\otimes a'_{p^{n-1}}$ otherwise. The last equality holds because $s^{U}_j$ is constant for different values of $j\in U$, and $C_p$ acts on the proper non-empty subsets of $p$ by cyclically permuting their elements. This concludes the proof that $\phi_{n-1}$ and $(-)^{\otimes p}$ are congruent modulo $\tr_{e}^{C_p}$.

Now let us show that $\phi_{n-1}$ is equivariant. By the relations of Lemma \ref{tst}
\begin{align*}
\sigma_{n} \phi_{n-1}&=\sigma_{n}\tau_n\phi^{\otimes p^{n-1}}=\tau_n (\sigma_{1}\amalg \id_{ p^{n}-p})\circ (\sigma_{n-1}\times\id_p)\phi^{\otimes p^{n-1}}\\
&=\tau_n (\sigma_{1}\amalg \id_{ p^{n}-p})\phi^{\otimes p^{n-1}}\sigma_{n-1}
\\
&=\tau_n \phi^{\otimes p^{n-1}}\sigma_{n-1}=\phi_{n-1}\sigma_{n-1}.
\end{align*}
The fourth equality holds since $\phi$ takes values in the invariants $(M^{\otimes p})^{C_p}$. 

For the last statement, we calculate that on representatives
\begin{align*}
\phi_{k+i}(m_{i}^{\otimes p^{k}})&=\tau_{k+i+1}\phi^{\otimes p^{k+i}}(m_{i}^{\otimes p^{k}})=\tau_{k+i+1}((\phi^{\otimes p^i}m_{i})^{\otimes p^{k}})
\\&=(\tau_{k+1}\times\id_{p^i})((\tau_{i+1}\phi^{\otimes p^i}m_{i})^{\otimes p^{k}})=(\tau_{k+1}\times\id_{p^i})((\phi_im_{i})^{\otimes p^{k}})
\\&=(\omega_{k+1}\times\id_{p^i})((\phi_im_{i})^{\otimes p^{k}})=(\omega_{k+1}\times\id_{p^i})(m_{i}^{\otimes p^{k+1}})+\tr^{C_{p^{k+1}}}_e
\\&=m_{i}^{\otimes p^{k+1}}+\tr^{C_{p^{k+1}}}_e,
\end{align*}
where the fifth equality follows since $\tau_{k+1}=\omega_{k+1}(\omega_k\times \id_p)$, and $(\phi_ia_{i})^{\otimes p^{k}}$ is invariant under the action of $\Sigma_{p^k}$. The sixth equality holds in $M^{\otimes p^{k+i+1}}$ by the congruence of Proposition \ref{congruences}, since $\phi_i$ is congruent to $(-)^{\otimes p}$ modulo $\tr^{C_p}_e$ as elements of $M^{\otimes p^{i+1}}$.
\end{proof}

\begin{theorem}[Dwork Lemma]\label{Dwork}
Let $M$ be an $R$-bimodule with  an external Frobenius $\phi\colon M\to (M^{\otimes p})^{C_p}$. A sequence $(b_0,b_1,\dots,b_{n-1})$ of $ \prod_{j=0}^{n-1}(\cyctens{R}{M}{j})^{C_{p^j}}$ lies in the image of the ghost map $w\colon W_{n,p}(R;M)\to \prod_{j=0}^{n-1}(\cyctens{R}{M}{j})^{C_{p^j}}$ if and only if
\[
\phi_j(b_j)\equiv b_{j+1}\ \mbox{mod}\ \tr_{e}^{C_{p^{j+1}}}
\]
for every $0\leq j<n-1$,
where the congruence is modulo the image of the additive transfer map $\tr_{e}^{C_{p^{j+1}}}\colon \cyctens{R}{M}{j+1}\to (\cyctens{R}{M}{j+1})^{C_{p^{j+1}}}$. 
\end{theorem}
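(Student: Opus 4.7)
The strategy is a straightforward adaptation of the classical Dwork lemma to the present equivariant setup, where the role of congruences modulo $p$ is played by congruences modulo the image of the trace $\tr^{C_{p^{j+1}}}_e$. The engine of the proof is a single key computation, namely that for any $m=(m_0,\dots,m_{n-1})\in \prod_{i=0}^{n-1}\tens{R}{M}{i}$ one has
\[
\phi_j(w_j(m)) \equiv w_{j+1}(m) \pmod{\tr^{C_{p^{j+1}}}_e}
\qquad \text{in } (\cyctens{R}{M}{j+1})^{C_{p^{j+1}}}.
\]
Granting this, the ``only if'' direction is immediate by applying it to a sequence with $w(m)=(b_0,\dots,b_{n-1})$. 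The ``if'' direction is then an inductive construction of a preimage $m$, which uses the same identity to reduce each step to a lift along the surjection onto the cokernel of $\tr^{C_{p^{j+1}}}_e$.

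For the key computation, recall from Lemma \ref{higherFrob} that $\phi_j\colon \cyctens{R}{M}{j}\to \cyctens{R}{M}{j+1}$ is additive with $\phi_j\sigma_j=\sigma_{j+1}\phi_j$; in particular it sends $C_{p^{j-i}}$-invariants to $C_{p^{j+1-i}}$-invariants and commutes with transfers in the sense that $\phi_j\tr^{C_{p^j}}_{C_{p^{j-i}}}=\tr^{C_{p^{j+1}}}_{C_{p^{j+1-i}}}\phi_j$. Applying this to the definition $w_j(m)=\sum_{i=0}^{j}\tr^{C_{p^j}}_{C_{p^{j-i}}}(m_i^{\otimes p^{j-i}})$ gives
\[
\phi_j(w_j(m))=\sum_{i=0}^{j}\tr^{C_{p^{j+1}}}_{C_{p^{j+1-i}}}\bigl(\phi_j(m_i^{\otimes p^{j-i}})\bigr).
\]
By Lemma \ref{higherFrob}(3), $\phi_j(m_i^{\otimes p^{j-i}})\equiv m_i^{\otimes p^{j+1-i}} \pmod{\tr^{C_{p^{j+1-i}}}_e}$; since $\tr^{C_{p^{j+1}}}_{C_{p^{j+1-i}}}\tr^{C_{p^{j+1-i}}}_e=\tr^{C_{p^{j+1}}}_e$, the right hand side is congruent to $\sum_{i=0}^{j}\tr^{C_{p^{j+1}}}_{C_{p^{j+1-i}}}(m_i^{\otimes p^{j+1-i}})=w_{j+1}(m)-\tr^{C_{p^{j+1}}}_e(m_{j+1})$ modulo $\tr^{C_{p^{j+1}}}_e$, proving the claim.

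For the converse, suppose $(b_0,\dots,b_{n-1})$ satisfies the congruences. We build $m_0,\dots,m_{n-1}$ inductively so that $w_k(m_0,\dots,m_{n-1})=b_k$ for all $k$. Choose any lift $m_0\in M$ of $b_0\in M/[R,M]=\cyctens{R}{M}{0}$; then $w_0(m_0)=b_0$. Suppose $m_0,\dots,m_j$ have been chosen with $w_k=b_k$ for $k\leq j$, and set $b'_{j+1}:=\sum_{i=0}^{j}\tr^{C_{p^{j+1}}}_{C_{p^{j+1-i}}}(m_i^{\otimes p^{j+1-i}})$. The computation above shows $b'_{j+1}\equiv \phi_j(w_j(m))=\phi_j(b_j)\equiv b_{j+1}\pmod{\tr^{C_{p^{j+1}}}_e}$, so $b_{j+1}-b'_{j+1}$ lies in the image of $\tr^{C_{p^{j+1}}}_e\colon \cyctens{R}{M}{j+1}\to (\cyctens{R}{M}{j+1})^{C_{p^{j+1}}}$. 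Picking any lift of a preimage to $M^{\otimes_R p^{j+1}}$ gives the desired $m_{j+1}$, and by construction $w_{j+1}(m_0,\dots,m_{j+1})=b'_{j+1}+\tr^{C_{p^{j+1}}}_e(m_{j+1})=b_{j+1}$.

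The only delicate point is the interaction of $\phi_j$ with transfers and the shift of cyclic groups $C_{p^{j-i}}\subset C_{p^j}$ versus $C_{p^{j+1-i}}\subset C_{p^{j+1}}$, all of which has been isolated in Lemma \ref{higherFrob}; once those structural statements are in hand the proof is a bookkeeping exercise. No freeness hypothesis on $(R;M)$ is needed beyond the existence of an external Frobenius.
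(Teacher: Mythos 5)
Your proposal is correct and follows essentially the same route as the paper's proof: the forward direction via the congruence $\phi_j(w_j(m))\equiv w_{j+1}(m)$ obtained from the equivariance of $\phi_j$ (which yields $\phi_j\tr^{C_{p^j}}_{C_{p^{j-i}}}=\tr^{C_{p^{j+1}}}_{C_{p^{j+1-i}}}\phi_j$) together with Lemma \ref{higherFrob}(3) and the composition of transfers, and the converse by the same inductive lifting of $b_{j+1}-b'_{j+1}$ along $\tr^{C_{p^{j+1}}}_e$.
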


\begin{proof}
Let us start by showing that a sequence in the image of the relative ghost map satisfies these congruences, that is that
\[
\phi_j(\sum_{i=0}^{j}\tr_{C_{p^{j-i}}}^{C_{p^j}}(m_i^{\otimes p^{j-i}}))\equiv \sum_{i=0}^{j+1}\tr_{C_{p^{j+1-i}}}^{C_{p^{j+1}}}(m_i^{\otimes p^{j+1-i}}) \ \mbox{mod}\ \tr_{e}^{C_{p^{j+1}}}.
\]
We observe that the $(j+1)$-st term of the sum on the right is in the image of $ \tr_{e}^{C_{p^{j+1}}}$, and thus it is sufficient to show that for every $0\leq i\leq j$
\[
\phi_j(\tr_{C_{p^{j-i}}}^{C_{p^j}}(m_i^{\otimes p^{j-i}}))\equiv \tr_{C_{p^{j+1-i}}}^{C_{p^{j+1}}}(m_i^{\otimes p^{j+1-i}}) \ \mbox{mod}\ \tr_{e}^{C_{p^{j+1}}}.
\]
We calculate the left-hand side
\begin{align*}
\phi_j(\tr_{C_{p^{j-i}}}^{C_{p^j}}(m_i^{\otimes p^{j-i}}))&=\phi_j(\sum_{\sigma\in C_{p^{j}}/C_{p^{j-i}}}\sigma(m_i^{\otimes p^{j-i}}))=\sum_{l=1}^{p^i}\phi_j(\sigma_{j}^l(m_i^{\otimes p^{j-i}}))=\sum_{l=1}^{p^i}\sigma_{j+1}^l\phi_j(m_i^{\otimes p^{j-i}})
\\&=\tr_{C_{p^{j+1-i}}}^{C_{p^{j+1}}}\phi_j(m_i^{\otimes p^{j-i}})=\tr_{C_{p^{j+1-i}}}^{C_{p^{j+1}}}(m_i^{\otimes p^{j+1-i}}+\tr_{e}^{C_{p^{j-i+1}}}(z_{i,j}))
\\&=\tr_{C_{p^{j+1-i}}}^{C_{p^{j+1}}}(m_i^{\otimes p^{j+1-i}})+\tr_{e}^{C_{p^{j+1}}}(z_{i,j}),
\end{align*}
where the fifth equality is from Lemma \ref{higherFrob}.
Conversely, let $(b_0,b_1,\dots)$ be a sequence which satisfies the congruences of the statement, and suppose that we found $a_0,\dots,a_j$ such that $b_j=\omega_j(a_0,\dots,a_j)$. Then
\begin{align*}
b_{j+1}&=\phi_j(b_j)+\tr^{C_{p^{j+1}}}_e(x)=\phi_j(\omega_j(a_0,\dots,a_j))+\tr^{C_{p^{j+1}}}_e(x)
\\&=\omega_{j+1}(a_0,\dots,a_j,0)+\tr^{C_{p^{j+1}}}_e(y)+\tr^{C_{p^{j+1}}}_e(x)\\
&=\omega_{j+1}(a_0,\dots, a_j,y+x).
\end{align*}
\end{proof}

\begin{cor}\label{algTD}
Let $M$ be an $R$-bimodule with  an external Frobenius $\phi\colon M\to (M^{\otimes p})^{C_p}$, such that the transfer maps $\tr^{C_{p^j}}_e\colon (\cyctens{R}{M}{j})_{C_{p^j}}\to (\cyctens{R}{M}{j})^{C_{p^j}}$ are injective (for example if $(R;M)$ is a free bimodule). Then there is a canonical isomorphism of abelian groups
\[
f_\phi\colon \bigoplus_{i=0}^{n-1}(\cyctens{R}{M}{i})_{C_{p^i}}\stackrel{\cong}{\longrightarrow} W_{n,p}(R;M)
\]
with ghosts $w_jf_\phi(a_0,\dots,a_{n-1})=\sum_{i=0}^j\tr_{C_{p^{j-i}}}^{C_{p^j}}\phi^{j-i}(a_i)$, where $\phi^{j-i}(a_i)=\phi_{i+j}\dots\phi_{i+1}\phi_i(a_i)\in (\cyctens{R}{M}{j})_{C_{p^i}}^{C_{p^{j-i}}}$.
\end{cor}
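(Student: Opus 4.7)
The strategy is to first produce the map at the level of ghost components and then pull it back to the Witt vectors using the Dwork Lemma plus the injectivity of the ghost map. More precisely, I will define an additive homomorphism
\[
g = g_\phi \colon \bigoplus_{i=0}^{n-1}(\cyctens{R}{M}{i})_{C_{p^i}}\longrightarrow \prod_{j=0}^{n-1}(\cyctens{R}{M}{j})^{C_{p^j}}
\]
by the formula $g_j(a_0,\dots,a_{n-1})=\sum_{i=0}^{j}\tr_{C_{p^{j-i}}}^{C_{p^j}}\phi^{j-i}(a_i)$ prescribed in the statement. The first task is to check this makes sense: by Lemma~\ref{higherFrob} each $\phi^{j-i}$ is equivariant along the inclusion $C_{p^i}\hookrightarrow C_{p^j}$ sending $\sigma_i\mapsto\sigma_j$, so $\phi^{j-i}$ applied to any lift of $a_i\in(\cyctens{R}{M}{i})_{C_{p^i}}$ lands in $(\cyctens{R}{M}{j})^{C_{p^{j-i}}}$ and the transfer $\tr_{C_{p^{j-i}}}^{C_{p^j}}$ kills the indeterminacy in the lift, since this transfer sums over coset representatives of $C_{p^j}/C_{p^{j-i}}\cong C_{p^i}$.

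The core technical step is to verify that $g$ lands in the image of the ghost map $w$. By the Dwork Lemma (Theorem \ref{Dwork}) this amounts to checking the congruences $\phi_j(g_j)\equiv g_{j+1}\ \mathrm{mod}\ \tr_e^{C_{p^{j+1}}}$. This is a direct computation: using the equivariance $\phi_j\sigma_j=\sigma_{j+1}\phi_j$ (Lemma~\ref{higherFrob}) to commute $\phi_j$ past the transfer sums gives
\[
\phi_j(g_j(\underline{a}))=\sum_{i=0}^{j}\tr_{C_{p^{j+1-i}}}^{C_{p^{j+1}}}\phi^{j+1-i}(a_i),
\]
so that $g_{j+1}(\underline{a})-\phi_j(g_j(\underline{a}))=\tr_e^{C_{p^{j+1}}}(a_{j+1})$. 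Under the hypothesis that all transfers $\tr^{C_{p^j}}_e$ are injective, the logarithmic derivative $\tlog$ of \cite[Prop 1.41]{DKNP1} is injective, so the ghost map $w\colon W_{n,p}(R;M)\to\prod_{j=0}^{n-1}(\cyctens{R}{M}{j})^{C_{p^j}}$ is injective by Theorem~\ref{existence}. Thus $g$ factors uniquely through $W_{n,p}(R;M)$, defining $f_\phi$ with the prescribed ghosts. Additivity of $f_\phi$ then follows formally from the additivity of $g$ and injectivity of $w$.

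Finally I will prove that $f_\phi$ is an isomorphism. Injectivity is immediate since $g=w\circ f_\phi$ is already injective: the equation $g_j=\tr_e^{C_{p^j}}(a_j)+(\text{terms in }a_0,\dots,a_{j-1})$ is triangular, and each diagonal $\tr_e^{C_{p^j}}$ is injective by hypothesis. For surjectivity I induct on $n$, the case $n=1$ being the identity $M/[R,M]\xrightarrow{\cong}W_{1,p}(R;M)$. Comparing ghost components shows that $f_\phi$ restricts on the last summand to the Verschiebung, i.e.\ $f_\phi(0,\dots,0,a_{n-1})=V^{n-1}(a_{n-1})$, and commutes with the restriction $R$ down to its analogue for $n-1$. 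Given these compatibilities, the diagram comparing the split exact sequence $0\to(\cyctens{R}{M}{n-1})_{C_{p^{n-1}}}\to\bigoplus_{i=0}^{n-1}(\cyctens{R}{M}{i})_{C_{p^i}}\to\bigoplus_{i=0}^{n-2}(\cyctens{R}{M}{i})_{C_{p^i}}\to 0$ with the right-exact sequence $(\cyctens{R}{M}{n-1})_{C_{p^{n-1}}}\xrightarrow{V^{n-1}}W_{n,p}(R;M)\xrightarrow{R^{}}W_{n-1,p}(R;M)\to 0$ from \cite[Prop 1.43]{DKNP1} yields surjectivity of $f_\phi$ by a standard diagram chase: given $x\in W_{n,p}(R;M)$, the inductive hypothesis lifts $R(x)$ to some tuple, correcting $x$ by the image of this tuple produces an element in $\ker(R)=\im(V^{n-1})$, which is in turn the image of the last summand. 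The main potential obstacle, verifying the Dwork congruence, is resolved cleanly by the equivariance of the higher Frobenii; everything else is bookkeeping.
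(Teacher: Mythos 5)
Your proposal is correct and follows the same strategy as the paper: define the map on ghost components, verify the Dwork congruences via the equivariance $\phi_j\sigma_j=\sigma_{j+1}\phi_j$ of Lemma \ref{higherFrob}, lift through the injective ghost map, and get injectivity from the triangular form with diagonal entries $\tr_e^{C_{p^j}}$. The only divergence is in the surjectivity step: the paper exploits that the Dwork Lemma is an \emph{if and only if}, so your computation $g_{j+1}-\phi_j(g_j)=\tr_e^{C_{p^{j+1}}}(a_{j+1})$ already shows that $g$ hits \emph{precisely} the image of $w$ (one solves for the $a_j$ inductively from any sequence satisfying the congruences), whence $f_\phi$ is onto without invoking anything further; your alternative via the $V$--$R$ exact sequence of \cite[Proposition 1.43]{DKNP1} and induction on $n$ is also valid, but it imports an external input where the congruence computation alone suffices.
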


\begin{proof}
The formula for $w_jf_\phi$ gives an additive map  $f^w\colon \bigoplus_{j=0}^{n-1}(\cyctens{R}{M}{j})_{C_{p^j}}\to  \prod_{j=0}^{n-1}(\cyctens{R}{M}{j})^{C_{p^j}}$. Since $\tr_{C_{p^{j-i}}}^{C_{p^j}}\phi^i(a_i)=\phi^{j-i}\tr_{e}^{C_{p^{j-i}}}(a_i)$ we see that
\[
\phi_{j}f_{j}^w=\phi_j(\sum_{i=0}^j\tr_{C_{p^{j-i}}}^{C_{p^j}}\phi^{j-i}(a_i))=f^{w}_{j+1}-\tr^{C_{p^{j+1}}}_ea_j.
\]
Thus by the Dwork lemma $f^w$ hits precisely the image of $w$. Since the transfers are injective the ghost $w$ is injective and $f^w$ lifts to a surjection $f_\phi$ onto $W_{n,p}(R;M)$. Again since the transfers are injective it is easy to see inductively that $f^w$ is injective, and therefore so is $f_\phi$.
\end{proof}

\begin{rem}
We observe that the maps $f_\phi$ are not natural with respect to morphisms of free bimodules, as these are not necessarily compatible with the external Frobenius. They are however natural with respect to those morphisms of bimodules which are induced by a map of bases. This is analogous to the tom-Dieck splitting, which is natural with respect to maps of spaces but not with respect to all the maps of suspension spectra.
\end{rem}

\begin{lemma}\label{iterFrob}
Let $\varphi\colon R\to (R^{\otimes p})^{C_p}$ be an external Frobenius on a ring $R$. Then for every $k\geq 0$
\[\Phi:=\varphi_k=\tau_{k+1}\varphi^{\otimes p^{k}}\colon R^{\otimes p^k}\longrightarrow (R^{\otimes p^{k+1}})^{C_p}\]
 is an external Frobenius on the ring $R^{\otimes p^{k}}$, with higher Frobenius $\Phi_n=\varphi_{k+n}$.
 
If $M$ is an $R$-bimodule with an external Frobenius  $\phi\colon M\to (M^{\otimes p})^{C_p}$ and $k\geq 0$, then 
\[\Phi:=\phi_k=\tau_{k+1}\phi^{\otimes p^{k}}\colon M^{\otimes p^k}\longrightarrow (M^{\otimes p^{k+1}})^{C_p}\]
 is an external Frobenius on the $R^{\otimes p^k}$-bimodule $M^{\otimes p^k}$, with higher Frobenius $\Phi_n=\phi_{k+n}$.
 Moreover this descends to an external Frobenius 
 \[
 \Phi\colon \tens{R}{M}{k}\longrightarrow ((\tens{R}{M}{k})^{\otimes p})^{C_p}
 \]
 on the $R$-bimodule $\tens{R}{M}{k}$.
\end{lemma}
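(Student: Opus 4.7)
The plan is to verify the three conditions of Definition \ref{FrobM} for $\Phi$, to compute the higher Frobenius $\Phi_n$ combinatorially, and then to descend the bimodule construction along the quotient $M^{\otimes p^k}\twoheadrightarrow\tens{R}{M}{k}$. For the first condition, multiplicativity of $\Phi=\varphi_k$ follows from that of $\varphi^{\otimes p^k}$, since $\tau_{k+1}$ is a permutation of tensor factors and hence a ring automorphism of $R^{\otimes p^{k+1}}$ with its componentwise multiplication. To see that the image lies in the appropriate $C_p$-invariants, observe that the outer $C_p$-action on $(R^{\otimes p^k})^{\otimes p}$ corresponds, after the natural identification with $R^{\otimes p^{k+1}}$, to the subgroup $\langle\sigma_{k+1}^{p^k}\rangle\subset C_{p^{k+1}}$; since $\varphi^{\otimes p^k}(R^{\otimes p^k})$ is already invariant under the ``diagonal'' $C_p$-action permuting the last coordinate of each block, the permutation $\tau_{k+1}$ rotates this invariance to the outermost coordinate, which is precisely $\sigma_{k+1}^{p^k}$. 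The bimodule case is analogous, using $\phi(rm)=\varphi(r)\phi(m)$ and $\phi(mr)=\phi(m)\varphi(r)$ to verify $R^{\otimes p^k}$-linearity of $\Phi$ with respect to the scalars restricted along $\varphi_k$.

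The congruence $\Phi(x)\equiv x^{\otimes p}$ modulo $\tr^{C_p}_e$, where $x^{\otimes p}$ is interpreted as the $p$-fold power of $x$ inside the ring $R^{\otimes p^k}$ (i.e.\ simply $x\otimes\cdots\otimes x\in R^{\otimes p^{k+1}}$), is exactly the statement of Lemma \ref{higherFrob}(1) applied with $n=k+1$, once we verify that the relevant $C_p$-action on $R^{\otimes p^{k+1}}$ in the two statements agrees. For the identification $\Phi_n=\varphi_{k+n}$ I would track multi-indices: write elements of $R^{\otimes p^{k+n}}=(R^{\otimes p^k})^{\otimes p^n}$ with multi-indices $(\alpha,\beta)\in\{1,\ldots,p\}^n\times\{1,\ldots,p\}^k$, where $\alpha$ indexes the outer and $\beta$ the inner factors. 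Applying $\Phi^{\otimes p^n}$ inserts, for each $\alpha$, a new index $\gamma$ coming from $\varphi$ and permutes via the inner $\tau_{k+1}$ so that $\gamma$ becomes the leading inner coordinate; then the outer $\tau_{n+1}$ cycles $\gamma$ past $\alpha_1,\ldots,\alpha_n$ to the very front of the $(k+n+1)$-tuple. This coincides with the effect of $\tau_{k+n+1}\varphi^{\otimes p^{k+n}}=\varphi_{k+n}$. The bimodule claim $\Phi_n=\phi_{k+n}$ is identical.

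Finally, to descend to $\tens{R}{M}{k}$, I would show that the composite $M^{\otimes p^k}\xrightarrow{\phi_k}M^{\otimes p^{k+1}}\twoheadrightarrow(\tens{R}{M}{k})^{\otimes p}$—where the quotient imposes $R$-balance within each outer block of $p^k$ factors—factors through $M^{\otimes p^k}\twoheadrightarrow\tens{R}{M}{k}$. The key check is that the $R$-balance relation $\cdots\otimes mr\otimes m'\otimes\cdots=\cdots\otimes m\otimes rm'\otimes\cdots$ in the source is respected: applying $\phi^{\otimes p^k}$ produces $\phi(m)\varphi(r)$ and $\phi(m')$ at adjacent positions in the first case, and $\phi(m)$ and $\varphi(r)\phi(m')$ in the second; after the shuffle $\tau_{k+1}$, the factors of $\varphi(r)$ are distributed across all $p$ outer blocks at corresponding adjacent inner positions, where the $R$-balance relation of the target applies. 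The congruence mod transfer and the identity $\Phi_n=\phi_{k+n}$ for the descended map then follow from the preceding parts by naturality of the quotient. The main obstacle I expect is the combinatorial bookkeeping in the identification $\Phi_n=\phi_{k+n}$, as the interaction of $\tau$ with the nested tensor structure is in the same spirit as Lemma \ref{tst} but one level more intricate.
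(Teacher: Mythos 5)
Your proposal is correct and follows essentially the same route as the paper: the congruence and $C_p$-invariance are pulled from Lemma \ref{higherFrob}, the identity $\Phi_n=\varphi_{k+n}$ reduces to the shuffle identity $(\tau_{n+1}\times\id_{p^k})(\id_{p^n}\times\tau_{k+1})=\tau_{n+k+1}$ (your multi-index bookkeeping is exactly this), and the descent to $\tens{R}{M}{k}$ is the same check that $\tau_{k+1}\phi^{\otimes p^k}$ respects the interior $R$-balance relations, as in the first part of the proof of Lemma \ref{higherFrob}. The only point you gloss over that the paper spells out is that the descended map is a morphism of $R$-bimodules for the $R$-action on $((\tens{R}{M}{k})^{\otimes p})^{C_p}$ induced by $\varphi$, which is a short explicit computation with $\tau_{k+1}(\varphi(r)\otimes 1\otimes\dots\otimes 1)$.
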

\begin{proof}
We start by proving the claim for the Frobenius of $R^{\otimes p^k}$.  Since $\varphi_k\sigma_k=\sigma_{k+1}\varphi_k$ we have that $\Phi=\varphi_k$ lands in the $C_p$-fixed-points, since the generator of $C_{p}\subset C_{p^{k+1}}$ acts by
\[
\sigma^{p^{k}}_{k+1}\Phi=\Phi\sigma^{p^k}_k=\Phi.
\]
Moreover by Lemma \ref{higherFrob} $\Phi$ is congruent to the $p$-th power map modulo $\tr^{C_p}_e$. Next, we determine the higher Frobenius:
\begin{align*}
\Phi_n&=(\tau_{n+1}\times \id_p)\Phi^{\otimes p^{ n}}=(\tau_{n+1}\times \id_p)(\tau_{k+1}\varphi^{\otimes p^k})^{\otimes p^{n}}=(\tau_{n+1}\times \id_{p^k})(\id_{p^{n}}\times \tau_{k+1})\varphi^{\otimes p^{n+k}}
\\&
=\tau_{n+k+1}\varphi^{\otimes p^{n+k}}
=\varphi_{n+k}.
\end{align*}
The claim for $M^{\otimes p^k}$ is completely analogous, by remarking that $\phi_k$ is indeed a map of $R^{\otimes p^k}$-bimodules. As for $\tens{R}{M}{k}$,
the proof that $\tau_{k+1}\phi^{\otimes p^{k}}$ is well-defined is analogous to the first argument of the proof of \ref{higherFrob}. It remains to verify that it is a map of $R$-bimodules. By definition
\begin{align*}
\tau_{k+1}\phi^{\otimes p^{k}}(r\cdot m_1\otimes\dots\otimes m_{p^{k}})&=\tau_{k+1}(\phi(r\cdot m_1)\otimes\dots\otimes \phi(m_{p^{k}}))
\\&=\tau_{k+1}((\varphi(r)\cdot\phi( m_1))\otimes\dots\otimes \phi(m_{p^{k}}))
\\&=\tau_{k+1}((\varphi(r)\otimes 1\otimes\dots\otimes 1)\cdot\phi^{\otimes p^{k}}(m_1\otimes\dots\otimes m_{p^{k}}))
\\&=\tau_{k+1}((\varphi(r)\otimes 1\otimes\dots\otimes 1))\cdot\tau_{k+1}\phi^{\otimes p^{k}}(m_1\otimes\dots\otimes m_{p^{k}}),
\end{align*}
and the action of $r$ of the left $R$-module structure on $((\tens{R}{M}{k})^{\otimes p})^{C_p}$ induced by $\varphi\colon R\to (R^{\otimes p})^{C_p}$ is precisely multiplication by $\tau_{k+1}((\varphi(r)\otimes 1\otimes\dots\otimes 1))$.
\end{proof}

\phantomsection\addcontentsline{toc}{section}{References} 
\bibliographystyle{amsalpha}
\bibliography{bib}

\end{document}